\newtheorem{lemma}{Lemma}
\newtheorem{proposition}[lemma]{Proposition}
\newtheorem{corollary}[lemma]{Corollary}
\newtheorem{theorem}{Theorem}
\newtheorem*{conjecture}{Conjecture}
\theoremstyle{definition}
\newtheorem{definition}[lemma]{Definition}
\newtheorem{example}[lemma]{Example}
\theoremstyle{remark}
\newtheorem{remark}[lemma]{Remark}
\numberwithin{equation}{section}
\numberwithin{lemma}{subsection}
\newcommand{\set}[1]{\left\{#1\right\}}
\newcommand{\n}[1]{\left\|#1\right\|}
\newcommand{\R}{\mathbb{R}}
\newcommand{\C}{\mathbb{C}}
\newcommand{\Z}{\mathbb{Z}}
\newcommand{\orient}{\varepsilon}
\newcommand{\W}{{\mathbb C^m}}
\DeclareMathOperator{\tr}{tr}
\DeclareMathOperator{\im}{Im}
\DeclareMathOperator{\image}{im}
\DeclareMathOperator{\re}{Re}
\DeclareMathOperator{\lk}{lk}
\title{Zeta functions and the Fried conjecture for smooth pseudo-Anosov flows}
\author{Malo J\'ez\'equel\footnote{CNRS, Univ. Brest, UMR 6205, Laboratoire de Mathématiques de Bretagne Atlantique, France. email: malo.jezequel@math.cnrs.fr} \and Jonathan Zung\footnote{Massachusetts Institute of Technology, Cambridge MA, USA. email: jzung@mit.edu}}
\date{}
\begin{document}

\maketitle

\begin{abstract}
    To a transitive pseudo-Anosov flow $\varphi$ on a $3$-manifold $M$ and a representation $\rho$ of $\pi_1(M)$, we associate a zeta function $\zeta_{\varphi,\rho}(s)$ defined for $\re s \gg 1$, generalizing the Anosov case. For a class of ``smooth pseudo-Anosov flows'', we prove that $\zeta_{\varphi,\rho}(s)$ has a meromorphic continuation to $\mathbb{C}$. We also prove a version of the Fried conjecture for smooth pseudo-Anosov flows which, under some conditions on $\rho$, relates $\zeta_{\varphi,\rho}(0)$ to the Reidemeister torsion of $M$. Finally we prove a topological analogue of the Dirichlet class number formula. In order to deal with singularities, we use $C^\infty$ versions of the approaches of Rugh and Sanchez--Morgado, based on Markov partitions.
\end{abstract}

\tableofcontents
\section{Introduction}

\subsection{The Fried conjecture}
An old theme in topology is that the number of fixed points or closed orbits of a dynamical system is a topological invariant. For example, the Lefschetz fixed point theorem computes a homological trace of a continuous self-map of a topological space by counting its fixed points with appropriate weights. Fried proposed such a counting theorem for Anosov flows. Let $M$ be a closed 3-manifold and $\varphi$ a smooth transitive Anosov flow on $M$. Let $\rho$ be an acyclic unitary representation of $\pi_1(M)$. Now an Anosov flow has infinitely many closed orbits, so one cannot count them in a naive way. Instead, one may count them using zeta function regularization. We form the \emph{twisted Ruelle zeta function} via an Euler product over primitive closed orbits:
\begin{equation}\label{eqn:anosov_zeta_definition}
\zeta_{\varphi,\rho}(s) = \prod_{\gamma \text{ primitive closed orbit of } \varphi} \det(I-e^{-s T_\gamma} \Delta_\gamma \rho(\gamma))
\end{equation}
where $T_\gamma$ is the length of $\gamma$ and $\Delta_\gamma = \pm 1$ depending on the orientation of the unstable bundle over $\gamma$. The product converges when $\re s \gg 1$ and admits a meromorphic continuation to $\mathbb{C}$ \cite{giulietti.liverani.ea.AnosovFlowsDynamical}. Fried proposed a ``Lefschetz formula for flows'' stated in terms of this twisted zeta function \cite{fried_lefschetz,fried95}:
 \begin{conjecture}[Fried conjecture]\label{conj:fried}
	The number $|\zeta_{\varphi,\rho}(0)|^{-1}$ is a topological invariant of $M$, the $\rho$-twisted Reidemeister torsion of $M$.
\end{conjecture}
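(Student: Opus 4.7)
The plan is to adapt S\'anchez-Morgado's proof of the Fried conjecture for certain 3-dimensional Anosov flows (and Rugh's earlier work on dynamical determinants) to the pseudo-Anosov setting, using Markov partitions to translate the dynamics into combinatorial/algebraic data that can be matched with the Reidemeister torsion.

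First, I would construct a Markov partition $\mathcal{R} = \{R_1, \dots, R_N\}$ for $\varphi$ consisting of rectangles transverse to the flow and bounded by segments of stable and unstable leaves. Singular orbits must be treated separately: near a $p$-prong singularity, the rectangles are arranged into $p$ sectors meeting along the singular orbit. The first-return map on the disjoint union of rectangles defines a subshift of finite type $\Sigma$ with transition matrix $A$ and a roof function equal to the return time, in such a way that primitive closed orbits of $\varphi$ correspond to primitive periodic orbits in $\Sigma$.

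Second, I would decompose the zeta function as an alternating product of Fredholm determinants of transfer operators. The identity $\det(I-B) = \sum_k (-1)^k \tr \Lambda^k B$, together with the Atiyah--Bott trace formula for transfer operators on the symbolic model, gives
\begin{equation*}
\zeta_{\varphi,\rho}(s) = \prod_{k} d_k(s)^{(-1)^k},
\end{equation*}
where $d_k(s) = \det(I - \mathcal{L}_k(s))$ is the Fredholm determinant of a transfer operator $\mathcal{L}_k(s)$ acting on $C^\infty$ sections of $\Lambda^k N^* \otimes \rho$ over the Markov rectangles, with $N$ a normal bundle to the flow. Meromorphic continuation of each $d_k(s)$ is a prerequisite; it is part of the broader goals of the paper (via $C^\infty$ transfer operator methods) and I would take it as input here.

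Third, I would evaluate at $s=0$. There, $\mathcal{L}_k(0)$ degenerates to a combinatorial operator: a signed matrix built from the symbolic transition data, the orientation signs $\Delta_\gamma$, and $\rho$. Using the Markov partition to build a CW decomposition of $M$ (or of a suitable cover) whose cells are indexed by rectangles of different ``depths'', one identifies each $\mathcal{L}_k(0)$ with a boundary map $\partial_k$ of the twisted cellular chain complex $C_\bullet(M;\rho)$. Under the acyclicity assumption on $\rho$, the alternating product $\prod_k d_k(0)^{(-1)^k}$ is, up to sign, precisely the Reidemeister torsion $\tau_\rho(M)$, yielding the desired identity.

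The main obstacle is the treatment of the pseudo-Anosov singularities. At a prong, the Markov rectangles fit together non-smoothly, the transfer operators acquire delicate behavior, and the usual anisotropic Banach space machinery needs to be adapted so that $\mathcal{L}_k(s)$ remains trace-class and the Atiyah--Bott trace formula continues to apply. The induced CW structure near a singularity acquires extra cells reflecting the prong geometry, and one must verify both that the identification $d_k(0) = \det \partial_k$ survives this degeneracy and that the resulting chain complex still computes $\tau_\rho(M)$. This is precisely where the ``smooth pseudo-Anosov'' hypothesis, together with the technical conditions on $\rho$ mentioned in the abstract, should enter decisively.
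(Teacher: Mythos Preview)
Your broad strategy—Markov partitions, transfer-operator determinants in the spirit of Rugh, and a CW-complex computation of $\tau_\rho(M)$ following S\'anchez-Morgado—is indeed the paper's approach. But step~3 contains a real gap: the operators $\mathcal{L}_k(0)$ do \emph{not} become the boundary maps $\partial_k$ of the twisted cellular complex, and no identity $d_k(0)=\det\partial_k$ holds termwise. What actually happens is that the alternating product $\prod_k d_k(0,z)^{(-1)^{k+1}}$ collapses, via the linear-algebra identity you quote, to a single finite-dimensional determinant $\det(I-zH)$ with $H$ the $\Delta\rho$-twisted adjacency matrix of the Markov graph. The individual $d_0(0,1)$ and $d_2(0,1)$ remain genuine infinite-dimensional spectral quantities whose nonvanishing must be proved separately by a Ruelle--Perron--Frobenius argument; they do not match any cellular boundary map. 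On the torsion side, the CW complex built from the partition (by collapsing along the stable direction to a branched surface, then filling solid tori) carries a 3-step filtration whose middle level contributes $|\det(I-H)|^{-1}$ and whose outer levels contribute $\prod_{\gamma\in\Gamma^s}|\det(I-\rho(\gamma))|$ and $\prod_{\gamma\in\Gamma^u}|\det(I-\varepsilon_\gamma\rho(\gamma))|$.

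You also omit the overcounting correction, which is the heart of the matching. The symbolic shift miscounts exactly the orbits in $\Gamma^u\cup\Gamma^s$ (those whose stable/unstable manifolds bound the Markov rectangles), and the discrepancy between $\zeta^{\textup{Markov}}_\rho$ and $\zeta_{\varphi,\rho}$ is a product of local factors $\xi_{\varphi,\rho,\gamma}$ and $\xi_{\varphi,\varepsilon\rho,\gamma}$ over these orbits. The proof works precisely because these correction factors coincide with the outer-level contributions to $\tau_\rho(M)$ above. This is why the monodromy hypotheses (that $1\notin\operatorname{spec}\rho(\gamma)$ for $\gamma\in\Gamma^s$, and $\varepsilon_\gamma\notin\operatorname{spec}\rho(\gamma)$ for $\gamma\in\Gamma^u$) are essential, and why the choice of $\Gamma^u,\Gamma^s$ must be made carefully before the Markov partition is built—a step your outline does not anticipate.
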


Fried proved his conjecture for the geodesic flow on surfaces of constant negative curvature \cite{fried.AnalyticTorsionClosed, fried_fuchsian}. Even though the current paper is mostly concerned about the case of hyperbolic flows, Fried conjecture has been investigated in other contexts. It is satisfied by the geodesic flow on the unit tangent bundle of a locally symmetric reductive manifold \cite{moscovici_stanton, shen_locally_symmetric} and a generalization to the case of locally symmetric orbifolds is also known \cite{shen_yu_orbifolds}. We refer to the survey \cite{shen_survey} for a detailed introduction to the Fried conjecture, and will from now on keep to the case of Anosov (and pseudo-Anosov) flows.

The proof of the existence of a meromorphic extension for \eqref{eqn:anosov_zeta_definition} has a long history. Ruelle established meromorphic extension to $\mathbb{C}$ for analytic Anosov flows with analytic strong stable or unstable foliation \cite{ruelle.ZetafunctionsExpandingMaps} and to a smaller domain for $C^1$ Anosov flows \cite{ruelle_zeta_holder}. The latter result has then been improved by Pollicott \cite{pollicott_zeta} and Haydn \cite{haydn_zeta}. They prove the existence of a meromorphic continuation for the zeta function associated to a $C^1$ Axiom A flow to a larger that is still smaller  than $\mathbb{C}$ (but in this context we do not expect the existence of a meromorphic continuation to the whole $\mathbb{C}$ as explained in \cite{pollicott_zeta}). All these results for $C^1$ hyperbolic flows are obtained using symbolic dynamics, the interested reader may refer to the textbook \cite{parry_pollicott_asterisque} for an exposition of this approach. With the emergence of spaces of anisotropic distributions in the hyperbolic dynamical systems literature, it became possible to prove meromorphic continuation to $\mathbb{C}$ of zeta functions associated to large classes of flows. Rugh proved the existence of a meromorphic extension for the zeta function associated to an analytic Anosov flows in dimension 3 \cite{rugh96}, and his result was then extended to higher dimensions by Fried \cite{fried95}. More recently, Giulietti--Liverani--Policott established meromorphic extension of the zeta function in the case of smooth ($C^\infty$) Anosov flows \cite{giulietti.liverani.ea.AnosovFlowsDynamical}. Another proof was given by Dyatlov and Zworski, using microlocal analysis \cite{dyatlov.zworski.DynamicalZetaFunctions}. 

The different approaches to the meromorphic extension of \eqref{eqn:anosov_zeta_definition} mentioned above have been used to prove Fried's conjecture in different contexts. Using Ruelle's approach to the zeta function, Sanchez-Morgado \cite{morgado93} proved Fried conjecture first for analytic Anosov flows with analytic stable foliation, under the assumption of the existence of a periodic orbit $\gamma$ for the flow such that $1$ and $\Delta_\gamma$ are not eigenvalues for $\rho(\gamma)$ (we will call this the \emph{monodromy condition}). Using the approach of Rugh and Fried, he was then able to remove the assumption of analyticity on the stable foliation \cite{morgado96}. Finally, using Dyatlov-Zworski's approach, Dang et al established the Fried conjecture for smooth Anosov flows in dimension 3 \cite{dang.guillarmou.ea.FriedConjectureSmall}, under an additional condition (the absence of resonance at zero) that is automatically satisfied when the flow $\varphi$ is volume preserving. Their proof is based on Sanchez-Morgado's result, but they are able to get rid of the monodromy condition if the first Betti number of the manifold $M$ is non-zero.

In this paper, we study the Fried conjecture for pseudo-Anosov flows. Before stating our results precisely, let us explain how to extend the definition of the Ruelle zeta function to pseudo-Anosov flows.

\subsection{Zeta function for pseudo-Anosov flows}\label{subsection:definition_zeta_pseudo_Anosov}
	Let $\varphi = (\varphi_t)_{t \in \mathbb{R}}$ be a smooth transitive pseudo-Anosov flow on a closed $3$-manifold $M$ and $\rho$ be a representation of the fundamental group of $M$ (the choice of the base point does not matter). We use the adjective ``smooth'' here to express the fact that we are not working with the standard definition of pseudo-Anosov flows that can be found in the literature (which is a topological notion). Indeed, the existence of a meromorphic continuation for the zeta function of a hyperbolic flow is deeply related to its regularity properties, so that we need to impose some smoothness assumptions on the flows we are working with (even if they have singular periodic orbits in general). See \cite{agol_tsang} for a discussion of the notions of smooth and topological pseudo-Anosov flows (beware though that their notion of smooth pseudo-Anosov flows is not exactly the same as ours).
	
	 The precise class of flows we can work with is defined in \cref{section:definition_pseudo_Anosov} but let us give an informal description of the properties of $\varphi$. We assume that there are a finite number of primitive periodic orbits\footnote{Our convention is that a periodic orbit is an integral curve $\gamma : [0,T] \to M$ for $\varphi$ such that $\gamma(0) = \gamma(T)$. We identify two periodic orbits if they only differ by shifting the parametrization. We say that $\gamma$ is primitive if $\gamma_{|[0,T)}$ is injective. Moreover, if $k$ is a positive integer, we let $\gamma^k$ denote the periodic orbit $[0,kT] \to M$ for $\varphi$ obtained by running $k$ times through $\gamma$. Notice that the $\gamma^k$'s all have the same image, but they a priori have different free homotpy classes. We may sometimes identify a primitive periodic orbit and its image.}  $\gamma_1,\dots,\gamma_N$ for $\varphi$ such that $\varphi$ is smooth on $\widetilde{M} \coloneqq M \setminus \bigcup_{i = 1}^N \gamma_i$ and satisfies there a hyperbolic property similar to the definition of an Anosov flow. Moreover, we require that the orbits $\gamma_1,\dots,\gamma_N$ have neighbourhoods on which the flow $\varphi$ is conjugate to a local model of a ``pseudo-hyperbolic singular orbit'' (defined in \cref{subsubsection:pseudo_hyperbolic_orbit}). The local model is obtained as a suspension of a ``pseudo-hyperbolic fixed point'' constructed in the following way: take a linear hyperbolic fixed point and cut it along the unstable direction, you get then a hyperbolic map acting on a half-space, glue more than two of these half-spaces together (and maybe make the map permute the half-spaces). The resulting singular orbits still have some hyperbolic behaviour, but instead of having two unstable and two stable prongs (like a smooth hyperbolic periodic orbit) they have more. We shall now explain how to take into account these exceptional orbits in the definition of a twisted zeta function.
		
	For $\gamma$ a periodic orbit for $\varphi$, define the auxiliary function
	\begin{equation}\label{eq:auxiliary_function}
	\xi_{\varphi,\rho,\gamma}(s) = \det(I-e^{-s T_\gamma }\rho(\gamma)),
	\end{equation}
	where $T_\gamma$ denotes the length of $\gamma$. Now suppose $\gamma$ is a primitive closed orbit. Suppose there are $m$ weak unstable half-leaves $\lambda_1^{u}, \dots, \lambda_m^{u}$ whose boundary is some multiple of $\gamma$. Let $r_i^{u}$ be the number of times $\lambda_i$ wraps around $\gamma$. See \cref{fig:pushoff}. Note that $\sum r_i^{u}$ is the number of prongs at $\gamma$. 
	\begin{figure}
		\centering
	\def\svgwidth{1.2in}
		\begingroup%
  \makeatletter%
  \providecommand\color[2][]{%
    \errmessage{(Inkscape) Color is used for the text in Inkscape, but the package 'color.sty' is not loaded}%
    \renewcommand\color[2][]{}%
  }%
  \providecommand\transparent[1]{%
    \errmessage{(Inkscape) Transparency is used (non-zero) for the text in Inkscape, but the package 'transparent.sty' is not loaded}%
    \renewcommand\transparent[1]{}%
  }%
  \providecommand\rotatebox[2]{#2}%
  \newcommand*\fsize{\dimexpr\f@size pt\relax}%
  \newcommand*\lineheight[1]{\fontsize{\fsize}{#1\fsize}\selectfont}%
  \ifx\svgwidth\undefined%
    \setlength{\unitlength}{117.43463802bp}%
    \ifx\svgscale\undefined%
      \relax%
    \else%
      \setlength{\unitlength}{\unitlength * \real{\svgscale}}%
    \fi%
  \else%
    \setlength{\unitlength}{\svgwidth}%
  \fi%
  \global\let\svgwidth\undefined%
  \global\let\svgscale\undefined%
  \makeatother%
  \begin{picture}(1,1.52790264)%
    \lineheight{1}%
    \setlength\tabcolsep{0pt}%
    \put(0,0){\includegraphics[width=\unitlength,page=1]{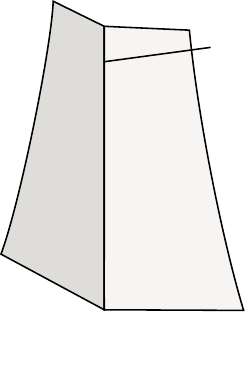}}%
    \put(0.88155784,1.31308465){\color[rgb]{0,0,0}\makebox(0,0)[lt]{\lineheight{1.25}\smash{\begin{tabular}[t]{l}$\gamma$\end{tabular}}}}%
    \put(0,0){\includegraphics[width=\unitlength,page=2]{pushoff.pdf}}%
    \put(1.08004464,0.39924525){\color[rgb]{0,0,0}\makebox(0,0)[lt]{\lineheight{1.25}\smash{\begin{tabular}[t]{l}$\lambda_1^u$\end{tabular}}}}%
    \put(0,0){\includegraphics[width=\unitlength,page=3]{pushoff.pdf}}%
    \put(0.96998448,0.99158673){\color[rgb]{0,0,0}\makebox(0,0)[lt]{\lineheight{1.25}\smash{\begin{tabular}[t]{l}$\textup{pushoff of }\gamma^3$\end{tabular}}}}%
    \put(0,0){\includegraphics[width=\unitlength,page=4]{pushoff.pdf}}%
  \end{picture}%
\endgroup%

		\caption{The neighbourhood of a 3-prong singular orbit. The bottom of the picture glues to the top with a rotation of $2\pi/3$. Here, $m=1$ and $r_1^u=3$.}\label{fig:pushoff}
	\end{figure}
	We define the local zeta function at $\gamma$ as
	\begin{equation}\label{eq:definition_local_zeta_function}
	\zeta_{\varphi,\rho,\gamma}(s) = \frac{\prod_{i=1}^m\xi_{\varphi,\rho,\gamma^{r_i^{u}}}(s)}{\xi_{\varphi,\rho,\gamma}(s)}.
	\end{equation}
	
	In other words, $\lambda_i^{u}$ counts as a closed orbit of length $\gamma^{r_i^u}$, and we deduct one orbit which wraps once around $\gamma$. In the case where the first return map along $\gamma$ is orientation preserving, one may motivate this definition by the fact that the first return map along $\gamma$ can be perturbed to a number of hyperbolic fixed points counted in the numerator of \cref{eq:definition_local_zeta_function} and a single elliptic fixed point counted in the denominator; see \cref{fig:perturb}. A different kind of perturbation may be used in the orientation reversing case; see \cref{fig:perturb_reflection}.
	
	Let $\Delta$ be the orientation class of the strong unstable bundle, well defined in the complement of the singular orbits of $\varphi$. Let $\varepsilon$ be the orientation class of $TM$. Given a closed orbit $\gamma$, we define $\Delta_\gamma$ or $\varepsilon_\gamma$ to be the value of the corresponding class on $\gamma$ (in the first case we need $\gamma$ to be non-singular).
	
	\begin{remark}\label{remark:orientable half leaves}
		When $\gamma$ is non-singular and $\lambda$ is a half weak stable leaf whose boundary is $\gamma^r$, the tangent bundle $T\lambda$ is orientable. Since $TM = T\lambda \oplus T\lambda^\perp$ and $\Delta$ is the orientation class of $T\lambda^\perp$, we have $\Delta_{\gamma^r} = \varepsilon_{\gamma^r}$.
	\end{remark}
	
	\begin{remark}\label{remark:nonsingular_zeta}
		When $\gamma$ is nonsingular, \cref{eq:definition_local_zeta_function} simplifies to
		\begin{equation*}
		\zeta_{\varphi,\rho,\gamma}(s) = \det(I-\Delta_\gamma e^{-s T_\gamma} \rho(\gamma)).
		\end{equation*}
		Indeed, if $\Delta_\gamma = 1$, then $n = 2$ and $r_1^u = r_2^u = 1$, so that $$\zeta_{\varphi,\rho,\gamma}(s) = \frac{\xi_{\varphi,\rho,\gamma}(s)^2}{\xi_{\varphi,\rho,\gamma}(s)} = \det(I-e^{-s T_\gamma }\rho(\gamma)).$$ If $\Delta_\gamma = -1$ then $n = 1$ and $r_1^u = 2$ so that
		\begin{equation*}
		\zeta_{\varphi,\rho,\gamma}(s) = \frac{\xi_{\varphi,\rho,\gamma^2}(s)}{\xi_{\varphi,\rho,\gamma}(s)} = \frac{\det(I-e^{-2s T_\gamma }\rho(\gamma)^2)}{\det(I - e^{-sT_\gamma} \rho(\gamma))} = \det(I + e^{-s T_\gamma} \rho(\gamma)).
		\end{equation*}
		\end{remark}
	
	\begin{figure}
		\centering
		\includegraphics[width=0.75\textwidth]{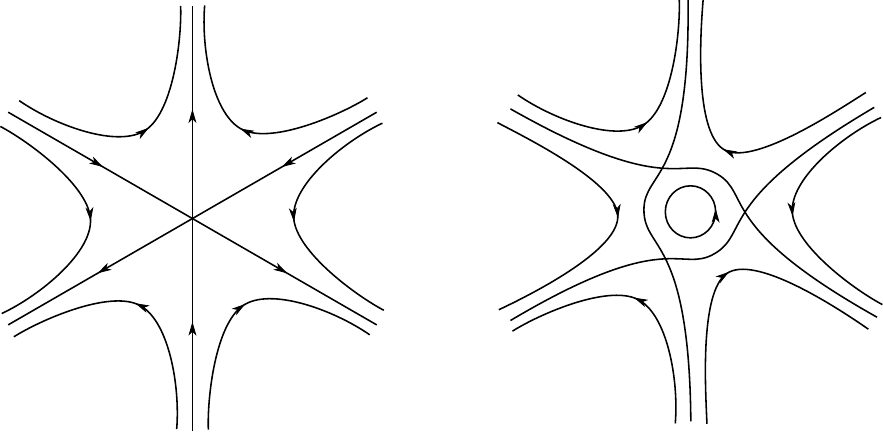}
		\caption{Perturbing a $k=3$ prong pseudo-Anosov singularity to a number of hyperbolic fixed points and an elliptic fixed point}\label{fig:perturb}
	\end{figure}
	
	\begin{figure}
		\centering
		\includegraphics[width=0.75\textwidth]{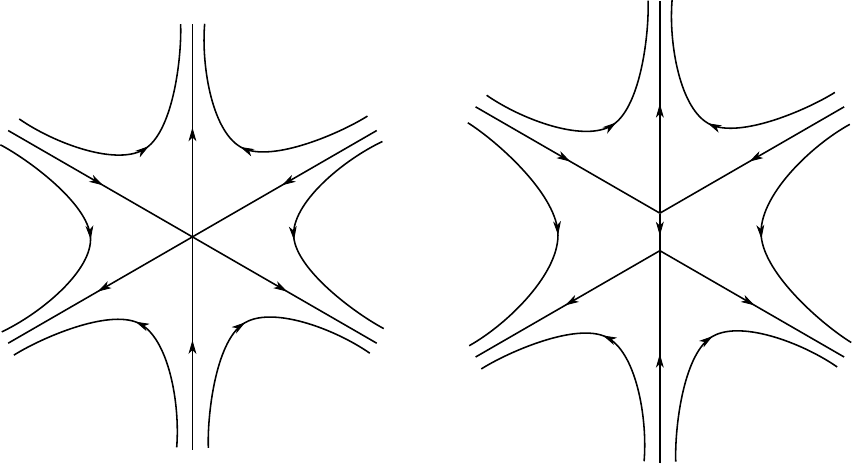}
		\caption{Consider the pseudo-Anosov diffeomorphism of the plane given by flowing along the vector field shown to the left and then reflecting in the vertical line. In this example, $\{r_1^s,r_2^s\}=\{1,2\}$ and $\{r_1^u,r_2^u\} = \{1,2\}$. The diffeomorphism can be perturbed to one with two hyperbolic fixed points shown on the left.}\label{fig:perturb_reflection}
	\end{figure}

	We now define the \emph{Ruelle zeta function} of $\varphi$ twisted by the representation $\rho$ by 
	\begin{equation}\label{eq:definition_twisted_zeta_function}
	\zeta_{\varphi,\rho}(s) = \prod_{\text{primitive closed orbits }\gamma} \zeta_{\varphi,\rho,\gamma}(s).
	\end{equation}
	By \cref{remark:nonsingular_zeta}, the contribution of regular periodic orbits is the same as in the Anosov case. We have just explained how to take into account singular orbits.
\subsection{Statement of results}

As in the Anosov case, the Euler product in \cref{eq:definition_twisted_zeta_function} converges when $\re s \gg 1$ (this follows from a bound on the number of periodic orbits that can be deduced from the symbolic representation of the flow used in \cref{subsection:symbolic_dynamics}), but we have more:

\begin{theorem}\label{theorem:continuation_zeta}
Let $\varphi = (\varphi_t)_{t \in \mathbb{R}}$ be a transitive smooth pseudo-Anosov flow on a $3$-dimensional closed manifold $M$. Let $\rho$ be a representation of the fundamental group of $M$. Then the zeta function $\zeta_{\varphi,\rho}$ has a meromorphic continuation to $\mathbb{C}$.
\end{theorem}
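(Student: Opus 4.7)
The plan is to adapt the symbolic dynamics approach of Rugh and Sanchez--Morgado to the $C^\infty$ pseudo-Anosov setting. The starting point is to build a Markov partition $\{R_1,\dots,R_k\}$ for $\varphi$ consisting of rectangles transverse to the flow and adapted to the stable/unstable laminations, with special care so that each primitive singular orbit sits at a corner where $\sum_i r_i^u$ rectangles meet, one per prong. The first return map $F \colon R \to R$ to $R := \bigsqcup R_i$, together with the return-time function $\tau \colon R \to \mathbb{R}_{>0}$, will encode $\varphi$ as a suspension of a subshift of finite type, with $F$ smooth on each rectangle but exhibiting only the prong geometry at the singular corners.

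Once this symbolic model is fixed, I would establish a bijection between primitive periodic orbits of $F$ and certain decorated primitive periodic orbits of $\varphi$: a nonsingular $\gamma$ yields one symbolic orbit if $\Delta_\gamma = -1$ and two if $\Delta_\gamma = 1$; a singular $\gamma$ yields $m$ symbolic orbits $(\gamma,\lambda_i^u)_{i=1}^m$ with respective periods $r_i^u T_\gamma$. Combining this bookkeeping with \cref{remark:nonsingular_zeta} and \cref{eq:definition_local_zeta_function}, one checks that the Euler product $\prod_\gamma \zeta_{\varphi,\rho,\gamma}(s)$ equals a symbolic Ruelle product over periodic orbits of $F$, \emph{up to} the auxiliary factor $\prod_{\gamma \text{ singular}} \xi_{\varphi,\rho,\gamma}(s)^{-1}$. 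Since there are finitely many singular orbits by hypothesis, this auxiliary factor is manifestly meromorphic on $\mathbb{C}$, so the problem reduces to meromorphically continuing the symbolic zeta function attached to $(F,\tau,\rho)$.

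For the latter, I would introduce a family of weighted transfer operators $\mathcal{L}_s$ acting on a suitable $C^\infty$ space of $\rho$-twisted sections over $R$, with weight $e^{-s\tau}$ and cocycle $\rho$. Following the Ruelle--Fried decomposition along stable and unstable directions, each iterate $\mathcal{L}_s^n$ is a composition of a contraction in the stable direction (which is smoothing) with restriction to an $n$-step Markov image. A Grothendieck-type estimate on approximation numbers then shows that $\mathcal{L}_s$ is nuclear of order zero on the relevant space, which yields an identity of the form $\zeta_{\varphi,\rho}(s) = \prod_k \det(I - \mathcal{L}_{s,k})^{(-1)^k}$ over a finite family of operators obtained by twisting by orientation bundles, and hence the meromorphic continuation to all of $\mathbb{C}$.

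The main obstacle is the combination of two difficulties that are separately well-understood but have not been treated together. On the geometric side, the Markov partition must be built with rectangles meeting along the prongs at each singular orbit rather than along smooth stable/unstable manifolds as in the Anosov case, and one has to verify that the local model for a pseudo-hyperbolic orbit makes the resulting subshift reproduce exactly the periodic-orbit bookkeeping dictated by \cref{eq:definition_local_zeta_function}. On the analytic side, the nuclearity estimates for $\mathcal{L}_s$ must be established in the $C^\infty$ category on these cornered rectangles, which requires a smooth analogue of Rugh's contour arguments; I expect the bulk of the technical work to sit in this second step and in the compatibility check between the prong geometry and the transfer-operator analysis.
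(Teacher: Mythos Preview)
Your overall strategy---Markov partition, symbolic coding, transfer-operator determinants---is the paper's, but two concrete steps are miscalibrated.

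First, the orbit bookkeeping. In any Markov partition, a generic periodic orbit (one that never touches the rectangle boundaries) has exactly one symbolic encoding, regardless of $\Delta_\gamma$; the sign $\Delta_\gamma$ enters as a weight in the local factor $\det(I - e^{-sT_\gamma}\Delta_\gamma \rho(\gamma))$, not as a multiplicity. Your claim that a nonsingular $\gamma$ yields two symbolic orbits when $\Delta_\gamma=1$ and one when $\Delta_\gamma=-1$ does not arise from any standard construction. What the paper does instead is arrange that the stable and unstable boundaries of the rectangles lie in $W^s(\gamma)$, $W^u(\gamma)$ for $\gamma$ in prescribed finite \emph{disjoint} sets $\Gamma^s,\Gamma^u$ containing all singular orbits (so each singular orbit sits on a side, not a corner). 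Then orbits not in $\Gamma^s\cup\Gamma^u$ are coded exactly once, while each $\gamma\in\Gamma^s\cup\Gamma^u$ is coded once per adjacent half-leaf, and the correction factor relating the symbolic zeta to $\zeta_{\varphi,\rho}$ is $\prod_{\gamma\in\Gamma^s}\xi_{\varphi,\rho,\gamma}(s)\prod_{\gamma\in\Gamma^u}\xi_{\varphi,\varepsilon\rho,\gamma}(s)$---involving all boundary orbits, not just singular ones, and with an $\varepsilon$-twist on the unstable side.

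Second, the analytic mechanism. Nuclearity of order zero via Grothendieck approximation numbers is the device Rugh uses for real-analytic flows, and it does not transfer to $C^\infty$: transfer operators for $C^\infty$ hyperbolic maps are not nuclear on any reasonable function space. The paper handles this by appealing to the Baladi--Tsujii anisotropic spaces, on which the family of open maps $T_{\theta,\vartheta}$ coming from the Markov partition have well-defined flat determinants $d_k(s,z)$ that are entire in $z$ for each fixed $s$; a separate Hartogs-type argument then upgrades this to joint holomorphy in $(s,z)\in\mathbb{C}^2$, so that $\zeta_\rho^{\mathrm{Markov}}(s):=d_1(s,1)/(d_0(s,1)d_2(s,1))$ is meromorphic on $\mathbb{C}$. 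This Baladi--Tsujii input is exactly the ``smooth analogue of Rugh's contour arguments'' you flag as the bulk of the work, but it is an off-the-shelf tool rather than something to be redeveloped.
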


Adapting Sanchez-Morgado's approach \cite{morgado96}, we also prove a version of Fried's conjecture:

\begin{theorem}\label{theorem:fried_conjecture}
Let $\varphi = (\varphi_t)_{t \in \mathbb{R}}$ be a transitive smooth pseudo-Anosov flow on a $3$-dimensional closed orientable manifold $M$. Let $\rho$ be an acyclic unitary representation of the fundamental group of $M$. Assume that for each singular orbit $\gamma$, the number $1$ is not an eigenvalue of $\rho(\gamma)$. If there is no singular orbit, then assume that there is an element $g\in \pi_1(M)$ such that $1$ is not an eigenvalue of $\rho(g)$. Then 
\begin{equation*}
|\zeta_{\varphi,\rho}(0)|^{-1} = \tau_{\rho}(M),
\end{equation*}
where $\tau_{\rho}(M)$ denote the $\rho$-twisted Reidemeister torsion of $M$.
\end{theorem}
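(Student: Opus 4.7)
The plan is to adapt Sanchez-Morgado's approach \cite{morgado96} to the pseudo-Anosov setting, starting from the Markov partition of \cref{subsection:symbolic_dynamics}. First I would factor $\zeta_{\varphi,\rho}(s)$ as an alternating product of symbolic dynamical determinants associated to the different strata of the cross section of the Markov partition (interiors of rectangles, stable and unstable boundary faces, and corners), together with local factors at the singular orbits. This factorization comes from an inclusion-exclusion over the stratification of the cross section that counts each primitive closed orbit of the flow with the correct multiplicity; crucially, at a singular orbit $\gamma$ the weak unstable half-leaves $\lambda_i^u$ produce a factor $\prod_i \xi_{\varphi,\rho,\gamma^{r_i^u}}(s)$ in the numerator while a ``corner'' of the Markov structure contributes $\xi_{\varphi,\rho,\gamma}(s)$ in the denominator, exactly matching \eqref{eq:definition_local_zeta_function}.

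Next I would evaluate the factorization at $s=0$. Each symbolic determinant then reduces to a determinant on a finite-dimensional space of chains of the Markov decomposition twisted by $\rho$. The hypothesis that $1$ is not an eigenvalue of $\rho(\gamma)$ at every singular orbit guarantees that the denominators $\xi_{\varphi,\rho,\gamma}(0)$ do not vanish, so $\zeta_{\varphi,\rho}(0)$ is a well-defined nonzero number and the local combinatorial complex associated to each singular orbit is acyclic. In the case when there are no singular orbits, the extra hypothesis on some $\rho(g)$ plays the role of Sanchez-Morgado's original monodromy condition, preventing a spurious zero coming from the global longitudinal direction.

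I would then identify $\zeta_{\varphi,\rho}(0)$ with the torsion of an explicit finite chain complex $C_\ast$ built from the Markov partition, and identify $C_\ast$ with the cellular complex of a CW decomposition of $M$. Away from the singular orbits the rectangles of the Markov partition, their faces, and their corners thicken in the flow direction into cells of dimensions $3,2,1$; at a singular orbit one uses the perturbation of \cref{fig:perturb} (or \cref{fig:perturb_reflection} in the orientation-reversing case) to replace the local model by a smooth one whose cellular torsion contribution is precisely $\xi_{\varphi,\rho,\gamma}(0)/\prod_i \xi_{\varphi,\rho,\gamma^{r_i^u}}(0)$. Invariance of Reidemeister torsion under subdivision and simple-homotopy equivalence then yields $|\zeta_{\varphi,\rho}(0)|^{-1} = \tau_\rho(M)$, the absolute value reflecting the fact that for a unitary $\rho$ the torsion is only defined up to a unit-modulus scalar.

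The main obstacle is the local analysis at the singular orbits. The meromorphic extension of \cref{theorem:continuation_zeta} makes sense of $\zeta_{\varphi,\rho}(0)$, but matching each local dynamical factor with a cellular piece requires delicate bookkeeping: each weak unstable half-leaf at $\gamma$ must correspond to exactly one cell, the wrapping numbers $r_i^u$ must appear in the $\rho$-twisting as $\rho(\gamma^{r_i^u})$ (rather than $\rho(\gamma)^{r_i^u}$, which in general differ when the singular orbit permutes its prongs), and the elliptic virtual fixed point of the perturbation must account for the denominator in \eqref{eq:definition_local_zeta_function}. Carrying out this identification, and verifying acyclicity of the resulting local complex under the monodromy hypothesis, is where the bulk of the technical work will lie.
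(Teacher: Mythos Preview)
Your plan is broadly in the right spirit, but the paper executes it along a noticeably different and simpler line, and your sketch contains one genuine soft spot.

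\textbf{How the paper actually proceeds.} The paper does \emph{not} perform a full inclusion--exclusion over a stratification of the cross section (interiors, stable faces, unstable faces, corners). Instead it makes a careful \emph{choice} of two disjoint finite sets $\Gamma^u,\Gamma^s$ of primitive orbits such that every singular orbit lies in $\Gamma^u\cup\Gamma^s$ and the monodromy conditions $1\notin\mathrm{spec}\,\rho(\gamma)$ for $\gamma\in\Gamma^s$, $\varepsilon_\gamma\notin\mathrm{spec}\,\rho(\gamma)$ for $\gamma\in\Gamma^u$ hold (this uses \cref{lemma:mcmullen_realization}). With the Markov partition of \cref{proposition:existence_Markov} built from these $\Gamma^u,\Gamma^s$, the only orbits miscounted by the symbolic model are those in $\Gamma^u\cup\Gamma^s$, and the correction is the single explicit product in \cref{proposition:explicit_zeta_Markov3}. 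At $s=0$ the Markov zeta function collapses to the \emph{single} finite determinant $\det(I-H)$ of the twisted adjacency matrix (\cref{lemma:zeta_Markov_at_zero}); the conditions $d_0(0,1)\neq 0$ and $d_2(0,1)\neq 0$ needed for this step are supplied by the acyclicity of $\rho$ via \cref{lemma:denominator_non_zero} and \cref{lemma:denominator_non_zero_two}. On the topological side, \cref{prop:rtorsionformula} computes $\tau_\rho(M)$ from a $3$-step filtered complex (solid tori over $\Gamma^u$; a branched surface carrying the weak unstable foliation with boundary map $I-H$; a $1$-complex retracting onto $\Gamma^s$), yielding exactly $|\det(I-H)|^{-1}\prod_{\Gamma^s}|\det(I-\rho(\gamma))|\prod_{\Gamma^u}|\det(I-\varepsilon_\gamma\rho(\gamma))|$. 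No perturbation of the local model is ever used; Figures~\ref{fig:perturb} and~\ref{fig:perturb_reflection} are purely motivational.

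\textbf{Where your plan diverges and where it is shaky.} Your inclusion--exclusion over all four strata can be made to work (this is closer to Bowen--Manning), but it is heavier than necessary and you will have to control the corner stratum, which the paper sidesteps entirely by taking $\Gamma^u\cap\Gamma^s=\emptyset$. The weakest point in your outline is the proposal to ``use the perturbation of \cref{fig:perturb}'' to compute the local cellular torsion at a singular orbit: this is heuristic, the elliptic fixed point has no bona fide local zeta factor, and making it rigorous would essentially rederive \cref{lemma:local_zeta_stable} and the half-leaf bookkeeping that the paper does directly. Finally, your worry that $\rho(\gamma^{r_i^u})$ might differ from $\rho(\gamma)^{r_i^u}$ is unfounded here: $\rho$ is a representation of $\pi_1(M)$ and $\gamma^{r_i^u}$ is by definition the $r_i^u$-fold iterate of $\gamma$, so the two agree. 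That subtlety only arises when $\rho$ is defined on $\pi_1(\widehat M)$ and one must interpret $\rho$ on a pushoff, as in the proof of \cref{theorem:dirichletclassnumber}.
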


\Cref{theorem:fried_conjecture} is a particular case of a more general result (\cref{theorem:fried_conjecture_general}) that is also valid when $M$ is not orientable, but whose statement is slightly more complicated. Notice that \cref{theorem:fried_conjecture} extends previously available results \cite{morgado96,dang.guillarmou.ea.FriedConjectureSmall} in several ways. First of all, we leave the class of Anosov flows and are able to deal with certain singularities. But even in the class of Anosov flows, our result a priori has a larger scope. Indeed, our monodromy condition (there is $g\in \pi_1(M)$ such that $1$ is not an eigenvalue of $\rho(g)$) is weaker than the corresponding condition in \cite{morgado93,morgado96,dang.guillarmou.ea.FriedConjectureSmall} (there is a periodic orbit $\gamma$ such that $1$ and $\Delta_\gamma$ are not eigenvalues for $\rho(\gamma)$). Moreover, we do not require the ``no resonance at zero'' assumption from \cite{dang.guillarmou.ea.FriedConjectureSmall}.

Finally, we give a formula for the order of the first homology group of the double cover of an integral homology sphere branched over periodic orbits of an Anosov flow. Double branched covers are well studied in knot theory. Recall that the order of the first homology group of the double branched cover of $\mathbb{S}^3$ over a knot may be computed using the Alexander polynomial \cite[Corollary 9.2]{book_knot}. We give instead a formula in terms of dynamical quantities:

\begin{theorem}\label{theorem:dirichletclassnumber}
	Let $\varphi = (\varphi_t)_{t \in \mathbb{R}}$ be a transitive $C^\infty$ Anosov flow on an integer homology 3-sphere $M$. Let $\gamma_1,\dots,\gamma_n$ be a collection of closed orbits of $\varphi$. Let $\overline M$ be the double cover\footnote{See \Cref{subsection:branched_covers} for the description of $\overline{M}$.} of $M$ branched over $\bigcup_i \gamma_i$. Let $\overline \varphi$ be the pseudo-Anosov flow obtained by lifting $\varphi$ to $\overline M$. If $H_1(\overline M, \Z)$ is finite, then

	$$|H_1(\overline M,\Z)| = \frac 1 2 \lim_{s\to 0} \left|\frac{\zeta_{\overline \varphi,1}(s)} {\zeta_{\varphi,1}(s)} \right|$$
\end{theorem}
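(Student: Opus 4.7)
The plan is to express the ratio $\zeta_{\overline\varphi,1}(s)/\zeta_{\varphi,1}(s)$ as essentially a twisted Ruelle zeta function of $\varphi$ and then apply the paper's Fried-conjecture machinery to compute its value at $s = 0$. Set $L = \bigcup_i \gamma_i$, and let $\chi \colon \pi_1(M \setminus L) \to \{\pm 1\}$ be the character classifying the branched double cover $\overline M \to M$, so that $\chi$ sends every meridian of $L$ to $-1$ (well defined because $M$ is a $\Z$HS, so $H_1(M\setminus L,\Z)$ is freely generated by meridians). First I would classify how primitive closed orbits of $\varphi$ lift to $\overline\varphi$: an orbit $\gamma$ disjoint from $L$ lifts to two orbits of length $T_\gamma$ (when $\chi(\gamma)=+1$) or to a single orbit of length $2T_\gamma$ (when $\chi(\gamma)=-1$), with $\Delta_{\overline\gamma}$ determined by $\Delta_\gamma$; each branch orbit $\gamma_i$ lifts to a single orbit $\overline{\gamma_i}$ of the same length whose transverse model $z\mapsto z^2$ shows it is a $4$-prong singular orbit with every $r_j^u=1$ when $\Delta_{\gamma_i}=+1$, and a regular ($2$-prong) orbit with $\Delta_{\overline{\gamma_i}}=+1$ when $\Delta_{\gamma_i}=-1$ (the branching trivialises the $-1$ monodromy).

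Next, substituting this orbit data into the local-zeta formula \cref{eq:definition_local_zeta_function} and dividing, I would compute orbit-by-orbit that every non-branch $\gamma$ contributes the factor $1 - \chi(\gamma)\Delta_\gamma e^{-sT_\gamma}$ to the ratio --- exactly what a twisted Ruelle zeta function of $\varphi$ associated to $\chi$ would give (and $\chi(\gamma)$ is well defined on orbits disjoint from $L$) --- while each $\gamma_i$ contributes the purely local correction $(1-e^{-sT_{\gamma_i}})^2$ (if $\Delta_{\gamma_i}=+1$) or $(1-e^{-sT_{\gamma_i}})/(1+e^{-sT_{\gamma_i}})$ (if $\Delta_{\gamma_i}=-1$). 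Recognising these corrections as the local Reidemeister-torsion contributions associated to the branching, I would rewrite the full ratio as a single meromorphic zeta-type object on $M$ and use \cref{theorem:continuation_zeta} to control it near $s=0$.

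Finally, to evaluate at $s = 0$, I would apply (an adaptation of) \cref{theorem:fried_conjecture_general} to $\varphi$ on $M$ and to $\overline\varphi$ on $\overline M$ with the trivial representation. Since both $M$ and $\overline M$ are rational homology $3$-spheres, the trivial representation is not acyclic, so I need the refined form of the Fried conjecture that identifies the leading Laurent coefficient of $\zeta$ at $s=0$ with a normalised Reidemeister torsion; in Turaev's normalisation this coefficient has absolute value $|H_1(N,\Z)|$ for a rational homology $3$-sphere $N$. This gives $1$ for $M$ (an integer homology sphere) and $|H_1(\overline M,\Z)|$ for $\overline M$, and the universal factor $\tfrac12$ in the statement arises from how the deck involution identifies the $H_0$-contributions to the leading term. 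The hardest step is this refinement of the Fried conjecture to the non-acyclic case and the precise bookkeeping of the normalisation; a self-contained alternative is to identify the product of branch-orbit corrections at $s=0$ with a Mayer--Vietoris/Fox-calculus contribution and invoke the classical formula $|H_1(\overline M, \Z)| = |\Delta_L(-1)|$ relating the Alexander polynomial of $L$ to the homology of its double branched cover.
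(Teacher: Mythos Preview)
Your orbit-by-orbit identification of the ratio $\zeta_{\overline\varphi,1}/\zeta_{\varphi,1}$ with a $\chi$-twisted object is the right first step and is essentially what the paper does. The genuine gap is in your evaluation at $s=0$. You propose to apply \cref{theorem:fried_conjecture_general} to $\varphi$ on $M$ and to $\overline\varphi$ on $\overline M$ with the \emph{trivial} representation, and you acknowledge that this requires a ``refined form of the Fried conjecture'' identifying the leading Laurent coefficient in the non-acyclic case. That refinement is not proved anywhere in the paper, and the machinery you would need (\cref{lemma:zeta_Markov_at_zero}) explicitly fails for the trivial representation: Lemmas~\ref{lemma:denominator_non_zero} and~\ref{lemma:denominator_non_zero_two} show that $d_0(0,1)=0$ precisely when $H^0(M;\rho)\neq 0$, which always happens for $\rho=1$. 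So neither $\zeta_{\varphi,1}(0)$ nor $\zeta_{\overline\varphi,1}(0)$ can be computed by the paper's tools applied to the trivial representation, and you have no access to the normalised-torsion statement you invoke.

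The paper avoids this obstacle by never separating the two zeta functions. It works throughout with the nontrivial character $\chi$ as a representation of $\pi_1(M\setminus\bigcup\Gamma^u)$, choosing $\Gamma^u=\{\gamma_1,\dots,\gamma_n\}$ and $\Gamma^s$ to consist of orbits with $\chi(\gamma)=-1$. The ratio is identified with $\zeta_\chi^{\textup{Markov}}(s)\prod_{\gamma\in\Gamma^s}\xi_{\varphi,\chi,\gamma}(s)^{-1}$. Crucially, $\chi$ \emph{is} acyclic on $M\setminus\bigcup\Gamma^u$ (this is part of \cref{proposition:doublecovertorsion}, proved by a short-exact-sequence / spectral-sequence comparison of $C(\overline M)$, $C(M)$, and the $\chi$-twisted complex), so Lemmas~\ref{lemma:denominator_non_zero} and~\ref{lemma:denominator_non_zero_two} apply and \cref{lemma:zeta_Markov_at_zero} gives $\zeta_\chi^{\textup{Markov}}(0)=\det(I-H)$. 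The same spectral-sequence argument yields $|H_1(\overline M,\Z)|=|\det(I-H)|\,2^{-|\Gamma^s|-1}$, and combining these two facts produces the factor $\tfrac12$ directly, with no appeal to a non-acyclic Fried conjecture or to Turaev's normalisation. Your Alexander-polynomial alternative would require its own bridge between $\Delta_L(-1)$ and the twisted zeta value, which is again essentially the content of \cref{prop:rtorsionformula} and \cref{proposition:doublecovertorsion}.
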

This last theorem is a topological analogue of the Dirichlet class number formula for quadratic extensions of number fields. We explain this analogy further in \cref{section:motivation_pseudo_Anosov}. When $H_1(\overline{M},\Z)$ is infinite, the quotient of zeta functions is expected to have a zero of order dictated by $b_1(\overline{M})$, but we do not prove this. Note that $H_1(\overline{M},\Z)$ is always finite when taking the branched double cover over a single orbit.

\subsection{Pseudo-Anosov flows in topology and arithmetic}\label{section:motivation_pseudo_Anosov}
For readers unfamiliar with pseudo-Anosov flows on 3-manifolds, let us explain some of the topological and arithmetic motivations for studying them. First of all, pseudo-Anosov flows are abundant on 3-manifolds. The mapping tori of most diffeomorphisms of positive genus surfaces support pseudo-Anosov flows. More generally, Gabai and Mosher explained how to build a pseudo-Anosov flow on any hyperbolic 3-manifold with $b_1>0$ \cite{gabai_mosher}. Starting from these examples, one may produce many more via surgery constructions \cite{fried.TransitiveAnosovFlows, goodman, shannon, agol_tsang}.

\Cref{theorem:fried_conjecture} additionally requires an acyclic representation satisfying a monodromy condition on the singular orbits. Let us explain how to construct some examples satisfying these conditions.

\begin{example}
Let $S$ be an orientable surface of genus $\geq 2$. Choose a pseudo-Anosov map $h:S\to S$. Let $M$ be the mapping torus of $h$ and let $\varphi$ be a suspension flow\footnote{The flow $\varphi$ is a smooth pseudo-Anosov flow provided $h$ is smooth away from its singular points, the singular points of $h$ may be described as in \Cref{definition:standard_fixed_point}, and the roof function satisfies the relevant regularity condition as in \cref{definition:roof_function}. This is true for instance if $h$ is a linear pseudo-Anosov map and the roof function is constant.} of $h$. Let $\alpha\in H^1(M,\R)$ be the cohomology class dual to the surface $S\subset M$. For any $t\in \R$, we have the one dimensional unitary representation $\rho(\gamma) = e^{it\alpha(\gamma)}$. For $i=0,1,2,3$, the first page of the Leray-Serre spectral sequence for the fibration over $S^1$ has rows of the form $$H_i(S) \xrightarrow{I-e^{it}h_*} H_i(S) \to H_i(M; \rho) \to H_i(S^1; \rho)$$
For $t$ not a multiple of $2\pi$, $H_i(S^1,\rho)=0$. For all but finitely many values of $t \mod 2\pi$, $I-e^{it} h_*$ has full rank and consequently $H_i(M;\rho)=0$. All the singular orbits have monodromy a power of $e^{it}$, so the monodromy condition is also satisfied when $t\notin 2 \pi \mathbb{Z}$.
\end{example}
\begin{example}
One can modify the previous example to get non suspension flows. Fix $t=2\pi p/q$ for some $p/q$ satisfying the conditions in the previous example. Take any nonsingular orbit $\gamma$ of $\varphi$ and perform a Fried--Goodman surgery with slope $1/N$. The representation survives the Dehn surgery whenever $N$ is a multiple of $q$. The monodromy along the nonsingular orbits is unchanged. The set of $N$ for which $H_1(M; \rho)\neq 0$ has the form $q\Z \cap X$ for some algebraic set $X$; this can be seen be writing down the maps in the twisted chain complex as a function of $N$ and noting that the vanishing of $H_1$ is equivalent the algebraic condition that a map has full rank. Note that $0\not \in X$ because $N=0$ corresponds with the trivial surgery. Therefore, $X$ is a finite set and $H_1(M;\rho)$ is almost always trivial. It is easy to check that $H_0(M;\rho)=H_3(M;\rho)=0$. A 3-manifold has trivial Euler characteristic, so $H_2(M;\rho)=0$ whenever all the other homology groups vanish. Therefore, there are infinitely many choices of $N$ yielding 3-manifolds with acyclic representations satisfying the monodromy condition.
\end{example}

There is a long history of studying dynamical determinants for pseudo-Anosov flows. The earliest work takes place in the setting of pseudo-Anosov maps \cite{birman.brinkmann.ea.PolynomialInvariantsPseudoAnosov,mcmullen.PolynomialInvariantsFibered}. More recently Landry--Minsky--Taylor and Parlak studied related constructions in the setting of pseudo-Anosov flows without perfect fits \cite{landry.minsky.ea.PolynomialInvariantVeering, parlak.TautPolynomialAlexander}. These invariants encode information about the growth rates of the number of orbits in various homology classes, just as the domain of convergence of the Euler product defining the Ruelle zeta function is determined by the growth rate of a weighted count of orbits of $\varphi$.

Another motivation for studying zeta functions of pseudo-Anosov flows is the primes and knots analogy:
\begin{center}
	\begin{tabular}{c | c}
	 number field $K$ & 3-manifold $M$ with a flow \\ 
	 prime $p$ in the ring of integers $\mathcal O_K$ & primitive closed orbit $\gamma$ \\
	 Frobenius automorphism of $\mathcal O_K/p$ & generator of $\pi_1(\gamma)$\\
	 rank of the unit group & $b_1(M)$\\
	 class number $h_K$ & $|H_1(M,\Z)_{tors}|$\\
	 Legendre symbol & linking number\\
	 Artin L-function & Ruelle zeta function\\
	 field extension ramified over $p_1,\dots,p_n$ & covering branched over $\gamma_1,\dots,\gamma_n$\\
	\end{tabular}
	\end{center}
Given the last analogy, it is natural to consider a class of flows for which we can take ramified covers over closed orbits. Pseudo-Anosov flows constitute such a class of flows. This idea was pursued by McMullen who proved a version of the Chebotarev density theorem for pseudo-Anosov flows \cite{mcmullen}. We will prove a topological analogue of the Dirichlet class number formula. The Dirichlet class number formula computes the class number of a quadratic field extension in terms of special values of Dedekind zeta functions. In the case of an imaginary quadratic extension $K=\mathbb Q[\sqrt {d}]$, $d<0$, the class number formula can be written as

\begin{equation*}
	h_K =\frac {w}{2}\lim_{s\to 0} \frac{\zeta_K(s)}{\zeta(s)}
\end{equation*}

where $\zeta_K(s)$ is the Dedekind zeta function for $K$, $\zeta(s)$ is the Riemann zeta function, and $w$ is the number of roots of unity in $K$. $$w=\begin{cases}
	4 \qquad d=-1\\
	6 \qquad d =-3\\
	2 \qquad \textup {otherwise}
\end{cases}$$

\Cref{theorem:dirichletclassnumber} is the analogous statement for double branched covers of 3-manifolds. There, $M$ plays the role of $\mathbb Q$ and $\overline M$ plays the role of $K$. The orbits $\gamma_1,\dots,\gamma_n$ correspond to the primes which ramify in $K$. The condition that $b_1(\overline M)=0$ is analogous to the condition that the rank of the unit group is zero; this is always the case for imaginary quadratic extensions.

\subsection{Strategy of proof}

A modern approach to the study of zeta functions associated to hyperbolic flows is to relate them to spectral properties of transfer operators acting on spaces of anisotropic distributions (see \cite{giulietti.liverani.ea.AnosovFlowsDynamical,dyatlov.zworski.DynamicalZetaFunctions}, this is also the approach in \cite{dang.guillarmou.ea.FriedConjectureSmall}). We will only partly adhere to this approach, and rather follows the strategy from \cite{rugh96}: using a Markov partition (see \cref{section:Markov_partition}), we study a transfer operator associated to a hyperbolic family of open maps instead of the transfer operator for the flow itself. To study the transfer operator associated to this hyperbolic family of open maps, we use the spaces of anisotropic distributions designed by Baladi and Tsujii in \cite{baladi_tsujii_determinant}. Consequently, we get a $C^\infty$ version of the approach from \cite{rugh96} (this reference only deals with analytic flows because it relies on the anisotropic spaces from \cite{rugh92}). 

This approach using a Markov partition has a least two important advantages. First, we avoid the difficult task of constructing a space of anisotropic distributions adapted to a pseudo-Anosov flow (notice that such a construction has been made for linear pseudo-Anosov maps \cite{pseudoAnosovmap}) with good enough properties to be able to study zeta functions. Instead, we can work with spaces already present in the literature \cite{baladi_tsujii_determinant}. This slightly less intrinsic approach is also why we can remove the ``no resonance at zero'' assumption from \cite{dang.guillarmou.ea.FriedConjectureSmall}. The second advantage is that Markov partitions for Anosov and pseudo-Anosov flows in dimension 3 are nicer than for Anosov flows in higher dimensions: the rectangles of the partition do look like actual rectangles (see for instance \cite{chernov_markov}). Consequently, a well-chosen Markov partition induces a topological description of the underlying manifold that is convenient for computing Reidemeister torsion. We have a combinatorial object (a finite graph) that can be used to describe both the dynamics of the flow and the topology of the manifold, and consequently to relate these two things. This is Sanchez-Morgado's approach \cite{morgado93,morgado96}, that we adapt slightly here (which allows us for instance to weaken the monodromy condition). It is well-known that the symbolic model associated to a Markov partition produces a slight error when counting periodic orbits. But, once again, the low dimension allows us to build a Markov partition for which this error is explicit, and can thus be corrected.

\subsection{Organization of the paper}
In \cref{section:definition_pseudo_Anosov}, we introduce the class of smooth pseudo-Anosov flows we will work with. In \cref{section:stable_unstable_manifolds} and \cref{section:Markov_partition}, we establish the existence of stable and unstable manifolds and Markov partitions for smooth pseudo-Anosov flows. These arguments are more or less standard, but we are not aware of the existence of a careful treatment of this question in the setting of smooth pseudo-Anosov flows in the literature\footnote{See for instance \cite{brunella_markov} for a construction of Markov partitions for pseudo-Anosov flows. We cannot use the Markov partitions constructed this way here, because this approach is merely topological. We shall use instead work with nice Markov partitions whose construction uses the smoothness assumptions on our flows.}. In \cref{section:zeta_continuation}, we relate the Ruelle zeta function to a dynamical determinant defined in terms of a Markov partition. This is where we prove \cref{theorem:continuation_zeta}. Finally, in \cref{section:rtorsion} we relate this dynamical determinant to the Reidemeister torsion and prove Theorems \ref{theorem:fried_conjecture} and \ref{theorem:dirichletclassnumber}.

\subsection*{Acknowledgement}

The authors would like to thank Semyon Dyatlov for introducing them and suggesting this collaboration. The second author would like to thank Michael Landry, Sam Taylor, and Anna Parlak for explaining their closely related work on the taut polynomial, which served as motivation for the present work. Part of this work was done while the first author was working at MIT. The first author benefits from the support of the French government “Investissements d’Avenir” program integrated to France 2030, bearing the following reference ANR-11-LABX-0020-01.

\section{Definition of smooth pseudo-Anosov flows}\label{section:definition_pseudo_Anosov}

We start by defining the class of flows we will work with. It is standard that the existence of a meromorphic continuation for the zeta function associated to an Anosov flow is deeply connected with the regularity properties of this flow. Hence, even if we will work with flows that have singularities, we need to impose some smoothness assumption in the way these singularities are described. This is why we use the term ``smooth pseudo-Anosov flow'', even though these flows may have singularities. 

To define these flows, we first introduce a local model for ``pseudo-hyperbolic orbits'' in \cref{subsection:local_model} and then use it to get a global definition in \cref{subsection:definition_pseudo_Anosov}.

\subsection{Local model}\label{subsection:local_model}

We first define a class of ``pseudo-hyperbolic fixed point'' for $2$-dimensional maps in \cref{subsubsection:pseudohyperbolic_fixed_point} and then explain how to take a suspension in order to get a model of pseudo-hyperbolic orbit in \cref{subsubsection:pseudo_hyperbolic_orbit}.

\subsubsection{Standard pseudo-hyperbolic fixed point (map case)}\label{subsubsection:pseudohyperbolic_fixed_point}

We start by recalling a common definition.

\begin{definition}\label{definition:hyperbolic_fixed_point}
Let $U$ be an open neighbourhood of $0$ in $\mathbb{R}^2$ and $\mathbf{F} : U \to \mathbb{R}^2$ be a $C^\infty$ map such that $\mathbf{F}(0) = 0$. We say that $0$ is a hyperbolic fixed point for $\mathbf{F}$ is $D \mathbf{F}(0)$ is invertible and has an eigenvalue of modulus strictly less than one and another of modulus strictly larger than one\footnote{We are not interested here in the case of merely expanding or contracting fixed points, so we exclude them from the definition.}. 
\end{definition}

It is standard \cite[Theorem 6.2.3]{katok_hasselblatt_book} that under the assumption of \cref{definition:hyperbolic_fixed_point} there are $C^\infty$ curves $W^{\textup{s}}_{\mathbf{F}}$ and $W^{\textup{u}}_{\mathbf{F}}$ (called respectively local stable and unstable manifolds of $\mathbf{F}$) that intersect transversally at zero such that $\mathbf{F}$ induces a contraction on $W^{\textup{s}}_{\mathbf{F}}$ and $\mathbf{F}^{-1}$ induces a contraction on $W^{\textup{u}}_{\mathbf{F}}$. The following definition refers to the closed upper half-plane:
\begin{equation*}
\overline{\mathbb{H}} = \set{ z \in \mathbb{C} : \im z \geq 0}.
\end{equation*}

\begin{definition}
Let $U$ be an open neighbourhood of $0$ in $\mathbb{R}^2$ and $\mathbf{F} : U \to \mathbb{R}^2$ be a $C^\infty$ map such that $\mathbf{F}(0) = 0$.  We say that $\mathbf{F}$ is a half hyperbolic fixed point bounded by the unstable direction if:
\begin{itemize}
\item $0$ is a hyperbolic fixed point for $\mathbf{F}$;
\item $\mathbf{F}(U \cap \overline{\mathbb{H}}) \subseteq \overline{\mathbb{H}}$;
\item the local unstable manifold of $\mathbf{F}$ is contained in $\mathbb{R} \times \set{0}$ and the local stable manifold of $\mathbf{F}$ is contained in $\set{0} \times \mathbb{R}$.
\end{itemize}
We define similarly half hyperbolic fixed point bounded by the stable direction by interverting stable and unstable manifolds in the last point. In this definition, even though $\mathbf{F}$ is defined on a neighbourhood of $0$, we are mostly interested in the restriction of $\mathbf{F}$ to the upper half-plane.
\end{definition}

After some changes of coordinates, a hyperbolic fixed point may be described by using two half hyperbolic fixed points (hence the name, see \cref{example:regular} below). We will now introduce a class of singular fixed points obtained by gluing a priori more than two half hyperbolic fixed points. To do so, let $n\geq 2$ be an integer and define the closed angular sectors:
\begin{equation*}
\mathfrak{A}_k^n = \set{ r e^{i \theta}: r \in \mathbb{R}_+, \theta \in [\pi k /n, \pi (k+2)/n]} \textup{ for } k \in \mathbb{Z} / 2n \mathbb{Z}.
\end{equation*}

\begin{definition}\label{definition:standard_fixed_point}
Let $\phi : \mathbb{R}^2 \to \mathbb{R}^2$. We say that $\phi$ is a standard $n$-pronged hyperbolic fixed point if:
\begin{enumerate}[label=(\roman*)]
\item $\phi(0) = 0$;
\item $\phi$ is a homeomorphism from $\mathbb{R}^2$ to itself and induces a $C^\infty$ diffeomorphism from $\mathbb{R}^2 \setminus \set{0}$ to itself; \label{item:global_assumption}
\item there is an open neighbourhood $\mathcal{V}$ of $0$ in $\overline{\mathbb{H}}$, and for each $k \in \mathbb{Z}/ 2n \mathbb{Z}$ there is a map $\psi_k :\mathcal{V} \to \mathfrak{A}_k^n,$ such that $\psi_k(0) =0$, $\psi_k$ is a homeomorphism on its image and $\psi_k$ restricted to $\mathcal{V} \setminus \set{0}$ is a diffeomorphism on its image;
\item there is an open neighbourhood $\mathcal{U}$ of $0$ in $\mathbb{R}^2$ such that for every $k \in \mathbb{Z}/2n \mathbb{Z}$, we have $\phi(\mathcal{U} \cap \mathfrak{A}_k^n) \subseteq \mathfrak{A}_{\sigma(k)}^n$ for some $\sigma(k) \in \mathbb{Z}/ 2n\mathbb{Z}$ and the map $\psi_{\sigma(k)}^{-1} \circ \phi \circ \psi_k$ defined on a neighbourhood of $0$ in $\overline{\mathbb{H}}$ extends to a half hyperbolic fixed point\footnote{In particular, $\psi_{\sigma(k)}^{-1} \circ \phi \circ \psi_k$ is smooth.}, bounded by the unstable direction if $k$ is even, by the stable direction if $k$ is odd; \label{item:lambda_mu}
\item for every $k \in \mathbb{Z}/2n \mathbb{Z}$, the map $\psi_{k+1}^{-1} \circ \psi_k : \psi_k^{-1}( \psi_k(\mathcal{V}) \cap \psi_{k+1}(\mathcal{V})) \to \psi_{k+1}^{-1}( \psi_k(\mathcal{V}) \cap \psi_{k+1}(\mathcal{V}))$ is smooth. \label{item:smooth_rotation}
\end{enumerate}
\end{definition}

\begin{remark}
In \cref{definition:standard_fixed_point}, we only care about the properties of $\phi$ near $0$. However, for later convenience, we imposed some global assumptions on $\phi$ in \ref{item:global_assumption}. This is not restrictive. Indeed, if $\phi : U \to \mathbb{R}^2$ defined on an neighbourhood of $0$ in $\mathbb{R}^2$ satisfies \cref{definition:standard_fixed_point} with \ref{item:global_assumption} replaced by ``$\phi$ is a homeomorphism on its image and $\phi$ restricted to $U \setminus \set{0}$ is a diffeomorphism on its image'', then one can find $\tilde{\phi}$ satisfying \cref{definition:standard_fixed_point} as it is stated and that coincides with $\phi$ on a neighourhood of zero.
\end{remark}

\begin{example}\label{example:regular}
Let $U$ be an open neighbourhood of $0$ in $\mathbb{R}^2$ and $\phi : U \to \mathbb{R}^2$ be a $C^\infty$ map with a hyperbolic fixed point at $0$. Then, after a change of variables near $0$, we may assume that the local unstable manifold of $\phi$ is contained in $\mathbb{R} \times \set{0}$ and that the local stable manifold of $\phi$ is contained in $\set{0} \times \mathbb{R}$. Define $\sigma : \mathbb{Z}/4 \mathbb{Z} \to \mathbb{Z}/ 4 \mathbb{Z}$ by
\begin{itemize}
\item $\sigma(0) = 0$ and $\sigma(2) = 2$ if $\phi^{-1}$ is orientation-preserving on the local unstable manifold of $\phi$, and $\sigma(0) = 2, \sigma(2) = 0$ otherwise;
\item  $\sigma(1) = 1$ and $\sigma(3) = 3$ if $\phi$ is orientation preserving on its local stable manifold, $\sigma(1) = 3, \sigma(3) = 1$ otherwise.
\end{itemize}
With this definition, for every $k \in \mathbb{Z}/ 4 \mathbb{Z}$, the map $\phi$ sends a neighbourhood of $0$ in $\mathfrak{A}_k^2$ to a neighbourhood of $0$ in $\mathfrak{A}_{\sigma(k)}^2$. 

Now, for $k \in \mathbb{Z}/ 4 \mathbb{Z}$, let $\psi_k$ be the rotation of angle $k \pi /2$. With this definition, we find that for every $k \in \mathbb{Z}/4 \mathbb{Z}$, the composition $\psi_{\sigma(k)} \circ \phi \circ \psi_{k}^{-1}$ defines a half hyperbolic fixed point (bounded by the stable direction if $k$ is odd, by the unstable direction if $k$ is even). Consequently, $\phi$ coincides near $0$ with a standard $2$-pronged hyperbolic fixed point.
\end{example}

\begin{example}\label{example:ramified}
Let $\phi$ be as in \cref{example:regular}. As above, make a change of variables to ensure that the local unstable manifold of $\phi$ is contained in $\mathbb{R} \times \set{0}$ and that the local stable manifold of $\phi$ is contained in $\set{0} \times \mathbb{R}$. We will reuse the notations from \cref{example:regular}. Let $m \geq 2$. Identifying $\mathbb{R}^2$ with $\mathbb{C}$, the map $p : z \mapsto z^m$ defines a ramified cover of $\mathbb{R}^2$ by itself. Let then $\tilde{\phi}$ be a continuous lift of $\phi$ defined on a neighbourhood of zero (that is we have $p \circ \tilde{\phi} = \phi \circ p$). 

Let $b: \mathbb{Z}/4m \mathbb{Z} \to \mathbb{Z} / 4 \mathbb{Z}$ be the natural projection. For $k \in \mathbb{Z}/ 4m \mathbb{Z}$, let $\psi_k : \overline{\mathbb{H}} \to \mathfrak{A}_k^{2m}$ be the composition of a rotation of angle $k \pi/2$ and the inverse of the homeomorphism induced by $p$ between $\mathfrak{A}_k^{2m}$ and $\mathfrak{A}_{b(k)}^{2}$. 

For every $k \in \mathbb{Z}/ 4 m \mathbb{Z}$, the map $\tilde{\phi}$ sends a neighbourhood of zero in $\mathfrak{A}_k^{2m}$ to a neighbourhood of zero in $\mathfrak{A}_{\tilde{\sigma}(k)}^{2m}$ for some $\tilde{\sigma}(k)$ such that $b(\tilde{\sigma}(k)) = \sigma(b(k))$. Near $0$ in $\overline{\mathbb{H}}$ the map $\tilde{\psi}_{\tilde{\sigma}(k)}^{-1} \circ \tilde{\phi} \circ \tilde{\psi}_k$ and $\psi_{\sigma(b(k))}^{-1} \circ \phi \circ \psi_{b(k)}$ coincides. Hence, $\tilde{\psi}_{\tilde{\sigma}(k)}^{-1} \circ \tilde{\phi} \circ \tilde{\psi}_k$ extends to a half hyperbolic fixed point according to \cref{example:regular}. It follows that $\tilde{\phi}$ coincides near zero with a standard $2m$-pronged hyperbolic fixed point near zero.
\end{example}

\begin{example}\label{example:fixed_point}
Let us mention two further examples of standard $n$-pronged hyperbolic fixed points. The first one is used to define smooth pseudo-Anosov flows in \cite{agol_tsang}. Fix some $\lambda > 1$. Notice that, for $k \in \mathbb{Z}/ 2 n \mathbb{Z}$ even, identifying $\mathbb{R}^2$ and $\mathbb{C}$, the map $z \mapsto z^{\frac{n}{2}}$ induces a homeomorphism between $\mathfrak{A}_k^n$ and the lower or the upper half-plane. We define $\phi$ on $\mathfrak{A}_k^n$ by imposing that 
\begin{equation*}
\phi(z) \in \mathfrak{A}_k^n, \phi(z)^{\frac{n}{2}} = \begin{bmatrix}
\lambda & 0 \\ 0 & \lambda^{-1}
\end{bmatrix} z^{\frac{n}{2}}
\end{equation*}
for $z \in \mathfrak{A}_k^n$. One can then check that $\phi$ is well-defined (there is no ambiguity in the definition of $\phi$ for points on the boundary of an angular sector) and satisfies \cref{definition:standard_fixed_point}. One can then compose $\phi$ on the left by a rotation of angle a multiple of $2 \pi / n$ to get more examples.

The second class of examples is obtained by the same procedure as above but uses the map $z \mapsto z^{\frac{n}{2}} / |z^{\frac{n}{2}-1}|$ instead of $z \mapsto z^{\frac{n}{2}}$.
\end{example}

\begin{remark}\label{remark:prong_permutation}
\cref{definition:standard_fixed_point} involves a map $\sigma $ from $\mathbb{Z}/ 2n \mathbb{Z}$ to itself. It cannot be any self-map of $\mathbb{Z}/2n \mathbb{Z}$. Indeed, the fact that $\phi$ is a homeomorphism forces $\sigma$ to be a permutation. Moreover, for every $k \in \mathbb{Z}/2n \mathbb{Z}$ $0$ belongs to the closure of the interior of $\mathfrak{A}_k^n \cap \mathfrak{A}_{k+1}^n$. Consequently, $\mathfrak{A}_{\sigma(k)}^n$ and $\mathfrak{A}_{\sigma(k+1)}^n$ must have the same property, which imposes $\sigma(k+1) = \sigma(k) \pm 1$. Here, the sign $\pm 1$ has to be the same for every $k$ (the opposite would contradict the injectivity of $\sigma$). It follows that there is $\epsilon \in \set{\pm 1}$ and $a \in \mathbb{Z}/2n \mathbb{Z}$ even such that $\sigma : k \mapsto \epsilon k + a$. Notice that $\epsilon = 1$ if and only if $\phi$ preserves orientation.
\end{remark}

For the class of $n$-pronged fixed point introduce in \cref{definition:standard_fixed_point}, the role of the local unstable manifold is played by a neighbourhood of $0$ in the set 
\begin{equation*}
\mathfrak{L}^n = \set{ r e^{\frac{2i k \pi}{n}}: r \in [0,\infty), k = 0,\dots, n - 1}.
\end{equation*}
Indeed:

\begin{lemma}\label{lemma:ramified_unstable}
Let $\phi$ be a standard $n$-pronged hyperbolic fixed point and $U$ be a neighbourhood of $0$ in $\mathbb{R}^2$. Then there is a neighbourhood $V \subseteq U$ of $0$ in $\mathbb{R}^2$ such that:
\begin{itemize}
\item $\phi^{-1}(\mathfrak{L}^n \cap V) \subseteq \mathfrak{L}^n \cap V$;
\item if $x \in \mathfrak{L}^n \cap V$ then $\phi^{-m}(x) \underset{m \to + \infty}{\to} 0$;
\item if $x \in \mathbb{R}^2$ is such that $\phi^{-m}(x) \in V$ for every $m \geq 0$ then $x \in \mathfrak{L}^n$.
\end{itemize}
\end{lemma}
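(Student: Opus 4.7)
The strategy is to reduce the three properties to the classical stable and unstable manifold theorems for the auxiliary half hyperbolic fixed points $\mathbf{F}_k := \psi_{\sigma(k)}^{-1}\circ \phi \circ \psi_k$ supplied by \cref{definition:standard_fixed_point}\ref{item:lambda_mu}. The relevant charts are the ones for \emph{even} $k \in \mathbb{Z}/2n\mathbb{Z}$: the even sectors $\mathfrak{A}_k^n$ cover $\mathbb{R}^2$, pairwise intersect only along common prongs of $\mathfrak{L}^n$, and by \cref{remark:prong_permutation} (since $a$ is even) the permutation $\sigma$ restricts to a bijection on even indices. For such $k$, the local unstable manifold $W^u_{\mathbf{F}_k}$ is contained in $\mathbb{R}\times\{0\}$, and its image under $\psi_k$ fills a neighbourhood of $0$ in the two prongs of $\mathfrak{L}^n$ bounding $\mathfrak{A}_k^n$.

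To construct $V$, I would first apply standard hyperbolic theory to each $\mathbf{F}_k$ (even $k$) to obtain a neighbourhood $V_k^0$ of $0$ in $\mathbb{R}^2$ on which $\mathbf{F}_k^{-1}$ contracts $\mathbb{R}\times\{0\}\cap V_k^0$ toward $0$ and on which any point whose full backward $\mathbf{F}_k$-orbit remains in $V_k^0$ must lie in $\mathbb{R}\times\{0\}$. I would then pick an open $V\subseteq U$ around $0$ small enough that $V\cap \mathfrak{A}_k^n\subseteq \psi_k(V_k^0\cap \overline{\mathbb{H}})$ for every even $k$ and that $\phi^{-1}(\mathfrak{L}^n\cap V)\subseteq V$. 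Property (i) then follows because each prong corresponds, under some $\psi_k^{-1}$, to (a half of) $\mathbb{R}\times\{0\}=W^u_{\mathbf{F}_k}$, which $\mathbf{F}_{\sigma^{-1}(k)}^{-1}$ preserves and pushes into itself; iterating and using the uniform contraction along each $W^u_{\mathbf{F}_k}$ together with the finite order of $\sigma$ yields property (ii).

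Property (iii) is the substantive one. If $x$ lies on a shared boundary of two adjacent even sectors, it is already on a prong and in $\mathfrak{L}^n$. Otherwise, $x$ lies in the strict interior of a unique even sector $\mathfrak{A}_{k_0}^n$. Set $k_j := \sigma^{-j}(k_0)$ and $\tilde x_j := \psi_{k_j}^{-1}(\phi^{-j}(x)) \in \overline{\mathbb{H}}$; since $\phi$ maps the interior of $\mathfrak{A}_l^n$ into the interior of $\mathfrak{A}_{\sigma(l)}^n$, induction yields $\phi^{-j}(x)$ in the interior of $\mathfrak{A}_{k_j}^n$ and $\tilde x_{j+1}=\mathbf{F}_{k_{j+1}}^{-1}(\tilde x_j)$, and the hypothesis $\phi^{-j}(x)\in V$ keeps all the $\tilde x_j$ in a fixed small neighbourhood of $0$. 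Picking $p\geq 1$ with $\sigma^{-p}(k_0)=k_0$, the composite
\begin{equation*}
\mathbf{G} := \mathbf{F}_{k_0}^{-1}\circ \mathbf{F}_{k_{p-1}}^{-1}\circ \cdots \circ \mathbf{F}_{k_1}^{-1} = \psi_{k_0}^{-1}\circ \phi^{-p}\circ \psi_{k_0}
\end{equation*}
is smooth near $0$, fixes $0$, has diagonal derivative with eigenvalue of modulus $<1$ on $\mathbb{R}\times\{0\}$ and $>1$ on $\{0\}\times\mathbb{R}$, and preserves $\mathbb{R}\times\{0\}$ locally (each factor does). Hence $\mathbf{G}$ is a smooth hyperbolic fixed point whose local stable manifold coincides with $\mathbb{R}\times\{0\}$ near $0$. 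The forward $\mathbf{G}$-orbit of $\tilde x_0$ is the subsequence $(\tilde x_{jp})_j$, which stays near $0$, so the local stable manifold theorem forces $\tilde x_0\in \mathbb{R}\times\{0\} = \partial \overline{\mathbb{H}}$, and therefore $x=\psi_{k_0}(\tilde x_0) \in \partial \mathfrak{A}_{k_0}^n \subseteq \mathfrak{L}^n$.

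The main obstacle is the combinatorial bookkeeping across the overlapping sectors -- arranging that the local half hyperbolic fixed point theorems on different sectors glue consistently and that $\phi^{-1}$ matches the sector permutation $\sigma^{-1}$ on the nose. Once this is aligned, every step reduces to a classical local statement about the smooth hyperbolic maps $\mathbf{F}_k$ and their periodic compositions.
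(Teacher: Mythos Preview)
Your argument is correct and complete in spirit, but it follows a different route from the paper's proof. The paper avoids the periodic-composition trick for property (iii) entirely: it applies the Hartman--Grobman theorem (together with the fact that all contracting germs on $\mathbb{R}$ are $C^0$-conjugate) to replace the charts $\psi_k$ by new homeomorphisms $\tilde{\psi}_k$ in which each $\mathbf{F}_k$ becomes the \emph{linear} model $(x_1,x_2)\mapsto (2x_1,x_2/2)$. It then takes $V=\bigcup_{k\text{ even}}\tilde{\psi}_k([-\delta,\delta]\times[0,\delta])$, and all three properties are read off directly from the linear dynamics. By contrast, you work in the original smooth charts $\psi_k$, invoke the stable/unstable manifold theorems for each $\mathbf{F}_k$, and for (iii) pass to a $\sigma$-periodic composite $\mathbf{G}=\psi_{k_0}^{-1}\circ\phi^{-p}\circ\psi_{k_0}$ to reduce to a single hyperbolic fixed point. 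Your approach has the advantage of staying in the $C^\infty$ category (Hartman--Grobman is only $C^0$), but the paper's linearization is slicker and yields an explicit $V$ that is reused verbatim in the proof of \cref{lemma:unstable_chambers}. One small point to tighten in your write-up: the neighbourhood required for the local stable manifold characterization of $\mathbf{G}$ need not coincide with the $V_{k_0}^0$ you fixed for the individual $\mathbf{F}_k$, so you should also shrink $V$ to accommodate each of the finitely many composites $\mathbf{G}$ (one per starting even sector).
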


\begin{proof}
Applying the Hartman--Grobman Theorem \cite[Theorem 6.3.1]{katok_hasselblatt_book} and using that all the germs of contracting fixed points are $C^0$-conjugated in dimension $1$, we may construct homeomorphisms $\tilde{\psi}_k$ (from a neighbourhood of zero in $\overline{\mathbb{H}}$ to a neighbourhood of zero in $\mathfrak{A}_k^n$) such that for $(x_1,x_2)$ near $0$ in $\overline{\mathbb{H}}$ and $k \in \mathbb{Z}/ 2n \mathbb{Z}$ even, we have
\begin{equation*}
\tilde{\psi}_{\sigma(k)}^{-1} \circ \phi \circ \tilde{\psi}_k (x_1,x_2) = (2 x_1,x_2/2).
\end{equation*}
Then, for $\delta > 0$ small enough, we can set
\begin{equation*}
V = \bigcup_{k \in \mathbb{Z}/ 2n \mathbb{Z} \textup{ even}} \tilde{\psi}_k([ - \delta,\delta] \times [0,\delta]).
\end{equation*}
\end{proof}

Similarly, one can prove that the role of the local stable manifold of $\phi$ is played by a neighbourhood of zero in 
\begin{equation*}
\widetilde{\mathfrak{L}}^n = \set{ r e^{\frac{(2 k+1)i \pi}{n}}: r \in [0,\infty), k = 0,\dots, n - 1}.
\end{equation*}

\subsubsection{Standard pseudo-hyperbolic orbit (flow case)}\label{subsubsection:pseudo_hyperbolic_orbit}

We explain now how to take a suspension of the maps from \cref{definition:standard_fixed_point} in order to get a model of singular hyperbolic orbit.

\begin{definition}\label{definition:roof_function}
Let $n \geq 2$. Let $\phi$ be a standard $n$-pronged hyperbolic fixed point. We say that $r : \mathbb{R}^2 \to (0,\infty)$ is an admissible roof function for $\phi$ if:
\begin{enumerate}[label=(\roman*)]
\item $r$ is continuous;
\item $r$ is bounded below by a positive number, and is bounded above;
\item $r$ is smooth on $\mathbb{R}^2 \setminus \set{0}$;
\item using the notation from \cref{definition:standard_fixed_point}, for every $k \in \mathbb{Z}/2n \mathbb{Z}$, the map $r \circ \psi_k$ is $C^\infty$ on $\mathcal{V}$.
\end{enumerate}
\end{definition}

\begin{definition}\label{definition:suspension}
Let $n \geq 2$. Let $\phi$ be a standard $n$-pronged hyperbolic fixed point and $r$ an admissible roof function for $\phi$. Define the suspension manifold of $\mathbb{R}^2$ over $\phi$ with roof function $r$:
\begin{equation*}
N_{\phi,r} = (\mathbb{R}^2 \times \mathbb{R}) / \mathbb{Z},
\end{equation*}
where the action of $\mathbb{Z}$ on $\mathbb{R}^2 \times \mathbb{R}$ is given by
\begin{equation*}
n \cdot (x,t) = (\phi^n (x), t - S_n r (x)),
\end{equation*}
where
\begin{equation*}
S_n r (x) = \begin{cases} \sum_{k = 0}^{n-1} r \circ \phi^k(x) & \textup{ if } n \geq 0, \\ - \sum_{k = 1}^{-n} r \circ \phi^{-k} (x) & \textup{ if } n \leq -1.\end{cases}
\end{equation*}
We let $\Phi^{\phi,r} = (\Phi_t^{\phi,r})_{t \in \mathbb{R}}$ denote the associated suspension flow, i.e. for fixed $t \in \mathbb{R}$, the map $\Phi_t^{\phi,r}$ on $N_{\phi,r}$ factorizes $(x,t') \mapsto (x,t'+t)$. We will denote by $\pi_{\phi,r}$ the canonical projection $\mathbb{R}^2 \times \mathbb{R} \to N_{\phi,r}$.
\end{definition}

\begin{remark}\label{remark:proper}
Since $r$ is bounded below, the action of $\mathbb{Z}$ on $\mathbb{R}^2 \times \mathbb{R}$ in \cref{definition:suspension} is free and  properly continuous. It follows that the quotient space $N_{\phi,r}$ is a topological manifold (when endowed with the quotient topology). Moreover, since $\phi$ induces a diffeomorphism from $\mathbb{R}^2 \setminus \set{0}$ to itself, we find that $N_{\phi,r}^* \coloneqq N_{\phi,r} \setminus \gamma_{\phi,r}$ has a structure of smooth manifold (which makes the restriction of $\pi_{\phi,r}$ to $ (\mathbb{R}^2 \setminus \set{0})\times \mathbb{R}$ a submersion).
\end{remark}

\begin{definition}\label{definition:unstable_manifolds_model}
Let $n \geq 2$. Let $\phi$ be a standard $n$-pronged hyperbolic fixed point and $r$ be an admissible roof function for $\phi$. Let then $V$ be a small neighbourhood of zero as in \cref{lemma:ramified_unstable} and set $\widetilde{V} = \set{(x,t) \in V \times \mathbb{R} : 0 \leq t < r(x)}$. We define then the local unstable manifold of $\gamma_{\phi,r}$ as
\begin{equation*}
W^{\textup{lu}}_{\phi,r} \coloneqq \pi_{\phi,r}( (\mathfrak{L}^n \times \mathbb{R}) \cap \widetilde{V} ) \subseteq N_{\phi,r}.
\end{equation*}
Beware that the set $W^{\textup{lu}}_{\phi,r}$ is not a manifold in general. This definition of $W^{\textup{lu}}_{\phi,r}$ makes it depend on some choices, but its germ at $0$ does not, and that is what we care about.

The stable manifold $W^{\textup{ls}}_{\phi,r}$ of $\gamma_{\phi,r}$ is defined similarly.
\end{definition}

The flows from \cref{definition:suspension} will be used to model singular orbits for the class of smooth pseudo-Anosov flows that we introduce in \cref{subsection:definition_pseudo_Anosov} below. In preparation for future proofs, we list here some properties of the class of flows from \cref{definition:suspension}. We start with a direct consequence of \cref{lemma:ramified_unstable}.

\begin{lemma}\label{lemma:actually_unstable}
Let $n \geq 2$. Let $\phi$ be a standard $n$-pronged hyperbolic fixed point and $r$ be an admissible roof function for $\phi$. For every neighbourhood $U$ of the $\gamma_{\phi,r}$ in $N_{\phi,r}$, there is a neighbourhood $V$ of $\gamma_{\phi,r}$ in $N_{\phi,r}$ such that:
\begin{itemize}
\item for every $t \geq 0$, we have $\Phi^{\phi,r}_{-t}(W^{\textup{lu}}_{\phi,r} \cap V) \subseteq W^{\textup{lu}}_{\phi,r} \cap V$;
\item if $x \in W^{\textup{lu}}_{\phi,r}$ then $\Phi_{-t}^{\phi,r}(x)$ converges to $\gamma_{\phi,r}$ as $t$ goes to $+ \infty$;
\item if $x \in N_{\phi,r}$ is such that $\Phi_{-t}^{\phi,r}(x) \in V$ for every $t \geq 0$ then $x \in W^{\textup{lu}}_{\phi,r}$.
\end{itemize}
\end{lemma}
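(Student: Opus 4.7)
The plan is to reduce this flow-theoretic statement to its map-theoretic counterpart, \cref{lemma:ramified_unstable}, by exploiting that $\Phi^{\phi,r}$ is a suspension of $\phi$. The key observation is that backward flow on $N_{\phi,r}$ corresponds, after passing to the $\mathbb{Z}$-cover $\mathbb{R}^2 \times \mathbb{R}$, to decreasing the time coordinate, which after each passage through a fundamental domain forces one application of $\phi^{-1}$ in the $\mathbb{R}^2$ coordinate. The three conclusions will correspond term-by-term to the three conclusions of \cref{lemma:ramified_unstable}.

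First I would construct $V$. Let $R_{\max}$ be an upper bound for $r$. Use the continuity of $\pi_{\phi,r}$ and the fact that $\gamma_{\phi,r} = \pi_{\phi,r}(\{0\} \times \mathbb{R})$ to pick an open neighbourhood $U_0$ of $0$ in $\mathbb{R}^2$ such that
\begin{equation*}
\pi_{\phi,r}\bigl(\{(x,t') : x \in U_0,\ 0 \leq t' < r(x)\}\bigr) \subseteq U.
\end{equation*}
Apply \cref{lemma:ramified_unstable} to $U_0$ to obtain a neighbourhood $V_0 \subseteq U_0$ of $0$ satisfying the three conclusions there, and set $V = \pi_{\phi,r}(\widetilde{V}_0)$ with $\widetilde{V}_0 = \{(x,t') \in V_0 \times \mathbb{R} : 0 \leq t' < r(x)\}$.

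For the first bullet, given $y \in W^{\textup{lu}}_{\phi,r} \cap V$, lift $y$ to $(x,t') \in \widetilde{V}_0$ with $x \in \mathfrak{L}^n \cap V_0$. For $t \geq 0$, the point $\Phi^{\phi,r}_{-t}(y)$ is represented by $(x, t'-t)$; iteratively applying the $\mathbb{Z}$-action relation $(x,t'') \sim (\phi^{-1}(x), t''+r(\phi^{-1}(x)))$ rewrites this as $(\phi^{-k}(x), s)$ with $0 \leq s < r(\phi^{-k}(x))$ for some $k \geq 0$. Because $\phi^{-k}(x) \in \mathfrak{L}^n \cap V_0$ by the first conclusion of \cref{lemma:ramified_unstable}, the point lies in $W^{\textup{lu}}_{\phi,r} \cap V$. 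For the second bullet, any $y \in W^{\textup{lu}}_{\phi,r}$ is represented by $(x,t')$ with $x \in \mathfrak{L}^n$ near $0$; since $\phi^{-k}(x) \to 0$ and $r$ is bounded above, the backward flow orbit converges to $\gamma_{\phi,r}$. For the third bullet, suppose $\Phi_{-t}^{\phi,r}(y) \in V$ for all $t \geq 0$, and lift $y$ to $(x_0, t_0')$ with $0 \leq t_0' < r(x_0)$. For each $k \geq 0$, there is some $t \geq 0$ such that $\Phi^{\phi,r}_{-t}(y)$ is represented by $(\phi^{-k}(x_0), s_k)$ with $0 \leq s_k < r(\phi^{-k}(x_0))$; since the representative is unique, being in $V$ forces $\phi^{-k}(x_0) \in V_0$. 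The third conclusion of \cref{lemma:ramified_unstable} then yields $x_0 \in \mathfrak{L}^n$, hence $y \in W^{\textup{lu}}_{\phi,r}$.

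The main bookkeeping obstacle is ensuring that when one tracks backward flow and passes between fundamental domains, the representative $(\phi^{-k}(x_0), s_k)$ is the \emph{unique} one falling inside $\{0 \leq s < r(\phi^{-k}(x_0))\}$, so that membership in $V$ genuinely constrains the $\mathbb{R}^2$-coordinate to lie in $V_0$; this uses freeness and proper discontinuity of the $\mathbb{Z}$-action (\cref{remark:proper}). Once that is in place, all three bullets follow immediately from the corresponding bullets of \cref{lemma:ramified_unstable}.
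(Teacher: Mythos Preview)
Your proposal is correct and follows exactly the approach the paper intends: the paper introduces the lemma as ``a direct consequence of \cref{lemma:ramified_unstable}'' and gives no further proof, so your explicit reduction to the map-level statement via the suspension structure is precisely the unwinding of that sentence. The only minor bookkeeping point to keep in mind is that the neighbourhood $V_0$ you extract from \cref{lemma:ramified_unstable} should be taken inside the neighbourhood already used in \cref{definition:unstable_manifolds_model} to define $W^{\textup{lu}}_{\phi,r}$, so that ``$x \in \mathfrak{L}^n \cap V_0$'' is compatible with membership in $W^{\textup{lu}}_{\phi,r}$; this is harmless since the germ at $0$ is all that matters (as the paper remarks).
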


We will need the following technical facts for the proof of \cref{lemma:knot}, a crucial step for the description of the local unstable manifolds of a smooth pseudo-Anosov flow (\cref{proposition:unstable_manifolds})

\begin{lemma}\label{lemma:unstable_chambers}
Let n $\geq 2$. Let $\phi$ be a standard $n$-pronged hyperbolic fixed point and $r$ an admissible roof function for $\phi$. Let $U$ be a neighbourhood of $\gamma_{\phi,r}$ in $N_{\phi,r}$ and $C$ be an upper bound for $r$. Let $c > 0$ be such that $c < \inf r$. Let $d$ be any distance inducing the topology of $N_{\phi,r}$. There are $\epsilon > 0$ and a neighbourhood $V \subseteq U$ of $\gamma_{\phi,r}$ such that for every $x,y \in V$ if there are $-10C \leq a < b \leq 10C$ such that $b - a \leq c $ and $k,k' \in \mathbb{Z}/ 2n \mathbb{Z}$ even such that $k \neq k'$, $x \in \pi_{\phi,r} (\mathfrak{A}_{k}^n \times [a,b])$ and $y \in \pi_{\phi,r} (\mathfrak{A}_{k'}^n \times [a,b])$ and $d(x,y) < \epsilon$ then
\begin{itemize}
\item there is $t_0 \geq 0 $ such that $d(\Phi^{\phi,r}_{-t_0}(x),\Phi^{\phi,r}_{-t_0}(y)) \geq \epsilon$ and $\Phi^{\phi,r}_{-t} (x) \in U$ and $ \Phi^{\phi,r}_{-t} (y) \in U$ for every $t \in [0,t_0]$
\end{itemize}
or
\begin{itemize}
\item $x$ and $y$ belong to $W_{\phi,r}^{\textup{lu}}$.
\end{itemize}
\end{lemma}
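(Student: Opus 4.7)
The plan is to lift $x$ and $y$ to the universal cover $\mathbb{R}^2 \times \mathbb{R}$ and analyse the backward orbit via the topological linearising charts that were used in the proof of \cref{lemma:ramified_unstable}. Geometrically, two close points belonging to distinct even chambers $\mathfrak{A}_k^n, \mathfrak{A}_{k'}^n$ must have lifts close to a common boundary ray (inside $\mathfrak{L}^n$); backward iteration of $\phi$ in each even chamber expands the stable direction and contracts the unstable one, pushing each point toward the central stable ray of its own chamber. These central rays belong to distinct chambers, so the orbits separate exponentially fast---unless both lifts already lie on $\mathfrak{L}^n$, in which case $x,y \in W^{\textup{lu}}_{\phi,r}$.

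More precisely, I would first invoke the proof of \cref{lemma:ramified_unstable} to obtain, for some $R>0$ and each even $k$, a homeomorphism $\tilde\psi_k : [-R,R]\times[0,R] \to \mathfrak{A}_k^n$ fixing $0$ and conjugating $\phi$ to $L:(u,v)\mapsto (2u, v/2)$ (with $\sigma$ permuting the even sectors bijectively, by the prong permutation remark \cref{remark:prong_permutation}). Let $\tilde W_\rho = \bigcup_{k\ \textup{even}} \tilde\psi_k([-\rho,\rho]\times[0,\rho])$. Using \cref{lemma:actually_unstable}, I would shrink $U$ to a neighbourhood $V_0\subseteq U$ with the property that any point whose backward orbit stays in $V_0$ lies in $W^{\textup{lu}}_{\phi,r}$. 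Then I choose $V\subseteq V_0$ small enough that each point of $V$ admits a lift $(z,t)$ with $z \in \tilde W_{R/8}$. Finally, I pick $\epsilon$ small: the key use of the hypothesis $b-a \leq c < \inf r$ is that two lifts $\tilde x=(x_1,t_x), \tilde y=(y_1,t_y)$ with $t_x,t_y\in[a,b]$ are the $\mathbb{Z}$-closest pair of lifts, because any nontrivial translate shifts $t$ by at least $\inf r > c \geq |t_x-t_y|$; so $d(x,y)<\epsilon$ forces $|x_1-y_1|_{\mathbb{R}^2}$ and $|t_x-t_y|$ to be small in terms of $\epsilon$.

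Now comes the main step. Write $x_1=\tilde\psi_k(u_x,v_x)$ and $y_1=\tilde\psi_{k'}(u_y,v_y)$ with $|u_*|,v_* \leq R/8$. If $v_x=v_y=0$, both lifts lie in $\mathfrak{L}^n$ and hence $x,y \in W^{\textup{lu}}_{\phi,r}$, which is the second alternative. Otherwise, assume by symmetry $v_x=\max(v_x,v_y) > 0$. Then $\phi^{-m}(x_1)=\tilde\psi_{k_m}(u_x/2^m,\, 2^m v_x)$ with $k_m=\sigma^{-m}(k)$ even, valid as long as $2^m v_x \leq R$; let $m^*$ be the largest such $m$, so $2^{m^*}v_x \in (R/2,R]$. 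Since $\sigma$ restricts to a bijection on even sectors and $k\neq k'$, we have $k_m\neq k'_m$ for all $m$. By compactness of the linearising charts, the Euclidean distance from $\phi^{-m^*}(x_1)$ to $\partial \mathfrak{A}_{k_{m^*}}^n$ is bounded below by a constant $c_0 = c_0(R)>0$; since $\phi^{-m^*}(y_1) \in \mathfrak{A}_{k'_{m^*}}^n$, which is disjoint from the interior of $\mathfrak{A}_{k_{m^*}}^n$, this gives $|\phi^{-m^*}(x_1)-\phi^{-m^*}(y_1)|\geq c_0$.

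It remains to convert this to a flow-time statement. The backward flow performs $m^*$ iterations of $\phi^{-1}$ in a time $t_0$ with $m^*\inf r \leq t_0 \leq m^*\sup r + C$, and throughout the interval $[0,t_0]$ the lifted orbits lie in $\tilde W_R\times \mathbb{R}$, which projects into $V_0\subseteq U$. Descending to $N_{\phi,r}$ yields $d(\Phi_{-t_0}^{\phi,r}(x),\Phi_{-t_0}^{\phi,r}(y))\gtrsim c_0$; choosing $\epsilon$ smaller than this quantitative bound (note $c_0$ depends only on $R$, not on $x,y$) gives the first alternative. The principal obstacle I expect is the bookkeeping in the third paragraph: guaranteeing that the continuous-time backward orbit stays in $U$ between the discrete snapshots, not merely at the $\phi^{-m}$ iterates. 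This is dealt with by taking the initial neighbourhood $V$ small enough that the entire $\phi$-trajectory $\{(\phi^{-m}(x_1),s) : s\in[0,r(\phi^{-m-1}(x_1))]\}$ remains in $\tilde\psi_{k_m}([-R/4,R/4]\times[0,R])\times\mathbb{R}$ for all $m\leq m^*$, which is possible because the roof function $r$ is bounded and the chart is open.
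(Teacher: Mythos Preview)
Your approach is essentially the same as the paper's: both use the Hartman--Grobman linearising charts $\tilde\psi_k$ on the even sectors, both exploit $b-a<\inf r$ to get a unique close lift so that $d(x,y)<\epsilon$ forces the base points to be close in $\mathbb{R}^2$, and both track the stable coordinate $v$ (the paper's $x_2$) under backward iteration, concluding that either both stable coordinates vanish (hence $x,y\in W^{\textup{lu}}_{\phi,r}$) or the orbits must separate while remaining in $U$.

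The organisational difference is that the paper introduces stopping times $t_0=\inf\{t:d(\Phi_{-t}x,\Phi_{-t}y)\geq\epsilon\}$ and $t_1=\inf\{t:\Phi_{-t}x\notin V\text{ or }\Phi_{-t}y\notin V\}$ and argues that $t_0\leq t_1$, whereas you pick $m^*$ explicitly and assert separation at the corresponding flow time. Your version has a small gap at the final ``descending to $N_{\phi,r}$'' step: the time $t_0$ you choose is the moment at which the orbit of $x$ has undergone $m^*$ iterations of $\phi^{-1}$, but the orbit of $y$ may have undergone a different number of iterations at that same flow time (the roof function is not constant), so the base point of $\Phi_{-t_0}(y)$ need not be $\phi^{-m^*}(y_1)$. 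The paper's stopping-time argument avoids this by showing inductively that \emph{as long as} $d(\Phi_{-t}x,\Phi_{-t}y)<\epsilon$, the iterate counts stay synchronised (because $\epsilon$ is taken much smaller than $\inf r$, so close lifts must share the same $m(t)$). Your argument is easily repaired by inserting this synchronisation step, or equivalently by arguing contrapositively: if $d(\Phi_{-t}x,\Phi_{-t}y)<\epsilon$ for all $t\leq t_0$, then $|\phi^{-m^*}(x_1)-\phi^{-m^*}(y_1)|$ is small, contradicting your lower bound $c_0$.
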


\begin{proof}
As in the proof of \cref{lemma:ramified_unstable}, apply the Hartman--Grobman Theorem \cite[Theorem 6.3.1]{katok_hasselblatt_book} to construct homeomorphisms $\tilde{\psi}_k$ (from a neighbourhood of zero in $\overline{\mathbb{H}}$ to a neighbourhood of zero in $\mathfrak{A}_k^n$) such that for $(x_1,x_2)$ near $0$ in $\overline{\mathbb{H}}$ and $k \in \mathbb{Z}/ 2n \mathbb{Z}$ even, we have
\begin{equation*}
\tilde{\psi}_{\sigma(k)}^{-1} \circ \phi \circ \tilde{\psi}_k (x_1,x_2) = (2 x_1,x_2/2).
\end{equation*}

Write then $x = \pi_{\phi,r} (z,\tau)$ and $y = \pi_{\phi,r}(z',\tau')$ with $z \in \mathfrak{A}_k^n, z' \in \mathfrak{A}_{k'}^n$ and $t,t' \in [a,b]$. Write $z = \psi_k(x_1,x_2)$ and $z' = \psi_{k'}(y_1,y_2)$. Since $\pi_{\phi,r |\mathbb{R}^2 \times [a,b]} $ is a homeomorphism on its image, we find that the distance between $z$ and $z'$ is bounded by some quantity that goes to $0$ as $\epsilon$ goes to $0$ (this is uniform in $a$ and $b$ because they stay within a bounded set and $a-b \leq c < \inf r$). Hence, $z$ and $z'$ must be close to the boundary of $\mathfrak{A}_{k}^n$ and $\mathfrak{A}_{k'}^n$ respectively. Thus, we find that $x_2$ and $y_2$ are bounded by some quantity that goes to $0$ with $\epsilon$.

Take $\delta > 0$ very small, and define
\begin{equation*}
V_0 = \bigcup_{\substack{p \in \mathbb{Z}/ 2n \mathbb{Z}\\ p \textup{ even}}} \tilde{\psi}_{p}([ - \delta,\delta]\times[0,\delta]).
\end{equation*}
Let then $V$ be the neighbourhood of the singular orbits of $\Phi^{\phi,r}$ given by $V = \pi_{\phi,r}( V_0 \times [-C,2C])$. By taking $\delta$ small enough, we ensure that $V$ is contained in the interior of $U$. Introduce the times
\begin{equation*}
t_0 = \inf\set{t \geq 0 : \forall t' \in [0,t], d(\Phi^{\phi,r}_{-t'} x , \Phi^{\phi,r}_{-t'} y) < \epsilon} \in (0,+\infty]
\end{equation*}
and
\begin{equation*}
t_1 = \inf\set{ t \geq 0 : \forall t' \in [0,t], \Phi^{\phi,r}_{-t'} (x) \in V \textup{ and } \Phi^{\phi,r}_{-t'} (y) \in V} \in (0,+\infty].
\end{equation*}
Let then $t_2 = \min (t_0,t_1)$.

For $t \in [0,t_2)$, let $m(t)$ be the smallest integer such that $(\phi^{-m(t)} z, \tau + t - S_{-m(t)} r(z)) \in V_0 \times [- C, 2C]$. The integer $m(t)$ exists because $\Phi^{\phi,r}_{-t} (x) \in V$ and $r$ is bounded below. Since $d(\Phi^{\phi,r}_{-t} x, \Phi^{\phi,r}_{-t} y) \leq \epsilon$, we find that there is $\tilde{m}(t) \in \mathbb{Z}$ such that the distance between $(\phi^{-m(t)} z, \tau - t - S_{-m(t)} r(z))$ and $(\phi^{-\tilde{m}(t)} z', \tau' - t - S_{-\tilde{m}(t)} r(z))$ is bounded by some quantity that goes to $0$ with $\epsilon$. We can consequently make this quantity way smaller than the infimum of $r$, which ensures that $m(t) = \tilde{m}(t)$ for $t \in [0,t_2)$. Thus, for every $t \in [0,t_2)$, the distance between $\phi^{-m(t)} (z)$ and $\phi^{-m(t)} (z')$ is bounded by some quantity that goes to $0$ with $\epsilon$. Notice that
\begin{equation*}
\phi^{-m(t)} z = \psi_{\sigma^{-m(t)}(k)}(2^{-m(t)} x_1, 2^{m(t)} x_2)
\end{equation*}
and
\begin{equation*}
\phi^{-m(t)} z' = \psi_{\sigma^{-m(t)}(k')}(2^{-m(t)} y_1, 2^{m(t)} y_2).
\end{equation*}

As above, we see that $\phi^{-m(t)} (z)$ and $\phi^{-m(t)} (z')$ must be close to the boundaries of $\mathfrak{A}_{\sigma^{-m(t)}(k)}^n$ and $\mathfrak{A}_{\sigma^{-m(t)}(k')}^n$ respectively, which proves that $2^{m(t)} x_2$ and $2^{m(t)} y_2$ are bounded for $t \in [0,t_2)$ by some quantity that goes to $0$ with $\epsilon$. Since $r$ is bounded, if $t_2 = + \infty$, we must have $m(t) \underset{t \to + \infty}{\to} + \infty$, and thus $x_2 = y_2 = 0$, that is $x$ and $y$ belong to $W^{\textup{lu}}_{\phi,r}$. 

We need to deal now with the case $t_2 < + \infty$. Let us prove that in that case, we must have $t_0 < t_1$ (which ends the proof of the lemma, we are in the first case). Notice that $2^{-m(t)} |x_1|$ and $2^{-m(t)} |y_1|$ are decreasing functions of $t$ on $[0,t_2)$. Hence, they are bounded on $[0,t_2)$ by their values at $0$, which are less than $\delta$. Thus, if $\Phi^{\phi,r}_{-t} (x)$ or $\Phi^{\phi,r}_{-t} (y)$ is to leave $V$ when $t$ crosses $t_2$, then $2^{m(t)} x_2$ or $2^{m(t)} y_2$ must become larger than some positive number proportional to $\delta$ as $t$ approaches $t_2$. But these two quantities are bounded by some quantity that goes to $0$ with $\epsilon$. Hence, by making this quantity way smaller than $\delta$ we make $t_0 > t_1$ impossible.
\end{proof}

\subsection{Definition of smooth pseudo-Anosov flows.}\label{subsection:definition_pseudo_Anosov}

We will first introduce a class of flows (\cref{definition:admissible_singularities}) with singularities as in the local model we just introduced (see \cref{definition:suspension}). After discussing some basic properties of these flows, we add a global hyperbolicity assumption to define the class of smooth pseudo-Anosov flows (\cref{definition:pseudo_Anosov}) that is the object of this paper.

\begin{definition}\label{definition:admissible_singularities}
Let $M$ be a $3$-dimensional closed topological manifold and $\varphi = (\varphi_t)_{t \in \mathbb{R}}$ be a flow on $M$. We say that $(\varphi_t)_{t \in \mathbb{R}}$ is a smooth flow with pseudo-hyperbolic singularities if it satisfies the following properties:
\begin{enumerate}[label=(\roman*)]
\item $(\varphi_t)_{t \in \mathbb{R}}$ is a continuous flow with no fixed points;
\item there are periodic orbits $\gamma_1,\dots,\gamma_N$ for $(\varphi_t)_{t \in \mathbb{R}}$ and a structure of smooth manifold on $\widetilde{M} \coloneqq M \setminus \bigcup_{j = 1}^N \gamma_j$ such that $(\varphi_t)_{t \in \mathbb{R}}$ is $C^\infty$ on $\widetilde{M}$;
\item for each $j \in \set{1,\dots,N}$, there is $n_j \geq 2$, a standard $n_j$-pronged hyperbolic fixed point $\phi_j$, an admissible roof function $r_j$ for $\phi_j$ and a homeomorphism $f_j: V_j \to U_j$ from a neighbourhood of $\gamma_j$ in $M$ to a neighbourhood of $\gamma_{\phi_j,r_j}$ in $N_{\phi_j,r_j}$ that induces a diffeomorphism from $V_j \setminus \gamma_j \subseteq \widetilde{M}$ to $U_j \cap N_{\phi_j,r_j}^*$;
\item for each $j \in \set{1,\dots,N}$, if $x \in V_j$ and $t \in \mathbb{R}$ then
\begin{equation*}
\set{\varphi_{t'}(x) : t' \in [0,t]} \subseteq V_j \textup{ if and only if } \set{\Phi^{\phi_j,r_j}_{t'}(f_j(x)) : t' \in [0,t]} \subseteq U_j,
\end{equation*}
in which case $f_j (\varphi_t(x)) = \Phi^{\phi_j,r_j}_t(f_j(x))$.
\end{enumerate}
\end{definition}

From now on, we fix a flow $\varphi = (\varphi_t)_{t \in \mathbb{R}}$ with pseudo-hyperbolic singularities on a manifold $M$ and use the notations from \cref{definition:pseudo_Anosov}. In particular, the manifold $\widetilde{M}$ will always be endowed with the smooth structure from this definition. Let $j \in \set{1,\dots,N}$. We will introduce the dependence on $j$ in the notation from \cref{definition:standard_fixed_point} in the following way:
\begin{itemize}
\item the permutation $\sigma$ will be called $\sigma_j$;
\item the homeomorphisms $\psi_k,k \in \mathbb{Z}/2n_j \mathbb{Z},$ will be called $\psi_k^{(j)}, k \in \mathbb{Z}/2 n_j \mathbb{Z}$;
\item by choosing the neighbourhoods $\mathcal{U}$ and $\mathcal{V}$ of $0$  small enough, we may choose them indenpendent of $j$. 
\end{itemize}
We introduce the constant
\begin{equation*}
r_{\min} = \inf_{j = 1,\dots,N} \inf_{x \in \mathbb{R}^2} r_j(x) > 0 \textup{ and } r_{\max} = \sup_{j = 1,\dots,N} \sup_{x \in \mathbb{R}^2} r_j(x) <+ \infty.
\end{equation*}
By taking $\mathcal{U}$ small enough, we may assume that
\begin{equation*}
\pi_{\phi_j,r_j}(\mathcal{U} \times [ \tau - \frac{r_{\min}}{3}, \tau + \frac{r_{\min}}{3}]) \subseteq U_j,
\end{equation*}
for every $j \in \set{1,\dots,N}$ and $\tau \in [-r_{\max}, r_{\max}]$. Up to taking $\mathcal{V}$ smaller, we assume that $\psi_k^{(j)}(\mathcal{V}) \subseteq \mathcal{U}$ for every $j \in \set{1,\dots,N}$ and $k \in \mathbb{Z} / 2 n_j \mathbb{Z}$.

\begin{definition}\label{definition:unstable_singular_orbits}
Let $j \in \set{1,\dots,N}$. Let us assume, as we may, that the neighbourhood $V$ in \cref{definition:unstable_manifolds_model} has been chosen so that $W_{\phi_j,r_j}^{\textup{lu}} \subseteq U_j$. Then, we define the local unstable and stable manifolds of $\gamma_j$ as
\begin{equation*}
W_j^{\textup{lu}} \coloneqq f_j^{-1}(W^{\textup{lu}}_{\phi_j,r_j}) \textup{ and } W_j^{\textup{ls}} \coloneqq f_j^{-1}(W_{\phi_j,r_j}^{\textup{ls}}).
\end{equation*}
Let us also define the full unstable and stable manifolds as
\begin{equation*}
W_j^{\textup{u}} \coloneqq \bigcup_{t \geq 0} \varphi_t(W_j^{\textup{lu}}) \textup{ and } W_j^{\textup{s}} \coloneqq \bigcup_{t \geq 0} \varphi_t(W_j^{\textup{ls}}). 
\end{equation*}
We will also use the following notation
\begin{equation*}
W_{\textup{sing}}^{\textup{lu}} \coloneqq \bigcup_{j = 1}^N W_j^{\textup{lu}}, W_{\textup{sing}}^{\textup{ls}} \coloneqq \bigcup_{j = 1}^N W_j^{\textup{ls}}, W_{\textup{sing}}^{\textup{u}} \coloneqq \bigcup_{j = 1}^N W_j^{\textup{u}} \textup{ and } W_{\textup{sing}}^{\textup{s}} \coloneqq \bigcup_{j = 1}^N W_j^{\textup{s}}.  
\end{equation*}
\end{definition}

We study in \cref{section:local_unstable_manifold} stable and unstable manifolds for the class of smooth pseudo-Anosov flows that we are about to define. However, since the singular orbits are described by a very specific model, it was possible to define their stable and unstable manifolds already in a relevant way, as expressed by the following result.

\begin{lemma}\label{lemma:true_local_weak_unstable}
Let $x \in M$ and $j \in \set{1, \dots ,N}$. Then $x \in W_j^{\textup{u}}$ if and only if $\varphi_{-t} (x)$ converges to $\gamma_j$ as $t$ goes to $+ \infty$.
\end{lemma}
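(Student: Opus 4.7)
The plan is to reduce both directions to the local model by means of the conjugation $f_j$ from \cref{definition:admissible_singularities}, using \cref{lemma:actually_unstable} as the key input. The only subtlety is that the conjugation property (iv) in \cref{definition:admissible_singularities} is only valid for orbit segments that stay in $V_j$, so a preliminary task in each direction is to verify that the relevant portion of the backward orbit actually sits inside $V_j$.

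For the forward implication, suppose $x \in W_j^{\textup{u}}$, so $x = \varphi_{t_0}(y)$ for some $t_0 \geq 0$ and $y \in W_j^{\textup{lu}} = f_j^{-1}(W^{\textup{lu}}_{\phi_j,r_j})$. First I would apply \cref{lemma:actually_unstable} with the open set $U$ chosen to be a neighbourhood of $\gamma_{\phi_j,r_j}$ contained in $U_j$: this produces a smaller neighbourhood $V$ of $\gamma_{\phi_j,r_j}$ with $\Phi^{\phi_j,r_j}_{-t}(f_j(y)) \in V \subseteq U_j$ for every $t \geq 0$, and $\Phi^{\phi_j,r_j}_{-t}(f_j(y)) \to \gamma_{\phi_j,r_j}$ as $t \to +\infty$. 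Since the backward orbit of $f_j(y)$ stays in $U_j$ for all positive times, condition (iv) of \cref{definition:admissible_singularities} guarantees that the backward orbit of $y$ stays in $V_j$ and satisfies $f_j(\varphi_{-t}(y)) = \Phi^{\phi_j,r_j}_{-t}(f_j(y))$ for all $t \geq 0$. Applying the continuous map $f_j^{-1}$ then gives $\varphi_{-t}(y) \to \gamma_j$, and hence $\varphi_{-t}(x) = \varphi_{t_0 - t}(y) \to \gamma_j$ as $t \to +\infty$.

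For the converse, assume $\varphi_{-t}(x) \to \gamma_j$. Fix a neighbourhood $V \subseteq U_j$ of $\gamma_{\phi_j,r_j}$ as provided by \cref{lemma:actually_unstable} (applied to some $U \subseteq U_j$). By convergence, there exists $t_0 \geq 0$ such that $\varphi_{-t}(x) \in f_j^{-1}(V)$ for every $t \geq t_0$. Set $y = \varphi_{-t_0}(x)$. Then for every $T \geq 0$, the whole segment $\{\varphi_{-t}(y) : t \in [0,T]\}$ lies in $V_j$, so condition (iv) of \cref{definition:admissible_singularities} yields $\Phi^{\phi_j,r_j}_{-t}(f_j(y)) \in V \subseteq U_j$ for all $t \geq 0$. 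The third bullet of \cref{lemma:actually_unstable} then forces $f_j(y) \in W^{\textup{lu}}_{\phi_j,r_j}$, i.e.\ $y \in W_j^{\textup{lu}}$, and consequently $x = \varphi_{t_0}(y) \in W_j^{\textup{u}}$.

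The proof is essentially a routine translation via the conjugation $f_j$; the only step that requires some care, and which I would treat as the main (minor) obstacle, is ensuring that the backward orbit segments actually remain inside $V_j$ so that (iv) of \cref{definition:admissible_singularities} is applicable. This is exactly what the first two bullets of \cref{lemma:actually_unstable} provide in one direction, and what the assumed convergence $\varphi_{-t}(x) \to \gamma_j$ provides in the other.
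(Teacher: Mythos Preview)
Your proof is correct and essentially identical to the paper's: both directions go through the conjugation $f_j$ and \cref{lemma:actually_unstable}, and your care about verifying that orbit segments stay in $V_j$ before invoking condition (iv) of \cref{definition:admissible_singularities} is exactly the point. The only imprecision is in the forward direction: the first bullet of \cref{lemma:actually_unstable} does not by itself give $\Phi^{\phi_j,r_j}_{-t}(f_j(y)) \in V$ for all $t \geq 0$ (that bullet requires $f_j(y)\in V$, which is not guaranteed); the paper instead uses that $W^{\textup{lu}}_{\phi_j,r_j}$ itself is backward invariant under $\Phi^{\phi_j,r_j}$ (immediate from its construction via \cref{lemma:ramified_unstable}) and is contained in $U_j$, which is all that is needed to apply (iv).
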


\begin{proof}
If $x \in W_j^{\textup{u}}$, then by definition there is $t' \geq 0$ such that $\varphi_{-t'}(x) \in W_j^{\textup{lu}} = f_j^{-1}(W^{\textup{lu}}_{\phi_j,r_j})$. Since $W^{\textup{lu}}_{\phi_j,r_j}$ is backward invariant by $\Phi^{\phi_j,r_j}$, it follows from \cref{lemma:actually_unstable} that $\varphi_{-t} (x)$ converges to $\gamma_j$ as $t$ goes to $+ \infty$.

Reciprocally, assume that $\varphi_{-t} (x)$ converges to $\gamma_j$ as $t$ goes to $+ \infty$. Then, there is $t_0 \geq 0$ such that for every $t \geq t_0$, we have $\varphi_{-t}(x) \in f_j^{-1}(V)$, where $V$ is as in \cref{lemma:actually_unstable}. Hence, $f_j(\varphi_{-t_0} x) \in W_{\textup{lu}}^{r_j,\phi_j}$, and thus $x \in W_j^{\textup{u}}$.
\end{proof}

This result suggests to introduce the following notation:

\begin{definition}\label{definition:full_manifolds_periodic}
Let $\gamma$ be a periodic orbit for $\varphi$. We define the stable and unstable manifolds of $\gamma$ as
\begin{equation*}
W^{\textup{s}}(\gamma) = \set{ x \in M : d(\varphi_t(x), \gamma) \underset{t \to + \infty}{\to} 0}
\end{equation*}
and
\begin{equation*}
W^{\textup{u}}(\gamma) = \set{ x \in M : d(\varphi_t(x), \gamma) \underset{t \to - \infty}{\to} 0}.
\end{equation*}
\end{definition}

We will get more information about these sets in \cref{section:local_unstable_manifold} below. Our analysis will rely in an essential way on the existence of specific (a priori non-smooth) coordinates near the singular orbits of $\varphi$ in which the action of the flow has a particularly neat form (in particular, it is smooth).

\begin{definition}
Let $j \in \set{1,\dots,N},k \in \mathbb{Z}/ 2 n_j \mathbb{Z}$ and $\tau \in [-r_{\max},r_{\max}]$. Define the map
\begin{equation*}
\begin{array}{ccccc}
\Upsilon_{j,k,\tau} &:&  \mathcal{V} \times ] - \frac{r_{\min}}{3}, \frac{r_{\min}}{3} [ &\to & V_j \\
 & & (x,t) & \mapsto & f_j^{-1} \pi_{\phi_j,r_j} (\psi_k^{(j)}(x), \tau + t).
\end{array}
\end{equation*}
We call $\Upsilon_{j,k,\tau}$ a ``half-space parametrization''.
\end{definition}

\begin{remark}\label{remark:flow_in_half_space_chart}
Let $j \in \set{1,\dots,N}$ and $x$ be a point on $\gamma_j$. Choose $k \in \mathbb{Z}/ 2 n_j \mathbb{Z}$ and $\tau \in [-r_{\max},r_{\max}]$ such that $\Upsilon_{j,k,\tau}(0) = x$. Let $T_0 \in \mathbb{R}$. Let $m \in \mathbb{Z}$ be such that 
\begin{equation*}
\tilde{\tau} \coloneqq \tau + T_0 - m r_j(0) \in [-r_{\max},r_{\max}]
\end{equation*}
and notice that $\varphi_{T_0} (x) = \Upsilon_{j, \sigma_j^{m}(k) , \tilde{\tau}}(0)$.

Assume for instance that $k$ is even and notice that the composition of maps $\smash{(\psi_{\sigma_j^m(k)}^{(j)})^{-1} \circ \phi^m_j \circ \psi_k^{(j)}}$ is well-defined on a neighbourhood of zero. If $m \neq 0$, it is the restriction of a half hyperbolic fixed point $\mathbf{F}$ (bounded by the unstable direction if $m > 0$, by the stable  direction if $m< 0$). Indeed, it follows from \ref{item:lambda_mu} in \cref{definition:standard_fixed_point} that $\mathbf{F}$ is the product of $|m|$ half-hyperbolic fixed point with the same stable and unstable manifolds near zero. When $m = 0$, let just $\mathbf{F}$ be the identity map.

Consequently, for $x$ near $0$ in $\mathcal{V}$, we have
\begin{equation*}
\begin{split}
\Phi^{\phi_j,r_j}_{T_0} \pi_{\phi_j,r_j} (\psi_k^{(j)} (x), \tau) & = \pi_{\phi_j,r_j} ( \psi_k^{(j)} (x), \tau + T_0) \\
     & = \pi_{\phi_j,r_j}(\phi_j^m \circ \psi_k^{(j)}(x), \tau + T_0 - S_m r_j \circ \psi_k^{(j)} (x)) \\
     & = \pi_{\phi_j,r_j}(\psi_{\sigma_j^m(k)}^{(j)} \circ \mathbf{F} (x), \tau + T_0 - S_m r_j \circ \psi_k^{(j)} (x)).
\end{split}
\end{equation*}
Finally, we find that for $(x,t)$ close enough to $0$ in $\mathcal{V} \times ] - \frac{r_{\min}}{3}, \frac{r_{\min}}{3}[$, we have
\begin{equation*}
\varphi_{T_0}(\Upsilon_{j,k,\tau} (x,t)) = \Upsilon_{j,\sigma_j^m(k),\tilde{\tau}} (
\mathbf{F} (x), R(x) + t),
\end{equation*}
where
\begin{equation*}
R(x) = \tau + T_0 - \tilde{\tau} - S_m r_j \circ \psi_k^{(j)} (x).
\end{equation*}
Noticing that
\begin{equation*}
S_m r_j  \circ \psi_k^{(j)} (x) = \begin{cases} \sum_{p = 0}^{m-1} r_j \circ \psi_{\sigma_j^p(k)}^{(j)} \circ \phi^p_j \circ \psi_k^{(j)} (x) & \textup{ if } m \geq 0, \\
-\sum_{p = 1}^{-m} r_j \circ \psi_{\sigma_j^{-p}(k)}^{(j)} \circ \phi^p_j \circ \psi_k^{(j)} (x) & \textup{ if } m \leq -1, \end{cases}
\end{equation*}
we find that $R$ is $C^\infty$ on a neighbourhood of $0$ in $\mathcal{V}$.

Finally, notice that if $T_0 > 2 r_{\max}$, then we must have $m \geq \frac{T_0 - 2 r_{\max}}{r_{\max}} > 0$, and thus $\mathbf{F}$ is a half hyperbolic fixed point bounded by the unstable direction. Consequently, we have
\begin{equation}\label{eq:derivative_half_unstable}
D\mathbf{F}(0) = \begin{bmatrix}
\lambda & 0 \\ 0 & \mu 
\end{bmatrix}
\end{equation}
with $\lambda,\mu \in \mathbb{R}$ such that $|\lambda| > 1$ and $\mu \in (0,1)$. More precisely, we have
\begin{equation*}
D \mathbf{F}(0) = D ((\psi_{\sigma_j^m(k)}^{(j)})^{-1} \circ \phi_j \circ \psi_{\sigma_j^{m-1}(k)}^{(j)}) (0) \dots D((\psi_{\sigma_j(k)}^{(j)})^{-1} \circ \phi_j \circ \psi_k^{(j)})(0)
\end{equation*}
and all the matrices of the product also have the form \eqref{eq:derivative_half_unstable}. 

Hence, if we let $\lambda_{\min}$ be the minimum of the modulus of the unstable eigenvalues of the matrices 
\begin{equation*}
D( (\psi_{\sigma_{j'}(k')}^{j'})^{- 1} \circ \phi_{j'} \circ \psi_{k'}^{(j')})(0), j' \in \set{1,\dots,N}, k' \in \mathbb{Z}/ 2 n_{j'} \mathbb{Z} \textup{ even},
\end{equation*}
and $\mu_{\max}$ be the maximum of their stable eigenvalues, we find that $\lambda$ and $\mu$ in \cref{eq:derivative_half_unstable} satisfies $|\lambda| \geq \lambda_{\min} > 1$ and $\mu \leq \mu_{\max} < 1$.
\end{remark}

Now that we understand the flow $\varphi$ near the singular orbits, we add a global hyperbolicity assumption and define the class of smooth pseudo-Anosov flows that we will study in the rest of the paper.

\begin{definition}\label{definition:pseudo_Anosov}
Let $\varphi = (\varphi_t)_{t \in \mathbb{R}}$ be a flow with pseudo-hyperbolic singularities on $M$. We will use the notations defined above. Fix any smooth Riemannian metric on $\widetilde{M}$. We say that $(\varphi_t)_{t \in \mathbb{R}}$ is a smooth pseudo-Anosov flow if it satisfies the following properties:
\begin{enumerate}[label=(\roman*)]
\item \label{item:hyperbolic_splitting} for every $x \in \widetilde{M}$, there is a splitting $T_x M = E_x^u \oplus E_x^s \oplus E_x^0$ of the tangent space to $M$ at $x$;
\item for every $x \in \widetilde{M}$, the direction $E_x^0$ is spanned by the generator $X(x)$ of the flow;
\item for every $x \in \widetilde{M}$ and $t \in \mathbb{R}$, we have $D\varphi_t (x) E_x^u = E_{\varphi_t(x)}^u$ and $D\varphi_t (x) E_x^s = E_{\varphi_t(x)}^s$;
\item \label{item:hyperbolic} for every neighbourhood $U$ of the singular orbits of $\varphi$, there are constants $C, \nu > 0$ such that for every $t \geq 0, x \in \widetilde{M} \setminus U, v_u \in E_{\varphi_t(x)}^u$ and $v_s \in E_x^s$, if $\varphi_t(x) \in \widetilde{M} \setminus U$ then we have
\begin{equation*}
| D\varphi_t(x) \cdot v_s| \leq C e^{- \nu t} |v_s| \textup{ and } | D\varphi_{-t}(\varphi_t(x)) \cdot v_u| \leq C e^{- \nu t} |v_u|;
\end{equation*}
\item for every $j \in \set{1,\dots,N}, k \in \mathbb{Z}/ 2 n_j \mathbb{Z}$ and $\tau \in [ - r_{\max},r_{\max}]$, there is a neighbourhood $U$ of $0$ in $\mathcal{V} \times (- r_{\min}, r_{\min})$ such that the distribution of lines $\widetilde{E}^u$ and $\widetilde{E}^s$ defined on $U \setminus \set{0} \times ( - r_{\min}/ 3, r_{\min}/3)$ by $\widetilde{E}^u_x = D \Upsilon_{j,k,\tau} (x)^{-1} E^u_{\Upsilon_{j,k,\tau}(x)}$ and $\widetilde{E}^s_x = D \Upsilon_{j,k,\tau} (x)^{-1} E^s_{\Upsilon_{j,k,\tau}(x)}$ are continuous and have continuous extensions to $U$ which are tangent respectively to $\mathbb{R} \times \set{0} \times (- r_{\min} /3, r_{\min}/3)$ and $\set{0} \times [0,+ \infty) \times (-r_{\min}/3,r_{\min}/3)$ if $k$ is even (the converse when $k$ is odd). \label{item:a_priori_direction}
\end{enumerate}
\end{definition}

\begin{remark}
Notice that a $C^\infty$ flow with no fixed point on a smooth manifold $M$ is a smooth flow with pseudo-hyperbolic singularities (take $N= 0$ in \cref{definition:admissible_singularities}). Consequently, a $C^\infty$ Anosov flow on a $3$-dimensional manifold is also a smooth pseudo-Anosov flow according to \cref{definition:pseudo_Anosov}.
\end{remark}

\begin{remark}
Item \ref{item:a_priori_direction} in \cref{definition:pseudo_Anosov} expresses the fact that the hyperbolicity of the flow on $\widetilde{M}$ and the hyperbolicity near the singular orbits are compatible.
\end{remark}

\begin{example} 
Let $M$ be an integral homology $3$-sphere and $\varphi$ be an Anosov flow on $M$. Let $\overline{M}$ be a ramified cover of $M$ over finitely many periodic orbits $\gamma_1,\dots,\gamma_n$ of $\varphi$, constructed as in \ref{subsection:branched_covers}, and $\overline{\varphi}$ a continuous lift of $\varphi$ to $\overline{M}$. Then, $\overline{\varphi}$ is a smooth pseudo-Anosov flow. Indeed, it follows from \cref{example:ramified} that the singular orbits of $\bar{\varphi}$, that are just the lifts of $\gamma_1,\dots,\gamma_n$, are conjugated to the required local model. To define the stable and unstable directions of $\bar{\varphi}$, just lifts the stable and unstable directions of $\varphi$. Away from the singular orbits, $\bar{\varphi}$ looks locally like $\varphi$. Near the singular orbits, the same is true in half-space parametrization. Hence, \cref{definition:pseudo_Anosov} may be checked using the hyperbolicity of $\varphi$ and the continuity of its stable and unstable manifolds.
\end{example}

\begin{example}
Let $T$ be a linear pseudo-Anosov map on a half-translation surface $\Sigma$. Let $r : \Sigma \to (0,+ \infty)$ be a map smooth away from the singular points of $T$ and satisfying the regularity assumptions from \cref{definition:admissible_singularities} near the singular point (for instance $r$ can be chosen constant). Let then $M$ be the suspension of $\Sigma$ over $T$ with roof function $r$, and $\varphi$ the associated suspension flow. One may check that $\varphi$ satisfies \cref{definition:pseudo_Anosov}. As explained in \cref{section:motivation_pseudo_Anosov}, in that case we can find a representation $\rho$ of the fundamental group of $M$ such that \cref{theorem:fried_conjecture} applies.
\end{example}

\begin{example}
More examples of pseudo-Anosov flows should be obtained by making a smooth perturbation of the generator of a pseudo-Anosov flow away from the singular orbits (the hyperbolicity can probably be obtained using a cone-field criterion). One could also apply techniques used to produce Anosov flows (such as Goodman surgery) to produce more examples. Finally, we expect that every topological pseudo-Anosov flow is orbit equivalent to a smooth pseudo-Anosov flow (see \cite{agol_tsang} for a discussion of this question based on the work of Shannon \cite{shannon}).
\end{example}

\section{Local stable and unstable manifolds of smooth pseudo-Anosov flows}\label{section:stable_unstable_manifolds}

In order to construct Markov partitions adapted to the class of smooth pseudo-Anosov flows under study, we need to investigate the stable and unstable manifolds for such flows. There will only be few differences with the Anosov case (the interested reader may refer for instance to \cite{fisher_hasselblatt_book} for a dicsussion of this case). However, we are not aware of a reference dealing with the case of pseudo-Anosov flow that would include a discussion of the specific regularity properties near the singular orbits that we need in order to get a meromorphic continuation of the zeta function. Let us fix for this section a smooth pseudo-Anosov flows $\varphi = (\varphi_t)_{t \in \mathbb{R}}$ on a manifold $M$, using the notations from \cref{subsection:definition_pseudo_Anosov}. We will use the notations introduced in \cref{section:definition_pseudo_Anosov}. Fix any distance $d$ on $M$ compatible with the topology of the manifold.

\begin{definition}
For $\epsilon > 0$ and $x \in M$ the local  (strong) stable and unstable manifolds $W^s_\epsilon(x)$ and $W^u_\epsilon(x)$ are the connected components of $x$ in
\begin{equation*}
\set{ y \in M : d(\varphi_t(x), \varphi_t(y)) \underset{ t \to + \infty}{\to} 0 \textup{ and } \forall t \geq 0: d(\varphi_t(x),\varphi_t(y)) < \epsilon}
\end{equation*}
and
\begin{equation*}
\set{ y \in M : d(\varphi_t(x), \varphi_t(y) \underset{ t \to - \infty}{\to} 0 \textup{ and } \forall t \leq 0: d(\varphi_t(x),\varphi_t(y)) < \epsilon}
\end{equation*}
respectively.
\end{definition}

The main result that we will use concerning local stable and unstable manifolds is the following. The proposition is stated for unstable manifolds, but of course a similar statement holds for stable manifolds. While it may seem technical, \cref{proposition:unstable_manifolds} just asserts that the situation is very similar to the case of smooth Anosov flows, except that near the singular orbits one should use coordinates given by half-space parametrizations from \cref{remark:flow_in_half_space_chart} instead of smooth coordinates in order to describe the local unstable manifolds. The only true difference happens for points that are near a singular orbits in its weak unstable: in that case the local unstable manifold crosses the singular orbit and is not a manifold anymore.

\begin{proposition}\label{proposition:unstable_manifolds}
Let $\delta_0 > 0$. Then there are $\epsilon_0 > 0, \delta \in (0,\delta_0)$ and open subsets $U_{\textup{sing}}$ and $U_{\textup{reg}}$ in $M$ such that for every $\epsilon \in (0,\epsilon_0)$ we have
\begin{enumerate}[label=(\roman*)]
\item \label{item:covering} $M = U_{\textup{sing}} \cup U_{\textup{reg}}$;
\item \label{item:standard_away_singular} for every $x \in U_{\textup{reg}}$, the set $W_\epsilon^{u}(x)$ is a $C^\infty$ submanifold of dimension $1$ of $M$ (uniformly in $x$) tangent to the unstable direction;
\item \label{item:sing_close_singular} for every $x \in U_{\textup{sing}}$, there is $j \in \set{1,\dots, N}, k \in \mathbb{Z}/ 2n_j \mathbb{Z}$ and $\tau \in [-r_{\max}/2, r_{\max}/2]$ such that  $x$ belongs to the intersection $$\Upsilon_{j,k,\tau}([ - \frac{\delta}{2},\frac{\delta}{2}] \times [0,\frac{\delta}{2}] \times [ -\frac{\delta}{2},\frac{\delta}{2}]) \cap \Upsilon_{j,k+1,\tau}([ - \frac{\delta}{2},\frac{\delta}{2}] \times [0,\frac{\delta}{2}] \times [ - \frac{\delta}{2},\frac{\delta}{2}]);$$
\item \label{item:unstable_parallel} if $x \in U_{\textup{sing}},j \in \set{1,\dots,N}, k \in \mathbb{Z}/ 2 n_j \mathbb{Z}$ and $\tau \in [-r_{\max}/2,r_{\max}/2]$ are such that $x \in \Upsilon_{j,k,\tau}([ - \delta/2,\delta/2] \times [0,\delta/2] \times [ - \delta/2,\delta/2])$ and $k$ is even, then there is an interval $I \subseteq [ - \delta,\delta]$, and a $C^\infty$ function $F : I \to \mathbb{R}^2$ such that
$$[- \delta,\delta] \times [0,\delta]  \times [ - \delta,\delta]  \cap \Upsilon_{j,k,\tau}^{-1}(W_\epsilon^u(x)) = \set{(x,F(x)): x \in I},$$ and for every $x \in I$ the vector $(1, F'(x))$ spans the pullback of the unstable direction by $\Upsilon_{j,k,\tau}$ at $x$ (eventually extended by continuity);
\item \label{item:normal_parallel} under the assumption of the previous point, if in addition $x \notin W_{\textup{sing}}^{\textup{lu}}$, then $W_\epsilon^u(x)$ is contained in $\Upsilon_{j,k,\tau}([- \delta,\delta] \times [0,\delta] \times [- \delta,\delta])$;
\item \label{item:unstable_perpendicular} if $x \in U_{\textup{sing}},j \in \set{1,\dots,N}, k \in \mathbb{Z}/ 2 n_j \mathbb{Z}$ and $\tau \in [-r_{\max},r_{\max}]$ are such that $x \in \Upsilon_{j,k,\tau}([ - \delta/2,\delta/2] \times [0,\delta/2] \times [ - \delta/2,\delta/2])$ and $k$ is odd, then there is an interval $I \subseteq [0,\delta]$, two $C^\infty$ function $f,g : I \to \mathbb{R}$ such that $$[- \delta,\delta] \times [0,\delta]  \times [ - \delta,\delta]  \cap \Upsilon_{j,k,\tau}^{-1}(W_\epsilon^u(x)) = \set{(f(x),x,g(x)) : x \in I},$$ and for every $x \in I$, the vector $(f'(x), 1, g'(x))$ spans the pullbak of the unstable direction by $\Upsilon_{j,k,\tau}$ at $x$ (eventually extended by continuity);
\item \label{item:uniform_graph} the functions $F,f$ and $g$ in \ref{item:unstable_parallel} and \ref{item:unstable_perpendicular} are $C^\infty$ uniformly in $x$, and the size of $I$ is uniformly bounded below (depending on $\epsilon$).
\end{enumerate}
\end{proposition}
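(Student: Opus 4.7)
The plan is to decompose $M = U_{\text{sing}} \cup U_{\text{reg}}$ into a thin neighborhood of the singular orbits and its complement, and treat the two regions by rather different methods. Concretely, I fix nested tubular neighborhoods $U' \subset U''$ of $\bigcup_j \gamma_j$, contained in the charts $V_j$ and small enough that the uniform hyperbolicity estimates of item \ref{item:hyperbolic} in \cref{definition:pseudo_Anosov} hold on $M \setminus U'$ with some constants $C, \nu$. Set $U_{\text{reg}} = M \setminus \overline{U'}$ and $U_{\text{sing}} = U''$, which gives \ref{item:covering}; then take $\delta < \delta_0$ small enough that every $x \in U_{\text{sing}}$ lies in the required overlap of two adjacent half-space charts, as in \ref{item:sing_close_singular}.

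On $U_{\text{reg}}$, the flow is $C^\infty$ with a continuous $D\varphi_t$-invariant hyperbolic splitting of uniform strength. I would apply the classical Hadamard--Perron / graph transform stable manifold theorem for smooth hyperbolic flows on an enlarged open subset of $\widetilde M$ containing $\overline{U_{\text{reg}}}$, on which uniform hyperbolicity still holds. This yields \ref{item:standard_away_singular}: a family of local unstable manifolds which are $C^\infty$ curves tangent to $E^u$, with uniform $C^\infty$ bounds.

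For the part of the proposition near the singular orbits I would work in the half-space parametrizations $\Upsilon_{j,k,\tau}$. By \cref{remark:flow_in_half_space_chart} the flow is smooth in these coordinates on the closed half-space $\mathcal{V}$, and by item \ref{item:a_priori_direction} of \cref{definition:pseudo_Anosov} the pulled-back unstable and stable directions extend continuously to the boundary, tangent to the $x_1$-axis and to the positive $x_2$-axis when $k$ is even (and swapped when $k$ is odd). Applying the graph transform in these coordinates --- iterating the backward flow contracts graphs over the unstable direction, with uniform contraction given by the quantities $\lambda_{\min}$ and $\mu_{\max}$ of \cref{remark:flow_in_half_space_chart} --- then produces local unstable manifolds as smooth graphs. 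When $k$ is even the unstable axis is the horizontal boundary axis, giving the form $(x_1, F(x_1))$ of \ref{item:unstable_parallel}; when $k$ is odd it is transverse to the boundary, giving the form $(f(x_2), x_2, g(x_2))$ of \ref{item:unstable_perpendicular}. Uniformity \ref{item:uniform_graph} then follows from the standard graph transform estimates together with the finiteness of the collection $(j,k)$ and the compactness in $\tau$.

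The main obstacle is \ref{item:normal_parallel} together with the precise overlap statement in \ref{item:sing_close_singular}: when $x \notin W^{\textup{lu}}_{\text{sing}}$, one must rule out that the unstable leaf of $x$ wraps around $\gamma_j$ by entering a non-adjacent sector, and \cref{lemma:unstable_chambers} does exactly this, forcing the backward orbit of $x$ to stay in a single half-space chart. On the other hand, when $x \in W^{\textup{lu}}_{\text{sing}}$ the unstable leaf crosses the singular orbit and its continuation lives in the \emph{adjacent} chart $\Upsilon_{j,k+1,\tau}$, so $W^u_\epsilon(x)$ is genuinely not contained in a single chart. The two smooth graphs produced in the two charts must then be matched along the common boundary axis, and the continuity up to the boundary of the pulled-back unstable direction ensures that the resulting local unstable set has the asserted graph form in each chart. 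Requiring $x$ to lie in the overlap of both charts, as in \ref{item:sing_close_singular}, is precisely what makes this matching uniform over $U_{\text{sing}}$.
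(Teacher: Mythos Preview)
Your proposal has a genuine gap in the treatment of $U_{\textup{reg}}$. You propose to apply the classical Hadamard--Perron theorem on an enlarged open subset of $\widetilde M$ on which uniform hyperbolicity holds, but this cannot work: the graph transform for $W^u_\epsilon(x_0)$ iterates the \emph{backward} flow, and the backward orbit of a typical $x_0 \in U_{\textup{reg}}$ will pass arbitrarily close to the singular orbits infinitely often. Near the singular orbits the flow is not uniformly $C^\infty$ in smooth coordinates (only in half-space parametrizations), so the maps appearing in the graph transform are not uniformly controlled and the classical theorem does not apply. The same issue arises for your treatment of $U_{\textup{sing}}$: for most $x_0$ near a singular orbit, the backward orbit eventually leaves the half-space chart, so you cannot run the graph transform entirely inside one $\Upsilon_{j,k,\tau}$. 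The two regions are dynamically entangled and cannot be decoupled.

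The paper resolves this by running a \emph{single} graph transform for each $x_0$, using a mixed sequence of charts $(\kappa_n)_{n\le 0}$ along the backward orbit: smooth charts when $x_n = \varphi_{nT_0}(x_0)$ is away from the singular orbits, and half-space parametrizations $\kappa_{j,k,\tau,z}$ when $x_n$ is close. The transition maps $F_n$ between consecutive charts are then uniformly $C^\infty$ by construction, and their hyperbolicity is obtained by splitting each length-$L$ block of the orbit into pieces near and away from the singular orbits and combining the local-model estimates with item~\ref{item:hyperbolic} of \cref{definition:pseudo_Anosov} (this is \cref{lemma:derivative_graph_transform}). Two further points then require real work: one must show that the graph produced by the transform actually stays inside the closed half-space $H_n$ when $\kappa_n$ is a half-space chart (\cref{lemma:manifold_makes_sense}, which uses that $\partial H_n$ is mapped by $\kappa_n$ into $W^{\textup{lu}}_{\textup{sing}}$ and is backward invariant), and one must identify the resulting curve with $W^u_\epsilon(x_0)$ (\cref{lemma:knot}, where \cref{lemma:unstable_chambers} is used to rule out that a point of $W^u_\epsilon(x_0)$ escapes into a non-adjacent sector). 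Your invocation of \cref{lemma:unstable_chambers} for \ref{item:normal_parallel} is in the right spirit, but the argument only makes sense once the unified chart sequence is in place.
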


\begin{remark}
Let us help the reader understand the meaning of \cref{proposition:unstable_manifolds}. We cover $M$ by two open sets. On the first one ($U_{\textup{reg}}$) the local unstable manifolds behave exactly as in the Anosov case according to \ref{item:standard_away_singular}. The other set ($U_{\textup{sing}}$) is an arbitrarily small neighbourhood of the singular orbits (as \ref{item:sing_close_singular} imposes that it is contained in the image of a small neighbourhood of zero by half-space parametrizations). The item \ref{item:unstable_parallel} means that if you take a half-space parametrization that makes the unstable direction almost parallel to the first axis (this is what it means for $k$ to be even), then in this parametrization the local unstable curve becomes a smooth graph over the first axis. By taking $k$ odd, we have the same picture up to a $\pi/2$ rotation (this is \ref{item:unstable_perpendicular}). The item \ref{item:normal_parallel} asserts that if $x$ is near a singular orbit then its local unstable will remain in the range of a half-space parametrization (and thus be a smooth curve) unless $x$ is exactly on the local unstable manifold of a singular orbit (in which case you may have a singularity at the singular orbit).

The proof of \cref{proposition:unstable_manifolds} is relatively straightforward: we will use a graph transform argument (see for instance \cite[Theorem 6.2.8]{katok_hasselblatt_book}). The only difficulty is that we need to use smooth coordinates away from the singular orbits and half-space parametrizations near the singular orbits. We must consequently check that the unstable manifolds given in coordinates by the graph transform argument do not leave the domain of the half-space parametrizations when we use them, so that these unstable manifolds correspond to actual subsets of $M$. This is achieved by using half-space parametrizations that are bounded by the local weak unstable manifolds of the singular orbits (which we know how to define a priori): this way, an unstable manifold that intersects the boundary of the domain of a half-space parametrization must be contained in this boundary, and in particular in the domain of the parametrization.
\end{remark}

Finally, we will also need to know that the local unstable manifold $W_\epsilon^u(x)$ depends continuously on $x$ in the following sense:

\begin{proposition}\label{proposition:continuity_unstable_manifolds_regular}
Let $\epsilon > 0$ be small enough. Let $x_0 \in \widetilde{M}$. Let $\kappa : U \to V$ be a $C^\infty$ diffeomorphism from a neighbourhood of $0$ in $\mathbb{R}^3$ and a neighbourhood of $x_0$ in $M$ such that $\kappa(0) = x_0$ and $D \kappa(0) (\mathbb{R} \times \set{0})= E_{x_0}^u$. Then, there is an open interval $I$ in $\mathbb{R}$ containing $0$, an open neighbourhood $W$ of $x_0$ in $M$, and for every $x \in W$ a $C^\infty$ function $F_x : I \to \mathbb{R}^2$ such that:
\begin{enumerate}[label=(\roman*)]
\item for every $x$ in $W$ the set $\set{(t,F_x(t)) : t \in I}$ is contained in $U$ and the image $\kappa(\set{(t,F_x(t)) : t \in I})$ is the intersection of $\kappa((I \times \mathbb{R}^2) \cap U)$ with $W_\epsilon^u(x)$;
\item the maps $(t,x) \mapsto F_x(t)$ and $(t,x) \mapsto F_x'(t)$ are continuous from $I \times W$ to $\mathbb{R}^2$.
\end{enumerate}
\end{proposition}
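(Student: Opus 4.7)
Since $x_0 \in \widetilde M$, the plan is to shrink the neighborhood $W$ of $x_0$ so that $\overline W$ lies in $\widetilde M$ at positive distance from the singular orbits $\gamma_1, \dots, \gamma_N$, and is moreover contained in $U_{\textup{reg}}$ from \cref{proposition:unstable_manifolds}. Then item \ref{item:hyperbolic} of \cref{definition:pseudo_Anosov} supplies uniform contraction/expansion constants $C, \nu$ valid on a neighborhood of $\overline W$. Inside such a $W$ the dynamics is a standard smooth uniformly hyperbolic flow, so the proposition reduces to the classical continuous dependence of the local unstable manifold on the base point at a regular point of a smooth Anosov flow.

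The graph representation is set up as follows. The differential $D\kappa(0)$ carries $\mathbb{R} \times \{0\}$ to $E^u_{x_0}$. Because the splitting $E^u \oplus E^s \oplus E^0$ is continuous on $\widetilde M$ (a standard consequence of uniform hyperbolicity together with a cone-field argument, here applicable since $\overline W$ is compact in $\widetilde M$), the pullback $D\kappa^{-1}_y \, E^u_{\kappa(y)}$ stays close to the horizontal for $y \in \kappa^{-1}(W)$ small. Combining this with items \ref{item:standard_away_singular} and \ref{item:uniform_graph} of \cref{proposition:unstable_manifolds}, one can, after shrinking $W$ and $I$ if necessary, write $\kappa^{-1}(W^u_\epsilon(x)) \cap (I \times \mathbb{R}^2) = \{(t, F_x(t)) : t \in I\}$ for a smooth $F_x : I \to \mathbb{R}^2$ with $C^1$ bounds uniform in $x \in W$, on an interval $I$ whose length is uniform in $x$.

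For continuity, I would use a graph transform with parameter, along the lines of the standard stable manifold theorem (see \cite[Theorem 6.2.8]{katok_hasselblatt_book}). Fix $T > 0$ large enough that $D\varphi_{-T}$ contracts vectors in $E^u$ by a factor at most $1/2$ uniformly on a neighborhood of $\overline W$; such $T$ exists by the uniform constants of item \ref{item:hyperbolic} of \cref{definition:pseudo_Anosov}. Shrinking $W$ further, fix a single smooth chart around $\varphi_{-T}(x_0)$ whose image contains $\varphi_{-T}(x)$ for all $x \in W$, and in which the horizontal direction corresponds to the unstable direction at $\varphi_{-T}(x_0)$. For each $x \in W$, pushing forward a horizontal $C^1$ graph by $\varphi_T$ and rewriting the result in the chart $\kappa$ defines an operator $\mathcal T_x$ on a small ball in the Banach space of $C^1$ graphs $I \to \mathbb{R}^2$; uniformly in $x$, $\mathcal T_x$ is a contraction, its unique fixed point is $F_x$, and $\mathcal T_x$ depends continuously on $x$ in operator norm. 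The contraction mapping theorem with parameter then gives continuity of $x \mapsto F_x$ in $C^1$, hence continuity of $(t,x) \mapsto F_x(t)$ and $(t,x) \mapsto F_x'(t)$. (For the derivative statement one could alternatively argue directly: the tangent line to $W^u_\epsilon(x)$ at $\kappa(t, F_x(t))$ equals $E^u_{\kappa(t, F_x(t))}$, which depends continuously on $(t,x)$ by continuity of $E^u$ on $\widetilde M$.)

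The main obstacle is really a bookkeeping one: arranging $\mathcal T_x$ to act on a \emph{single} Banach ball, independent of $x \in W$, with uniform contraction constant. This is where compactness of $\overline W \subset \widetilde M$, the uniform hyperbolicity constants from item \ref{item:hyperbolic} of \cref{definition:pseudo_Anosov}, the uniform $C^\infty$ estimates of item \ref{item:uniform_graph} of \cref{proposition:unstable_manifolds}, and continuity of the chart $\kappa$ all enter; once these are arranged, the argument is identical to the smooth Anosov case and involves no new ingredient specific to the pseudo-Anosov setting, precisely because the statement concerns a regular point.
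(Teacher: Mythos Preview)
Your overall strategy—graph transform with continuous dependence on a parameter—is a legitimate route, and the paper in fact remarks (just after its own proof) that one could argue this way. However, your specific implementation has a gap. The operator $\mathcal T_x$ you describe takes a graph in a chart around $\varphi_{-T}(x_0)$ and returns a graph in the chart $\kappa$ around $x_0$; even if one formally identifies both with the same Banach space $C^1(I,\mathbb R^2)$, the resulting fixed-point equation does \emph{not} characterize the unstable manifold of $x$ (it would only if $x$ were $T$-periodic so that the two charts coincide). The unstable manifold is obtained instead as the limit of an \emph{infinite} composition of graph transforms along the full backward orbit $(\varphi_{-nT}(x))_{n\ge 0}$, as in the proof of \cref{proposition:unstable_manifolds}. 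Making that construction continuous in $x$ is possible—the contraction at each step means finitely many steps already determine $F_x$ to any prescribed accuracy, and each finite composition is continuous in $x$—but this is not a single application of the contraction mapping theorem with parameter. Moreover, the backward orbit of $x$ may well approach the singular orbits even though $x_0\in\widetilde M$, so the charts along the orbit must switch between smooth charts and half-space parametrizations; your claim that ``no new ingredient specific to the pseudo-Anosov setting'' enters is therefore not accurate.

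The paper sidesteps all of this with a shorter argument of a different flavor. Since $W^u_\epsilon(x)$ is tangent to $E^u$, the graph $F_x$ solves the Cauchy problem
\[
F_x'(t)=G\bigl(\kappa(t,F_x(t))\bigr),\qquad F_x(t_x)=y_x,
\]
where $(t_x,y_x)=\kappa^{-1}(x)$ and $G$ is the continuous function with $(1,G(y))$ spanning $D\kappa(y)^{-1}E^u_{\kappa(y)}$. A general theorem on continuous dependence for ODEs with merely continuous right-hand side (Hartman) then reduces everything to showing that $F_x$ is the \emph{unique} local solution of this Cauchy problem. Uniqueness is proved by observing that any curve through $x$ tangent to $E^u$, flowed backward, stays $\epsilon$-close to the orbit of $x$ and hence lies in $W^u_\epsilon(x)$; the pseudo-Anosov structure enters only through the uniform backward contraction of $E^u$, including in half-space charts near the singular orbits (via \cref{lemma:derivative_graph_transform}). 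Your parenthetical alternative—deducing continuity of $F_x'$ from continuity of $E^u$—is precisely the first reduction in this argument, but on its own it only reduces continuity of $F_x'$ to continuity of $F_x$, which still has to be proved.
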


\begin{proposition}\label{proposition:continuity_unstable_manifolds_singular_parallel}
Let $\epsilon > 0$. Let $j \in \set{1,\dots,N}, k \in \mathbb{Z}/ 2 n_j \mathbb{Z}$ even and $\tau \in [-r_{\max},r_{\max}]$. Then, there are $\delta, \delta' > 0$ and for each $x \in (- \delta,\delta) \times [0,\delta) \times (-\delta,\delta)$ a $C^\infty$ function $F_x$ from $(-\delta,\delta)$ to $\mathbb{R}^2$ such that
\begin{enumerate}[label=(\roman*)]
\item for every $x \in (-\delta,\delta) \times [0,\delta) \times(-\delta,\delta)$ the set $\set{(t, F_x(t)) : t \in (-\delta,\delta}$ is contained in $(-\delta,\delta) \times [0,\delta') \times (-\delta',\delta')$ and $\Upsilon_{j,k,\tau}(\set{(t, F_x(t)) : t \in I})$ is the intersection of $\Upsilon_{j,k,\tau}((-\delta,\delta) \times [0,\delta') \times (-\delta',\delta'))$ with $W_\epsilon^u (\Upsilon_{j,k,\tau}(x))$;
\item the maps $(t,x) \mapsto F_x(t)$ and $(t,x) \mapsto F_x'(t)$ are continuous from $(-\delta,\delta) \times ((-\delta,\delta) \times [0,\delta) \times (- \delta,\delta))$ to $\mathbb{R}^2$.
\end{enumerate}
\end{proposition}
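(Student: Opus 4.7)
The plan is to transport the standard graph transform argument into the half-space parametrization. By Remark~\ref{remark:flow_in_half_space_chart}, for any $T_0 \in \mathbb{R}$ the pullback of $\varphi_{T_0}$ under $\Upsilon_{j,k,\tau}$ takes the form $(x,t) \mapsto (\mathbf{F}(x), R(x) + t)$ on a neighborhood of $0$ in $\mathcal{V} \times (-r_{\min}/3, r_{\min}/3)$, where $\mathbf{F}$ is a composition of half-hyperbolic fixed points bounded by the unstable direction (for $T_0$ large) and $R$ is $C^\infty$. In particular, $\mathbf{F}$ preserves the closed upper half-plane $\{x_2 \geq 0\}$, is smooth up to the boundary, expands in the $x_1$-direction with rate at least $\lambda_{\min}^{T_0/r_{\max}}$, and contracts in the $x_2$-direction. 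By item~\ref{item:a_priori_direction} of Definition~\ref{definition:pseudo_Anosov}, the pullback of the unstable distribution extends continuously to the boundary so as to lie tangent to $\mathbb{R} \times \{0\} \times \mathbb{R}$; consequently, after choosing $T_0$ large and $\delta, \delta'$ small, this distribution is contained in a narrow horizontal cone field throughout $(-\delta,\delta) \times [0,\delta') \times (-\delta',\delta')$.

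I would then rerun the proof of Proposition~\ref{proposition:unstable_manifolds}\ref{item:unstable_parallel}, treating the base point $y \in (-\delta,\delta) \times [0,\delta) \times (-\delta,\delta)$ as a parameter. Fix a small constant $\eta>0$ compatible with the horizontal cone field above, and let $\mathcal{G}$ be the Banach space of $C^1$ maps $F : (-\delta,\delta) \to \mathbb{R}^2$ with $\|F'\|_\infty \leq \eta$ whose image lies in $[0,\delta') \times (-\delta',\delta')$. For each $y$, let $\mathcal{G}_y \subset \mathcal{G}$ be the closed affine subspace consisting of graphs passing through $y$, i.e.\ with $F(y_1) = (y_2,y_3)$. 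Define the graph transform $\mathcal{T}_y : \mathcal{G}_y \to \mathcal{G}_y$ by pushing a graph backward by $\mathbf{F}^{-1}$ (equivalently pulling forward the corresponding subset of $M$ by $\varphi_{T_0}$), reparametrizing to a graph over $(-\delta,\delta)$, and keeping the graph through $y$. The expansion of $\mathbf{F}$ in $x_1$ together with the cone control makes $\mathcal{T}_y$ a uniform contraction on $\mathcal{G}_y$ with respect to both the $C^0$ and the $C^1$ norms, and its unique fixed point $F_y$ has graph equal to $\Upsilon_{j,k,\tau}^{-1}(W^u_\epsilon(\Upsilon_{j,k,\tau}(y))) \cap (-\delta,\delta) \times [0,\delta') \times (-\delta',\delta')$. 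Since the lifted flow is $C^\infty$ on the closed half-space, a bootstrap of the standard type upgrades $F_y$ from $C^1$ to $C^\infty$.

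For the continuity assertion, I would appeal to the standard parametric contraction mapping principle: the family $\{\mathcal{T}_y\}_y$ is a continuous family of uniform contractions on a continuous family of closed subsets of $\mathcal{G}$ (the affine constraint $F(y_1) = (y_2,y_3)$ varies continuously in $y$), so the fixed point $y \mapsto F_y$ is continuous from the half-space into $\mathcal{G}$. Continuity in the $C^1$ topology of $\mathcal{G}$ is precisely joint continuity of $(t,y) \mapsto F_y(t)$ and $(t,y) \mapsto F_y'(t)$. The preservation of the half-space by $\mathbf{F}$ (together with the cone containing the pulled-back unstable direction being tangent to the boundary there) ensures that $F_y$ automatically takes values in $[0,\delta') \times (-\delta',\delta')$, including when $y_2 = 0$.

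The principal obstacle is that $\Upsilon_{j,k,\tau}$ is only a homeomorphism at the boundary $\{x_2 = 0\}$, so one cannot simply invoke an off-the-shelf smooth unstable manifold theorem on $M$ in a neighborhood of the singular orbit. This forces the graph transform to be carried out in the half-space chart itself, which is legitimate precisely because Definition~\ref{definition:admissible_singularities} and Remark~\ref{remark:flow_in_half_space_chart} promote the flow to a $C^\infty$ object there, and item~\ref{item:a_priori_direction} of Definition~\ref{definition:pseudo_Anosov} provides the continuous extension of the hyperbolic splitting needed to set up the cone field near the boundary. Once these ingredients are in place, the remaining work parallels the proof of Proposition~\ref{proposition:continuity_unstable_manifolds_regular}.
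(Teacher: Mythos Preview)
Your route differs from the paper's. The paper does not redo a parametric graph transform; it simply transplants the ODE argument from the proof of Proposition~\ref{proposition:continuity_unstable_manifolds_regular} into half-space coordinates. Item~\ref{item:a_priori_direction} of Definition~\ref{definition:pseudo_Anosov} makes the pulled-back unstable direction continuous up to the boundary $\{x_2=0\}$, so each $F_x$ solves a first-order ODE $F_x'(t)=G(t,F_x(t))$ with continuous right-hand side; continuous dependence on initial data then follows from \cite[Theorem~2.1, Chapter~V]{hartman_book} once uniqueness of solutions is known, and uniqueness comes from the same backward-contraction argument as in the regular case. The Remark immediately following that proof explicitly names your parametric graph-transform route as a valid alternative and explains that the ODE argument was preferred for brevity.

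That said, your sketch has two defects that would have to be repaired. First, the graph transform for \emph{unstable} manifolds pushes graphs \emph{forward} by $\mathbf{F}$, not backward by $\mathbf{F}^{-1}$: applying $\mathbf{F}^{-1}$ to a nearly horizontal graph increases its slope and shrinks its base interval, so the map you describe is not a contraction on $\mathcal{G}_y$ (your own sentence ``the expansion of $\mathbf{F}$ in $x_1$ \ldots\ makes $\mathcal{T}_y$ a uniform contraction'' is the right reasoning for the forward transform and is inconsistent with the backward one). Second, the formula from Remark~\ref{remark:flow_in_half_space_chart} is not a self-map of one chart: $\varphi_{T_0}$ sends $\Upsilon_{j,k,\tau}$-coordinates to $\Upsilon_{j,\sigma_j^m(k),\tilde\tau}$-coordinates, and $\mathbf{F}$ does not fix the base point $y$, so there is no autonomous contraction $\mathcal{T}_y:\mathcal{G}_y\to\mathcal{G}_y$ whose fixed point is the unstable leaf through $y$. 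What ``rerun the proof of Proposition~\ref{proposition:unstable_manifolds}\ref{item:unstable_parallel}'' actually entails is the non-autonomous Hadamard--Perron construction along the sequence of charts $\kappa_n$ built in \S\ref{subsubsection:family_charts}--\ref{subsubsection:graph_transform}, followed by an argument that the output varies continuously because the charts and the maps $\widetilde{F}_n$ do, uniformly in $n$. That is correct in principle, but it is precisely the longer path the paper chose to avoid.
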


\begin{proposition}\label{proposition:continuity_unstable_manifolds_singular_perpendicular}
Let $\epsilon > 0$. Let $j \in \set{1,\dots,N}, k \in \mathbb{Z}/ 2 n_j \mathbb{Z}$ odd and $\tau \in [-\tau_{\max},\tau_{\max}]$. Then, there are $\delta, \delta' > 0$ and for each $x \in (- \delta,\delta) \times [0,\delta) \times (-\delta,\delta)$ two $C^\infty$ functions $f_x,g_x$ from $[0,\delta)$ to $\mathbb{R}$ such that
\begin{enumerate}[label=(\roman*)]
\item for every $x \in (- \delta,\delta) \times [0,\delta) \times(-\delta,\delta)$ the set $\set{(f_x(t),t, g_x(t)) : t \in I}$ is contained in $(- \delta',\delta') \times [0,\delta) \times (-\delta',\delta')$ and $\Upsilon_{j,k,\tau}(\set{(f_x(t),t, g_x(t)) : t \in [0,\delta)})$ is the intersection of $\Upsilon_{j,k,\tau}((- \delta',\delta') \times [0,\delta) \times (-\delta',\delta'))$ with $W_\epsilon^u (\Upsilon_{j,k,\tau}(x))$;
\item the map $(t,x) \mapsto (f_x(t), g_x(t),f_x'(t), g_x'(t))$ is continuous from $[0,\delta) \times (- \delta,\delta) \times [0,\delta) \times (- \delta,\delta)$ to $\mathbb{R}^4$.
\end{enumerate}
\end{proposition}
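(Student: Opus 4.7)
The plan is to mimic the graph transform argument underlying Proposition \ref{proposition:continuity_unstable_manifolds_singular_parallel}, with the parametrization of the unstable manifold adapted to the geometry of the $k$-odd case. By item \ref{item:a_priori_direction} of Definition \ref{definition:pseudo_Anosov}, for $k$ odd the pullback of the unstable direction by $\Upsilon_{j,k,\tau}$ extends continuously to a section tangent to $\{0\} \times [0,\infty) \times (-r_{\min}/3, r_{\min}/3)$ at the singular orbit; this forces the local unstable manifold to be parametrized by the second (half-space) coordinate $x_2 \in [0,\delta)$, which is exactly why the claimed graph takes the form $t \mapsto (f_x(t), t, g_x(t))$.

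I would begin by fixing a sufficiently large $T_0 > 0$ and applying Remark \ref{remark:flow_in_half_space_chart} together with the parity-preservation statement of Remark \ref{remark:prong_permutation} (which ensures that $\sigma_j^m(k)$ stays odd whenever $k$ is odd) to express $\varphi_{T_0}$ near $\Upsilon_{j,k,\tau}(0)$ in coordinates of the form
\begin{equation*}
(x_1, x_2, t) \mapsto (\mathbf{F}(x_1, x_2),\ R(x_1, x_2) + t),
\end{equation*}
where $\mathbf{F}$ extends to a $C^\infty$ half hyperbolic fixed point bounded by the stable direction, and $R$ is $C^\infty$ on a neighbourhood of $0$. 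In these coordinates $D\mathbf{F}(0)$ is diagonal with a contracting eigenvalue in the $x_1$-direction (the stable direction) and an expanding eigenvalue in the $x_2$-direction (the unstable direction), with the uniform rates supplied by Remark \ref{remark:flow_in_half_space_chart}.

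The remainder follows standard Hadamard--Perron graph-transform lines. I would introduce the Banach space of $C^1$ maps $F = (f,g) : [0,\delta) \to (-\delta',\delta')^2$ whose derivatives lie in a narrow admissible cone around the unstable direction, and define a graph transform $\mathcal{T}_x$ sending such an $F$ to the function whose graph represents the part of the $\varphi_{T_0}$-image, based at $\varphi_{-T_0}(\Upsilon_{j,k,\tau}(x))$, of the graph of $F$ which is visible in the $\Upsilon_{j,k,\tau}$-coordinates. The hyperbolicity of $\mathbf{F}$ makes $\mathcal{T}_x$ a contraction for the $C^1$ norm uniformly in $x$ varying in a small neighbourhood, and the smoothness of $\mathbf{F}$ and $R$ makes $(x,F) \mapsto \mathcal{T}_x(F)$ continuous. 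The uniform contraction principle then yields a continuous family of fixed points $F_x$; uniqueness of the local unstable manifold identifies the graph of $F_x$ with the representation of $W_\epsilon^u(\Upsilon_{j,k,\tau}(x))$, delivering the joint continuity of $(t,x) \mapsto (f_x(t), g_x(t), f_x'(t), g_x'(t))$.

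The main technical subtlety, and the step where the $k$-odd case differs most from its $k$-even counterpart, will be to check that $\mathcal{T}_x$ preserves the half-space constraint $t \geq 0$ and sends graphs defined on $[0,\delta)$ to graphs defined on $[0,\delta)$. The former is guaranteed because $\mathbf{F}$, being a half hyperbolic fixed point, preserves the half-plane $\{x_2 \geq 0\}$ and fixes its boundary $\{x_2 = 0\}$, so a graph meeting $t = 0$ is mapped to a graph meeting $t = 0$; the latter follows from the uniform expansion of $\mathbf{F}$ in the $x_2$-direction, which ensures that after restriction the $\varphi_{T_0}$-image of a graph over $[0,\delta)$ still covers $[0,\delta)$, provided $\delta$ is chosen small and $T_0$ large.
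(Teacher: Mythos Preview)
Your overall strategy (graph transform plus uniform contraction) is a legitimate alternative, and the paper even acknowledges this explicitly in the remark following the three continuity propositions. However, the paper itself takes a different and much shorter route: it simply repeats the ODE argument from the proof of Proposition~\ref{proposition:continuity_unstable_manifolds_regular} in half-space coordinates. There, one writes the unstable graph as the solution of the Cauchy problem $F_x'(t) = G(\kappa(t,F_x(t)))$, $F_x(t_x)=y_x$, where $G$ is the (continuous) function describing the unstable direction, and then appeals to continuous dependence of ODE solutions on initial data (Hartman) together with uniqueness of the local unstable manifold. No contraction mapping is invoked.

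Your implementation, as written, has a genuine gap. You describe $\mathcal{T}_x$ as taking a graph ``based at $\varphi_{-T_0}(\Upsilon_{j,k,\tau}(x))$'' and pushing it forward to a graph based at $\Upsilon_{j,k,\tau}(x)$; this is a map between \emph{different} graph spaces, not a self-map, so it has no fixed point to speak of unless $x$ lies on the singular periodic orbit. The correct object is the \emph{non-stationary} composition $\mathcal{T}_{x}\circ\mathcal{T}_{\varphi_{-T_0}(x)}\circ\cdots\circ\mathcal{T}_{\varphi_{-nT_0}(x)}$ acting on an arbitrary initial graph, and continuity comes from the uniformity of the convergence as $n\to\infty$ (this is precisely what the paper's remark alludes to). Relatedly, for $k$ odd the stable direction is parallel to the boundary of the half-space, so in backward time a generic point drifts away from the singular orbit along the $x_1$-axis and eventually leaves the domain of the half-space chart; you cannot iterate indefinitely inside a single $\Upsilon_{j,k,\tau}$. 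You would have to switch to regular charts once the backward orbit exits the singular neighbourhood, exactly as in \S\ref{subsubsection:family_charts}--\ref{subsubsection:graph_transform}. A cheaper fix is to use item~\ref{item:smooth_rotation} of Definition~\ref{definition:standard_fixed_point} to pass smoothly to an adjacent $k$-even chart and then invoke Proposition~\ref{proposition:continuity_unstable_manifolds_singular_parallel} directly, mirroring how item~\ref{item:unstable_perpendicular} of Proposition~\ref{proposition:unstable_manifolds} was obtained.
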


The section is structured as follows. We start with some preliminary results in \cref{subsection:preliminary} and then prove \cref{proposition:unstable_manifolds} in \ref{section:local_unstable_manifold}. Finally, we prove Propositions \ref{proposition:continuity_unstable_manifolds_regular}, \ref{proposition:continuity_unstable_manifolds_singular_parallel} and \ref{proposition:continuity_unstable_manifolds_singular_perpendicular} in \cref{subsection:continuity_unstable_manifold}.

\subsection{Preliminary results on smooth pseudo-Anosov flows}\label{subsection:preliminary}

Before writing a graph transform argument to prove \cref{proposition:unstable_manifolds}, we need to establish the continuity of the unstable direction away from the singular orbits (\cref{lemma:continuity_unstable}). We also need some result on the local weak unstable of the singular orbits in half-space parametrizations (\cref{lemma:local_weak_unstable_half_space}).

\begin{lemma}\label{lemma:continuity_unstable}
The directions $E^u_x$ and $E^s_x$ depend continuously on $x \in \widetilde{M}$.
\end{lemma}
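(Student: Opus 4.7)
Proof plan. We prove continuity of $x \mapsto E^u_x$; the case of $E^s_x$ is symmetric under time reversal. Fix $x_0 \in \widetilde M$. The argument naturally splits depending on whether $x_0$ is close to a singular orbit.

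\emph{Close to a singular orbit.} Suppose $x_0$ lies in the image $\Upsilon_{j,k,\tau}(U \setminus \{0\} \times (-r_{\min}/3, r_{\min}/3))$ of some half-space parametrization. Then continuity of $E^u$ at $x_0$ is essentially a tautology given item \ref{item:a_priori_direction} of \cref{definition:pseudo_Anosov}: that item asserts precisely that the pullback $z \mapsto D\Upsilon_{j,k,\tau}(z)^{-1} E^u_{\Upsilon_{j,k,\tau}(z)}$ is continuous on the punctured domain and extends continuously across $0$. Since $\Upsilon_{j,k,\tau}$ restricts to a $C^\infty$ diffeomorphism off the singular orbit, pushing forward via $D\Upsilon_{j,k,\tau}$ transports this continuity back to a continuity statement for $E^u$ on a neighbourhood of $x_0$ in $\widetilde M$.

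\emph{Away from the singular orbits.} Suppose $x_0 \in \widetilde M \setminus \overline{U}$ for some open neighbourhood $U$ of the singular orbits, and let $C, \nu > 0$ be the hyperbolicity constants from item \ref{item:hyperbolic} attached to $U$. I would use the standard invariant cone field argument. Define the unstable cones
\[
\mathcal{C}^u_\alpha(x) = \{ v_u + v : v_u \in E^u_x,\ v \in E^s_x \oplus E^0_x,\ \|v\| \leq \alpha \|v_u\| \}.
\]
Item \ref{item:hyperbolic}, together with boundedness of $\|X\|$ above and below on $M$ (which controls the flow-direction component), gives cone contraction: for any $\alpha_0, \epsilon > 0$ there is $T > 0$ such that for $z, \varphi_T(z) \in \widetilde M \setminus U$,
\[
D\varphi_T(z)\, \mathcal{C}^u_{\alpha_0}(z) \subseteq \mathcal{C}^u_\epsilon(\varphi_T(z)).
\]
Now given $y$ close to $x_0$, continuity of the flow and of $D\varphi_T$ on $\widetilde M$ yields $\varphi_{-T}(y) \to \varphi_{-T}(x_0)$ and $D\varphi_T(\varphi_{-T}(y)) \to D\varphi_T(\varphi_{-T}(x_0))$. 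Since $E^u_{\varphi_{-T}(y)} \subset \mathcal{C}^u_{\alpha_0}(\varphi_{-T}(y))$ trivially, applying the cone contraction and passing to the limit shows $E^u_y$ lies inside an arbitrarily small cone around $E^u_{x_0}$, giving continuity.

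\emph{Main obstacle.} The delicate point is that the backward orbit of $x_0$ may repeatedly dip into arbitrarily small neighbourhoods of singular orbits, so one cannot naively iterate a cone contraction time step by time step. The crucial feature of item \ref{item:hyperbolic} is that it requires only the two endpoints (not the intermediate orbit) to lie in $\widetilde M \setminus U$, so the estimate over the whole interval $[-T, 0]$ holds as a single inequality regardless of excursions in between. Patching this with the ``near singular orbit'' case covered above via item \ref{item:a_priori_direction} (which handles exactly those points whose orbits may dwell near the singular set) yields continuity on all of $\widetilde M$.
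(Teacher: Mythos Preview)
Your treatment of points near a singular orbit via item~\ref{item:a_priori_direction} is correct and matches the paper. The gap is in your cone-field argument for points away from the singular orbits.

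The cones $\mathcal{C}^u_\alpha(x)$ you define are built out of the splitting $E^u_x \oplus E^s_x \oplus E^0_x$ at the very point $x$. Your cone-contraction statement $D\varphi_T(z)\,\mathcal{C}^u_{\alpha_0}(z) \subseteq \mathcal{C}^u_\epsilon(\varphi_T(z))$ therefore only compares a cone at $z$ with a cone at $\varphi_T(z)$. Applying it with $z = \varphi_{-T}(y)$ yields $E^u_y \subset \mathcal{C}^u_\epsilon(y)$, which is a tautology: it says $E^u_y$ lies in a cone around $E^u_y$. To conclude that $E^u_y$ lies in a small cone around $E^u_{x_0}$ you would need either (a) that $\mathcal{C}^u_\epsilon(y)$ is close to $\mathcal{C}^u_\epsilon(x_0)$, or (b) that $E^u_{\varphi_{-T}(y)}$ lies inside $\mathcal{C}^u_{\alpha_0}(\varphi_{-T}(x_0))$ (in a local trivialization) so that you can push forward by the single map $D\varphi_T(\varphi_{-T}(x_0))$. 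Both of these statements presuppose exactly the continuity of the splitting you are trying to prove. The phrase ``passing to the limit'' hides this circularity rather than resolving it.

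A second, smaller gap: your two cases do not cover points $x_0$ that are far from the singular orbits but lie on $W^{\textup{u}}(\gamma_j)$ for some singular $\gamma_j$. For such $x_0$, the backward orbit $\varphi_{-T}(x_0)$ eventually enters and stays inside any neighbourhood $U$ of the singular set, so you cannot find a single large $T$ with both endpoints outside $U$. Your ``patching'' sentence does not address this.

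The paper's argument sidesteps both issues. It first uses item~\ref{item:a_priori_direction} together with flow invariance to get continuity at every point of $W^{\textup{u}}_{\textup{sing}}$. For $x \notin W^{\textup{u}}_{\textup{sing}}$ there exists a sequence $t_j \to +\infty$ with $\varphi_{-t_j}(x) \notin V$. Then, rather than comparing cones at nearby points, the paper takes a sequence $(x_n, v_n) \to (x,v)$ with $v_n \in E^u_{x_n}$, uses item~\ref{item:hyperbolic} at $x_n$ to get $|D\varphi_{-t_j}(x_n) v_n| \le C e^{-\nu t_j}|v_n|$, passes to the limit in $n$ (using only smoothness of the flow on $\widetilde M$), and then decomposes the single limit vector $v$ in the splitting at the single point $x$. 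This last step uses the splitting only at $x$, where it exists by hypothesis, so no circularity arises.
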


\begin{proof}
Let us deal for instance with the case of the unstable direction. First of all, notice that one may cover a neighbourhood of the singular orbits by the ranges of a finite number of half-space parametrizations. It follows then from item \ref{item:a_priori_direction} that $E^u_x$ depends continuously on $x$ in some neighbourhood $V$ of the singular orbits (with the singular orbits removed). Since the unstable direction is invariant under the action of the flow, it follows that it is continuous at every point on the unstable manifold of a singular orbits.

So pick $x \in \widetilde{M} \setminus V$ that does not belong to the unstable manifold of a singular orbit. Let $(x_n,v_n)_{n \geq 0}$ be a sequence of points in $T \widetilde{M}$ such that $v_n \in E_{x_n}^u$ for every $n \geq 0$. Assume that $(x_n,v_n)_{n \geq 0}$ converges to $(x,v)$ for some $v \in T_xM$. We want to prove that $v \in E_x^u$ (since we can do the same with the stable direction, it implies that the unstable direction depends continuously on $x$).

Since $x$ does not belong to the unstable manifold of a singular orbit, there is a sequence $(t_j)_{j \geq 0}$ going to $+ \infty$ such that $\varphi_{-t_j}(x) \in \widetilde{M} \setminus V$ for every $j \geq 0$. Let $U$ be an open neighbourhood of the singular orbits such that $\overline{U}$ is contained in the interior of $V$. Let $C$ and $\nu$ be the constant given by \ref{item:hyperbolic} (from \cref{definition:pseudo_Anosov}) applied to $U$.

Let $j \geq 0$. For $n$ large enough, the continuity of $\varphi$ implies that $x_n$ and $\varphi_{-t_j} (x)$ belongs to $M \setminus U$. Hence, we have
\begin{equation*}
|D\varphi_{-t_j}(x_n) \cdot v_n | \leq C e^{- \nu t_j } |v_n|.
\end{equation*}
Since $\varphi_t$ is smooth on $\widetilde{M}$, it follows that
\begin{equation}\label{eq:contraction_limit}
|D\varphi_{-t_j}(x) \cdot v | \leq C e^{- \nu t_j } |v|.
\end{equation}
Now, decompose $v$ as $v = v_u+ v_s + v_0$ where $v_u \in E_x^u, v_s \in E_x^s$ and $v_0 \in E_x^0$. Write first that for $j \geq 0$
\begin{equation}\label{eq:remove_vs}
|D\varphi_{-t_j}(x) \cdot v | \geq C^{-1} e^{\nu t_j} |v_s| - C e^{- \nu t_j}|v_u| - |D\varphi_{-t_j}(x) \cdot v_0|.
\end{equation}
By compactness of $\widetilde{M} \setminus U$, we find that $|\mathrm{d}\varphi_{-t_j}(x) \cdot v_0|$ is bounded uniformly in $j \geq 0$ (since $X$ is invariant under the action of $\varphi$). Thus, $v_s = 0$, since otherwise \cref{eq:remove_vs} would contradict \cref{eq:contraction_limit}. We find now that
\begin{equation*}
|D\varphi_{-t_j}(x) \cdot v | \geq |D\varphi_{-t_j}(x) \cdot v_0|  - C e^{- \nu t_j}|v_u|.
\end{equation*}
This would again contradict \cref{eq:contraction_limit} if $v_0 \neq 0$ (using that $X$ is bounded away from zero on $\widetilde{M} \setminus U$). Thus, $v_0=0$, and $v \in E_x^u$.
\end{proof}

\begin{lemma}\label{lemma:local_weak_unstable_half_space}
There is $\delta > 0$ such that for every $j \in \set{1,\dots,N}$, every even $k \in \mathbb{Z}/ 2 n_j \mathbb{Z}$ and every $\tau \in [ - r_{\max}, r_{\max}]$ we have
\begin{equation*}
\Upsilon_{j,k,\tau}^{-1}(W_{\textup{sing}}^{\textup{lu}}) \cap ([ - \delta,\delta] \times [0,\delta] \times [ - \delta,\delta]) = [- \delta,\delta] \times \set{0} \times [- \delta,\delta].
\end{equation*}
\end{lemma}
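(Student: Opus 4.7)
My plan is to reduce to a local analysis in the plane via the half-space parametrizations, using that for $k$ even the two boundary rays of the sector $\mathfrak{A}_k^{n_j}$ both lie in $\mathfrak{L}^{n_j}$, while interior points of $\overline{\mathbb{H}}$ are sent by $\psi_k^{(j)}$ into the interior of the sector (disjoint from $\mathfrak{L}^{n_j}$).

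First I would choose $\delta > 0$ small enough that, for every admissible triple $(j,k,\tau)$, the image $\Upsilon_{j,k,\tau}([-\delta,\delta]\times[0,\delta]\times[-\delta,\delta])$ is disjoint from $W_i^{\textup{lu}}$ for every $i \neq j$. This is possible since the orbits $\gamma_1,\dots,\gamma_N$ are pairwise disjoint, each $W_i^{\textup{lu}}$ may be taken to lie in an arbitrarily small neighbourhood of $\gamma_i$, there are only finitely many pairs $(j,k)$, and $\tau$ ranges over the compact set $[-r_{\max}, r_{\max}]$ with continuous dependence. After this reduction it suffices to prove the equality with $W_{\textup{sing}}^{\textup{lu}}$ replaced by $W_j^{\textup{lu}}$.

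Using $W_j^{\textup{lu}} = f_j^{-1}(W_{\phi_j,r_j}^{\textup{lu}})$ together with $\Upsilon_{j,k,\tau}(x,t) = f_j^{-1}\pi_{\phi_j,r_j}(\psi_k^{(j)}(x), \tau+t)$, the condition $\Upsilon_{j,k,\tau}(x,t) \in W_j^{\textup{lu}}$ translates to $\pi_{\phi_j,r_j}(\psi_k^{(j)}(x), \tau+t) \in W_{\phi_j,r_j}^{\textup{lu}}$. I would then prove that, for $(x,t)$ in a small cube, this is equivalent to $\psi_k^{(j)}(x) \in \mathfrak{L}^{n_j}$. For the forward direction: as a homeomorphism of $\mathbb{R}^2$ permuting the closed sectors $\mathfrak{A}_k^{n_j}$, the map $\phi_j$ permutes their common boundary rays; the formula $\sigma_j(k) = \epsilon k + a$ with $a$ even from \cref{remark:prong_permutation} shows that $\phi_j$ preserves parity, and hence preserves the set of rays $\mathfrak{L}^{n_j}$ (the boundaries of even-indexed sectors). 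Consequently the $\mathbb{Z}$-action on $\mathbb{R}^2 \times \mathbb{R}$ preserves $\mathfrak{L}^{n_j}\times\mathbb{R}$, and so does $\pi_{\phi_j,r_j}^{-1}(W_{\phi_j,r_j}^{\textup{lu}})$. For the converse: given $\psi_k^{(j)}(x) \in \mathfrak{L}^{n_j}$ close to $0$, I would iterate backward by $\phi_j^{-1}$ using \cref{lemma:ramified_unstable} to keep the spatial component inside the neighbourhood $V$ of \cref{definition:unstable_manifolds_model}, while choosing $n \in \mathbb{Z}$ so that $\tau + t - S_n r_j(\psi_k^{(j)}(x)) \in [0, r_j(\phi_j^n\psi_k^{(j)}(x)))$ thanks to the two-sided boundedness of $r_j$; this exhibits a $\mathbb{Z}$-translate in the fundamental domain $\widetilde V$.

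It remains to observe that when $k$ is even both boundary rays of $\mathfrak{A}_k^{n_j}$, at angles $\pi k/n_j$ and $\pi(k+2)/n_j$ (both even multiples of $\pi/n_j$), lie in $\mathfrak{L}^{n_j}$, while the open sector is disjoint from $\mathfrak{L}^{n_j}$. Since $\psi_k^{(j)}:\mathcal{V}\to\mathfrak{A}_k^{n_j}$ is a homeomorphism onto its image with $\psi_k^{(j)}(0)=0$, invariance of domain forces it to send the topological boundary $\mathcal{V}\cap(\mathbb{R}\times\{0\})$ of $\overline{\mathbb{H}}$ into the boundary of the sector and the interior $\mathcal{V}\cap(\mathbb{R}\times(0,\infty))$ into the interior. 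Hence $(\psi_k^{(j)})^{-1}(\mathfrak{L}^{n_j})\cap\mathcal{V} = \mathcal{V}\cap(\mathbb{R}\times\{0\})$, which combined with the previous paragraph yields the claim. The main obstacle is the converse implication above: one must verify carefully that every point of $\mathfrak{L}^{n_j}\times\mathbb{R}$ close to $\{0\}\times[-r_{\max},r_{\max}]$ admits a $\mathbb{Z}$-translate in the particular fundamental domain $\widetilde V$ used in \cref{definition:unstable_manifolds_model}, which requires combining the contraction property of \cref{lemma:ramified_unstable} with the two-sided bounds on the cocycle $S_n r_j$.
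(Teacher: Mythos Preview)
Your proposal is correct and follows essentially the same approach as the paper: reduce from $W_{\textup{sing}}^{\textup{lu}}$ to $W_j^{\textup{lu}}$ by a separation argument, then unwind the definitions. The paper's proof is extremely terse (after the reduction it just says ``the result follows from the definition of the local unstable manifold of $\gamma_j$''), whereas you spell out exactly what that means via the characterisation $\psi_k^{(j)}(x)\in\mathfrak{L}^{n_j}$ and the boundary/interior dichotomy for even $k$.

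One small clarification regarding your converse implication: you write that you would ``iterate backward by $\phi_j^{-1}$'' to find a $\mathbb{Z}$-translate in $\widetilde V$, but for points with $\tau+t>0$ the required $n$ may in fact be positive, in which case backward iteration and \cref{lemma:ramified_unstable} do not directly apply. The cleaner argument is that since $\tau+t$ is confined to a bounded interval and $r_j$ is bounded above and below, the integer $n$ with $\tau+t-S_n r_j(\psi_k^{(j)}(x))\in[0,r_j(\phi_j^n\psi_k^{(j)}(x)))$ satisfies a uniform bound $|n|\le C$; then continuity of $\phi_j$ at $0$ ensures that $\phi_j^n(\psi_k^{(j)}(x))\in V$ for all $|n|\le C$ once $x$ is small enough. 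This is surely what you intended by ``two-sided boundedness of $r_j$'', but as written the emphasis on backward iteration is slightly misleading.
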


\begin{proof}
First, use that $\Upsilon_{j,k,\tau}$ is continuous uniformly in $j,k,\tau$ to find that $\delta$ small ensures that
\begin{equation*}
\Upsilon_{j,k,\tau}^{-1}(W_{\textup{sing}}^{\textup{lu}}) \cap ([ - \delta,\delta] \times [0,\delta] \times [ - \delta,\delta]) = \Upsilon_{j,k,\tau}^{-1}(W_{j}^{\textup{lu}}) \cap ([ - \delta,\delta] \times [0,\delta] \times [ - \delta,\delta]).
\end{equation*}
Indeed, it follows from \cref{lemma:true_local_weak_unstable} that $\gamma_j$ does not intersect the local unstable manifold of another singular orbit. The result follows then from the definition of the local unstable manifold of $\gamma_j$.
\end{proof}

\subsection{Construction of local unstable manifolds}\label{section:local_unstable_manifold}

We can now start the proof of \cref{proposition:unstable_manifolds}. In \cref{subsubsection:family_charts}, we construct some specific charts suited to describe the action of the flow $\varphi$. In \cref{subsubsection:graph_transform}, we use these charts to set up a graph transform argument. Finally, we sum up our findings in \cref{subsubsection:proof_unstable_manifolds} and prove \cref{proposition:unstable_manifolds}.

\subsubsection{A family of charts}\label{subsubsection:family_charts}

Let $\mathcal{A}$ be the set of $(j,k,\tau)$ where $j \in \set{1,\dots,N}$, $k \in \mathbb{Z}/ 2n_j \mathbb{Z}$ is even and $\tau \in [-r_{\max},r_{\max}]$. For $\alpha > 0$, we let $D_\alpha = [ - \alpha,\alpha ] \times [0,\alpha ] \times [ - \alpha,\alpha ]$. Let us fix some $\alpha_0$ such that $D_{2 \alpha_0} \subseteq \mathcal{V} \times (- r_{\min}/3,r_{\min}/3)$. By taking $\alpha_0$ small enough, we ensure that for every $(j,k,\tau) \in \mathcal{A}$, there are continuous functions $y_{j,k,\tau}^u,y_{j,k,\tau}^s, t^u_{j,k,\tau}$ and $t^{s}_{j,k,\tau}$ from $\mathcal{V}$ to $\mathbb{R}$ such that for every $(z,t) \in D_{\alpha_0} \setminus (\set{(0,0)} \times [ - \alpha_0,\alpha_0])$ the lines $D \Upsilon_{j,k,\tau} (z,t)^{-1} E^u_{\Upsilon_{j,k,\tau}(z,t)}$ and $D \Upsilon_{j,k,\tau} (z,t)^{-1} E^s_{\Upsilon_{j,k,\tau}(z,t)}$ are spanned respectively by $(1,y^u_{j,k,\tau}(z), t^u_{j,k,\tau}(z))$ and $(y^s_{j,k,\tau}(z), 1, t^s_{j,k,\tau}(z))$. This is a consequence of item \ref{item:a_priori_direction} in \cref{definition:pseudo_Anosov}, which also implies that $y^u_{j,k,\tau}$ and $y^s_{j,k,\tau}(0)$ are indentically zero respectively on $[- \alpha_0,\alpha_0] \times \set{0}$ and on $\set{0} \times [0,\alpha_0]$. We also assume that $\alpha_0$ is small enough so that the sets
\begin{equation*}
\bigcup_{\substack{ k \in \mathbb{Z}/2 n_j \mathbb{Z} \\ \tau \in [ - r_{\max},r_{\max}]}} \Upsilon_{j,k,\tau}(D_{2 \alpha_0}), j = 1,\dots,N
\end{equation*}
are pairwise disjoint.

Choose some time $T_0 > 2 r_{\max}$. It follows from \cref{remark:flow_in_half_space_chart} that there is $\alpha_1 \in (0,\alpha_0)$ such that for every $(j,k,\tau) \in \mathcal{A}$ there are $k',\tau'$ such that $(j,k',\tau') \in \mathcal{A}$ and the map $\varphi_{-T_0}$ sends $\Upsilon_{j,k,\tau}(D_{\alpha_1})$ inside  $\Upsilon_{j,k',\tau'}(D_{\alpha_0})$. Moreover, there is a half hyperbolic fixed point bounded by the unstable direction $\mathbf{F}_{j,k,\tau}$, and a $C^\infty$ map $R_{j,k,\tau} : [ - \alpha_1,\alpha_1] \times [0,\alpha_1] \to \mathbb{R}$ such that 
\begin{equation}\label{eq:local_model_near_singular}
\varphi_{-T_0} \circ \Upsilon_{j,k,\tau}(x,t) = \Upsilon_{j,k',\tau'}(\mathbf{F}^{-1}_{j,k,\tau}(x),t + R_{j,k,\tau}(x)) 
\end{equation}
for every $(x,t) \in D_{\alpha_1}$. In addition, we can impose that $\tau ' \in [ - r_{\max}/2,r_{\max}/2]$ and that $R_{j,k,\tau}(0) = 0$.

If $(j,k,\tau) \in \mathcal{A}$ and $z = (z_1,z_2) \in [ \alpha_0,\alpha_0] \times [0 ,\alpha_0]$, define the affine map on $\mathbb{R}^3$:
\begin{equation*}
\Xi_{j,k,\tau,z} :  (x_1,x_2,t) \mapsto (\xi_{j,k,\tau,z}(x_1,x_2), t + \eta_{j,k,\tau,z}(x_1,x_2) )
\end{equation*}
where
\begin{equation*}
\xi_{j,k,\tau,z}(x_1,x_2) = (z_1 + x_1 + x_2 y^s_{j,k,\tau}(z), z_2 + x_2 + x_1 y^u_{j,k,\tau}(z))
\end{equation*}
and
\begin{equation*} 
\eta_{j,k,\tau,z}(x_1,x_2) = x_1 t_{j,k,\tau}^u (z) + x_2 t_{j,k,\tau}^s(z).
\end{equation*}
Let $k'$ and $\tau'$ be as in \cref{eq:local_model_near_singular} and assume that $z$ is close to $0$. Let $z' = \mathbf{F}_{j,k,\tau}^{-1}(z)$ and $\tau'' = \tau' + R_{j,k,\tau}(z)$. Then, for $(x,t)$ near $0$ in $\Xi_{j,k,\tau,z}^{-1}(\overline{\mathbb{H}} \times \mathbb{R})$ we have
\begin{equation}\label{eq:explicit_transition}
\Xi_{j,k',\tau'',z'}^{-1} \circ \Upsilon_{j,k',\tau''}^{-1} \circ \varphi_{-T_0} \circ \Upsilon_{j,k,\tau,z} \circ \Xi_{j,k,\tau,z}(x,t) = (\mathbf{F}_{j,k,\tau,z}^{-1}(x), t + R_{j,k,\tau,z}(x)),
\end{equation}
where
\begin{equation*}
\mathbf{F}_{j,k,\tau,z} =  \xi_{j,k,\tau,z}^{-1} \circ \mathbf{F}_{j,k,\tau} \circ \xi_{j,k,\tau'',z'}
\end{equation*}
and
\begin{equation*}
R_{j,k,\tau,z} = \eta_{j,k,\tau,z} + R_{j,k,\tau} \circ \xi_{j,k,\tau,z} - \eta_{j,k,\tau'',z'} \circ \mathbf{F}_{j,k,\tau,z}^{-1} - R_{j,k,\tau}(z).
\end{equation*}

Since the stable and unstable direction are stable under the action of the flow, we find that $D \mathbf{F}_{j,k,\tau,z}(0)$ is diagonal:
\begin{equation*}
D \mathbf{F}_{j,k,\tau,z}(0) = \begin{bmatrix}
\lambda_{j,k,\tau,z} & 0 \\ 0 & \mu_{j,k,\tau,z}
\end{bmatrix}.
\end{equation*}
Moreover, since $\Xi_{j,k,\tau,z}$ and $\Xi_{j,k,\tau'',z'}$ depend continuously on $z$ and are the identity when $z = 0$, we find that for $z$ small enough we have
\begin{equation}\label{eq:hyperbolicity_singular}
|\lambda_{j,k,\tau,z}| > (1 + \lambda_{\min})/2 \textup{ and } \mu_{j,k,\tau,z} \in (0, (1+ \mu_{\max})/2).
\end{equation}

Let now $\alpha_2 \in (0,\alpha_1)$ be small enough so that there is $\varpi > 0$ such that, for every $(j,k,\tau) \in \mathcal{A}$ and $z \in [-\alpha_2,\alpha_2] \times [0,\alpha_2]$, the relation \eqref{eq:explicit_transition} holds on $[- \varpi,\varpi]^3 \cap \Xi_{j,k,\tau,z}^{-1}(\overline{\mathbb{H}} \times \mathbb{R})$ and the estimate \eqref{eq:hyperbolicity_singular} is satisfied.

By continuity of $\varphi_{-T_0}$, we find that there is $\alpha_3 \in (0,\alpha_2)$, such that for every $(j,k,\tau) \in \mathcal{A}$, the set $\varphi_{-T_0} \circ \Upsilon_{j,k,\tau}(D_{2 \alpha_0} \setminus (( - \alpha_2, \alpha_2) \times [0,\alpha_2) \times [ - 2 \alpha_0,2 \alpha_0]))$ does not intersect $\bigcup_{(\tilde{j},\tilde{k},\tilde{\tau}) \in \mathcal{A}} \Upsilon_{\tilde{j},\tilde{k},\tilde{\tau}}(D_{\alpha_3})$. We also impose that $\alpha_3$ is small enough so that for every $(j,k,\tau) \in \mathcal{A}$, we have
\begin{equation}\label{eq:separation_orbites}
\varphi_{T_0} (\Upsilon_{j,k,\tau}(D_{\alpha_3})) \cap \bigcup_{\substack{(\tilde{j},\tilde{k},\tilde{\tau}) \in \mathcal{A} \\ \tilde{j} \neq j}} \Upsilon_{\tilde{j},\tilde{k},\tilde{\tau}}(D_{2 \alpha_0}) = \emptyset.
\end{equation}

Notice that there is a neighbourhood $U$ of $D_{2 \alpha_0} \setminus ( \set{(0,0)} \times [ - 2 \alpha_0,2 \alpha_0])$ in $\mathbb{R}^3$ such that for every $(j,k,\tau) \in \mathcal{A}$ the map $\Upsilon_{j,k,\tau}$ extends to a $C^\infty$ diffeomorphism from $W$ to some subset of $M$. Up to taking $\varpi$ smaller, we may assume that for every $(j,k,\tau) \in \mathcal{A}$ and $z = (z_1,z_2) \in [ - \alpha_0,\alpha_0] \times [0,\alpha_0]$ such that $|z_1| \geq \alpha_3$ the set $\Xi_{j,k,\tau,z}([- \varpi,\varpi]^3)$ is contained in $U$.

Let us now define for every $(j,k,\tau) \in \mathcal{A}$ and $z \in [- \alpha_0,\alpha_0] \times [0 ,\alpha_0]$ the ``parametrization'' $\kappa_{j,k,\tau,z} :  P_{j,k,\tau,z} \to M$ by 
\begin{equation*}
\kappa_{j,k,\tau,z} : (y_1,y_2,t)  \mapsto \Upsilon_{j,k, \tau} (\Xi_{j,k,\tau,z}(y_1,y_2,t)),
\end{equation*}
where
\begin{equation*}
P_{j,k,\tau,z} = \begin{cases} [ - \varpi,\varpi]^{3} & \textup{ if } |z_1| \geq \alpha_3 \\ [ - \varpi,\varpi]^3 \cap \Xi_{j,k,\tau,z}^{-1}( \overline{\mathbb{H}} \times \mathbb{R}) & \textup{ otherwise, } \end{cases}
\end{equation*}
We will also need the notation
\begin{equation*}
\widetilde{P}_{j,k,\tau,z} = [ - \varpi,\varpi]^3 \cap \Xi_{j,k,\tau,z}^{-1}( \overline{\mathbb{H}} \times \mathbb{R}).
\end{equation*}

Let us list some properties of the maps we just constructed.

\begin{lemma}\label{lemma:properties_chart_near_singular}
\begin{enumerate}[label =(\roman*)]
\item \label{item:clear} For every $(j,k,\tau) \in \mathcal{A}$ and $z \in [- \alpha_0,\alpha_0] \times [0,\alpha_0]$, we have $\kappa_{j,k,\tau,z}(0) = \Upsilon_{j,k, \tau}(z,0).$ Moreover, $ D \kappa_{j,k,\tau,z}(0) \cdot (1,0,0) \in E^u_{\kappa_{j,k,\tau,z}(0)}$ and $D \kappa_{j,k,\tau,z}(0) \cdot (0,1,0) \in E^s_{\kappa_{j,k,\tau,z}(0)}$.
\item \label{item:coordinates_local_weak_unstable} For every $(j,k,\tau) \in \mathcal{A}$ and $z \in [ - \alpha_0,\alpha_0] \times [0,\alpha_0]$, we have
\begin{equation*}
\Xi_{j,k,\tau,z}^{-1}(\mathbb{R} \times \set{0} \times \mathbb{R}) \cap P_{j,k,\tau,z} = \kappa_{j,k,\tau,z}^{-1}(W_{\textup{sing}}^{\textup{lu}}).
\end{equation*}
\item \label{item:no_abrupt_change} Let $(j,k,\tau), (\tilde{j},\tilde{k},\tilde{\tau}) \in \mathcal{A}$ and $z,\tilde{z} \in [- \alpha_0,\alpha_0] \times [0,\alpha_0]$. Assume that $\varphi_{-T_0} (\kappa_{j,k,\tau,z}(0)) = \kappa_{\tilde{j},\tilde{k},\tilde{\tau},\tilde{z}}(0)$ and $\tilde{z} \in [ - \alpha_3,\alpha_3] \times [0, \alpha_3]$. Then $j = \tilde{j}$ and $z \in [- \alpha_2,\alpha_2] \times [0,\alpha_2]$.
\end{enumerate}
\end{lemma}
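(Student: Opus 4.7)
The plan is to verify each of the three items by unpacking the explicit construction of $\Xi_{j,k,\tau,z}$, $\kappa_{j,k,\tau,z}$ and the constants $\alpha_1,\alpha_2,\alpha_3,\varpi$ fixed just above the lemma, then invoking the appropriate separation or weak-unstable statement.

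For \ref{item:clear}, I would plug $(0,0,0)$ into the definitions: $\xi_{j,k,\tau,z}(0,0)=(z_1,z_2)=z$ and $\eta_{j,k,\tau,z}(0,0)=0$, so $\Xi_{j,k,\tau,z}(0)=(z,0)$ and hence $\kappa_{j,k,\tau,z}(0)=\Upsilon_{j,k,\tau}(z,0)$. For the derivative claims (meaningful on the regular set $z\neq 0$, and extended by continuity at $z=0$ via item \ref{item:a_priori_direction} in \cref{definition:pseudo_Anosov}), compute $D\Xi_{j,k,\tau,z}(0)\cdot(1,0,0)=(1,y^u_{j,k,\tau}(z),t^u_{j,k,\tau}(z))$ and likewise $D\Xi_{j,k,\tau,z}(0)\cdot(0,1,0)=(y^s_{j,k,\tau}(z),1,t^s_{j,k,\tau}(z))$; by the very definition of $y^{u/s}_{j,k,\tau},t^{u/s}_{j,k,\tau}$ these vectors span $D\Upsilon_{j,k,\tau}(z,0)^{-1}E^u_{\Upsilon_{j,k,\tau}(z,0)}$ and $D\Upsilon_{j,k,\tau}(z,0)^{-1}E^s_{\Upsilon_{j,k,\tau}(z,0)}$ respectively, so the chain rule closes \ref{item:clear}.

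For \ref{item:coordinates_local_weak_unstable}, I would invoke \cref{lemma:local_weak_unstable_half_space}. By shrinking $\varpi$ once and for all, uniformly in the compact parameter set (which is permissible since $\Xi_{j,k,\tau,z}$ depends continuously on $(j,k,\tau,z)$ and equals the identity when $z=0$), we may assume $\Xi_{j,k,\tau,z}(P_{j,k,\tau,z})\subseteq [-\delta,\delta]\times[0,\delta]\times[-\delta,\delta]$ for the $\delta$ furnished by that lemma. Inside that box $\Upsilon_{j,k,\tau}^{-1}(W^{\mathrm{lu}}_{\mathrm{sing}})$ coincides with the slice $\mathbb{R}\times\{0\}\times\mathbb{R}$, so $\kappa_{j,k,\tau,z}(y_1,y_2,t)\in W^{\mathrm{lu}}_{\mathrm{sing}}$ if and only if the second coordinate of $\Xi_{j,k,\tau,z}(y_1,y_2,t)$ vanishes, which is the defining condition of $(y_1,y_2,t)\in\Xi_{j,k,\tau,z}^{-1}(\mathbb{R}\times\{0\}\times\mathbb{R})$.

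For \ref{item:no_abrupt_change}, I would argue by contraposition against the two separation facts established just above the lemma. If $z\notin[-\alpha_2,\alpha_2]\times[0,\alpha_2]$, then $(z,0)\in D_{2\alpha_0}\setminus\bigl((-\alpha_2,\alpha_2)\times[0,\alpha_2)\times[-2\alpha_0,2\alpha_0]\bigr)$, so by the choice of $\alpha_3$ the point $\varphi_{-T_0}(\Upsilon_{j,k,\tau}(z,0))$ avoids $\bigcup_{(\tilde j,\tilde k,\tilde\tau)\in\mathcal{A}}\Upsilon_{\tilde j,\tilde k,\tilde\tau}(D_{\alpha_3})$, contradicting the hypothesis that it equals $\Upsilon_{\tilde j,\tilde k,\tilde\tau}(\tilde z,0)$ with $\tilde z\in[-\alpha_3,\alpha_3]\times[0,\alpha_3]$. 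For $j=\tilde j$, apply $\varphi_{T_0}$ to the same identity: the left side is $\kappa_{j,k,\tau,z}(0)\in\Upsilon_{j,k,\tau}(D_{2\alpha_0})$, while the right side lies in $\varphi_{T_0}(\Upsilon_{\tilde j,\tilde k,\tilde\tau}(D_{\alpha_3}))$, which by \eqref{eq:separation_orbites} is disjoint from $\Upsilon_{j',k',\tau'}(D_{2\alpha_0})$ for every $j'\neq\tilde j$; hence $j=\tilde j$. I do not anticipate any substantive obstacle here — the lemma is essentially a bookkeeping summary of the design choices — and the only modicum of care needed is the uniform shrinking of $\varpi$ in \ref{item:coordinates_local_weak_unstable} so that \cref{lemma:local_weak_unstable_half_space} applies across the whole compact parameter space.
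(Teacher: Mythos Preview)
Your proposal is correct and follows essentially the same approach as the paper's proof, which is terse to the point of saying only that \ref{item:clear} ``follows directly from the definition,'' \ref{item:coordinates_local_weak_unstable} ``is a consequence of \cref{lemma:local_weak_unstable_half_space},'' and \ref{item:no_abrupt_change} follows from \eqref{eq:separation_orbites} together with the defining property of $\alpha_3$. You have simply unpacked these references explicitly, and your computations and contrapositive arguments match the intended logic.
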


\begin{proof}
The point \ref{item:clear} follows directly from the definition.

Item \ref{item:coordinates_local_weak_unstable} is a consequence of \cref{lemma:local_weak_unstable_half_space}.

It remains to prove \ref{item:no_abrupt_change}. The fact that $j = \tilde{j}$ follows from \cref{eq:separation_orbites}. For the other points, just notice that $(z,0) \in D_{2 \alpha_0}$ and $\varphi_{-T_0}( \Upsilon_{j,k,\tau}(z,0)) \in \Upsilon_{\tilde{j},\tilde{k},\tilde{\tau}}(D_{\alpha_3})$. By our definition of $\alpha_3$, it follows then that $z \in [ - \alpha_2,\alpha_2] \times [0,\alpha_2]$.
\end{proof}

The set $V_0 \coloneqq \set{ \kappa_{j,k,\tau,z}(0) : (j,k,\tau) \in \mathcal{A}, z \in [ - \alpha_0,\alpha_0] \times [0,\alpha_0]}$ is a neighbourhood of the singular orbits of $\varphi$ in $M$, as follows from \ref{item:clear} in \cref{lemma:properties_chart_near_singular}. For every $x \in M \setminus V_0$ choose a parametrization $\kappa_x : [- \varpi,\varpi]^3 \to M$ (up to making $\varpi$ smaller) such that the following properties hold:
\begin{enumerate}[label=(\roman*)]
\item for every $x \in M \setminus V_0$, we have $\kappa_x(0) = x$;
\item for every $x \in M \setminus V_0$ and $w \in [- \varpi,\varpi]^3$, we have $D\kappa_x(w) \cdot (0,0,1) = X(\kappa_x(w))$ (where $X$ is the generator of $\varphi$);
\item for every $x \in M \setminus V_0$, we have $D\kappa_x(0) \cdot (1,0,0) \in E_x^u$ and $ D\kappa_x(0) \cdot (0,1,0) \in E_x^s$.
\item the $\kappa_x$'s are uniformly $C^\infty$.
\end{enumerate}

Let us call $\mathcal{C}$ the set of all parametrizationss of the form $\kappa_x$ for $x \in M \setminus V_0$ and of the form $\kappa_{j,k,\tau,z}$ for $(j,k,\tau) \in \mathcal{A}$ and $z \in ([ - \alpha_0,\alpha_0] \times [0,\alpha_0]) \setminus ([ - \alpha_3, \alpha_3] \times [0,\alpha_3])$. Notice that there is a neighbourhood $U_0$ of the singular orbits that does not intersect $\set{\kappa(0) : \kappa \in \mathcal{C}}$. Let $C_0$ and $\nu$ be the constants given by \ref{item:hyperbolic} in \cref{definition:pseudo_Anosov} for this neighbourhood $U_0$ of the singular orbits.

Let us now define the constant
\begin{equation*}
B_1 = \sup_{\kappa \in \mathcal{C}} \sup_{ w \in \set{(1,0,0),(0,1,0)}} \max( |D\kappa(0) \cdot w|,|D\kappa(0) \cdot w|^{-1}).
\end{equation*}

Choose then a positive integer $L$ large enough such that 
\begin{equation*}
\mathfrak{h} \coloneqq \max \left(\left(\frac{1 + \lambda_{\min}}{2} \right)^{-1}, \frac{1+\mu_{\max}}{2}, e^{- \nu T_0}\right) C_0^{\frac{1}{L}} B_1^{\frac{2}{L}} < 1.
\end{equation*}

\subsubsection{Graph transform argument}\label{subsubsection:graph_transform}

Let us now fix a point $x_0 \in M$. We will construct a local unstable manifold for $x_0$. For every $n \leq 0$, define $x_n = \varphi_{n T_0} (x_0)$. For each $n \leq 0$, we want to define a map $\kappa_n$ that will be either one of the $\kappa_{j,k,\tau,z}$'s or one of the $\kappa_x$'s from \cref{subsubsection:family_charts}. This definition will be made by induction:
\begin{itemize}
\item if $x_0 \notin V_0$, we just take $\kappa_0 = \kappa_{x_0}$, otherwise there are $(j,k,\tau) \in \mathcal{A}$ and $z \in [- \alpha_0,\alpha_0] \times [0,\alpha_0]$ such that $\kappa_{j,k,\tau,z}(0) = x_0$, and we set $\kappa_{0} = \kappa_{j,k,\tau,z}$;
\item assume that $\kappa_0,\kappa_{-1},\dots, \kappa_n$ are defined and let us define $\kappa_{n-1}$. We distinguish two cases. The first case is when $\kappa_n = \kappa_{j,k,\tau,z}$ for some $(j,k,\tau) \in \mathcal{A}$ and $z \in [-\alpha_2,\alpha_2] \times [0, \alpha_2]$. In that case, we let $j,k',\tau''$ and $z'$ be as in \cref{eq:explicit_transition} and we set $\kappa_{n-1} = \kappa_{j,k',\tau'',z'}$. We say that $n-1$ is of type II. Otherwise, we define $\kappa_{n-1}$ as we did for $\kappa_0$, and we say that $n-1$ is of type I.
\end{itemize}

Notice that if $n$ is of type I then both $\kappa_n$ and $\kappa_{n+1}$ belongs to $\mathcal{C}$. For $\kappa_{n+1}$, it follows directly from the definition of type I, and for $\kappa_{n}$ it follows from \ref{item:no_abrupt_change} in \cref{lemma:properties_chart_near_singular}. Let us now define a family of $C^\infty$ maps $(F_n)_{n \leq -1}$ from $[- \delta_0,\delta_0]^3$ to $\mathbb{R}^3$ for some small $\delta_0 > 0$ (that can be chosen independent on $x_0$) and a sequence of sets $(H_n)_{n \leq 0}$:
\begin{itemize}
\item If $n$ is of type I, then both $\kappa_n$ and $\kappa_{n+1}$ are in $\mathcal{C}$ and thus define $C^\infty$ diffeomorphisms (uniformly in $x$ and $n$) between a neighbourhood of zero in $\mathbb{R}^3$ (of uniform size in $n$ and $x$) and a subset of $M$, and that $x_n$ and $x_{n+1}$ are uniformly away from the singular orbits of the flow. In that case, we just let $F_n$ be defined an a neighbourhood of zero by $F_n = \kappa_{n+1}^{-1} \circ \varphi_{T_0} \circ \kappa_{n}$, and we set $H_{n+1} = \mathbb{R}^3$.
\item If $n$ is of type II, then it follows from the construction of $\kappa_n$ that there are $(j,k,\tau),(j,k',\tau'') \in \mathcal{A}$ and $z,z' \in [ - \alpha_0,\alpha_0] \times [0,\alpha_0]$ such that $\kappa_n = \kappa_{j,k',\tau',z'}$ and $\kappa_{n+1} = \kappa_{j,k,\tau,z}$. Moreover, the relation \eqref{eq:explicit_transition} holds on $[ - \varpi, \varpi]^3$ intersected with $\Xi_{j,k,\tau,z}(\overline{\mathbb{H}} \times \mathbb{R})$. We define then $F_n$ by $F_n(x,t) = (\mathbf{F}_{j,k,\tau,z}(x),t - R_{j,k,\tau,z}(\mathbf{F}_{j,k,\tau,z}(x))$ and define $H_{n+1} = \Xi_{j,k,\tau,z}^{-1}(\overline{\mathbb{H}} \times \mathbb{R})$. 
\end{itemize}
Notice that $F_n$ always extends as a $C^\infty$ map on a neighbourhood of $0$ in $\mathbb{R}^3$ (uniformly in $x$ and $n$). Moreover, the domain of $\kappa_{n+1}$ always contains a neighbourhood of $0$ in $H_{n+1}$ and for $(x,t)$ there we have $\varphi_{-T_0} \circ \kappa_{n+1}(x,t) = \kappa_{n} \circ F_n^{-1}(x,t)$. Moreover, if $H_{n+1} \neq \mathbb{R}^3$, it follows from point \ref{item:coordinates_local_weak_unstable} in \cref{lemma:properties_chart_near_singular} that the image of $\partial H_{n+1}$ by $\kappa_{n+1}$ is the intersection of $W_{\textup{sing}}^{\textup{lu}}$ with the range of $\kappa_{n+1}$.

We want to apply a graph transform argument to $(F_n)_{n \leq -1}$, so we start by checking that they satisfy a hyperbolicity estimates.

\begin{lemma}\label{lemma:derivative_graph_transform}
For every $n \leq -1$, there are $a_n,b_n \in \mathbb{R}$ such that
\begin{equation}\label{eq:derivative_flow_coordinates}
DF_n(0) = \left[ \begin{array}{ccc}
a_n & 0 & 0 \\
0 & b_n & 0 \\
0 & 0 & 1
\end{array} \right].
\end{equation}
Moreover, for every $n \leq -L$,
\begin{equation*}
\prod_{p = 0}^{L-1} |a_{n+p}| \geq \mathfrak{h}^{-L} \textup{ and } \prod_{p = 0}^{L-1} |b_{n+p}| \leq \mathfrak{h}^L.
\end{equation*}
\end{lemma}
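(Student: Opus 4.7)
The statement has two independent parts: the diagonal form of $DF_n(0)$ and the multiplicative hyperbolicity of the diagonal entries. My strategy is to handle them in that order, relying in each step on the special geometry of the parametrizations $\kappa_n$.

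\emph{Diagonal form.} By the construction of the $\kappa_m$'s (either via the $\kappa_x$ recipe in $\mathcal{C}$ or via $\kappa_{j,k,\tau,z}$), the vectors $D\kappa_m(0)\cdot(1,0,0)$, $D\kappa_m(0)\cdot(0,1,0)$ and $D\kappa_m(0)\cdot(0,0,1)$ lie respectively in $E^u_{x_m}$, $E^s_{x_m}$ and $\mathbb{R} X(x_m)$; for the $\kappa_{j,k,\tau,z}$ this is built into $\Xi_{j,k,\tau,z}$ via the vectors $(1,y^u,t^u)$ and $(y^s,1,t^s)$ together with the fact that $\Upsilon_{j,k,\tau}(\cdot,\tau+\cdot)$ sends $(0,0,1)$ to the flow direction. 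Since $D\varphi_{T_0}(x_n)$ preserves the three invariant line bundles $E^u, E^s, E^0$, conjugating by the parametrizations gives an operator $DF_n(0)$ that stabilizes each coordinate axis, hence is diagonal. The third diagonal entry equals $1$ because $X$ is $\varphi_t$-invariant (i.e.\ $D\varphi_{T_0}X(x_n)=X(x_{n+1})$) and because $D\kappa_n(0)\cdot(0,0,1)=X(x_n)$ by construction.

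\emph{Per-index/per-segment bounds.} If $n$ is of type II, \eqref{eq:explicit_transition} provides the explicit expression $F_n(y,s)=(\mathbf{F}_{j,k,\tau,z}(y),\,s - R_{j,k,\tau,z}(\mathbf{F}_{j,k,\tau,z}(y)))$, so $a_n = \lambda_{j,k,\tau,z}$ and $b_n = \mu_{j,k,\tau,z}$, and \eqref{eq:hyperbolicity_singular} gives $|a_n|\geq(1+\lambda_{\min})/2$ and $|b_n|\leq(1+\mu_{\max})/2$. If $n$ is of type I, both $\kappa_n$ and $\kappa_{n+1}$ lie in $\mathcal{C}$, hence $x_n,x_{n+1}\notin U_0$, and writing $D\varphi_{T_0}(x_n)\cdot u_n=a_n u_{n+1}$ with $u_m=D\kappa_m(0)\cdot(1,0,0)$ together with axiom (iv) of \cref{definition:pseudo_Anosov} yields $|a_n|^{-1}\leq C_0 e^{-\nu T_0}|u_{n+1}|/|u_n|\leq C_0 e^{-\nu T_0}B_1^{2}$. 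More importantly, for a maximal block of $k$ consecutive type I indices $n,n+1,\dots,n+k-1$, all the endpoints $x_n,\dots,x_{n+k}$ are outside $U_0$ (their $\kappa$'s lie in $\mathcal{C}$), so applying hyperbolicity over time $kT_0$ telescopes to
\[
\prod_{p=0}^{k-1}|a_{n+p}| \;\geq\; C_0^{-1}e^{\nu k T_0}\,|u_n|/|u_{n+k}| \;\geq\; C_0^{-1}e^{\nu k T_0}B_1^{-2},
\]
and the analogous upper bound for the $|b_{n+p}|$ product.

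\emph{Combining over a block of length $L$.} Given $n\leq -L$, decompose $\{n,\dots,n+L-1\}$ into its maximal type I segments (lengths $k_1,\dots,k_r$, total $N_I$) and its type II indices ($N_{II}=L-N_I$). Multiplying the above bounds gives
\[
\prod_{p=0}^{L-1}|a_{n+p}| \;\geq\; C_0^{-r}B_1^{-2r}\,e^{\nu N_I T_0}\,\big((1+\lambda_{\min})/2\big)^{N_{II}},
\]
and symmetrically for $|b|$. Since each of the three factors in the definition of $\mathfrak{h}$ satisfies $\max(\cdot)\geq $ each of $e^{-\nu T_0}$, $((1+\lambda_{\min})/2)^{-1}$, $(1+\mu_{\max})/2$, comparing against $\mathfrak{h}^{-L}=\max(\cdots)^{-L}\,C_0^{-1}B_1^{-2}$ reduces to an elementary arithmetic check using that the block-endpoint case $r\leq 1$ corresponds to a single application of hyperbolicity over time $LT_0$ (which is precisely what the $C_0^{1/L}B_1^{2/L}$ correction in $\mathfrak{h}$ is designed to absorb).

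\emph{Main obstacle.} The delicate point is the last step: controlling how the combinatorics of alternating type I / type II segments inside a length-$L$ block interacts with the constants $C_0, B_1$. The argument must exploit the fact that each type II index contributes a factor strictly larger than the common ``rate'' $\max(\cdots)^{-1}$ in the definition of $\mathfrak{h}$, so that the extra $C_0, B_1$ factors coming from the endpoints of additional type I sub-segments are compensated; this is where one crucially uses that $T_0>2r_{\max}$ was fixed first and then $L$ was chosen large (so that $\mathfrak{h}<1$), and the fact that whenever $\kappa_m$ is in $\mathcal{C}$ we have uniform control $|u_m|\in[B_1^{-1},B_1]$.
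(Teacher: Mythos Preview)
Your diagonal-form argument is correct and matches the paper. The difficulty is entirely in the product estimate, and there your decomposition into \emph{maximal} type I segments does not close.

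With $r$ maximal type I segments of total length $N_I$ and $N_{II}$ type II indices you obtain
\[
\prod_{p=0}^{L-1}|a_{n+p}| \;\geq\; (C_0 B_1^{2})^{-r}\,e^{\nu N_I T_0}\Bigl(\tfrac{1+\lambda_{\min}}{2}\Bigr)^{N_{II}},
\]
whereas the target is $\mathfrak{h}^{-L}=\max(\cdots)^{-L}\,(C_0B_1^{2})^{-1}$. The rate factors combine to at least $\max(\cdots)^{-L}$, so what remains is $(C_0B_1^{2})^{-r}\geq(C_0B_1^{2})^{-1}$, which fails for $r\geq 2$. Your proposed compensation---that each type II index contributes \emph{strictly} more than $\max(\cdots)^{-1}$---is not available: if $((1+\lambda_{\min})/2)^{-1}$ realizes the maximum, the type II contribution is exactly $\max(\cdots)^{-1}$ with no slack. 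And even if there were slack, a single type II index separating two type I runs would have to absorb a full factor $C_0B_1^{2}$, which has no reason to hold. Making $L$ large does not help either, since $r$ is not bounded in terms of $L$.

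The paper avoids this by a different decomposition: let $L_1$ be the initial run of type II indices and let $n+L_1+L_2-1$ be the \emph{last} type I index in the block, so that $n+L_1+L_2,\dots,n+L-1$ are all type II. On the extremal pieces use the per-step local-model bound. On the single middle piece $p\in\{L_1,\dots,L_1+L_2-1\}$ (which may well contain type II indices internally), observe that $DF_m(0)=D\kappa_{m+1}(0)^{-1}\,D\varphi_{T_0}(x_m)\,D\kappa_m(0)$ holds for \emph{every} $m$ (type I or II), so the product telescopes to
\[
D(F_{n+L_1+L_2-1}\circ\cdots\circ F_{n+L_1})(0)=D\kappa_{n+L_1+L_2}(0)^{-1}\,D\varphi_{L_2T_0}(x_{n+L_1})\,D\kappa_{n+L_1}(0).
\]
Since the first and last type I indices force $\kappa_{n+L_1},\kappa_{n+L_1+L_2}\in\mathcal{C}$, both endpoints $x_{n+L_1},x_{n+L_1+L_2}$ lie outside $U_0$, and a \emph{single} application of the flow hyperbolicity over time $L_2T_0$ gives $\prod_{p=L_1}^{L_1+L_2-1}|a_{n+p}|\geq C_0^{-1}B_1^{-2}e^{\nu L_2T_0}$. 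Now only one factor $C_0^{-1}B_1^{-2}$ appears, exactly matching the $C_0^{1/L}B_1^{2/L}$ correction built into $\mathfrak{h}$.

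The missing idea in your approach is thus that the hyperbolicity estimate (iv) in Definition~\ref{definition:pseudo_Anosov} requires only the \emph{endpoints} of the orbit segment to avoid $U_0$; one should therefore take the middle segment as large as possible (first to last type I index) rather than restricting to maximal type I runs.
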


\begin{proof}
It follows from the invariance of the unstable, stable and neutral directions (eventually continuously extended at $0$ in a half-space chart) under the action of the flow that the matrix of $DF_n(0)$ is diagonal. Since in addition $\kappa_n$ and $\kappa_{n+1}$ are flow boxes, we find that $DF_n(0)$ indeed has the form \eqref{eq:derivative_flow_coordinates}.

Let us estimate now $A_n = \prod_{p = 0}^{L-1} |a_{n+p}|$. The other product may be estimated similarly. We split the orbit of $\varphi_{T_0}$ from $x_n$ to $x_{n+L}$ in three. Let $L_1$ be the largest integer such that $n, n +1,\dots, n + L_1 - 1$ are of type II ($L_1 = 0$ if $n$ is of type I). We let then $L_2$ be the largest integer such that $n + L_1 + L_2 - 1$ is of type I. Finally, we set $L_3 = L - L_2 - L_1$. The idea of the proof is that hyperbolicity follows from the local model when the orbit stays close to the singular orbits and and from \ref{item:hyperbolic} in \cref{definition:pseudo_Anosov} for long orbits with endpoints away from the singular orbits. Hence, we decomposed our orbit in three (potentially empty) parts on which we can prove hyperbolicity: the extremal parts are close to a singular orbit and the middle part has its endpoints away from the singular orbits.

If $p \in \set{0,\dots, L_1 - 1} \cup \set{L_1 + L_2,\dots, L - 1}$, then we know that $n+p$ is of type II. Consequently $a_{n+p} = \lambda_{j,k,\tau,z}$ and $b_{n+p} = \mu_{j,k,\tau,z}$ for some $(j,k,\tau) \in \mathcal{A}$ and $z \in [ - \alpha_2,\alpha_2] \times [0,\alpha_2]$. Hence, it follows from \cref{eq:hyperbolicity_singular} that $|a_p| \geq (1 + \lambda_{\min})/2$.

Notice that $D(F_{n+ L_1 + L_2 - 1} \circ \dots \circ F_{n+ L_1}) (0) = D \kappa_{n + L_1 + L_2}(0)^{-1} \circ D \varphi_{L_2 T_0}\circ D \kappa_{n + L_1}(0)$. Thus
\begin{equation*}
\begin{split}
\left|\prod_{p = L_1}^{L_1 + L_2 - 1} a_p \right| & = |D \kappa_{n + L_1 + L_2}(0)^{-1} \circ D \varphi_{L_2 T_0}  (\kappa_{n+L_1}(0)) \circ D \kappa_{n + L_1}(0) \cdot (1,0,0)| \\
    & \geq C_0^{-1} e^{\nu L_2 T_0} B_1^{-2}.
\end{split}
\end{equation*}
Here, we used the fact that $\kappa_{n+L_1}$ and $\kappa_{n+L_1+L_2}$ belongs to $\mathcal{C}$ to get that the images of $0$ by these maps are not in $U_0$ and apply the hyperbolicity estimate for the flow.

Hence, we have
\begin{equation*}
\begin{split}
A_n & = \prod_{p = 0}^{L_1 - 1} a_{n+p} \left| \prod_{p = L_1}^{L_1 + L_2 - 1} a_{n+p}\right| \prod_{p = L_1 + L_2}^{L-1} a_{n+p} \\
     & \geq \left( \frac{1+ \lambda_{\min}}{2}\right)^{L_1 + L_3} C_0^{-1} B_1^{-2} e^{\nu L_2 T_0} \geq \mathfrak{h}^{-L} >1.
\end{split}
\end{equation*}
\end{proof}

Then, let $\chi : \mathbb{R}^3 \to [0,1]$ be a $C^\infty$ function identically equal to $1$ on $[-1,1]^3$ and vanishing outside of $[-2,2]^3$ and define for some $\delta_1 >0$ very small the map
\begin{equation}\label{eq:extension_diffeo}
\widetilde{F}_n : z \mapsto \chi(\delta_1^{-1} z) F_n(z) + (1 - \chi(\delta_1^{-1} z)) D F_n(0) \cdot z.
\end{equation}
Then, for $\delta_1$ small enough (uniform in $n$ and $x$), $\widetilde{F}_n$ is a diffeomorphism from $\mathbb{R}^3$ to itself that coincides with $F_n$ on $[-\delta_1,\delta_1]^3$.

Moreover, provided $\delta_1$ is small enough, $\widetilde{F}_n$ is $C^1$ close to $DF_n(0)$ and consequently, one can apply a graph transform argument to construct unstable manifolds for the family of maps $(\widetilde{F}_n)_{n \leq - 1}$. See for instance the statement of the Hadamard--Perron Theorem in \cite[Theorem 6.2.8]{katok_hasselblatt_book}. Technically speaking, the version of the Hadamard--Perron Theorem from this reference does not apply directly here, let us explain how to bypass these technicalities:
\begin{itemize}
\item the $\widetilde{F}_n$ are only defined for $n \leq -1$, but it does not matter since we only are constructing unstable manifolds (we could set $\widetilde{F}_n = DF_{-1}(0)$ for $n \geq 0$ if we really want to have it defined);
\item with the notation from the reference, we have $\lambda = 1$, so that as stated \cite[Theorem 6.2.8]{katok_hasselblatt_book} does not give the smoothness of the unstable manifolds, but see the beginning of the step 5 of the proof (p. 254);
\item the family of maps $(\widetilde{F}_n)_{n \leq -1}$ does not satisfy directly the required hyperbolicity conditions: we need to iterate in \cref{lemma:derivative_graph_transform} to witness the hyperbolicity. But this issue is not dynamically relevant. One can reduce to the setting of \cite[Theorem 6.2.8]{katok_hasselblatt_book} for instance by using an $n$-dependent norm (but with a dependence on $n$ that is periodic of period $L$). Another solution is to apply the result to the families of maps $(F_{nL + p} \circ \dots \circ F_{nL + p - L + 2} \circ F_{nL +p - L +1})_{n \leq - 1}$ for $p = 0,\dots, L-1$.
\end{itemize}
From the Hadamard--Perron Theorem, we get for each $n \leq 0$ a $C^\infty$ curve $\widetilde{W}_n$ such that
\begin{itemize}
\item $\widetilde{W}_n$ is a graph over the unstable direction
\begin{equation*}
\widetilde{W}_n = \set{(x, f_n(x), g_n(x)) : x \in \mathbb{R}},
\end{equation*}
where $f_n$ and $g_n$ are $C^\infty$ (locally uniformly in $x_0$ and $n$);
\item $\widetilde{F}_n(\widetilde{W}_n) = \widetilde{W}_{n+1}$ for $n \leq - 1$;
\item there is $\varrho < 1$ such that for every $z \in \widetilde{W}_n$ we have $$\n{\widetilde{F}_{n -L + 1}^{-1} \circ \dots \circ \widetilde{F}_{n-1}^{-1} \circ \widetilde{F}_n^{-1} z} \leq \varrho^L \n{z};$$
\item if $z \in \mathbb{R}^3$ and $m \leq 0$ are such that there is $C > 0$ such that $$\n{\widetilde{F}_{m-n}^{-1} \circ \dots \circ \widetilde{F}_{m-1}^{-1}(z)} \leq C \varrho^{n}$$ for every $n \geq 0$, then $z \in \widetilde{W}_m$.
\end{itemize}

We want now to use these manifolds to understand the local unstable manifold of $x_0$. Let us pick some very small $\delta_2 \in (0,\delta_1)$ and define $W_n = \widetilde{W}_n \cap [- \delta_2,\delta_2]^3$ for $n \leq 0$. Notice that $W_n$ is contained in the domain where $F_n$ and $\widetilde{F}_n$ coincides.

\begin{lemma}\label{lemma:manifold_makes_sense}
Provided $\delta_2$ is small enough (uniformly in $x_0$), then for every $n \leq 0$ the manifold $W_n$ is contained in $H_n$ (and thus in the domain of $\kappa_n$).
\end{lemma}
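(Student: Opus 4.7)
\emph{Plan of proof.} The plan is to reduce the claim to a question of flow-invariance and then invoke the uniqueness of the graph transform. I would first observe that $H_n = \mathbb{R}^3$ unless $\kappa_n = \kappa_{j,k,\tau,z}$ is a singular chart arising from a type II step at index $n-1$, in which case $H_n = \Xi_{j,k,\tau,z}^{-1}(\overline{\mathbb{H}} \times \mathbb{R})$ is a genuine half-space. The statement is vacuous in the first case, so one may restrict attention to the second. In that situation $\partial H_n$ is an affine plane, and by \ref{item:coordinates_local_weak_unstable} of \cref{lemma:properties_chart_near_singular} this plane is precisely the $\kappa_n$-preimage of the local weak unstable manifold $W_{\textup{sing}}^{\textup{lu}}$ of the relevant singular orbit $\gamma_j$.

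The main input I would then use is that $W_{\textup{sing}}^{\textup{lu}}$ is invariant under $\varphi_{-T_0}$, so that the genuine transition map $F_m$ sends the portion of $\partial H_m$ near $0$ into $\partial H_{m+1}$. Combined with the continuity of $F_m$ and the fact that $0 \in H_m$ for every $m$, this shows that $F_m$ preserves each side of the half-space in a neighbourhood of $0$. Consequently the restricted sequence $(F_m|_{H_{m+1}})_{m \leq -1}$ is a hyperbolic sequence of maps between half-spaces, and running the usual graph transform on this restricted sequence produces an invariant graph through $0$ that lies in each $H_n$: this is nothing but the true local strong unstable manifold $W^u_\epsilon(x_n)$ read in $\kappa_n$-coordinates.

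To conclude, I would invoke the uniqueness of the attracting fixed point of the graph transform: any graph tangent to the unstable direction at $0$ with exponential backward contraction must coincide near $0$ with $\widetilde W_n$. Applied to the restricted graph produced above, this gives $\widetilde W_n \cap [-\delta_2,\delta_2]^3 \subseteq H_n$ for $\delta_2$ small enough, independently of $x_0$ (since all the hyperbolicity constants in \cref{lemma:derivative_graph_transform}, the size of the flow box $\varpi$, and the cutoff radius $\delta_1$ in \cref{eq:extension_diffeo} are uniform). The main obstacle to making this argument rigorous is the degenerate case where the auxiliary parameter of some chart satisfies $z_2 = 0$, so that $0 \in \partial H_m$ and the half-space collapses onto its boundary at the origin. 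In that limiting regime the restricted graph-transform dynamics only acts on a closed half-neighbourhood of $0$, and one must use the continuity of the unstable direction up to the boundary provided by \ref{item:a_priori_direction} of \cref{definition:pseudo_Anosov} to show that this restricted transform still admits a unique attracting graph, which then coincides with $\widetilde W_n$ by the same uniqueness principle.
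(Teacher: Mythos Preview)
Your proposal has a genuine gap. The central claim — that one can ``run the usual graph transform on the restricted sequence $(F_m|_{H_{m+1}})$'' and obtain an invariant graph lying in each $H_n$ — is precisely the content of the lemma and is not justified by your argument. Two concrete problems:

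\begin{itemize}
\item The globally extended maps $\widetilde F_m$ from \eqref{eq:extension_diffeo} do \emph{not} preserve the half-spaces. Outside $[-\delta_1,\delta_1]^3$ they equal the diagonal linearization $DF_m(0)$, whereas $\partial H_{m+1}$ is the affine plane $\{x_2 + x_1\, y^u_{j,k,\tau}(z) = -z_2\}$, which is not invariant under a diagonal map unless $z_2=0$ and $y^u(z)=0$. So there is no global ``hyperbolic sequence of maps between half-spaces'' on which to run Hadamard--Perron.
\item Even locally, there is no off-the-shelf half-space graph transform here. In the generic (non-degenerate) case $z_2>0$, the origin lies strictly inside $H_n$ and the unstable direction is nearly tangent to $\partial H_n$; whether the unstable graph through $0$ crosses $\partial H_n$ is exactly the question at hand. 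Your appeal to ``the true local strong unstable manifold $W^u_\epsilon(x_n)$ read in $\kappa_n$-coordinates'' is circular, since the graph structure of $W^u_\epsilon(x_n)$ is what this construction is meant to establish.
\end{itemize}

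The paper's proof proceeds differently, by contradiction. Assuming some $y_n\in W_n\cap\partial H_n$, one tracks it backward: since $\partial H_m = \kappa_m^{-1}(W_{\textup{sing}}^{\textup{lu}})$ and $W_{\textup{sing}}^{\textup{lu}}$ is backward invariant, every backward iterate $y_m$ stays on $\partial H_m$. These iterates also tend to $0$ (by contraction along $\widetilde W_m$), so by \cref{lemma:actually_unstable} the base point $x_n$ itself lies on $W_{\textup{sing}}^{\textup{lu}}$. Only in this degenerate case does one get $z_2=0$, $y^u(z)=0$, and hence $\partial H_m = \mathbb{R}\times\{0\}\times\mathbb{R}$ for all $m\le n$, a fixed plane that \emph{is} globally $\widetilde F_m$-invariant. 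One then runs the graph transform inside this two-dimensional plane (not a half-space) and uses uniqueness to force $W_n\subseteq\partial H_n$, contradicting the assumption. So your intuition that $z_2=0$ is special is correct, but the non-degenerate case is not handled by a restricted graph transform; it is handled by the backward-tracking argument that reduces everything to the degenerate case.
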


\begin{proof}
It follows from \cref{eq:derivative_flow_coordinates} that for every $n \leq 0$, the vector $(1,0,0)$ is tangent to $W_n$ at $0$. Hence, provided $\delta_2$ is small enough, the manifold $W_n$ is connected. Consequently, if there is $n \leq 0$ such that $W_n$ is not contained in $H_n$, then there is a point $y_n \in W_n \cap \partial H_n$. Define for $m < n$ the point $y_m = \widetilde{F}_m^{-1} \circ \dots \circ \widetilde{F}_{n-1}^{-1} y_n$. It follows then from the properties of the $\widetilde{W}_m$'s that there is a constant $C > 0$ (that does not depend on $x_0$ nor $n$) such that for every $m \leq n$ we have $y_m \in [- C \delta_2,C \delta_2]^3$ and that $y_m \underset{m \to - \infty}{\to} 0$. Notice also that, since we clearly have $H_n \neq \mathbb{R}^3$, we must have $\kappa_n(y_n) \in W_{\textup{sing}}^{\textup{lu}}$.

Let us prove by induction that for every $m \leq n$ the point $y_m$ belongs to $ H_m$. This is clearly true for $m = n$. Now, assume that $y_m,\dots,y_n$ belong respectively to $H_m,\dots,H_n$ and let us prove that $y_{m-1}$ belongs to $H_{m-1}$. If $H_{m-1} = \mathbb{R}^3$, this is clear, so assume that $H_{m-1} \neq \mathbb{R}^3$. Since $y_m,\dots,y_n$ belong respectively to $H_m,\dots,H_n$, and are all in a small neighbourhood of zero, we find that $y_{m-1}$ belongs to the domain of $\kappa_{m-1}$ and that $\kappa_{m-1}(y_{m-1}) = \varphi_{(m-1-n) T_0} \circ \kappa_n(y_n) \in W_{\textup{sing}}^{\textup{lu}}$. Here we used that the local unstable manifold of the singularities is backward invariant by the flow. Using point \ref{item:coordinates_local_weak_unstable} in \cref{lemma:properties_chart_near_singular}, we find that $y_{m-1} \in \partial H_{m-1} \subseteq H_{m-1}$.

Thus, we know that for every $m \leq n$, we have $y_m \in H_m$, and it implies, provided $\delta_2$ is small enough, that, for every $m \leq n$, the point $y_m$ belongs to the domain of $\kappa_m$ with $\kappa_m(y_m) = \varphi_{(m-n) T_0} \circ \kappa_{n}(y_n)$. Using $\kappa_n(y_n) \in W_{\textup{sing}}^{\textup{lu}}$, the limit $y_m \underset{m \to - \infty}{\to} 0$, the fact that the $\kappa_m$'s are uniformly continuous and \cref{lemma:actually_unstable}, we find that $x_n \in W_{\textup{sing}}^{\textup{lu}}$. Since $H_n \neq \mathbb{R}^3$, we must consequently have $0 \in \partial H_n$. It implies that $\kappa_n$ is of the form $\kappa_{j,k,\tau,z}$ with $z \in [ - \alpha_3,\alpha_3] \times \set{0}$. It follows that $y_{j,k,\tau}^u(z) = 0$ (because the unstable direction of the flow is supposed to be tangent to the local weak unstable of the local model for the singular orbits), and thus that $\partial H_n = \mathbb{R} \times \set{0} \times \mathbb{R}$. One can then prove by induction using the formula for the $\widetilde{F}_m^{-1}$'s near $0$ that for every $m < n$, the integer $m$ is of type II and $\partial H_m = \mathbb{R} \times \set{0} \times \mathbb{R}$. Notice then that the maps $(\widetilde{F}_m)_{m < n}$ let the plane $\mathbb{R} \times \set{0} \times \mathbb{R}$ invariant and that their restrictions to this plane satisfy the hyperbolicity conditions required to construct a family of unstable manifolds by a graph transform argument (we just removed one directions that was not expanded). This produces a new family of unstable manifolds for the $\widetilde{F}_m$'s with $m < n$, but these new manifolds are contained in $\mathbb{R} \times \set{0} \times \mathbb{R}$. These new unstable manifolds have to coincide with the old ones (for instance because the constant $\varrho$ appearing in these two constructions can be chosen to be the same). In particular, we find that $W_n \subseteq \partial H_n \subseteq H_n$, a contradiction.
\end{proof}

Let us now consider the ``manifold'' $W = \kappa_0(W_0)$, which is well-defined thanks to \cref{lemma:manifold_makes_sense}. Notice that $W$ is smooth unless it contains a point on a singular orbit, and that in that case it is smooth in a half-space parametrization. By comparing $W$ to the local unstable manifold of $x_0$, we will prove most of \cref{proposition:unstable_manifolds}. Let us start by proving that $W$ is tangent to the unstable direction (away from the singular orbits and in half-space parametrizations).

\begin{lemma}\label{lemma:manifold_tangent_unstable}
Provided $\delta_2$ is small enough, for every $n \leq 0$, the curve $W_n$ is tangent to the direction $\widetilde{E}_{u,n}$ obtained by pulling-back $E_u$ by $\kappa_{n}$ (and eventually extended by continuity).
\end{lemma}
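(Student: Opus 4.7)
The approach is via an invariant cone argument in the charts $\kappa_m$. At each $y$ in the domain of $\kappa_m$, I would define the unstable cone $C^u_m(y,\eta)$ of directions making angle at most $\eta$ with $\widetilde{E}_{u,m}|_y$, the pullback of the unstable direction by $\kappa_m$, continuously extended across $\partial H_m$ by item \ref{item:a_priori_direction} of \cref{definition:pseudo_Anosov}. The goal is to prove that $TW_m|_{y_m}$ is contained in $C^u_m(y_m,\eta)$ for arbitrarily small $\eta$, which by one-dimensionality forces equality with $\widetilde{E}_{u,m}|_{y_m}$.

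First I would establish uniform forward cone invariance: for $\eta>0$ small enough and $y_m$ sufficiently close to $0$ in its chart, $DF_m(y_m)\cdot C^u_m(y_m,\eta)\subseteq C^u_{m+1}(y_{m+1},\eta')$ with a uniform estimate on $\eta'/\eta$. At the origin this is immediate from \cref{lemma:derivative_graph_transform}: $DF_m(0)$ is diagonal with entries $(a_m,b_m,1)$, so (since $|b_m|<1$) the cone around $(1,0,0)$ contracts by exactly $|a_m|^{-1}$, and $L$ successive applications give contraction by $\mathfrak{h}^L$. Away from $0$ the estimate follows by perturbation using three facts uniform in $m$: the axis $\widetilde{E}_{u,m}$ is invariant under $DF_m$ (pullback of the $\varphi$-invariance of $E^u$); the derivative $DF_m$ is uniformly close to $DF_m(0)$ on a small neighbourhood of $0$ (uniform $C^\infty$ control on the maps $\mathbf F_{j,k,\tau,z}$ and the smooth charts in $\mathcal C$); and $\widetilde{E}_{u,m}$ is uniformly continuous on the chart up to $\partial H_m$ (which is the key input of item \ref{item:a_priori_direction}).

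Second, the tangent line $TW_m|_{y_m}$ automatically lies in a uniform cone around the unstable direction. Indeed $W_m$ is the graph $\{(x,f_m(x),g_m(x))\}$ with $f_m,g_m$ having uniformly bounded $C^1$-norms (Hadamard--Perron output), so $TW_m|_{y_m}$ is spanned by a vector of the form $(1,f_m'(y_m^{(1)}),g_m'(y_m^{(1)}))$ whose slope is universally bounded; together with boundedness of $\widetilde{E}_{u,m}$ this gives $TW_m|_{y_m}\subseteq C^u_m(y_m,M)$ for some universal $M$.

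Finally the conclusion follows by forward iteration: fix $n\leq 0$ and $y\in W_n$. Hadamard--Perron gives $\|y_m\|\leq \varrho^{n-m}\|y\|\to 0$, so for $m$ sufficiently negative $y_m$ lies in the region where cone contraction applies. By invariance, $TW_n|_y=DF_{n-1}(y_{n-1})\circ\cdots\circ DF_m(y_m)\cdot TW_m|_{y_m}$, and combining the uniform bound $TW_m|_{y_m}\subseteq C^u_m(y_m,M)$ with the cone contraction yields $TW_n|_y\subseteq C^u_n(y,M\,(\mathfrak{h}(1+\epsilon)^{1/L})^{n-m})$ where $\epsilon$ can be made as small as we please by shrinking $\delta_2$. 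Choosing $\epsilon$ so that $\mathfrak{h}(1+\epsilon)^{1/L}<1$ and letting $m\to -\infty$ shrinks the cone to $\widetilde{E}_{u,n}|_y$, forcing $TW_n|_y = \widetilde{E}_{u,n}|_y$. The main obstacle is making the cone contraction genuinely uniform as $y_m$ approaches $\partial H_m$ (the singular orbit); this is exactly what item \ref{item:a_priori_direction} of \cref{definition:pseudo_Anosov} together with the careful choice of charts in \cref{subsubsection:family_charts} provides.
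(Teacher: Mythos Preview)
Your cone-field argument is correct and is the standard textbook route to this kind of statement. The paper takes a closely related but more explicit approach: it picks $y\in W_n$, writes $y_p$ for its backward iterates in blocks of $L$, and then decomposes the tangent vector $(1,f_{n+pL}'(z_p),g_{n+pL}'(z_p))$ directly in the invariant frame $(\widetilde{E}_{u},\widetilde{E}_{s},\text{flow})$ at $y_p$, obtaining an iteration formula for the coefficients via the relation $G_p(\widetilde{W}_{n+(p-1)L})=\widetilde{W}_{n+pL}$. Hyperbolicity forces the stable coefficient to decay exponentially and the flow coefficient to stay bounded while the normalizing expansion factor diverges, which leaves only the unstable component in the limit. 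Your cones are the geometric repackaging of exactly this computation: the ``aperture'' is a proxy for the ratio of stable-plus-flow to unstable coefficient, and its contraction encodes the same decay. Both proofs rest on the same three inputs you identified---block hyperbolicity from \cref{lemma:derivative_graph_transform}, invariance of the pulled-back unstable direction under $DF_m$ along the orbit (which holds on $H_m$ by \cref{lemma:manifold_makes_sense}), and the uniform continuity from item~\ref{item:a_priori_direction}---so neither is materially simpler, but yours is arguably more transparent for a reader familiar with cone-field criteria.

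One minor correction: you assert $|b_m|<1$ for each $m$, which \cref{lemma:derivative_graph_transform} does not give (type~I steps only satisfy the $L$-block bound $\prod|b_{m+p}|\le\mathfrak h^L$). This is harmless, since the presence of the neutral flow eigenvalue $1$ already means the per-step cone contraction at the origin is governed by $\max(|b_m|,1)/|a_m|$, and what you actually need---and correctly state---is the $L$-block contraction $\prod_{p=0}^{L-1}\max(|b_{m+p}|,1)/|a_{m+p}|\le\mathfrak h^L$, which follows from $\prod|a_{m+p}|\ge\mathfrak h^{-L}$ alone.
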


\begin{proof}
Pick a point $y \in W_n$, and define for $m \leq 0$ the point $y_m = \widetilde{F}_{n+ m}^{-1} \circ \dots \circ \widetilde{F}_{n-1}^{-1} y$. Notice that for every $p \leq 0$, the point $y_{n+p}$ belongs to $\widetilde{W}_{n+p} \cap [- C\delta_2,C\delta_2]^3$, for some $C > 0$ uniform in $x_0,y,n$ and $p$ (as follows from the properties of the $\widetilde{W}_m$'s). 

For every $p \leq 0$, let us consider the map $G_p = \widetilde{F}_{n-1 + p L} \circ \dots \circ \widetilde{F}_{n + (p-1) L}$. From the relation $G_p ( \widetilde{W}_{n + (p-1) L}) = \widetilde{W}_{n+ p L}$, we deduce that for every $x \in \mathbb{R}$ we have
\begin{equation*}
G_p(x, f_{n+ (p-1)L} (x) , g_{n+ (p-1) L} (x)) = (h_p(x), f_{n+ pL}(h_p(x)), g_{n+ pL}(h_p(x)))
\end{equation*}
for some $C^\infty$ map $h_p : \mathbb{R} \to \mathbb{R}$ such that $h_p(0) = 0$, that is smooth and expanding (uniformly in $p$). Thus, we have for every $x \in \mathbb{R}$
\begin{equation*}
\begin{split}
& h_p'(h_p^{-1}(x)) (1, f_{n + p L}'(x) , g_{n + p L}'(x)) \\ & \qquad \qquad = D G_p (h_p^{-1}(x), f_{n + (p+1)L} (h_p^{-1}(x)), g_{n + (p+1)L}^{-1}(x)) \\ & \qquad \qquad \qquad \qquad \qquad \qquad \qquad \qquad \qquad \qquad \cdot (1, f_{n + (p+1)L}'(x), g_{n + (p+1)L}'(x))
\end{split}
\end{equation*}
Writing $y_p = (z_p, f_{n + p L} (z_p) , g_{n + p L}(z_p))$, we find iterating the relation above that
\begin{equation}\label{eq:iteration_hyperbolic}
(1 , f_n'(z_0), g_n'(z_0) ) = \frac{D G_1( y_{-1}) \circ \dots \circ D G_{p}(y_{p}) \cdot (1, f_{n+ p L }'(z_p), g_{n + p L}'(z_p))}{h_1'(z_{-1}) \dots h_{p }'(z_p)}.
\end{equation}

Let then $(1, v_p,w_p)$ and $(\tilde{v}_p, 1, \tilde{w}_p)$ span respectively $\widetilde{E}_{u,n + p L}$ and $\widetilde{E}_{s,n+ p L}$ at the point $y_p \in E_{n + p L} \cap W_{n + p L}$ (when $\delta_2$ is small enough, we can normalize these vectors in this way). Write 
\begin{equation*}
(1, f_{n+ p L }'(z_p), g_{n + p L}'(z_p)) = a_p (1, v_p, w_p) + b_p (\tilde{v}_p, 1, \tilde{w}_p) + c_p (1,0,0)
\end{equation*}
and notice that the coefficients $a_p, b_p$ and $c_p$ must be bounded uniformly in $p$ (because of the hyperbolic splitting, \cref{lemma:continuity_unstable} and the assumption that the stable and unstable directions are continuous in half-space parametrizations). It follows then from \cref{eq:iteration_hyperbolic} that
\begin{equation*}
(1, f_n'(z_0), g_n'(z_0)) = \frac{\tilde{a}_p ( 1 , v_0, w_0) + \tilde{b}_p (\tilde{v}_0,1,\tilde{w}_0) + c_p(1,0,0)}{h_0'(z_{-1}) \dots h_{p + 1}'(z_p)},
\end{equation*}
for some coefficients $\tilde{a}_p,\tilde{b}_p$. Working as in the proof of \cref{lemma:derivative_graph_transform}, we deduce from the hyperbolicity of the flow that $\tilde{b}_p$ decays exponentially fast with $p$. We must consequently have $\tilde{a}_p/ h_0'(z_{-1}) \dots h_{p + 1}'(z_p) \underset{ p \to - \infty}{\to} 1$, and it follows that $(1, f_n'(z_0), g_n'(z_0)) = (1,v_0,w_0)$.
\end{proof}

\begin{lemma}\label{lemma:knot}
Assume that $\delta_2$ is small enough. Let $V$ be a neighbourhood of the singular orbits of the flow. There is $\epsilon_1 > 0$ (that does not depend on $x_0$) such that for every $\epsilon \in (0,\epsilon_1)$ the intersection of $W_\epsilon^u(x_0)$ with $\kappa_0([- \delta_2,\delta_2]^3 \cap H_0)$ is a neighbourhood of $x$ in $W$ (of uniform size). Moreover, if $x_0$ does not belong to $V \cap W_{\textup{sing}}^{\textup{lu}}$ then $W_\epsilon^u(x)$ is contained in $\kappa_0([- \delta_2,\delta_2]^3 \cap H_0)$ (and is thus a neighbourhood of $x_0$ in $W$).
\end{lemma}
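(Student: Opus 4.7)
The strategy is to identify the graph–transform manifold $W=\kappa_0(W_0)$ with $W^u_\epsilon(x_0)$ locally around $x_0$, by showing inclusions in both directions via the common characterisation through exponential backward contraction. The \emph{moreover} statement is then a separate matter, handled by invoking \cref{lemma:unstable_chambers}.

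First I would prove the inclusion $\kappa_0(W_0) \subseteq W^u_\epsilon(x_0)$ in a uniform neighbourhood of $x_0$. Pick $w \in W_0$ close to $0$ and set $y = \kappa_0(w)$. By the Hadamard--Perron construction the backward iterates $w_n \coloneqq \widetilde F_{n+1}^{-1}\circ\cdots\circ\widetilde F_{-1}^{-1}(w) \in \widetilde W_n$ satisfy $|w_n|\le C\varrho^{|n|}|w|$ for some $\varrho<1$. For $|w|$ small enough these stay in the cube on which $\widetilde F_n$ and $F_n$ coincide, and by \cref{lemma:manifold_makes_sense} they stay in $H_n$, so $\kappa_n(w_n)$ is well defined and equals $\varphi_{nT_0}(y)$. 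Uniform $C^1$ control on the $\kappa_n$ then yields $d(\varphi_{nT_0}(y),x_n)\le C'\varrho^{|n|}$, and interpolation over intervals of length $T_0$ using uniform continuity of the flow gives $y\in W^u_\epsilon(x_0)$ provided $|w|$ is small in a way independent of $x_0$.

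For the reverse inclusion, take $y \in W^u_\epsilon(x_0) \cap \kappa_0([-\delta_2,\delta_2]^3 \cap H_0)$ close to $x_0$ and write $y = \kappa_0(w)$. Because the backward orbit of $y$ stays within $\epsilon$ of that of $x_0$, for $\epsilon$ small it lies in the range of $\kappa_n$ at time $nT_0$, and we may set $w_n = \kappa_n^{-1}(\varphi_{nT_0}(y))$. Uniform bi-Lipschitz bounds on the charts give $|w_n|\le C\epsilon$, and the relation $\varphi_{-T_0}\circ\kappa_{n+1}=\kappa_n\circ F_n^{-1}$ upgrades to $\widetilde F_n(w_n)=w_{n+1}$ once $w_n$ is small enough to sit inside the cube where $F_n=\widetilde F_n$. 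Hence the full backward orbit $(w_n)_{n\le 0}$ under $(\widetilde F_n)$ is uniformly bounded, so by the Hadamard--Perron uniqueness statement $w\in\widetilde W_0\cap[-\delta_2,\delta_2]^3=W_0$, giving $y\in W$. Together with the previous step this also implies the claimed uniform lower bound on the size of the neighbourhood.

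Finally, for the \emph{moreover} part, suppose $x_0\notin V\cap W^{\textup{lu}}_{\textup{sing}}$. If $x_0$ lies outside a sufficiently small neighbourhood of the singular orbits then $\kappa_0$ is a genuine $\mathbb R^3$-chart of uniform size and $W^u_\epsilon(x_0)$ trivially lies in $\kappa_0([-\delta_2,\delta_2]^3)$ for $\epsilon$ small. Otherwise $x_0\in V\setminus W^{\textup{lu}}_{\textup{sing}}$, so $x_0=\Upsilon_{j,k,\tau}(z,0)$ with the second coordinate of $z$ uniformly bounded below. Suppose for contradiction some $y\in W^u_\epsilon(x_0)$ exits $\kappa_0([-\delta_2,\delta_2]^3\cap H_0)$. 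Walking along $W^u_\epsilon(x_0)$ from $x_0$ to the first crossing, we obtain a point $y'$ which lies in a different angular sector $\mathfrak A^{n_j}_{k'}$ arbitrarily close to $x_0$, hence in a common suspension time-slice of length at most $c<r_{\min}$. Applying \cref{lemma:unstable_chambers} to the pair $(x_0,y')$ we get either $d(\varphi_{-t_0}(x_0),\varphi_{-t_0}(y'))\ge\epsilon_0$ while both remain in the larger neighbourhood (contradicting $y'\in W^u_\epsilon(x_0)$ for $\epsilon<\epsilon_0$), or both $x_0$ and $y'$ belong to $W^{\textup{lu}}_{\phi_j,r_j}$, contradicting $x_0\notin W^{\textup{lu}}_{\textup{sing}}$.

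The main obstacle is the \emph{moreover} part: one must carefully produce a pair $(x_0,y')$ satisfying the hypotheses of \cref{lemma:unstable_chambers} (same time-slice of width $\le c$, in distinct even-indexed sectors, close in $d$) from a hypothetical escape of $W^u_\epsilon(x_0)$ out of the chart, and then correctly thread the choices of $\epsilon,\delta_2$ and the ambient neighbourhood $V$ so that the separation conclusion of that lemma actually contradicts the definition of $W^u_\epsilon(x_0)$.
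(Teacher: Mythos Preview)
Your overall strategy matches the paper's, but there are two genuine gaps in the reverse inclusion that you should be aware of.

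\textbf{Gap 1: backward orbit staying in the half-space charts.} You write that since the backward orbit of $y$ stays within $\epsilon$ of that of $x_0$, for $\epsilon$ small it lies in the range of $\kappa_n$, so $w_n=\kappa_n^{-1}(\varphi_{nT_0}(y))$ is defined. This is precisely the point that fails near the singular orbits: when $H_n\neq\mathbb R^3$, the chart $\kappa_n$ only covers one angular sector, and a point $\epsilon$-close to $x_n$ may perfectly well sit in an adjacent sector and hence outside the range of $\kappa_n$. The paper devotes the bulk of the proof to ruling this out: one supposes there is a first $n$ where $\varphi_{nT_0}(y)$ escapes $\kappa_n([-\delta_2,\delta_2]^3\cap H_n)$, applies \cref{lemma:unstable_chambers} at that time to force both $x_n$ and $\varphi_{nT_0}(y)$ into $W^{\textup{lu}}_j$, and then applies \cref{lemma:unstable_chambers} \emph{again with time reversed} (using that the two points lie on distinct prongs of the local unstable, hence in distinct odd sectors) to obtain a forward separation contradicting $y\in W^u_\epsilon(x_0)$. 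Your argument places the unstable-chambers step only at time $0$ in the ``moreover'' paragraph, and simply assumes the chart containment for all $n$ in the reverse inclusion; that assumption is unjustified.

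\textbf{Gap 2: the neutral direction.} You conclude that boundedness of $(w_n)_{n\le 0}$ forces $w\in\widetilde W_0$ by Hadamard--Perron uniqueness. But the characterisation available for $(\widetilde F_n)$ is that \emph{exponential} contraction along the backward orbit implies membership in $\widetilde W_0$; mere boundedness does not suffice because the system has a neutral (flow) direction (in the paper's notation $\lambda=1$). The paper handles this by projecting out the third coordinate: one passes to the $2$-dimensional maps $\mathcal F_n(a,b)=p(\widetilde F_n(a,b,0))$, which are genuinely hyperbolic, so boundedness of $(a_n,b_n)$ gives $(a_0,b_0)\in\widehat W_0=p(\widetilde W_0)$. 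This only yields $\varphi_{t'}(y)\in W$ for some small $t'$, and a final step using $d(\varphi_t(x_0),\varphi_t(y))\to 0$ as $t\to-\infty$ is needed to force $t'=0$.
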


\begin{proof}
It follows from the properties of the $\widetilde{W}_n$'s that for every $\epsilon > 0$ the set $W_\epsilon^u(x_0)$ contains a neighbourhood of $x_0$ in $W$. Let us work around the reciprocal inclusion.

Take a point $y \in W_\epsilon^u(x)$ and assume that $y$ belongs to $\kappa_0([-\delta_2,\delta_2]^3 \cap H_0)$ or that $x_0$ does not belong to $V \cap W_{\textup{sing}}^{\textup{lu}}$. We want to prove first that for every $n \leq 0$, the point $\varphi_{nT_0}(y)$ belongs to $\kappa_n([- \delta_2,\delta_2]^3 \cap H_n)$. By contradiction, assume that there is $n \leq 0$ such that $\varphi_{nT_0} (y) \notin \kappa_n([- \delta_2,\delta_2]^3 \cap H_n)$. By taking $\epsilon$ small enough, we can make sure that it can only happen when $H_n \neq \mathbb{R}^3$ (in particular $n-1$ is of type II). In that case, $\kappa_n([- \delta_2,\delta_2]^3 \cap H_n)$ is the intersection of a neighbourhood of $x_n$ (of uniform size in $n$ and $x_0$) with $\Upsilon_{j,k,\tau}(D_{\alpha_0})$ for some $(j,k,\tau) \in \mathcal{A}$. Hence, by taking $\epsilon$ small enough, we ensure that $\varphi_{nT_0}(y) \notin \Upsilon_{j,k,\tau}(D_{\alpha_0})$. It follows then from \cref{lemma:unstable_chambers} that the points $x_n$ and $\varphi_{nT_0} y$ belong to $W_j^{\textup{lu}}$. We can then apply \cref{lemma:unstable_chambers} again but with time reversed (replacing the unstable direction by the stable direction, we use the fact that $x_n$ and $\varphi_{nT_0} (y)$ belong to the local unstable manifold of the singular orbit but not to the same prong to check the assumption that they are in different odd-indexed angular sector). If $x_n$ and $\varphi_{nT_0} (y)$ belong to the local stable manifold of $\gamma_j$, then they must belong to the singular orbit itself and we contradict $\varphi_{nT_0} (y) \notin \Upsilon_{j,k,\tau}(D_{\alpha_0})$. Hence, we find that there is some time $t_0$ such that $\varphi_{t_0} (x_n)$ and $\varphi_{nT_0 + t_0}( y)$ are at distance more than $2 \epsilon$ (which imposes $t_0 \geq - n T_0$ since $y \in W_\epsilon^u(x)$) and that $\varphi_t(x_n)$ and $\varphi_{nT_0 + t}(y)$ remain in an arbitrarily small neighbourhood of the singular orbits for $t \in [0,t_0]$. Provided that $\epsilon$ is small enough, we find that $x_0,y \in W_j^{\textup{lu}}$ and that the integers $-1,-2,\dots, n$ are all of type II. Consequently, $y \in \kappa_0([- \delta_2,\delta_2]^3 \cap H_0)$, and we can write $y = \kappa_0(a,b,c)$ but since $ x,y \in W_j^{\textup{lu}}$, we must have $b = 0$. Thus, we find by induction, as in the proof of \cref{lemma:manifold_makes_sense}, that $\varphi_{nT_0}(y) \in \kappa_n([- \delta_2,\delta_2]^3 \cap H_n)$, a contradiction.

Now that we know that $\varphi_{nT_0}(y) \in \kappa_n([-\delta_2,\delta_2]^3 \cap H_n)$ for every $n \leq 0$, let us write $\varphi_{nT_0}(y) = \kappa_n(a_n,b_n,c_n)$ with $(a_n,b_n,c_n) \in [-\delta_2,\delta_2]^3 \cap H_n$. Let $p$ denote the projection $\mathbb{R}^3 \to \mathbb{R}^2$ obtained by removing the last coordinate. Consider then the family of maps $(\mathcal{F}_n)_{n \leq -1}$ from $\mathbb{R}^2$ to $\mathbb{R}^2$ defined by $\mathcal{F}_n(a,b) = p(\widetilde{F}_n(a,b,0))$. Using \cref{lemma:derivative_graph_transform}, we find that we can apply a graph transform argument again \cite[Theorem 6.2.8]{katok_hasselblatt_book} to construct a family of unstable manifolds $(\widehat{W}_n)_{n \leq 0}$ for the family $(\mathcal{F}_n)_{n \leq - 1}$. Moreover, using the relation $\widetilde{F}_n(a,b,c+t) = \widetilde{F}_n(a,b,c)$, which is valid for every $n \leq -1$ and $(a,b,c,t) \in \mathbb{R}^4$, we find that $\widehat{W}_n = p(\widetilde{W}_n)$ for every $n \leq 0$. The advantage of the $\mathcal{F}_n$'s is that, since we removed the flow direction, we get a family of truly hyperbolic maps ($\lambda < 1$ and $\mu > 1$ in the notation of \cite[Theorem 6.2.8]{katok_hasselblatt_book}), which implies that if $z \in \mathbb{R}^2$ is such that the sequence $(\mathcal{F}_n^{-1} \circ \dots \circ \mathcal{F}_{-1}^{-1}(z))_{n \leq 0}$ is bounded, then $z \in \widehat{W}_0$. Since we have $\mathcal{F}_n(a_n,b_n) = (a_{n+1}, b_{n+1})$, we find that $(a_0,b_0) \in \widehat{W}_0$, and thus $\kappa_0^{-1}(y) \in p(\widetilde{W}_0)$. Using in addition that $\kappa_0^{-1}(y) \in H_0 \cap [- \delta_2,\delta_2]^3$, we find that there is $t' \in [ - 2 \delta_2,2 \delta_2]$ such that $\varphi_{t'}(y) \in W$. The fact that $t' \neq 0$ would contradict $d(\varphi_t (x),\varphi_t(y)) \underset{t \to - \infty}{\to} 0$. Hence, $t' = 0$ and $y \in W$.
\end{proof}

\subsubsection{Proof of \cref{proposition:unstable_manifolds}}\label{subsubsection:proof_unstable_manifolds}

With the analysis from \cref{subsubsection:graph_transform}, we have most of the tools to prove \cref{proposition:unstable_manifolds}.

With the notation from \cref{subsubsection:graph_transform}, notice that if $x_0$ is away from the singular orbits, then the map $\kappa_0$ is uniformly smooth near $0$. Item \ref{item:standard_away_singular} follows then from Lemmas \ref{lemma:manifold_tangent_unstable} and \ref{lemma:knot}, where we can take for $U_{\textup{reg}}$ any open subset of $M$ whose closure does not intersect the singular orbits, up to making $\epsilon$ smaller. Hence, we can take for $U_{\textup{sing}}$ an arbitrarily small neighbourhood of the singular orbits and still have \ref{item:covering}. The smallness of $U_{\textup{sing}}$ ensures that \ref{item:sing_close_singular} holds.

Let us now explain why \ref{item:unstable_parallel} and \ref{item:normal_parallel} hold. If $j \in \set{1,\dots,N}, k \in \mathbb{Z}/2 n_j \mathbb{Z}$ and $\tau \in [ - r_{\max}/2, r_{\max}/2]$ are such that $k$ is even and $x \in \Upsilon_{j,k,\tau}(D_{\delta/2})$, write $x = \Upsilon_{j,k,\tau}(z,t')$. Notice then that, provided $\delta$ is small enough, $(j,k,\tau + t') \in \mathcal{A}$ and, in the construction from \cref{subsubsection:graph_transform}, we can take $\kappa_0 = \kappa_{j,k,\tau + t',z}$. The points \ref{item:unstable_parallel} and \ref{item:normal_parallel} are then deduced from Lemmas \ref{lemma:manifold_tangent_unstable} and \ref{lemma:knot} again (along with the uniformity from \ref{item:uniform_graph}).

Finally, item \ref{item:unstable_perpendicular} follows from the previous points using item \ref{item:smooth_rotation} from \cref{definition:standard_fixed_point}.

\subsection{Continuity property of local unstable manifolds}\label{subsection:continuity_unstable_manifold}

We close this section by proving the continuity properties of local unstable manifold that we stated in Propositions \ref{proposition:continuity_unstable_manifolds_regular}, \ref{proposition:continuity_unstable_manifolds_singular_parallel} and \ref{proposition:continuity_unstable_manifolds_singular_perpendicular}. These properties are needed in particular to define the Bowen bracket in \cref{subsection:bowen_bracket}.

\begin{proof}[Proof of \cref{proposition:continuity_unstable_manifolds_regular}]

The first point follows from \cref{proposition:unstable_manifolds} (we need to take $\delta_0$ small enough to ensure that $x_0 \in U_{\textup{reg}}$) and the implicit function theorem. The main point is to prove the continuity of $(t,x) \mapsto (F_x(t), F_x'(t))$. Since $D\kappa(0) (\mathbb{R} \times \set{0}) = E_{x_0}^u$, for $x$ in a neighbourhood of $x_0$, there is $G(x) \in \mathbb{R}^2$ such that $(1,G(x))$ spans $D \kappa^{-1}(x) \cdot E_x^u$. Moreover, it follows from \cref{lemma:continuity_unstable} that the function $G$ is continuous. Since the unstable manifolds are tangent to the unstable direction, we find that for $x$ close to $x_0$ and $t$ close to $0$ we have, writing $\kappa^{-1}(x) = (t_x,y_x)$,
\begin{equation}\label{eq:cauchy_problem_unstable}
\begin{cases}
F_x'(t) = G(\kappa(t,F_x(t))); \\
F_x(t_x) = y_x.
\end{cases}
\end{equation}
From the differential equation satisfied by $F_x$, we deduce that we only need to prove the continuity of $(t,x) \mapsto F_x(t)$. Moreover, it follows from \cite[Theorem 2.1, chapter V]{hartman_book} that we only need to prove that for $x$ near $x_0$, the function $F_x$ is the only local solution to the Cauchy problem \eqref{eq:cauchy_problem_unstable}.

So, let $f_x$ be another solution to this Cauchy problem. Then, with $J$ a small open interval in $\mathbb{R}$ containing $t_x$, we find that $\kappa(\set{(t,f_x(t)): t \in J})$ is a small curve in $M$ that is tangent to the unstable direction and contains the point $x$. It follows then from the mean value theorem that a neighbourhood of $x$ in this curve must belong to $W_\epsilon^u(x)$, which forces $f_x = F_x$ near $t_x$. One needs to use half-space parametrizations to deal with times $t$ such that $\varphi_{-t}(x)$ is near a singular orbit. To do so, notice that it follows from Lemma \ref{lemma:derivative_graph_transform} that the unstable direction is also contracted by the flow in large backward time in half-space parametrizations.
\end{proof}

The proofs of Propositions \ref{proposition:continuity_unstable_manifolds_singular_parallel} and \ref{proposition:continuity_unstable_manifolds_singular_perpendicular} are based on the same argument as the proof of \cref{proposition:continuity_unstable_manifolds_regular}. One just needs to work in half-space parametrizations from the beginning.

\begin{remark}
One could deduce the continuity of the dependence of $W_\epsilon^u(x)$ on $x$ (Propositions \ref{proposition:continuity_unstable_manifolds_regular}, \ref{proposition:continuity_unstable_manifolds_singular_parallel} and \ref{proposition:continuity_unstable_manifolds_singular_perpendicular}) directly from the graph transform argument that is used to prove \cref{proposition:unstable_manifolds}. The main idea is that, once the smoothness and the hyperbolicity of the maps appearing in this argument are fixed, we only need a finite number of them to get the unstable manifolds up to a fixed precision (of course, the number that is needed increases if we ask for a better accuracy) and perturbing a little these maps do not change much the resulting manifold. However, we were not aware of a reference detailing this argument in a way that would have allowed us to quote it directly here, and thus we used a more indirect but shorter argument.
\end{remark}

\section{Markov partitions for smooth pseudo-Anosov flows}\label{section:Markov_partition}

In this section, we explain how to construct a Markov partition for a transitive smooth pseudo-Anosov flow $\varphi$ on a manifold $M$. The flow $\varphi$ and the manifold $M$ are fixed for the whole section. We will use notations from \cref{section:definition_pseudo_Anosov}. In particular, the singular orbits of $\varphi$ will be denoted by $\gamma_1,\dots,\gamma_N$. This is also a standard topic in the Anosov case, see for instance \cite{bowen_markov}.

We start by introducing a Bowen bracket in \cref{subsection:bowen_bracket}. Then, we define in \cref{subsection:rectangles} the class of rectangles that appear in the Markov partition that we construct in \cref{subsection:Markov_partition}. 

\subsection{Bowen bracket}\label{subsection:bowen_bracket}

A first consequence of \cref{proposition:unstable_manifolds} is that we can define the Bowen bracket as detailed in the following lemma. The main difference with the Anosov flow case is that if $x$ and $y$ are two points close to a singular orbit, then $[x,y]$ may not be defined, even if $x$ and $y$ are very close see \cref{fig:bowen_bracket}.

\begin{figure}[h]
	\centering
\def\svgwidth{2in}
	\begingroup%
  \makeatletter%
  \providecommand\color[2][]{%
    \errmessage{(Inkscape) Color is used for the text in Inkscape, but the package 'color.sty' is not loaded}%
    \renewcommand\color[2][]{}%
  }%
  \providecommand\transparent[1]{%
    \errmessage{(Inkscape) Transparency is used (non-zero) for the text in Inkscape, but the package 'transparent.sty' is not loaded}%
    \renewcommand\transparent[1]{}%
  }%
  \providecommand\rotatebox[2]{#2}%
  \newcommand*\fsize{\dimexpr\f@size pt\relax}%
  \newcommand*\lineheight[1]{\fontsize{\fsize}{#1\fsize}\selectfont}%
  \ifx\svgwidth\undefined%
    \setlength{\unitlength}{204.88260841bp}%
    \ifx\svgscale\undefined%
      \relax%
    \else%
      \setlength{\unitlength}{\unitlength * \real{\svgscale}}%
    \fi%
  \else%
    \setlength{\unitlength}{\svgwidth}%
  \fi%
  \global\let\svgwidth\undefined%
  \global\let\svgscale\undefined%
  \makeatother%
  \begin{picture}(1,0.99763064)%
    \lineheight{1}%
    \setlength\tabcolsep{0pt}%
    \put(0,0){\includegraphics[width=\unitlength,page=1]{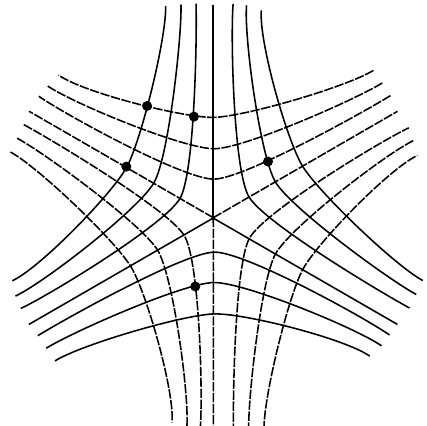}}%
    \put(0.31111296,0.59002374){\makebox(0,0)[lt]{\lineheight{1.25}\smash{\begin{tabular}[t]{l}$x$\end{tabular}}}}%
    \put(0.46985217,0.73015889){\makebox(0,0)[lt]{\lineheight{1.25}\smash{\begin{tabular}[t]{l}$y$\end{tabular}}}}%
    \put(0.64819423,0.60530152){\makebox(0,0)[lt]{\lineheight{1.25}\smash{\begin{tabular}[t]{l}$x'$\end{tabular}}}}%
    \put(0.478888,0.3070453){\makebox(0,0)[lt]{\lineheight{1.25}\smash{\begin{tabular}[t]{l}$y'$\end{tabular}}}}%
    \put(0.3167125,0.74276399){\makebox(0,0)[rt]{\lineheight{1.25}\smash{\begin{tabular}[t]{r}$[x,y]$\end{tabular}}}}%
  \end{picture}%
\endgroup%

	\caption{The flow is coming toward us. The unstable foliation is drawn with dotted lines and the stable foliation is drawn with solid lines. The Bowen bracket of $x$ and $y$ exists, but the Bowen bracket of $x'$ and $y'$ does not exist.}\label{fig:bowen_bracket}
\end{figure}

\begin{lemma}\label{lemma:bowen_bracket}
Let $\epsilon_0 > 0$ be sufficiently small. For every $x,y \in M$, there is at most one point $z \in W_{\epsilon_0}^s(x)$ such that there is $t \in [ - \epsilon_0,\epsilon_0]$ such that $\varphi_{t}(z) \in W_{\epsilon_0}^u(y)$. When this point exists, we call it $[x,y]$ and:
\begin{enumerate}[label=(\roman*)]
\item if $F$ is a closed set in $M$ that does not intersect the singular orbits of $\varphi$, then there is $\delta' > 0$ such that for every $x,y \in F$ with $d(x,y) \leq \delta'$ the point $[x,y]$ exists;
\item \label{item:bracket_near_singular} let $\delta' > 0$, there is $\delta'' > 0$ such that for every $j \in \set{1,\dots,N}, k \in \mathbb{Z}/2 n_j \mathbb{Z}$ and $ \tau \in [- r_{\max},r_{\max}]$, if $x,y \in \Upsilon_{j,k,\tau}([- \delta'',\delta''] \times [0,\delta''] \times [ - \delta'',\delta''])$, then the point $[x,y]$ exists and belong to $\Upsilon_{j,k,\tau}([- \delta',\delta'] \times [0,\delta'] \times [ - \delta',\delta'])$.
\end{enumerate}
Moreover, the map $(x,y) \mapsto [x,y]$ is continuous on its domain of definition.
\end{lemma}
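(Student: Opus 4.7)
The plan is to realize $[x,y]$, when it exists, as the unique intersection of the local stable curve $W^s_{\epsilon_0}(x)$ with the local weak unstable surface $W^{\mathrm{wu}}_{\epsilon_0}(y) := \bigcup_{|t|\le \epsilon_0} \varphi_{-t}(W^u_{\epsilon_0}(y))$. At any non-singular point, $W^s_{\epsilon_0}(x)$ is a $C^\infty$ curve tangent to $E^s$ and $W^{\mathrm{wu}}_{\epsilon_0}(y)$ is a $C^\infty$ surface tangent to $E^s \oplus E^0$; these tangent spaces together span $TM$ at any coincidence point, so the intersection is transverse. Near a singular orbit the analogous picture holds in a half-space parametrization by \ref{item:a_priori_direction} of \cref{definition:pseudo_Anosov}. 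In both cases, transversality combined with the diameter bound $\epsilon_0$ immediately gives uniqueness: two distinct points of $W^s_{\epsilon_0}(x) \cap W^{\mathrm{wu}}_{\epsilon_0}(y)$ would force either the stable curve to loop back within diameter $\epsilon_0$ or the transverse intersection to be non-isolated.

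For existence away from singular orbits (part (i)), I would apply \cref{proposition:continuity_unstable_manifolds_regular} at $x_0 \in F$ and its stable analogue to obtain $W^u_{\epsilon_0}(y)$ and $W^s_{\epsilon_0}(x)$ as $C^\infty$ graphs in a fixed smooth chart at $x_0$, depending continuously in $C^1$ on the base point. The weak unstable $W^{\mathrm{wu}}_{\epsilon_0}(y)$ is then a $C^\infty$ surface with a continuously varying tangent plane equal to $E^s_y \oplus E^0_y$ at $y$. At $(x,y) = (x_0, x_0)$ this surface meets the stable curve $W^s_{\epsilon_0}(x_0)$ transversally at $x_0$, so a parametric implicit function theorem yields a unique nearby intersection point depending continuously on $(x,y)$. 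A compactness argument on $F$ upgrades this to a uniform $\delta'$.

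For part (ii) I would do the same inside the half-space parametrization $\Upsilon_{j,k,\tau}$, using \cref{proposition:continuity_unstable_manifolds_singular_parallel} and \cref{proposition:continuity_unstable_manifolds_singular_perpendicular} together with the stable analogues (the roles of even and odd $k$ exchange; the odd case follows from the even one by \ref{item:smooth_rotation} of \cref{definition:standard_fixed_point}). For $k$ even, say, $W^u_{\epsilon_0}(y)$ appears as $\{(t, F_y(t)) : t \in (-\delta,\delta)\}$, the weak unstable $W^{\mathrm{wu}}_{\epsilon_0}(y)$ becomes the surface $\{(t, F_y(t), s_1)\}$, and $W^s_{\epsilon_0}(x)$ appears as $\{(f_x(s), s, g_x(s)) : s \in [0, \delta)\}$. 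The intersection equation reduces to the fixed-point equation $s = \pi_2(F_y(f_x(s)))$; since $F_y$ and $f_x$ have uniformly small derivatives (from the transversality of $E^u$ and $E^s$ continuously extended to the half-space boundary), the right-hand side is a uniform contraction producing a unique $s^* \in [0, \delta')$. The corresponding point then lies in $\Upsilon_{j,k,\tau}([-\delta',\delta']\times[0,\delta']\times[-\delta',\delta'])$ by construction, which is exactly the claim.

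Continuity of $(x,y) \mapsto [x,y]$ on its domain of definition follows by the same arguments with parameters: the defining equations in both (i) and (ii) depend continuously on $(x,y)$ since the graphs $F_y, f_x, g_x$ do so in $C^1$ by the second halves of \cref{proposition:continuity_unstable_manifolds_regular,proposition:continuity_unstable_manifolds_singular_parallel,proposition:continuity_unstable_manifolds_singular_perpendicular}, and continuous dependence of transverse intersections (respectively, of contraction fixed points with parameters) gives the result. The one delicate obstacle is (ii): one must verify that the solution $s^*$ is nonnegative and small, so that the intersection does not escape to a different prong or leave the chart. This is exactly why the hypothesis places $x$ and $y$ in the image of the \emph{same} $\Upsilon_{j,k,\tau}$; for points lying in different half-space parametrizations the fixed-point equation can fail to admit a solution (as in \cref{fig:bowen_bracket}), and this is the one feature that distinguishes the pseudo-Anosov setting from the Anosov one.
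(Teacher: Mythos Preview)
Your approach matches the paper's: transversality of $W^s_{\epsilon_0}(x)$ with the weak unstable of $y$, implemented via the continuity statements \cref{proposition:continuity_unstable_manifolds_regular,proposition:continuity_unstable_manifolds_singular_parallel,proposition:continuity_unstable_manifolds_singular_perpendicular} in smooth charts for (i) and in half-space parametrizations for (ii), with continuity of $[x,y]$ coming along for free. Parts (i), (ii) and the continuity claim are fine (modulo the slip that $W^{\mathrm{wu}}_{\epsilon_0}(y)$ is tangent to $E^u\oplus E^0$, not $E^s\oplus E^0$).

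There is, however, a genuine gap in your \emph{uniqueness} argument near the singular orbits. You assert that transversality plus the diameter bound ``immediately gives uniqueness'', and that near a singular orbit ``the analogous picture holds in a half-space parametrization''. But the uniqueness claim in the lemma is global: for \emph{arbitrary} $x,y\in M$ there is at most one $z$. When $x$ and $y$ are both close to a singular orbit $\gamma_j$, your transversality/contraction argument only produces uniqueness \emph{within a fixed} $\Upsilon_{j,k,\tau}$-chart; it does not exclude a second candidate sitting in a different angular sector. This is not the same issue as the one you flag in your last paragraph (which concerns the solution in (ii) staying in the chart, an existence-side worry).

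The paper closes this gap by a short case analysis based on item~\ref{item:normal_parallel} of \cref{proposition:unstable_manifolds}: if $y\notin W^{\mathrm{lu}}_{\mathrm{sing}}$ then $W^u_{\epsilon_0}(y)$ is entirely contained in the image of a single $\Upsilon_{j,k,\tau}$, hence so is any candidate for $[x,y]$, and your in-chart uniqueness finishes; the symmetric argument applies if $x\notin W^{\mathrm{ls}}_{\mathrm{sing}}$. In the remaining case $x\in W^{\mathrm{ls}}_{\mathrm{sing}}$ and $y\in W^{\mathrm{lu}}_{\mathrm{sing}}$, any candidate must lie on $\gamma_j$ itself, and $W^s_{\epsilon_0}(x)$ meets $\gamma_j$ in at most one point. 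You should insert this argument (or an equivalent one) to complete the proof.
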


\begin{proof}
The continuity follows from Propositions \ref{proposition:continuity_unstable_manifolds_regular}, \ref{proposition:continuity_unstable_manifolds_singular_parallel} and \ref{proposition:continuity_unstable_manifolds_singular_perpendicular} and a compactness argument once uniqueness is established.

Away from the singular orbits, the result (existence and uniqueness) follows from the continuity of the stable and unstable manifolds in the $C^1$ topology (\cref{proposition:continuity_unstable_manifolds_regular}) and a transversality argument as in the case of Anosov flows (see for instance \cite{fisher_hasselblatt_book}).

In order to prove that in the situation from \ref{item:bracket_near_singular} the point $[x,y]$ exists, one only needs to use the standard transversality argument (from the Anosov flow case) in the half-space parametrization $\Upsilon_{j,k,\tau}$. This argument proves the existence of $[x,y]$ in this case, but also that there is at most one point in the range of $\Upsilon_{j,k,\tau}$ satisfying the properties required for $[x,y]$.

It remains to prove that if $x$ and $y$ are two points in $M$ close to a singular orbit then there is at most one point satisfying the properties of $[x,y]$. Assume first that $y$ does not belong to the local unstable manifold of the singular orbit. Then, with $\delta$ as in \cref{proposition:unstable_manifolds}, if $x$ is close enough to a singular orbit, there is $(j,k,\tau) \in \mathcal{A}$ such that $y \in \Upsilon_{j,k,\tau}([ - \delta/2,\delta/2] \times [0,\delta/2] \times [ - \delta/2,\delta/2])$, and thus $W_{\epsilon_0}^u(y)$ is contained in $\Upsilon_{j,k,\tau}([ - \delta,\delta] \times [0,\delta] \times [ - \delta,\delta])$. Hence, a point satisfying the properties of $[x,y]$ must be in the range of $\Upsilon_{j,k,\tau}$, and thus there can only be one.

We can write a similar arguments under the assumption that $x$ does not belong to the local stable manifold of the singular orbit. It remains to deal with the case in which $x$ belongs to the local stable manifold of the singular orbit and $y$ to the local unstable manifold. In that case, $[x,y]$ must belong to the intersection of the local stable and unstable manifolds of the singular orbit, and must consequently be on the singular orbit. But we can use \cref{proposition:unstable_manifolds} to see that $W_{\epsilon_0}^s(x)$ has at most one point of intersection with the singular orbit.
\end{proof}

Using the Bowen bracket $[\cdot,\cdot]$, we prove that the stable and unstable manifolds of a periodic orbit for $\varphi$ are dense in $M$, a result that will be crucial to construct Markov partitions for the flow $\varphi$.

\begin{corollary}\label{corollary:dense}
Assume that $\varphi$ is transitive and that $\gamma$ is a periodic orbit for $\varphi$. Then the sets $W^{\textup{s}}(\gamma)$ and $W^{\textup{u}}(\gamma)$ from \cref{definition:full_manifolds_periodic} are dense in $M$.
\end{corollary}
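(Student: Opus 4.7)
The strategy is to combine transitivity with the Bowen bracket, flowing a bracket constructed near $\gamma$ forward along a dense orbit until it becomes close to the target point. We argue the density of $W^{\textup{u}}(\gamma)$; the density of $W^{\textup{s}}(\gamma)$ follows by applying the same argument to the time-reversed flow, which is again a transitive smooth pseudo-Anosov flow with the roles of local stable and unstable manifolds swapped.

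Fix $x \in M$ and $\eta > 0$. Pick $p \in \gamma$ and choose a neighbourhood $V$ of $p$ small enough that the Bowen bracket of any two points in $V$ is defined; this uses item \ref{item:bracket_near_singular} of \cref{lemma:bowen_bracket} when $\gamma$ is singular. By topological transitivity of $\varphi$, there exists a point $y_0 \in M$ whose forward orbit $(\varphi_t(y_0))_{t \geq 0}$ is dense in $M$, so we may pick $s \geq 0$ with $q := \varphi_s(y_0) \in V$.

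Form the Bowen bracket $w := [q, p]$. By \cref{lemma:bowen_bracket}, $w \in W^{\textup{s}}_{\epsilon_0}(q)$ and $\varphi_\sigma(w) \in W^{\textup{u}}_{\epsilon_0}(p)$ for some $|\sigma| \leq \epsilon_0$. Since $p$ is periodic, any point of $W^{\textup{u}}_{\epsilon_0}(p)$ has backward orbit converging to $\gamma$, so $\varphi_\sigma(w) \in W^{\textup{u}}(\gamma)$, and the $\varphi$-invariance of $W^{\textup{u}}(\gamma)$ (which is immediate from \cref{definition:full_manifolds_periodic}) gives $\varphi_t(w) \in W^{\textup{u}}(\gamma)$ for every $t \in \mathbb{R}$. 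On the other hand, the defining property of $W^{\textup{s}}_{\epsilon_0}(q)$ gives $d(\varphi_t(w), \varphi_t(q)) \to 0$ as $t \to +\infty$, so there exists $T_0 > 0$, depending on the pair $(q,w)$, such that $d(\varphi_t(w), \varphi_t(q)) < \eta/2$ for every $t \geq T_0$.

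Finally, exploiting density of the forward orbit of $y_0$ a second time, choose $u > s + T_0$ with $\varphi_u(y_0) \in B(x, \eta/2)$, and set $t := u - s \geq T_0$. Then $\varphi_t(q) = \varphi_u(y_0) \in B(x, \eta/2)$ while $d(\varphi_t(w), \varphi_t(q)) < \eta/2$, so $\varphi_t(w) \in B(x, \eta) \cap W^{\textup{u}}(\gamma)$, establishing density. I do not expect a serious obstacle: the argument only uses the existence of a point with dense forward orbit (standard for topologically transitive continuous flows on compact metric spaces), the definedness of the Bowen bracket near $p$ in both the regular and singular cases (exactly the content of \cref{lemma:bowen_bracket}), and pointwise contraction along a single local stable manifold, which holds by definition and avoids any appeal to uniform hyperbolicity near the singular orbits.
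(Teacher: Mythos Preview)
Your argument is correct and follows essentially the same strategy as the paper: take a point on a dense forward semi-orbit close to $\gamma$, bracket it with a point of $\gamma$ to land in $W^{\textup{u}}(\gamma)\cap W^{\textup{s}}_{\epsilon_0}(\text{dense-orbit point})$, then flow forward. The only technical difference is that the paper invokes continuity of the bracket (so that $z\in W^{\textup{s}}_{q(\epsilon)}(y)$ with $q(\epsilon)\to 0$), which lets it use the dense orbit just once, whereas you use the asymptotic contraction along $W^{\textup{s}}_{\epsilon_0}$ and hit the dense orbit a second time at a later moment; both variants work.

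One small imprecision worth fixing: in the singular case you cannot choose a neighbourhood $V$ of $p$ in which the bracket of \emph{any} two points is defined (cf.\ Figure~\ref{fig:bowen_bracket}). What item~\ref{item:bracket_near_singular} of \cref{lemma:bowen_bracket} actually gives, and what you use, is that $[q,p]$ exists because $p$ lies on the singular orbit and hence in the range of whichever half-space parametrization $\Upsilon_{j,k,\tau}$ contains the nearby point $q$.
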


\begin{proof}
Let $x \in M$. Choose a point $x'$ on $\gamma$. Let $\epsilon > 0$. According to \cite[Proposition 1.6.9]{fisher_hasselblatt_book}, $\varphi$ has a dense positive semi-orbit. Consequently, there is a point $y \in M$ and $T > 0$ such that $d(y,x') \leq \epsilon$ and $d(\varphi_T(y), x) \leq \epsilon$. Provided $\epsilon$ is small enough, it follows from \cref{lemma:bowen_bracket} that the point $z \coloneqq [y,x']$ is well-defined. Indeed, if $\gamma$ is not a singular orbit, we just take $\epsilon$ small enough to ensure that $y$ and $x'$ are away from all singular orbits, and, if $\gamma$ is a singular orbit, we take $\epsilon$ small enough to ensure that $y$ is in the image of a small neighbourhood of zero by a half-space parametrization.

It follows from the definition of $z$ that it belongs to $W^{\textup{u}}(\gamma)$, and thus so does $\varphi_T(z)$. Moreover, $z$ belongs to $W_{\epsilon_0}^s(y)$. Actually, it follows from the continuity of $[\cdot,\cdot]$ that $z \in W_{q(\epsilon)}^s(y)$ where the quantity $q(\epsilon)$ goes to $0$ with $\epsilon$. Thus, $d(\varphi_T(z), x) \leq d(\varphi_T(z),\varphi_T(y)) + d(\varphi_T(y),x) \leq \epsilon + q(\epsilon)$. Hence, $W^{\textup{u}}(\gamma)$ is dense in $M$. A similar argument deals with the case of $W^{\textup{s}}(\gamma)$.
\end{proof}

\subsection{Definition of rectangles}\label{subsection:rectangles}

We will define two notions of rectangle that will be used in the definition of a Markov partition: one for rectangle close to the singular orbits and the other for rectangles away from the singular orbits. In the following, we let $\epsilon_0 > 0$ be fixed small enough so that \cref{lemma:bowen_bracket} holds and smaller than the third of the length of the smallest periodic orbit of $\varphi$.

\begin{definition}\label{definition:regular_rectangle}
We say that a subset $R$ of $M$ is a \emph{regular rectangle} if:
\begin{enumerate}[label=(\roman*)]
\item there is a smooth disk $D \subseteq M \setminus \bigcup_{j = 1}^N \gamma_j$, transverse to the flow, such that $R \subseteq D$;
\item for every $x,y \in R$, the point $[x,y]$ is defined and there is $t \in (-\epsilon_0,\epsilon_0)$ such that $\varphi_t([x,y]) \in R$;
\item there is a smooth function $t_R$ from a neighbourhood of $R$ to $(- \epsilon_0,\epsilon_0)$, segments $I$ and $J$ in $D$, tangent to $E_u \oplus E_0$ and $E_s \oplus E_0$ respectively, such that
\begin{equation*}
R = \set{\varphi_{t_R([x,y])}([x,y]): x \in I, y \in J}.
\end{equation*}
\end{enumerate}
With the notations above, if $R$ is a regular rectangle, we let $R^*$ denote the interior of $R$ as a subset of $D$. If $x_0,x_1$ and $y_0,y_1$ denote respectively the extremities of $I$ and $J$, we also define the unstable and stable boundaries of $R$ as
\begin{equation*}
\partial^u R = \set{\varphi_{t_R([x,y])}([x,y]): x \in I, y \in \set{y_0,y_1}}
\end{equation*}
and
\begin{equation*}
\partial^s R = \set{\varphi_{t_R([x,y])}([x,y]): x \in \set{x_0,x_1}, y \in J}.
\end{equation*}
\end{definition}

\begin{definition}\label{definition:singular_rectangles}
We say that a subset $R$ of $M$ is a \emph{singular rectangle} if there are $j \in \set{1,\dots,N}, k \in \mathbb{Z}/ 2 n_j \mathbb{Z}$, $\tau \in [ - r_{\max},r_{\max}]$, two segments $\widetilde{I}$ and $\widetilde{J}$ in $\mathcal{V}$ and $\widetilde{R} \subseteq \mathcal{V}$ such that
\begin{enumerate}[label=(\roman*)]
\item $R = \Upsilon_{j,k,\tau}( \widetilde{R} \times \set{0})$;
\item for every $x,y \in R$, the point $[x,y]$ is defined, belongs to the range of $\Upsilon_{j,k,\tau}$ and there is $t \in (-\epsilon_0,\epsilon_0)$ such that $\varphi_t([x,y]) \in R$;
\item $\widetilde{I} \times \set{0}$ and $\widetilde{J} \times \set{0}$ are tangent respectively to the pullbacks of $E_u \oplus E_0$ and $E_s \oplus E_0$ by $\Upsilon_{j,k,\tau}$ (continuously continued when $\Upsilon_{j,k,\tau}$ is singular);
\item there is a smooth function $t_R$ from a neighbourhood of $\widetilde{R}$ in $\mathbb{R}^3$ to $(-\epsilon_0,\epsilon_0)$ such that
\begin{equation*}
R = \set{\varphi_{t_R(\Upsilon_{j,k,\tau}^{-1}[x,y])}([x,y]) : x \in I, y \in J}
\end{equation*}
where $I \coloneqq \Upsilon_{j,k,\tau}(\widetilde{I} \times \set{0})$ and $J \coloneqq \Upsilon_{j,k,\tau}(\widetilde{J} \times \set{0})$.
\end{enumerate}

With the notations above, if $R$ is a singular rectangle, then we let $R^*$ be the image by $\Upsilon_{j,k,\tau}$ of the interior of $\widetilde{R} \times \set{0}$ as a subset of $\mathbb{R}^2 \times \set{0}$. If $x_0,x_1$ and $y_0,y_1$ denote respectively the extremities of $I$ and $J$, we also define the unstable and stable boundaries of $R$ as
\begin{equation*}
\partial^u R = \set{\varphi_{t_R(\Upsilon_{j,k,\tau}^{-1}[x,y])}([x,y]): x \in I, y \in \set{y_0,y_1}}
\end{equation*}
and
\begin{equation*}
\partial^s R = \set{\varphi_{t_R(\Upsilon_{j,k,\tau}^{-1}[x,y])}([x,y]): x \in \set{x_0,x_1}, y \in J}.
\end{equation*}
\end{definition}

\begin{remark}
If $R$ is a singular rectangle and $0$ does not belong to the intervals $\widetilde{I}$ and $\widetilde{J}$ from \cref{definition:singular_rectangles}, then $R$ is also a regular rectangle.
\end{remark}

\begin{definition}
We say that a subset $R$ of $M$ is a rectangle if it is a regular rectangle or a singular rectangle. In that case, for $x,y \in R$, we define
\begin{equation*}
[x,y]_{R} = \varphi_t([x,y]),
\end{equation*}
where $t \in (\epsilon_0,\epsilon_0)$ is such that $\varphi_t([x,y]) \in R$. For $x \in R$, we define then
\begin{equation*}
W_R^s(x) = \set{[x,y]_R: y \in R} \textup{ and } W_R^u(x) = \set{[y,x]_R: y \in R}.
\end{equation*}
\end{definition}

\begin{remark}
Our definition of rectangles is more restrictive that definitions that can be found in the literature on general Anosov flows. We impose indeed that our rectangles are connected, which implies that they actually look like rectangles. This is essential for the topological applications in \cref{section:rtorsion}, and is possible thanks to the low dimension. The possibility to work with connected rectangles in dimension $3$ is a well-known feature of Anosov flows on $3$-manifolds, see \cite{ratner_markov,chernov_markov}. Notice also that our definition implies that if $R$ is a rectangle then $R = \overline{R}^*$, that is $R$ is proper.
\end{remark}

\begin{remark}\label{remark:rectangle_stable_unstable}
Notice that if $R$ is a small enough rectangle and $x \in R$ then
\begin{equation*}
W_R^s(x) = R \cap \bigcup_{t \in ( - \epsilon_0,\epsilon_0)} \varphi_t(W_{\epsilon_0}^s(x)) \textup{ and } W_R^u(x) = R \cap \bigcup_{t \in ( - \epsilon_0,\epsilon_0)} \varphi_t(W_{\epsilon_0}^u(x)).
\end{equation*}
\end{remark}

\subsection{Construction of a Markov partition}\label{subsection:Markov_partition}

Now that we have defined the class of rectangles that we want to use, we are ready to construct a Markov partition. Let us start with a definition.

\begin{definition}\label{definition:Markov_partition}
Let $\alpha > 0$. We say that a finite family $(R_\theta)_{\theta \in \Theta}$ of rectangles is a Markov partition of size $\alpha$ if:
\begin{enumerate}[label=(\roman*)]
\item for every $\theta \in \Theta$, the set $R_\theta$ is a rectangle of diameter less than $\alpha$;
\item let $\Omega = \bigcup_{\theta \in \Theta} R_\theta$, we have\footnote{We use the notation $\varphi_ {[0,\alpha]}(\Omega) = \set{ \varphi_t(x) : t \in [0,\alpha], x \in \Omega}$.} $M = \varphi_{[0,\alpha]}(\Omega)$;
\item if $\theta_1,\theta_2$ are distinct elements of $\Theta$ then $\varphi_{[0,\alpha]} (R_{\theta_1}) \cap R_{\theta_2}$ or $R_{\theta_1} \cap \varphi_{[0,\alpha]} (R_{\theta_2})$ is empty;
\item let $T : \Omega \to \Omega$ denote the first return map of the flow, if $\theta_1,\theta_2 \in \Theta$ and $x \in R_{\theta_1}^*$ are such that $Tx \in R_{\theta_2}^*$ then $W_{R_{\theta_1}}^s(x) \subseteq \overline{T^{-1}(W_{R_{\theta_2}}^s(Tx))}$ and $W_{R_{\theta_2}}^u(Tx) \subseteq \overline{T(W_{R_{\theta_1}}^u(x))}$;\label{item:Markov_property}
\item for every $\theta_1,\theta_2 \in \Theta$ the set $\set{ x \in R_{\theta_1}^* : T x \in R_{\theta_2}^*}$ is connected. \label{item:stay_connected}
\end{enumerate}
\end{definition}

\begin{remark}
We follow here the definition that can be found in \cite{chernov_markov}. Notice however that our definition is more restrictive than the standard one since we are only working with connected rectangles. We are also requiring an extra property \ref{item:stay_connected}. This is because we may want to split the rectangles in the Markov partition into smaller rectangles, and this assumption ensures that the new rectangles are also connected.

The interested reader may refer to \cite{chernov_markov} for a discussion of Markov partitions for Anosov flows, with an emphasis on the possibility to get connected rectangle in the $3$-dimensional case. We will construct Markov partitions following the approach from \cite{ratner_markov} in the $3$-dimensional Anosov case. See \cite{bowen_markov} for a construction of Markov partitions for Anosov flows in any dimension. Since this kind of construction is somehow standard we may not go over all details in the proofs.
\end{remark}

The main result of this section is the following. Notice that we have some control on the stable and unstable boundaries in the Markov partition that we are going to construct. It will be essential in \cref{section:rtorsion}.

\begin{proposition}\label{proposition:existence_Markov}
Let $\Gamma^u$ and $\Gamma^s$ be two non-empty finite sets of primitive periodic orbits for $\varphi$ such that $\set{\gamma_1,\dots,\gamma_N} \subseteq \Gamma^u \cup \Gamma^s$. For every $\alpha > 0$, there is a Markov partition $(R_\theta)_{\theta \in \Theta}$ of size $\alpha$ such that
\begin{equation}\label{eq:assumption_boundary}
\bigcup_{\theta \in \Theta} \partial^u R_\theta \subseteq \bigcup_{\gamma \in \Gamma^u} W^{\textup{u}}(\gamma) \textup{ and } \bigcup_{\theta \in \Theta} \partial^s R_\theta \subseteq \bigcup_{\gamma \in \Gamma^s} W^{\textup{s}}(\gamma).
\end{equation}
Moreover, for every $\gamma \in \Gamma^u$ there is $\theta \in \Theta$ such that $\partial^u R_\theta \subseteq W^{\textup{u}}(\gamma)$ and for every $\gamma \in \Gamma^s$ there is $\theta \in \Theta$ such that $\partial^s R_\theta \subseteq W^{\textup{s}}(\gamma)$.
\end{proposition}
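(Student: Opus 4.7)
The plan is to adapt the standard Ratner--Bowen construction of Markov partitions for three-dimensional Anosov flows (\cite{ratner_markov,bowen_markov,chernov_markov}) with two modifications: rectangles meeting a singular orbit $\gamma_j$ must be singular rectangles in the sense of \cref{definition:singular_rectangles}, and all stable/unstable boundaries must lie on the prescribed invariant manifolds. The crucial input is \cref{corollary:dense}, which says that $W^{\textup{u}}(\gamma)$ and $W^{\textup{s}}(\gamma)$ are dense in $M$ for every $\gamma \in \Gamma^u \cup \Gamma^s$; this allows us to prescribe the four boundary arcs of any small rectangle to lie on such manifolds.

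First I would build an initial finite covering of $M$ by rectangles of diameter less than $\alpha/2$. Near each singular orbit $\gamma_j$, pick a half-space parametrization $\Upsilon_{j,k,\tau_j}$ at a point of $\gamma_j$ and use it to produce $n_j$ singular half-rectangles meeting along $\gamma_j$, one per prong. Since $\gamma_j \in \Gamma^u \cup \Gamma^s$, one of the two pairs of boundary arcs of each such half-rectangle automatically lies on $W^{\textup{u}}(\gamma_j)$ or on $W^{\textup{s}}(\gamma_j)$; the remaining pair is chosen on $W^{\textup{u}}(\gamma')$ or $W^{\textup{s}}(\gamma'')$ for appropriate $\gamma' \in \Gamma^u$, $\gamma'' \in \Gamma^s$, which is possible by \cref{corollary:dense} together with the transversality statement \ref{item:bracket_near_singular} of \cref{lemma:bowen_bracket}. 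Away from a fixed small neighbourhood of the singular orbits, cover the remaining compact set by finitely many regular rectangles built in the same way: at each base point take four short arcs, two in $W^{\textup{u}}(\gamma)$ and two in $W^{\textup{s}}(\gamma')$ (with $\gamma \in \Gamma^u$, $\gamma' \in \Gamma^s$), and form their rectangle via the Bowen bracket. By throwing in at least one rectangle adjacent to each orbit in $\Gamma^u$ (respectively $\Gamma^s$), we ensure the final clause of the proposition.

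Next I would enforce the Markov property \ref{item:Markov_property} by refinement. Set $\Omega = \bigcup_\theta R_\theta$ and let $T : \Omega \to \Omega$ be the first return map. Whenever $x \in R_{\theta_1}^*$ and $Tx \in R_{\theta_2}^*$ but $W_{R_{\theta_2}}^u(Tx) \not\subseteq \overline{T(W_{R_{\theta_1}}^u(x))}$, cut $R_{\theta_1}$ along $T^{-1}(\partial^s R_{\theta_2})$ and $R_{\theta_2}$ along $T(\partial^u R_{\theta_1})$. Because the original boundaries lie in the $\varphi$-invariant sets $\bigcup_{\gamma \in \Gamma^u} W^{\textup{u}}(\gamma)$ and $\bigcup_{\gamma \in \Gamma^s} W^{\textup{s}}(\gamma)$, the refined boundaries stay in the same sets and \eqref{eq:assumption_boundary} is preserved at every stage. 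Hyperbolicity of $\varphi$ on $\widetilde{M}$ and the local model near $\gamma_j$ (\cref{remark:flow_in_half_space_chart}) ensure that the process terminates in finitely many cuts, yielding the desired finite partition.

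The main obstacles are ensuring connectedness of the refined rectangles (condition \ref{item:stay_connected}) and handling the singular orbits. The first is resolved by working in dimension three as in \cite{ratner_markov,chernov_markov}: one-dimensionality of $E^u$ and $E^s$ means each cut is along a single stable or unstable arc, so connected rectangles remain connected, provided the initial configuration is chosen so that each overlap $\{x \in R_{\theta_1}^*: Tx \in R_{\theta_2}^*\}$ is already connected (which holds if the rectangles are small enough relative to the local product structure). The second requires checking that the first return map to a singular rectangle respects the prong structure and that refinements of singular rectangles are again singular; this follows from \cref{remark:flow_in_half_space_chart}, where in half-space coordinates the return map is a standard half hyperbolic fixed point permuting prongs according to $\sigma_j$, together with the description of the local weak unstable manifold in \cref{lemma:local_weak_unstable_half_space}. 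The combination guarantees that all cuts near $\gamma_j$ respect the half-space structure, so the output is a bona fide Markov partition in the sense of \cref{definition:Markov_partition}.
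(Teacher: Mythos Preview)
Your overall strategy---build an initial cover of small rectangles with boundaries on the prescribed (un)stable manifolds, then refine until the Markov property holds---matches the paper's approach (via \cref{lemma:small_rectangles}, \cref{lemma:small_rectangles_singular}, \cref{lemma:into_pieces}, and \cref{lemma:Markov_first_step}). The handling of singular orbits and of connectedness in dimension three is also along the right lines.

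The genuine gap is your justification that the refinement terminates. You write that ``hyperbolicity of $\varphi$ on $\widetilde{M}$ and the local model near $\gamma_j$ \dots\ ensure that the process terminates in finitely many cuts'', but this is exactly the nontrivial content of the proof and does not follow from hyperbolicity alone. In the paper the mechanism is isolated in \cref{lemma:key_Markov}: if $W$ is a compact piece of $W^{\textup{u}}(\gamma)$ for some $\gamma\in\Gamma^u$, then $T^{-n}(W)$ eventually lies inside $\bigcup_\theta \partial^u R_\theta$. The reason is not generic hyperbolicity but the specific choice of boundaries: the backward orbit of $W$ accumulates on $\gamma$, and by the last condition in \cref{lemma:Markov_first_step} the orbit $\gamma$ already sits on the unstable boundary of some rectangle, so the projection of $W$ along the flow onto that rectangle must land in $\partial^u R_\theta$. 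With this lemma in hand, one shows that after $n$ iterated refinements (the $\mathfrak{R}_n$ in the paper), any candidate obstruction to \ref{item:Markov_property}---a piece of $\partial^u$ separating two points on a common stable arc---pulls back under $T^{-n}$ into $\bigcup_\theta\partial^u R_\theta$ and hence cannot lie in the interior of a rectangle of $\mathfrak{R}_n$. Without this argument, there is no reason your cutting procedure stops; each cut of $R_{\theta_1}$ along $T^{-1}(\partial^s R_{\theta_2})$ creates new boundary arcs which may themselves fail the Markov condition with respect to other rectangles, and so on.

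A smaller omission: you do not address condition (iii) of \cref{definition:Markov_partition} (disjointness of $\varphi_{[0,\alpha]}(R_{\theta_1})$ and $R_{\theta_2}$ in one direction). The paper handles this in \cref{lemma:Markov_first_step} by breaking rectangles into small pieces via \cref{lemma:into_pieces} and then shifting them along the flow; this step should be made explicit before the refinement begins.
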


The first thing that we need is a way to produce rectangles. The following lemma explains how to construct a regular rectangle.

\begin{lemma}\label{lemma:small_rectangles}
Let $\alpha > 0$. Let $D \subseteq M \setminus \bigcup_{j = 1}^{N} \gamma_j$ be a smooth disk transverse to the flow. Let $x_0 \in D$. Let $\gamma^u, \gamma^s$ be primitive periodic orbits for $\varphi$. Then, there is a regular rectangle $R$ of diameter less than $\alpha$ such that $x_0 \in R^*,R \subseteq D, \partial^u R \subseteq W^{\textup{u}}(\gamma^u)$ and $\partial^s R \subseteq W^{\textup{s}}(\gamma^s)$. 
\end{lemma}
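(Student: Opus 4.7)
My plan is to construct $R$ by applying the Bowen bracket to short arcs of the weak unstable and weak stable leaves of $x_0$ on $D$, with endpoints chosen using the density of $W^{\textup{s}}(\gamma^s)$ and $W^{\textup{u}}(\gamma^u)$ in $M$ (\cref{corollary:dense}). Since $D \subseteq M \setminus \bigcup_{j=1}^N \gamma_j$, every point of $D$ near $x_0$ is regular, so by \cref{proposition:unstable_manifolds} and \cref{proposition:continuity_unstable_manifolds_regular} the strong local stable and unstable manifolds $W^s_{\epsilon_0}(p)$, $W^u_{\epsilon_0}(p)$ are smooth and depend continuously on $p$. Following \cref{remark:rectangle_stable_unstable}, for $p$ near $x_0$ I set
\begin{equation*}
\ell^u(p) = D \cap \bigcup_{|t| < \epsilon_0} \varphi_t(W^u_{\epsilon_0}(p)), \qquad \ell^s(p) = D \cap \bigcup_{|t| < \epsilon_0} \varphi_t(W^s_{\epsilon_0}(p)),
\end{equation*}
and write $L^u_0 = \ell^u(x_0)$, $L^s_0 = \ell^s(x_0)$. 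These are two smooth arcs on $D$, tangent to $E_u \oplus E_0$ and $E_s \oplus E_0$ respectively, intersecting transversally at $x_0$.

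The key intermediate step will be to show that $W^{\textup{s}}(\gamma^s) \cap L^u_0$ is dense in $L^u_0$ near $x_0$, and symmetrically that $W^{\textup{u}}(\gamma^u) \cap L^s_0$ is dense in $L^s_0$ near $x_0$. For the first, given $y' \in L^u_0$ close to $x_0$, I use \cref{corollary:dense} to pick $p' \in W^{\textup{s}}(\gamma^s)$ arbitrarily close to $y'$ in $D$, and then invoke \cref{lemma:bowen_bracket} (valid since $D$ avoids the singular orbits) to form $z := [p', y']$. Since $z \in W^s_{\epsilon_0}(p')$, the estimate $d(\varphi_t(z), \gamma^s) \le d(\varphi_t(z), \varphi_t(p')) + d(\varphi_t(p'), \gamma^s) \to 0$ as $t \to +\infty$ gives $z \in W^{\textup{s}}(\gamma^s)$. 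Moreover $\varphi_\tau(z) \in W^u_{\epsilon_0}(y')$ for some small $\tau$, so $z$ lies on the weak unstable manifold of $y'$, which coincides near $x_0$ with that of $x_0$. Flowing $z$ the short distance back to $D$ produces a point $q \in L^u_0$ still in $W^{\textup{s}}(\gamma^s)$ (by flow-invariance of the latter), close to $y'$ by continuity of the bracket. Since $p'$ can be chosen on either side of $L^s_0$ in $D$, the point $q$ can be placed on either side of $x_0$ along $L^u_0$.

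Now I fix a disk $B \subseteq D$ around $x_0$ of diameter less than $\alpha/2$. By the density just established, I pick $a_-, a_+ \in L^u_0 \cap W^{\textup{s}}(\gamma^s) \cap B$ on the two sides of $x_0$ along $L^u_0$, and $y_-, y_+ \in L^s_0 \cap W^{\textup{u}}(\gamma^u) \cap B$ on the two sides of $x_0$ along $L^s_0$. Let $I$ be the subarc of $L^u_0$ from $a_-$ to $a_+$ and $J$ the subarc of $L^s_0$ from $y_-$ to $y_+$, and let $R$ be the rectangle built from $I$ and $J$ as in \cref{definition:regular_rectangle}, with $t_R$ the small smooth projection onto $D$ defined near $R$ by transversality of $D$ to the flow. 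The bracket is defined and continuous on $I \times J$ by \cref{lemma:bowen_bracket}; since $x_0 \in I \cap J$ and the bracket is a local homeomorphism between a neighborhood of $(x_0,x_0)$ in $I \times J$ and a neighborhood of $x_0$ in $D$, we have $x_0 \in R^*$. Each point of $\partial^u R$ has the form $\varphi_{t_R([x,y_\pm])}([x,y_\pm])$ with $x \in I$ and lies on the weak unstable manifold of $y_\pm \in W^{\textup{u}}(\gamma^u)$, hence in $W^{\textup{u}}(\gamma^u)$; the argument for $\partial^s R \subseteq W^{\textup{s}}(\gamma^s)$ is symmetric. Choosing $a_\pm, y_\pm$ close enough to $x_0$ and using continuity of the bracket and of $t_R$ ensures $R \subseteq B$, so the diameter of $R$ is less than $\alpha$.

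The main obstacle is the density claim of the second paragraph: \cref{corollary:dense} yields density only in the ambient $M$, not along the $1$-dimensional curves $L^u_0, L^s_0$, and the Bowen bracket is precisely the device that lets one convert ambient density into density along such a leaf.
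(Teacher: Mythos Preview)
Your proof is correct and follows essentially the same strategy as the paper's. The paper packages the local product structure into a single homeomorphism $\Psi:I_0\times J_0\times(-\epsilon,\epsilon)\to M$, $(x,y,t)\mapsto\varphi_{t+t_U([x,y])}([x,y])$, and then uses density of $W^{\textup{s}}(\gamma^s)$ in the open slabs $\Psi(I_i\times J_0\times(-\epsilon,\epsilon))$ to locate the desired $I_0$-coordinates $x_1,x_2$ directly; you instead prove density of $W^{\textup{s}}(\gamma^s)\cap L^u_0$ in $L^u_0$ by an explicit Bowen-bracket projection, which is exactly what the coordinate $x$ in $\Psi^{-1}$ encodes. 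Both arguments are the same local-product-structure device, just phrased differently.
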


\begin{proof}
We can replace $D$ by a disk of diameter less than $\alpha$, to ensure that at the end $R$ will also have diameter less than $\alpha$. Since $D$ is transverse to the flow, there is an open neighbourhood $U$ of $x_0$ in $M$ and a $C^\infty$ function $t_U : U \to \mathbb{R}$ such that $t_U(x_0) = 0$ and $\varphi_{t_U(x)}(x) \in D$ for every $x \in U$. Up to making $U$ smaller, we may assume that the range of $t_U$ is within $(-\epsilon_0,\epsilon_0)$. Pick then $\epsilon$ very small and define
\begin{equation*}
I_0 = \set{ \varphi_{t_U(x)}(x) :x \in W_{\epsilon}^u(x_0)} \textup{ and } J_0 = \set{ \varphi_{t_U(x)}(x) :x \in W_{\epsilon}^s(x_0)}.
\end{equation*}
Define then the map 
\begin{equation*}
\begin{array}{ccccc}
\Psi &: &I_0 \times J_0 \times (- \epsilon,\epsilon) & \to & M \\
& & (x,y,t) & \mapsto & \varphi_{t + t_U([x,y])}([x,y]).
\end{array}
\end{equation*}
If $\epsilon$ is small enough, then $\Psi$ defines a homeomorphism from $I_0 \times J_0 \times (- \epsilon,\epsilon)$ to a neighbourhood of $x_0$ in $M$. Indeed, the map $(x,y) \mapsto \varphi_{t_U([x,y])}([x,y])$ is a homeomorphism from $I_0 \times J_0$ to a neighbourhood of $x_0$ in $D$ (because its inverse is $z \mapsto (\varphi_{t_U([z,x_0])}([z,x_0]), \varphi_{t_U([x_0,z])}([x_0,z]))$), and $D$ is transverse to the flow. 

In particular, notice that $\Psi(I_0 \times J_0 \times \set{0})$ is a neighbourhood of $x_0$ in $D$, that the stable and unstable manifolds are parallel to the axis in the coordinates defined by $\Psi$, and that $\Psi$ is a flow box.

Let $I_1, I_2$ and $J_1,J_2$ denote respectively the connected components of $I_0 \setminus \set{x_0}$ and $J_0 \setminus \set{x_0}$. According to \cref{corollary:dense}, the stable manifold $W^{\textup{s}}(\gamma^s)$ is dense in $M$. Hence, it must intersect the open sets $\Psi(I_1 \times J_0 \times (- \epsilon,\epsilon))$ and $\Psi(I_2 \times J_0 \times (- \epsilon,\epsilon))$. Since $W^{\textup{s}}(\gamma^s)$ is invariant under the action of the flow, we find actually that there are $(x_1,y_1) \in I_1 \times J_0$ and $(x_2,y_2) \in I_2 \times J_0$ such that
$\Psi(x_1,y_1,0)$ and $\Psi(x_2,y_2,0)$ belong to $W^{\textup{s}}(\gamma^s)$. Similarly, there are $(x_3,y_3) \in I_0 \times J_1$ and $(x_4,y_4) \in I_0 \times J_2$ such that $\Psi(x_3,y_3,0)$ and $\Psi(x_4,y_4, 0)$ belong to $W^{\textup{u}}(\gamma^u)$.

Let then $I'$ be the subinterval of $I_0$ that goes from $x_1$ to $x_2$ and $J'$ be the subinterval of $J_0$ that goes from $y_3$ to $y_4$. We can define 
\begin{equation*}
R = \Psi(I' \times J' \times \set{0}).
\end{equation*}
It follows from the definition of $\Psi$ that $R$ is a rectangle with the required properties.
\end{proof}

\begin{figure}[h]
	\centering
\def\svgwidth{0.7\linewidth}
	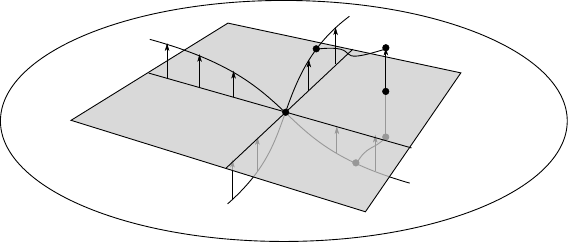
	\caption{Local product coordinates used in the proof of \cref{lemma:small_rectangles}}\label{fig:transverse_disk}
\end{figure}

The proof of \cref{lemma:small_rectangles} adapts easily to produce singular rectangles.

\begin{lemma}\label{lemma:small_rectangles_singular}
Let $j \in \set{1,\dots,N}, k \in \mathbb{Z} / 2 n_j \mathbb{Z}$ and $\tau \in [ - r_{\max}, r_{\max}]$. Let $\gamma^u$ and $\gamma^s$ be primitive periodic orbits for $\varphi$. If $k$ is even assume that $\gamma^u = \gamma_j$ and if $k$ is odd assume that $\gamma^s = \gamma_j$. Let $\alpha > 0$. Let $x_0 \in (\mathbb{R} \times \set{0}) \cap \mathcal{V}$. Then, there is a singular rectangle $R = \Upsilon_{j,k,\tau}(\widetilde{R} \times \set{0})$ (using the notation from \cref{definition:singular_rectangles}) of diameter less than $\alpha$ such that $x_0$ belongs to the interior of $\widetilde{R}$ as a subset of $\mathbb{R} \times [0,+\infty)$ and such that 
\begin{equation}\label{eq:condition_boundary_Markov}
\partial^u R \subseteq W^{\textup{u}}(\gamma^u) \textup{ and } \partial^s R \subseteq W^{\textup{s}}(\gamma^s).
\end{equation}
\end{lemma}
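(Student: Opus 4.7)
The plan is to mimic the proof of \cref{lemma:small_rectangles}, but working throughout in the half-space parametrization $\Upsilon_{j,k,\tau}$ and using \cref{proposition:continuity_unstable_manifolds_singular_parallel,proposition:continuity_unstable_manifolds_singular_perpendicular} in place of the smooth version \cref{proposition:continuity_unstable_manifolds_regular}. Consider the case $k$ even (the odd case is symmetric, with the roles of $\gamma^u$ and $\gamma^s$ swapped). Write $x_0 = (c, 0)$ with $c \in \mathbb{R}$ and set $\hat{x}_0 = \Upsilon_{j,k,\tau}(x_0, 0) \in W^{\textup{u}}(\gamma_j)$ (by \cref{lemma:local_weak_unstable_half_space}). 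By \cref{proposition:unstable_manifolds}\ref{item:unstable_parallel}, the pulled-back local unstable of $\hat{x}_0$ is precisely a horizontal segment $I_0 = [c-\eta,c+\eta] \times \{0\} \times \{0\}$ inside the half-plane (because $x_0$ lies on the local weak unstable manifold of $\gamma_j$, where the function $F$ of that item vanishes identically). By the analogue for stable manifolds of \cref{proposition:continuity_unstable_manifolds_singular_perpendicular} (obtained by reversing time, which exchanges the parities' roles), the pulled-back local stable of $\hat{x}_0$ is a smooth curve $J_0$ in the half-plane of the form $\{(f(s), s, 0) : s \in [0,\eta)\}$, with $f(0) = c$, transverse to $\mathbb{R}\times\{0\}\times\{0\}$.

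Next I would set up a Bowen-bracket product chart: choose $\eta' \in (0,\eta)$ small, and for $(a,b,t) \in (-\eta',\eta') \times [0,\eta') \times (-\eta',\eta')$ let
\[
\Psi(a,b,t) = \varphi_{t + t_U([u(a), s(b)])}\bigl([u(a), s(b)]\bigr),
\]
where $u(a) = \Upsilon_{j,k,\tau}(c+a, 0, 0)$ runs along the unstable of $\hat{x}_0$, $s(b) = \Upsilon_{j,k,\tau}(f(b), b, 0)$ runs along its stable, and $t_U$ is a local time function that projects a neighbourhood of $\hat{x}_0$ onto a local cross-section chosen inside $\Upsilon_{j,k,\tau}(\mathbb{R}\times[0,\infty)\times\{0\})$. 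The key technical check is that $\Psi$ is a homeomorphism onto a neighbourhood of $\hat{x}_0$ in $M$; this follows from \cref{lemma:bowen_bracket}\ref{item:bracket_near_singular} (which guarantees that the bracket is defined and stays in the range of $\Upsilon_{j,k,\tau}$ for $\eta'$ small) together with the continuous dependence supplied by \cref{proposition:continuity_unstable_manifolds_singular_parallel,proposition:continuity_unstable_manifolds_singular_perpendicular}, which also yield uniform transversality of the stable and unstable graphs in the half-space coordinates.

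With $\Psi$ in hand, I would apply \cref{corollary:dense} three times. Split $(-\eta',\eta') = I_1 \sqcup \{0\} \sqcup I_2$ into its two half-intervals. Because $W^{\textup{s}}(\gamma^s)$ is dense in $M$ and invariant by the flow, the open sets $\Psi(I_\ell \times [0,\eta') \times (-\eta',\eta'))$ for $\ell = 1,2$ each contain points of the form $\Psi(a_\ell, b_\ell, 0) \in W^{\textup{s}}(\gamma^s)$; their stable leaves then lie entirely in $W^{\textup{s}}(\gamma^s)$. Similarly, density of $W^{\textup{u}}(\gamma_j) \setminus \gamma_j$ in the open set $\Psi((-\eta',\eta') \times (0,\eta') \times (-\eta',\eta'))$ yields a point $\Psi(a_3, b_3, 0) \in W^{\textup{u}}(\gamma_j)$ with $b_3 > 0$, and the corresponding horizontal unstable segment lies in $W^{\textup{u}}(\gamma_j) = W^{\textup{u}}(\gamma^u)$. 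Taking $a_1 < 0 < a_2$ and defining
\[
\widetilde{R} = \bigl\{ \text{pullback by } \Upsilon_{j,k,\tau} \text{ of } \Psi([a_1,a_2] \times [0,b_3] \times \{0\}) \bigr\} \subset \mathcal{V},
\]
one obtains the desired $R = \Upsilon_{j,k,\tau}(\widetilde{R}\times\{0\})$. The unstable boundary consists of two horizontal segments: the lower one sits in $\mathbb{R}\times\{0\}$, hence in $W^{\textup{u}}(\gamma_j)$, and the upper one lies in $W^{\textup{u}}(\gamma_j)$ by construction; the stable boundary consists of two stable segments through points of $W^{\textup{s}}(\gamma^s)$, and diameter $\leq \alpha$ is arranged by choosing $\eta'$ sufficiently small.

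The main obstacle I anticipate is verifying that $\Psi$ is genuinely a homeomorphism onto an open neighbourhood of $\hat{x}_0$ in $M$ (not just of $\hat{x}_0$ within the image of $\Upsilon_{j,k,\tau}$) and that the resulting $\widetilde{R}$ satisfies all clauses of \cref{definition:singular_rectangles}: continuous extension of tangent directions up to the singular edge $\{b=0\}$, smoothness of the time function $t_R$ at interior points, and the proper-closure condition $R = \overline{R^*}$. Most of these are direct consequences of \cref{proposition:unstable_manifolds,lemma:bowen_bracket,proposition:continuity_unstable_manifolds_singular_parallel,proposition:continuity_unstable_manifolds_singular_perpendicular}, but checking them near the singular edge (where the parametrization degenerates) requires some care.
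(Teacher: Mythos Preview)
Your proposal is correct and follows essentially the same approach as the paper's proof. The paper's argument is terse---it simply says to repeat the regular-rectangle construction in half-space coordinates, observing that $\Upsilon_{j,k,\tau}(x_0,0)$ becomes an extremity of $J_0$ (for $k$ even), so that the edge $\{b=0\}$ of the half-plane automatically furnishes one of the two unstable boundary sides, lying in $W^{\textup{u}}(\gamma_j) = W^{\textup{u}}(\gamma^u)$ by hypothesis---which is exactly what you spell out in more detail.
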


\begin{proof}
The proof follows the same lines as the proof of \cref{lemma:small_rectangles}, except that we are working in half-space coordinates. The main change is that $\Upsilon_{j,k,\tau}(x_0,0)$ will be an extremity of $I_0$ or $J_0$, but this is not an issue for the argument because it happens when $\Upsilon_{j,k,\tau}(x_0,0)$ belongs respectively to the stable or unstable manifold of the singular orbit $\gamma_j$ (that we can use to define one side of our rectangle according to our assumption on $\gamma^u$ and $\gamma^s$).
\end{proof}

In order to avoid self-intersections of rectangles (that are forbidden by \cref{definition:Markov_partition}), we will break rectangles into smaller rectangles using the following result.

\begin{lemma}\label{lemma:into_pieces}
Let $\alpha > 0$. Let $\gamma^u, \gamma^s$ be primitive periodic orbits for $\varphi$. Let $R$ be a rectangle such that $\partial^u R \subseteq W^{\textup{u}}(\gamma^u)$ and $\partial^s R \subseteq W^{\textup{s}}(\gamma^s)$. Then, $R$ is a finite union of rectangles of diameter less than $\alpha > 0$, whose unstable and stable boundaries are contained respectively in $W^{\textup{u}}(\gamma^u)$ and $W^{\textup{s}}(\gamma^s)$.
\end{lemma}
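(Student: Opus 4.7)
Using the notation of Definitions \ref{definition:regular_rectangle} and \ref{definition:singular_rectangles}, write
\[
R = \{[x,y]_R : x \in I,\ y \in J\},
\]
either directly (regular case) or after pulling back by a half-space parametrization $\Upsilon_{j,k,\tau}$ (singular case). Here $I$ is a segment tangent to $E^u \oplus E^0$ and $J$ a segment tangent to $E^s \oplus E^0$, with endpoints $x_0,x_1$ and $y_0,y_1$ respectively. The strategy is to subdivide $I$ and $J$ using finitely many interior points that lie on $W^{\textup{s}}(\gamma^s)$ and $W^{\textup{u}}(\gamma^u)$, and then form sub-rectangles through the product structure of the Bowen bracket.

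The main step is to show that $J \cap W^{\textup{u}}(\gamma^u)$ is dense in $J$, and symmetrically that $I \cap W^{\textup{s}}(\gamma^s)$ is dense in $I$. By \cref{corollary:dense}, $W^{\textup{u}}(\gamma^u)$ is dense in $M$. Fix $y \in J$ (interior) and a small $\eta>0$. Pick $p \in W^{\textup{u}}(\gamma^u)$ with $d(p,y)<\eta$. By \cref{lemma:bowen_bracket} and continuity of the bracket, for $\eta$ small enough the point $q \coloneqq [y,p]_R$ is defined, close to $y$, and lies on the local unstable leaf of $p$, so $q \in W^{\textup{u}}(\gamma^u)$ (the set $W^{\textup{u}}(\gamma^u)$ is saturated by local unstable manifolds by \cref{definition:full_manifolds_periodic}). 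The product structure of the rectangle yields a unique decomposition $q = [x',y']_R$ with $(x',y') \in I\times J$; then $y' \in J$ lies on the local unstable leaf of $q$, hence $y' \in W^{\textup{u}}(\gamma^u)$, and continuity of the bracket forces $y'$ to be close to $y$. An analogous argument, exchanging the roles of stable and unstable, gives density of $I \cap W^{\textup{s}}(\gamma^s)$ in $I$.

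Given this, choose subdivisions $y_0 = z_0,\, z_1,\ldots, z_n = y_1$ of $J$ with $z_1,\ldots,z_{n-1} \in W^{\textup{u}}(\gamma^u)$, and $x_0 = w_0,\, w_1,\ldots, w_m = x_1$ of $I$ with $w_1,\ldots,w_{m-1} \in W^{\textup{s}}(\gamma^s)$, sufficiently fine so that each block
\[
R_{ij} \coloneqq \{[x,y]_R : x \in [w_{j-1},w_j],\ y \in [z_{i-1},z_i]\}
\]
has diameter less than $\alpha$; this is possible because the map $(x,y)\mapsto [x,y]_R$ is uniformly continuous on the compact set $I\times J$. Each $R_{ij}$ is a rectangle (regular if it misses all $\gamma_k$, singular otherwise): connectedness is inherited from the sub-intervals $[w_{j-1},w_j]$ and $[z_{i-1},z_i]$, and the tangency and transverse-disk conditions are inherited from $R$. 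The unstable boundary $\partial^{\textup{u}} R_{ij}$ is the union of the slices $\{[x,z_{i-1}]_R\}$ and $\{[x,z_i]_R\}$ for $x \in [w_{j-1},w_j]$; each such slice lies on the local unstable manifold of $z_{i-1}$ or $z_i$, which is either in $W^{\textup{u}}(\gamma^u)$ by choice of the $z_i$ (interior case) or contained in $\partial^{\textup{u}} R \subseteq W^{\textup{u}}(\gamma^u)$ (boundary case $i\in\{1,n\}$). Symmetrically $\partial^{\textup{s}} R_{ij} \subseteq W^{\textup{s}}(\gamma^s)$. Clearly $R = \bigcup_{i,j} R_{ij}$, finishing the proof.

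\textbf{Main obstacle.} The most delicate point is the density step in the singular case, where $J$ (or $I$) may have an endpoint on the singular orbit $\gamma_j$ and the Bowen bracket is only defined inside the range of a single half-space parametrization (\cref{lemma:bowen_bracket}\ref{item:bracket_near_singular}). One must carry out the approximation $p \leadsto [y,p]_R \leadsto y'$ entirely inside $\Upsilon_{j,k,\tau}(\mathcal{V}\times\mathbb{R})$, using that for $y$ in the interior of $J$ the bracket stays in this range by continuity; once this is established the rest of the construction goes through verbatim in half-space coordinates.
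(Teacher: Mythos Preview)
Your argument is correct and takes a genuinely different route from the paper. You subdivide the parametrizing segments $I$ and $J$ directly, by first proving that $J\cap W^{\textup{u}}(\gamma^u)$ is dense in $J$ (and symmetrically for $I$), and then choosing fine subdivision points on these dense sets. The paper instead uses \cref{lemma:small_rectangles} and \cref{lemma:small_rectangles_singular} as black boxes: it covers $R$ by finitely many small rectangles $R'$ with the prescribed boundary types, sets up product coordinates $\Psi$ on a slight enlargement $\mathcal{I}\times\mathcal{J}$ of $I\times J$, and observes that each $R'\cap R$ is again a rectangle of the right form because in $\Psi$-coordinates both $R$ and $R'$ are products of subintervals. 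Your approach is more self-contained (it essentially re-proves the density step inside \cref{lemma:small_rectangles} rather than invoking it), while the paper's is more modular. Both ultimately rest on \cref{corollary:dense} and the local product structure given by the Bowen bracket.

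One small expository point: your intermediate step ``decompose $q=[x',y']_R$'' is unnecessary and slightly ill-posed, since $[\,\cdot\,,\cdot\,]_R$ is only defined for pairs in $R$ and $p$ need not lie in $R$. The cleaner way to phrase your density argument is: project $[y,p]$ along the flow to the disk $D$ (or to $\Upsilon_{j,k,\tau}(\mathcal{V}\times\{0\})$ in the singular case); the resulting point lies on the weak stable leaf of $y$ in $D$, hence on $J$ itself when $y$ is interior to $J$, and it lies in $W^{\textup{u}}(\gamma^u)$ because it is on the weak unstable leaf of $p$. This gives the desired $y'\in J\cap W^{\textup{u}}(\gamma^u)$ directly, without passing through the product decomposition of $R$.
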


\begin{proof}
Let us deal with the case of a regular rectangle. The small difficulty here is that we require for our rectangles to be bounded by specific stable and unstable manifolds. Let $R$ be a regular rectangle contained within a disk $D$.

Recall the intervals $I$ and $J$ from \cref{definition:regular_rectangle}. Since $D$ is transverse to the flow and $I$ and $J$ are compact, one can find slightly larger intervals $ \mathcal{I}$ and $\mathcal{J}$ that contain respectively $I$ and $J$ and are contained in $W^{\textup{u}}(\gamma^u) \cap D$ and $W^{\textup{s}}(\gamma^s) \cap D$. Now, using again that $D$ is transverse to the flow, we find that there is a smooth function $t_D$ from a neighbourhood of $\set{[x,y]:x,y \in R}$ to $\mathbb{R}$ such that $t_D([x,y]) = t_R([x,y])$ when $x,y \in R$ (where $t_R$ is from \cref{definition:regular_rectangle}) and $\varphi_{t_D(x)}(x) \in D$ for every $x$ in the domain of $t_D$. Define then the map $\Psi :(x,y,t) \to \varphi_{t+t_D([x,y])}([x,y])$, and notice that $\Psi$ induces a homeomorphism from a neighbourhood of $I \times J \times \set{0}$ in $\mathcal{I} \times \mathcal{J} \times \mathbb{R}$ to a neighbourhood of $R$ in $M$. Now, let $R'$ be a regular rectangle of small diameter contained in the intersection of $D$ and the image of $\Psi$, and with $\partial^u R' \subseteq W^{\textup{u}}(\gamma^u)$ and $\partial^s R' \subseteq W^{\textup{s}}(\gamma^s)$. We write as in the definition $R' = \set{\varphi_{t_{R'}([x,y])}([x,y]): x \in I', y \in J'}$. Since $I'$ and $J'$ are pieces of weak stable and unstable manifolds respectively intersected with $D$, we find that $\Psi^{-1}(I')$ and $\Psi^{-1}(J')$ are of the form $\widetilde{I} \times \set{a} \times \set{0}$ and $\set{b} \times \widetilde{J} \times \set{0}$, with $\widetilde{I} \subseteq \mathcal{I}, \widetilde{J} \subseteq \mathcal{J}, a \in \mathcal{J}, b \in \mathcal{I}$. Hence, we find that $R' \cap R = \Psi(I \cap \widetilde{I} \times J \cap \widetilde{J} \times \set{0})$ is also a rectangle with its stable and unstable boundaries within $W^{\textup{s}}(\gamma^s)$ and $W^{\textup{u}}(\gamma^u)$ respectively.

By applying \cref{lemma:small_rectangles}, we may cover $R$ by finitely many rectangles of diameter less than $\alpha$ contained in the intersection of $D$ with the image of $\Psi$. Intersecting these rectangles with $R$, the lemma follows by the argument above.

To deal with the case of singular rectangles, we just need to use \cref{lemma:small_rectangles_singular} instead of \cref{lemma:small_rectangles} in order to cover the side of the rectangle which is on the boundary of the range of the half-space parametrization (if there is one).
\end{proof}

In order to prove \cref{proposition:existence_Markov}, we start by constructing a family of rectangles partially satisfying \cref{definition:Markov_partition}.

\begin{lemma}\label{lemma:Markov_first_step}
Let $\Gamma^u$ and $\Gamma^s$ be two finite sets of primitive periodic orbits for $\varphi$ such that $\set{\gamma_1,\dots,\gamma_N} \subseteq \Gamma^u \cup \Gamma^s$. Let $\alpha > 0$ be small enough. There is a family $(R_\theta)_{\theta \in \Theta}$ satisfying \cref{eq:assumption_boundary} and the first three points in \cref{definition:Markov_partition}. Moreover, for each $\gamma^u \in \Gamma^u$ and $\theta \in \Theta$, if $x$ is in in the intersection of $\gamma^u$ and $R_\theta$ then $x \in \partial^u R_\theta$. The same is true with $u$ replaced by $s$.
\end{lemma}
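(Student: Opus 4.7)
The plan is first to build an initial covering of $M$ by rectangles using Lemmas \ref{lemma:small_rectangles} and \ref{lemma:small_rectangles_singular}, then to refine it with Lemma \ref{lemma:into_pieces} to enforce the ``moreover'' clause and the non-overlap condition. Near each singular orbit $\gamma_j$, I would produce $n_j$ singular rectangles, one per sector $\mathfrak{A}_k^{n_j}$ of an appropriate parity (even if $\gamma_j \in \Gamma^u$, odd if $\gamma_j \in \Gamma^s$) so that Lemma \ref{lemma:small_rectangles_singular} forces $\gamma_j$ onto the unstable (resp.\ stable) boundary; the opposite side is chosen in $W^{\textup{s}}(\gamma')$ or $W^{\textup{u}}(\gamma')$ for some $\gamma' \in \Gamma^s$ or $\Gamma^u$. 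These $n_j$ rectangles jointly cross-section a small flow-box neighborhood of $\gamma_j$. Away from these neighborhoods, a compactness argument yields finitely many pairwise disjoint small transverse disks $D_i$ centered at points $x_i$ whose forward $\varphi_{[0,\alpha]}$-saturation covers the complement; at each $x_i$ apply Lemma \ref{lemma:small_rectangles} to produce a regular rectangle $R_i \subseteq D_i$ with boundaries in the permitted families. This initial family satisfies conditions (i) and (ii) of \cref{definition:Markov_partition} and \eqref{eq:assumption_boundary}.

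For the ``moreover'' clause, I would enumerate for each $\gamma \in \Gamma^u$ the finitely many points where $\gamma$ crosses the disks or half-space sections supporting the current rectangles. Whenever such a crossing lies in the interior $R^*$ of some rectangle $R$, apply Lemma \ref{lemma:into_pieces} to cut $R$ along the arc of $W^{\textup{u}}(\gamma)$ through the crossing, so that the crossing ends up on the unstable boundary of the resulting sub-rectangles; repeat symmetrically for $\Gamma^s$. Since $\Gamma^u \cup \Gamma^s$ is finite and each orbit meets the disks at finitely many points, this refinement terminates and preserves \eqref{eq:assumption_boundary}.

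Finally, for condition (iii), choose $\alpha$ strictly less than one-third of the minimum period of the orbits in $\set{\gamma_1,\dots,\gamma_N} \cup \Gamma^u \cup \Gamma^s$ and than the minimum first-return time between any two of the finitely many disks above. Arrange any two rectangles on the same disk to have disjoint interiors (subdividing via Lemma \ref{lemma:into_pieces} as needed). Then for distinct $\theta_1, \theta_2$, no round trip $R_{\theta_1} \to R_{\theta_2} \to R_{\theta_1}$ of total flow time $\leq 2\alpha$ is possible, so at least one of $\varphi_{[0,\alpha]}(R_{\theta_1}) \cap R_{\theta_2}$ and $R_{\theta_1} \cap \varphi_{[0,\alpha]}(R_{\theta_2})$ is empty. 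The main obstacle is reconciling disjointness with coverage near the singular orbits, where adjacent singular rectangles naturally share 1-dimensional edges on the local stable or unstable manifolds of $\gamma_j$; I would resolve this by interposing a thin auxiliary regular rectangle on a transverse disk slightly offset by the flow along each shared edge, which is possible because the edges lie in the fixed family $\bigcup_{\gamma \in \Gamma^u \cup \Gamma^s}(W^{\textup{u}}(\gamma) \cup W^{\textup{s}}(\gamma))$ permitted as boundary data for Lemma \ref{lemma:small_rectangles}.
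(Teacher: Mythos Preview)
Your overall strategy matches the paper's proof: build an initial cover via Lemmas~\ref{lemma:small_rectangles} and~\ref{lemma:small_rectangles_singular}, then refine. The paper also obtains the ``moreover'' clause exactly as you do, by cutting rectangles along the relevant stable and unstable manifolds.

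The main difference lies in how you enforce condition~(iii). The paper's mechanism is simpler: after producing the initial cover, it picks any rectangle intersecting another, breaks it into small pieces via Lemma~\ref{lemma:into_pieces}, and \emph{shifts these pieces along the flow} until they are disjoint from all other rectangles; iterating yields a pairwise disjoint family, and then (iii) follows for $\alpha$ small. This handles the singular-orbit case uniformly, so no auxiliary rectangles are needed along shared edges.

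Your ``no round trip'' argument for (iii), by contrast, contains an unstated step: having both $\varphi_{[0,\alpha]}(R_{\theta_1}) \cap R_{\theta_2} \neq \emptyset$ and $\varphi_{[0,\alpha]}(R_{\theta_2}) \cap R_{\theta_1} \neq \emptyset$ involves two \emph{different} orbit segments, not a single round trip. You can repair this by observing that the two segments, together with the smallness of the rectangles and continuity of the flow, force a point to approximately return to itself in time $\leq 2\alpha$, which is impossible since $\varphi$ has no fixed points; but you should say so explicitly. Your auxiliary-rectangle patch near the singular orbits is then unnecessary once you adopt the paper's shifting trick, which also avoids the bookkeeping of interposing new disks while preserving \eqref{eq:assumption_boundary}.
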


\begin{proof}
It is easy to produce a family of rectangles satisfying the first two points from \cref{definition:Markov_partition} and \cref{eq:assumption_boundary}: you use \cref{lemma:small_rectangles_singular} to produce a family of rectangles covering a neighbourhood of the singular orbits and then \cref{lemma:small_rectangles} to cover the rest of the manifold. 

One can then use \cref{lemma:into_pieces} to remove all intersections between rectangles. You pick a rectangle, break it into very small pieces using \cref{lemma:into_pieces}, then by shifting these pieces along the flow, you can ensure that none of them intersect any other rectangles. This process produces a new family of rectangles with at least one less rectangle that intersect other rectangles. Iterating this process, we get a family of rectangles that do not intersect each other, and if $\alpha$ is small enough, it ensures that the third point in \cref{definition:Markov_partition} holds.

Finally, for the last condition: we can always cut the rectangles along unstable or stable manifolds of the element of $\Gamma^u$ or $\Gamma^s$ to ensure that it is satisfied.
\end{proof}

The key point in order to construct a Markov partition is the following:

\begin{lemma}\label{lemma:key_Markov}
Let $(R_\theta)_{\theta \in \Theta}$ be as in \cref{lemma:Markov_first_step}. Write $\Omega = \bigcup_{\theta \in \Theta} R_\theta$ and let $T : \Omega \to \Omega$ be the first return map of $\varphi$. Let $W$ be a compact subset of $\Omega$ such that $W \subseteq W^{\textup{u}}(\gamma)$ for some $\gamma \in \Gamma^u$. Then, there is $n_0 \in \mathbb{N}$ such that $T^{-n}(W) \subseteq \bigcup_{\theta \in \Theta} \partial^u R_\theta$ for every $n \geq n_0$.

The same statement with $u$ replaced by $s$ and $T$ by $T^{-1}$ also holds.
\end{lemma}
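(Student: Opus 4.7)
The proof follows the standard strategy for Markov partitions: backward iterates of an unstable set contract, uniformly, to $\gamma$. First, since $W \subseteq W^u(\gamma)$ and $W^u(\gamma)$ is backward-invariant, $T^{-n}(W) \subseteq W^u(\gamma)$ for every $n \geq 0$. The key claim to establish is uniform convergence: for every neighbourhood $\mathcal{U}$ of $\gamma$ there exists $n_1$ such that $T^{-n}(W) \subseteq \mathcal{U}$ for all $n \geq n_1$. To prove this I would take $V \subseteq \mathcal{U}$ from \cref{lemma:actually_unstable}, which is backward-invariant under the flow along $W^{\textup{lu}}(\gamma)$ and characterises $W^{\textup{lu}}(\gamma)$ locally. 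For each $x \in W$ the convergence $\varphi_{-t}(x) \to \gamma$ yields a time $t_x$ with $\varphi_{-t_x}(x) \in V$, and continuity propagates this to a neighbourhood $N_x$ of $x$. Because $W^u(\gamma) \cap V \subseteq W^{\textup{lu}}(\gamma)$ for small $V$ (points with a non-trivial stable component would be expanded away from $\gamma$ under backward flow), the backward-invariance of $V$ inside $W^{\textup{lu}}(\gamma)$ then propagates the inclusion $\varphi_{-t}(y) \in V$ to all $t \geq t_x$, $y \in N_x \cap W$. A finite subcover of $W$ combined with the lower bound on return times of $T$ gives the uniform statement.

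Next I shrink $\mathcal{U}$ using the finiteness of the partition. Let $\delta_0 = \min\{d(R_\theta,\gamma) : \theta \in \Theta,\ \gamma \cap R_\theta = \emptyset\}$, which is positive since only finitely many closed rectangles are involved. I require $\mathcal{U}$ to sit inside the $\delta_0/2$-neighbourhood of $\gamma$, so that any rectangle meeting $\mathcal{U}$ also meets $\gamma$. I further require that inside any rectangle $R_\theta$ with $\gamma \cap R_\theta \neq \emptyset$ the only component of $W^{\textup{lu}}(\gamma) \cap R_\theta$ meeting $\mathcal{U}$ is the unstable leaf passing through some point of $\gamma \cap R_\theta$. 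This is possible because $W^{\textup{lu}}(\gamma)$ is, near $\gamma$, an embedded $2$-manifold (in the non-singular case) or, in the singular case, sits along the boundary $\mathbb{R} \times \{0\}$ of a half-space parametrisation by \cref{lemma:local_weak_unstable_half_space}; in either case its trace on a small rectangle close to $\gamma$ is a single strand through $\gamma \cap R_\theta$, the other strands being at a positive distance from $\gamma$.

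For each $\theta$ with $\gamma \cap R_\theta \neq \emptyset$, \cref{lemma:Markov_first_step} ensures that $\gamma \cap R_\theta \subseteq \partial^u R_\theta$, and by Definitions \ref{definition:regular_rectangle} and \ref{definition:singular_rectangles} any point of $\partial^u R_\theta$ carries its whole unstable leaf inside $R_\theta$ within $\partial^u R_\theta$. So the unstable strand through $\gamma \cap R_\theta$ lies in $\partial^u R_\theta$. Combining, for $n \geq n_0 := n_1$ every point $p \in T^{-n}(W)$ lies in some $R_\theta$ with $\gamma \cap R_\theta \neq \emptyset$ (by choice of $\mathcal{U}$), and
\begin{equation*}
p \in W^u(\gamma) \cap R_\theta \cap \mathcal{U} = W^{\textup{lu}}(\gamma) \cap R_\theta \cap \mathcal{U} \subseteq \partial^u R_\theta.
\end{equation*}
The analogous statement for $T^n$ and stable boundaries follows by reversing time.

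The main obstacle will be carefully verifying, especially at singular orbits, the two key structural facts used above: that $W^u(\gamma) \cap \mathcal{U}$ coincides with $W^{\textup{lu}}(\gamma) \cap \mathcal{U}$ for sufficiently small $\mathcal{U}$, and that in a rectangle near $\gamma$ the only strand of $W^{\textup{lu}}(\gamma)$ visible from $\mathcal{U}$ is the one through $\gamma \cap R_\theta$. Both are standard in the non-singular case but, in the singular case, require working in the half-space parametrisations $\Upsilon_{j,k,\tau}$ and using the local model from \cref{subsection:local_model} to control the pronged structure of $W^{\textup{lu}}$ at the singularity.
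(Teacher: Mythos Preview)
Your overall plan---backward iterates of $W$ accumulate on $\gamma$, and near $\gamma$ the only piece of $W^u(\gamma)$ visible in a rectangle is the boundary leaf through $\gamma\cap R_\theta$---is the same idea the paper uses. The paper packages it as a short contradiction argument rather than a direct one: assuming $T^{-n_j}(x_j)\notin\bigcup_\theta\partial^u R_\theta$ with $n_j\to\infty$, extract $T^{-n_j}(x_j)\to x\in R_\theta$, observe $x\in\gamma\subseteq\partial^u R_\theta$, and then use Proposition~\ref{proposition:unstable_manifolds} together with Remark~\ref{remark:rectangle_stable_unstable} to see that $T^{-n_j}(x_j)$ eventually lies on the projection of $W^u_{\epsilon_0}(x)$ to $R_\theta$, hence on $\partial^u R_\theta$.

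There is, however, a genuine gap in your argument. The assertion ``$W^u(\gamma)\cap V\subseteq W^{\textup{lu}}(\gamma)$ for small $V$'' is false, and the parenthetical justification (``points with a non-trivial stable component would be expanded away'') misidentifies the obstruction: every point of $W^u(\gamma)$ already lies on the weak unstable leaf of $\gamma$, so there is no stable component to speak of. The real issue is that the global unstable manifold $W^u(\gamma)$ is dense in $M$ (Corollary~\ref{corollary:dense}) and therefore re-enters any neighbourhood $V$ of $\gamma$ along infinitely many sheets, only one of which is $W^{\textup{lu}}(\gamma)$. Thus a point in $W^u(\gamma)\cap V$ need not have its entire backward orbit in $V$, the third bullet of Lemma~\ref{lemma:actually_unstable} does not apply, and your backward-invariance step---and with it the compactness argument for uniform convergence---collapses. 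What you actually need is that eventually $\varphi_{-t}(W)\subseteq W^{\textup{lu}}(\gamma)$, and for this mere compactness of $W$ in $M$ is not enough: one can build a compact $W=\{y_j\}\cup\{y_\infty\}\subseteq W^u(\gamma)$ with $y_j\to y_\infty$ in $M$ but the $y_j$ sitting on sheets whose backward-flow time to reach $W^{\textup{lu}}(\gamma)$ diverges. In the only application (the proof of Proposition~\ref{proposition:existence_Markov}) the set $W$ is a compact arc inside a single weak unstable leaf, and for such $W$ the backward flow genuinely contracts all of $W$ together towards $\gamma$, so the uniform statement follows. Your closing paragraph correctly flags the $W^u$ versus $W^{\textup{lu}}$ issue as the delicate point, but the difficulty is not specific to singular orbits---it is already present for regular $\gamma$ and concerns global recurrence rather than local branching.
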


\begin{proof}
Let us make a proof by contradiction. If the result were false, then one could construct a sequence $(x_j)_{j \geq 0}$ of points in $W$ and a sequence $(n_j)_{j \geq 0}$ of integers going to $+ \infty$ such that $T^{-n_j}(x_j) \notin \bigcup_{\theta \in \Theta} \partial^u R_\theta$. Up to extracting a subsequence, we may assume that there is $\theta \in \Theta$ such that $T^{-n_j}(x_j) \in R_\theta$ for every $j \geq 0$. Up to extracting again, we may assume that $(T^{-n_j}(x_j))_{j \geq 0}$ converges to a point $x \in R_{\theta}$.

Now, since the $R_\theta$'s are disjoint, the first return time of $\varphi$ to $\Omega$ is bounded below by some constant $t_*$. Hence, for each $j \geq 0$, we have $T^{-n_j}(x_j) = \varphi_{t_j}(x_j)$ with $t_j \leq - n_j t_*$. It follows that $x$ is a point on $\gamma$, and thus that $x \in \partial^u R_{\theta}$. Using that $W$ is compact and the description of local unstable manifolds from \cref{proposition:unstable_manifolds}, we find that for $j$ large enough, $T^{-n_j} (x_j)$ belongs to an arbitrarily small neighbourhood of $x$ in the projection on $R_\theta$ along the flow of its local unstable manifold. It follows then that $T^{-n_j}(x_j)$ belongs to $\partial^u R_\theta$ (see \cref{remark:rectangle_stable_unstable}), a contradiction.
\end{proof}

We can now upgrade the family of rectangles from \cref{lemma:into_pieces} in order to construct a Markov partition.

\begin{proof}[Proof of \cref{proposition:existence_Markov}]
Let us start with a family of rectangles $(R_\theta)_{\theta \in \Theta}$ as in \cref{lemma:Markov_first_step}. We are going to divide the $R_\theta$'s into subrectangles using an inductive procedure. Thus, we will get a sequence of family of rectangles $\mathfrak{R}_0,\dots, \mathfrak{R}_n,\dots$, and we will see that for $n$ large enough $\mathfrak{R}_n$ is a Markov partition, except that the rectangles in $\mathfrak{R}_n$ may intersect on their boundaries, but this can be fixed by shifting them a little along the flow, since the elements of $\mathfrak{R}_n$ are obtained by cutting the $R_\theta$'s into pieces.

Let us define $\mathfrak{R}_0$ first. For each $\theta_1,\theta_2 \in \Theta$, let us consider the sets
\begin{equation*}
\overline{\set{x \in R_{\theta_1}^* : Tx \in R_{\theta_2}^*}} \textup{ and } \overline{\set{x \in R_{\theta_1}^* : T^{-1}x \in R_{\theta_2}^*}}
\end{equation*}
They are bounded by finitely many intervals within $\bigcup_{\gamma \in \Gamma^s} W^{\textup{s}}(\gamma) \cap R_{\theta_1}$ and $\bigcup_{\gamma \in \Gamma^u} W^{\textup{u}}(\gamma) \cap R_{\theta_1}$. Letting $\theta_2$ run over $\Theta$, we get that way a finite collection of segments within $R_{\theta_1}$. Extending these segments up to the sides of $R_{\theta_1}$, we get a grid on $R_{\theta_1}$, hence dividing $R_{\theta_1}$ into smaller rectangles. We let $\mathfrak{R}_0$ be the collection of rectangles obtained this way (using the same procedure for each $\theta_1 \in \Theta$). What we gain from this first step is that if $R \in \mathfrak{R}_0$ then there is $\theta \in \Theta$ such that $T(R^*) \subseteq R_\theta$.

Using this last property, we explain how to go from $\mathfrak{R}_0$ to $\mathfrak{R}_1$. With $R \in \mathfrak{R}_0$, we take $\theta \in \Theta$ such that $T(R^*) \subseteq R_\theta$. Hence, $T_{|R^*}$ induces (by extending it by continuity to $R$) a bijection between $R$ and a subset of $R_\theta$. The subdivision of $R_\theta$ into elements of $\mathfrak{R}_0$ induces consequently a subdivision of $R$ into subrectangles. Doing the same for $T^{-1}$ and ``intersecting'' the two subdivisions, we get a subdivision of $R$ into rectangles. Doing the same for each element of $\mathfrak{R}_0$, we let $\mathfrak{R}_1$ denote the new collection of rectangles. 

Iterating this procedure, we get a sequence of family of rectangles $\mathfrak{R}_0,\mathfrak{R}_1,\mathfrak{R}_2$, \dots Let us prove that for $n$ large enough, $\mathfrak{R}_n$ satisfies \ref{item:Markov_property} in \cref{definition:Markov_partition}. Notice that on the interior of every rectangle $R \in \mathfrak{R}_n$, one can define inductively the $n$th return map of $\varphi$ to $\bigcup_{\theta \in \Theta} R_\theta$, and this maps extends continuously to a map $T_n$ from $R$ to $R_\theta$ for some $\theta \in \Theta$. Let then $x,y \in  R$ belong to the same stable manifold. We want to prove that $T_n(x)$ and $T_n(y)$ belong to the same element of $\mathfrak{R}_0$. If this is not the case, there is some piece $W$ of unstable manifold bounding an element of $\mathfrak{R}_0$ that intersects $W^s_{R_{\theta}}(T_n(x))$ between $T_n(x)$ and $T_n(y)$. Thus, $T_n^{-1}(W)$ is a piece of unstable manifold that intersects $W_R^s(x)$ between $x$ and $y$. But, \cref{lemma:key_Markov} implies that, provided $n$ is large enough, $T_n^{-1}(W)$ is contained within the unstable boundary of one of the $R_{\theta'}$'s, and thus it cannot cut $W_R^s(x)$ between $x$ and $y$, a contradiction. Here, how large $n$ needs to be is uniform in $x$ and $y$ since we only need to apply \cref{lemma:key_Markov} to a finite number of piece of unstable manifolds. Hence, $T_n x$ and $T_n y$ belong to the same element of $\mathfrak{R}_0$. Consequently, if we look at the first return time $T_1$ from the interior of $R$ to one of the original rectangles extended by continuity to $R$, then $T_1(x)$ and $T_1(y)$ belong to the same element of $\mathfrak{R}_n$. The assumption on stable manifolds in \ref{item:Markov_property} from\cref{definition:Markov_partition} follows. We prove similarly the properties for unstable manifolds in backward time.

Finally, since the elements of $\mathfrak{R}^n$ are obtained by cutting the elements of $(R_\theta)_{\theta \in \Theta}$ into pieces, we find that for every $\gamma \in \Gamma^u$ there is $R \in \mathfrak{R}_n$ such that $\partial^u R \subseteq W^{\textup{u}}(\gamma)$ and for every $\gamma \in \Gamma^s$ there is $R \in \mathfrak{R}_n$ such that $\partial^s R \subseteq W^{\textup{s}}(\gamma)$.
\end{proof}

\section{Zeta functions associated to smooth pseudo-Anosov flows}\label{section:zeta_continuation}

In this section, we prove \cref{theorem:continuation_zeta} about the meromorphic continuation of twisted zeta functions associated to smooth pseudo-Anosov flows. We will also give a formula for the value at zero of some zeta functions (\cref{lemma:zeta_Markov_at_zero}) that will be used in the proof of \cref{theorem:fried_conjecture}. For the whole section, let us fix a transitive smooth pseudo-Anosov flow $\varphi$ on a manifold $M$.

Our approach is based on a symbolic coding for smooth pseudo-Anosov flows based on Markov partitions. We explain how this coding works in \cref{subsection:symbolic_dynamics}. Most technical results associated to the coding are very similar to the case of Anosov flows (see for instance \cite{bowen_markov}).

We explain then in \cref{subsection:dynamical_determinant} how the analysis from \cite{baladi_tsujii_determinant} can be used in order to get holomorphic extensions for certain dynamical determinants associated to our symbolic dynamics. We use these dynamical determinants in \cref{subsection:zeta_functions} to construct a meromorphic continuation for a first zeta function. However, due to erros we make when counting periodic orbits using the symbolic coding, this is not exactly the zeta function from \cref{theorem:continuation_zeta}. We solve this issue in \cref{subsection:correcting} by making the counting errors explicit.

In \cref{subsection:value_at_zero} we give a first formula for the value of the certain twisted zeta functions at zero that will be used in the proof of Theorems \ref{theorem:fried_conjecture} and \ref{theorem:dirichletclassnumber}.

Here, even though we work with symbolic coding, we are not limited by the regularity of the coding when constructing meromorphic continuations for zeta functions, because we do not study directly the symbolic flow associated with $\varphi$ but instead work with a family of open hyperbolic maps associated to our Markov partition that are smooth. This is the strategy that was used in the analytic setting by Rugh \cite{rugh96}.

\subsection{Symbolic dynamics}\label{subsection:symbolic_dynamics}

Let us explain how one can use Markov partitions to construct a symbolic model for the flow that can be used to count periodic orbits and study the associated zeta functions. Many technical details are similar to the Anosov case, and we may consequently go rapidly over them (see e.g. \cite{bowen_markov,parry_pollicott_asterisque}).

Let $\Gamma^u$ and $\Gamma^s$ be two non-empty finite sets of primitive periodic orbits for $\varphi$ such that $\Gamma^u \cup \Gamma^s$ contains the singular orbits of $\varphi$. In order to compute the value at zero of zeta functions, we will require further properties on $\Gamma^u$ and $\Gamma^s$, but it is crucial that we do not need to impose them yet. Choose a Markov partition $(\widetilde{R}_\theta)_{\theta \in \Theta}$ with small diameter, as constructed in \cref{proposition:existence_Markov}, and let $\widetilde{T}$ be the first return map of the flow $(\varphi_t)_{t \in \mathbb{R}}$ to $\bigcup_{\theta \in \Theta} \widetilde{R}_\theta$, and $\widetilde{\mathfrak{r}}$ the associated first return time. Let $A = (A_{\theta,\vartheta})_{\theta,\vartheta \in \Theta}$ be the adjacency matrix defined by
\begin{equation*}
A_{\theta,\vartheta} = \begin{cases} 1 & \textup{ if } \widetilde{T}(\widetilde{R}_\theta^*) \cap \widetilde{R}_\vartheta^* \neq \emptyset; \\
0 & \textup{ otherwise}. \end{cases}
\end{equation*}
We refer to Definitions \ref{definition:regular_rectangle}, \ref{definition:singular_rectangles} and \ref{definition:Markov_partition} for the definitions of rectangles and Markov partitions. Let $(\Sigma,\sigma)$ be the shift of finite type associated to $A$:
\begin{equation*}
\Sigma = \set{ (\theta_m)_{m \in \mathbb{Z}} \in \Theta^{\mathbb{Z}} : A_{\theta_m, \theta_{m+1}} = 1 \textup{ for every } m \in \mathbb{Z}}
\end{equation*}
and $\sigma : \Sigma \to \Sigma$ is defined by $\sigma((\theta_m)_{m \in \mathbb{Z}}) = (\theta_{m+1})_{m \in \mathbb{Z}}$. We will also need for $n \geq 1$ the set
\begin{equation*}
\Sigma_n = \set{(\theta_0,\dots, \theta_{n-1}) \in \Theta^n : A_{\theta_{k},\theta_{k+1}} = 1 \textup{ for } k = 0,\dots,n-2}.
\end{equation*}

For $\theta \in \Theta$, if $\widetilde{R}_\theta$ is a regular rectangle, we choose a smooth disk $\widetilde{D}_\theta$ such that $\widetilde{R}_\theta \subseteq \widetilde{D}_\theta$ as in \cref{definition:regular_rectangle}. We let then $D_\theta$ be an open disk in $\mathbb{R}^2$ such that there is a $C^\infty$ diffeomorphism $\kappa_\theta :D_\theta \to \widetilde{D}_\theta$. We set $R_\theta = \kappa_\theta^{-1}(\widetilde{R}_\theta)$. If $\widetilde{R}_\theta$ is not a regular rectangle, then there is $j \in \set{1,\dots,N}$,$k \in \mathbb{Z}/ 2 n_j \mathbb{Z}$ and $\tau \in [-r_{\max},r_{\max}]$ such that $\widetilde{R}_\theta = \Upsilon_{j,k,\tau}(R_\theta \times \set{0})$ for some subset $R_\theta$ of $\mathcal{V}$ as in \cref{definition:singular_rectangles}. In that case, we define $\kappa_\theta$ by $\kappa_\theta(x,y) = \Upsilon_{j,k,\tau}(x,y,0)$ for $(x,y) \in R_\theta$.

If $x \in R_\theta$ for some $\theta \in \Theta$, we define the stable and unstable manifolds 
\begin{equation*}
W_{R_\theta}^s(x) = \kappa_\theta^{-1}(W_{\widetilde{R}_\theta}^s(\kappa_\theta x)) \textup{ and } W_{R_\theta}^u(x) = \kappa_\theta^{-1}(W_{\widetilde{R}_\theta}^u(\kappa_\theta x)).
\end{equation*}
We denote their tangent spaces at $x$ by $E^s_\theta(x)$ and $E^u_\theta(x)$. Notice that these directions depend continuously on $x$. If $x,y \in R_\theta$, we will also let $[x,y]_{R_\theta}$ denote the intersection of $W_{R_\theta}^s(x)$ and $W_{R_\theta}^u(y)$ (which exists since $\widetilde{R}_\theta$ is a rectangle).

If $\theta,\vartheta \in \Theta$ are such that $A_{\theta,\vartheta} = 1$, define the rectangle
\begin{equation*}
\widetilde{R}_{\theta,\vartheta} = \overline{\set{x \in \widetilde{R}_\theta^*: Tx \in \widetilde{R}_\vartheta^*}} \textup{ and } R_{\theta,\vartheta} = \kappa_\theta^{-1}(\widetilde{R}_{\theta,\vartheta})
\end{equation*}
Recall that $\widetilde{R}_{\theta,\vartheta}$, and thus $R_{\theta,\vartheta}$, are connected. Using the compactness of $\widetilde{R}_{\theta,\vartheta}$ and the implicit function theorem (for regular rectangles) or the local model for the flow (for singular rectangles that are not regular), we see that $\tilde{\mathfrak{r}}_{|\widetilde{R}_{\theta,\vartheta}^*} \circ \kappa_\theta$ extend to a smooth map $\mathfrak{r}_{\theta,\vartheta}$ on $R_{\theta,\vartheta}$. We can then define $T_{\theta,\vartheta}$ from $R_{\theta,\vartheta}$ to $R_\vartheta$ by 
\begin{equation*}
T_{\theta,\vartheta} (x) = \kappa_\vartheta^{-1}( \varphi_{\mathfrak{r}_{\theta,\vartheta}(x)}(\kappa_\theta(x))).
\end{equation*}
Notice that $\kappa_\vartheta \circ T_{\theta,\vartheta} \circ \kappa_\theta^{-1}$ is a continuous extension of $\widetilde{T}_{|\widetilde{R}_{\theta,\vartheta}}$ and that $T_{\theta,\vartheta}$ is smooth on $R_{\theta,\vartheta}$, even when $\widetilde{R}_{\theta,\vartheta}$ is not a regular rectangle due to the local model for the flow.

From the invariance of the stable and unstable manifolds, we find that if $\theta,\vartheta \in \Theta$ are such that $A_{\theta,\vartheta} = 1$ and that $x \in R_{\theta,\vartheta}$ then 
\begin{equation*}
D T_{\theta,\vartheta}(x)(E_\theta^u(x)) = E_\vartheta^u(T_{\theta,\vartheta}x) \textup{ and } D T_{\theta,\vartheta}(x)(E_\theta^s(x)) = E_\vartheta^s(T_{\theta,\vartheta}x).
\end{equation*}

Now, if $w = (w_0,\dots,w_n) \in \Sigma_{n+1}$, define by induction the rectangle
\begin{equation*}
R_w = \set{ x \in R_{w_0,w_1} : T_{w_0,w_1} x \in R_{w_1,\dots, w_n}}.
\end{equation*}
\begin{lemma}\label{lemma:one_sided_rectangle}
Let $n \in \mathbb{N}$ and $w \in \Sigma_{n+1}$. Then, $R_w$ is a non-empty connected rectangle.
\end{lemma}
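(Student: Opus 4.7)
The plan is to induct on $n$, carrying through the induction not just the conclusion of the lemma but the additional property that $R_w$ is \emph{stable-full} in $R_{w_0}$, meaning that whenever $x \in R_w$ and $y \in W_{R_{w_0}}^s(x)$ one has $y \in R_w$. The case $n = 0$ is immediate since $R_{(w_0)} = R_{w_0}$ is a rectangle by construction. For the base case $n = 1$ of the induction we have $R_w = R_{w_0, w_1}$: non-emptiness comes from $A_{w_0, w_1} = 1$, connectedness is exactly item \ref{item:stay_connected} of Definition \ref{definition:Markov_partition}, and the rectangle structure together with stable-fullness follows from the Markov property \ref{item:Markov_property}, which forces $T_{w_0, w_1}(W_{R_{w_0}}^s(x)) \subseteq W_{R_{w_1}}^s(T_{w_0, w_1}(x)) \subseteq R_{w_1}$ for every $x \in R_{w_0, w_1}^*$.

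For the inductive step, set $w' = (w_1, \ldots, w_n)$ and write $R_w = T_{w_0, w_1}^{-1}(R_{w'}) \cap R_{w_0, w_1}$. I first verify stable-fullness: if $x \in R_w$ and $y \in W_{R_{w_0}}^s(x)$, then $y \in R_{w_0, w_1}$ by the base case, and $T_{w_0, w_1}(y) \in W_{R_{w_1}}^s(T_{w_0, w_1}(x)) \subseteq R_{w'}$ by the inductive stable-fullness, so $y \in R_w$. The rectangle structure of $R_w$ then follows by analyzing $R_w$ slice by slice along unstable manifolds: the restriction of $T_{w_0, w_1}$ to an unstable slice of $R_{w_0, w_1}$ is a smooth, strictly expanding diffeomorphism onto the corresponding unstable slice of $R_{w_1}$, and pulling back the unstable factor of the inductively known rectangle $R_{w'}$ yields a single sub-interval. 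Combined with stable-fullness, this exhibits $R_w$ as a product of an unstable sub-interval and the full stable extent of $R_{w_0}$, which is precisely a rectangle in the sense of Definition \ref{definition:regular_rectangle} or Definition \ref{definition:singular_rectangles} (depending on whether $\widetilde{R}_{w_0}$ is regular or singular).

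Non-emptiness of $R_w$ uses the dual direction of the Markov property: $T_{w_0, w_1}$ maps each unstable slice of $R_{w_0, w_1}$ onto the full unstable slice of $R_{w_1}$ through its image, and by stable-fullness of $R_{w'}$ in $R_{w_1}$ each such unstable slice meets $R_{w'}$. Connectedness is then immediate from the product structure established above. The main technical point to verify carefully is that pulling back along $T_{w_0, w_1}$ does not break the unstable interval into several components; this uses both the smoothness of $T_{w_0, w_1}$ on $R_{w_0, w_1}$ (including at the boundary points lying on singular orbits, where one works in the half-space parametrization given by Definition \ref{definition:singular_rectangles}) and the fact that the derivative is strictly expanding in the unstable direction. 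This is also where the connectedness requirement \ref{item:stay_connected} of Definition \ref{definition:Markov_partition} is indispensable: without it, the base rectangle $R_{w_0, w_1}$ could already be disconnected and the induction would not start.
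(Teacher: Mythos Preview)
Your proof is correct and follows essentially the same inductive strategy as the paper, using $R_w = R_{w_0,w_1} \cap T_{w_0,w_1}^{-1}(R_{w'})$ and the Markov property. The main difference is one of presentation: you make the stable-fullness property explicit and carry it through the induction, then argue slice-by-slice for the unstable interval structure, whereas the paper compresses this into the single phrase ``intersection of two connected subrectangles of $R_{w_0}$'' (which tacitly uses exactly the stable-fullness you spell out). For non-emptiness the paper gives a slightly slicker one-line argument via the Bowen bracket: given $x \in R_{w_0,w_1}$ and $y \in R_{w'}$, the point $T_{w_0,w_1}^{-1}[y, T_{w_0,w_1}x]_{R_{w_1}}$ lies in $R_w$ by the Markov property in both directions; your version unwinds the same idea through the product structure.
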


\begin{proof}
The proof is by induction on $n$. We already know that it is true for $n = 0$ and $ n =1$. For the induction step, just write
\begin{equation*}
R_w = R_{w_0,w_1} \cap T_{w_0,w_1}^{-1}(R_{w_1,\dots,w_n}).
\end{equation*}
Hence, $R_w$ is the intersection of two connected subrectangles of $R_{w_0}$ and is thus a connected rectangle. To find that it is non-empty, just notice that if $x \in R_{w_0,w_1}$ and $y \in R_{w_1,\dots,w_n}$ then $T_{w_0,w_1}^{-1}[y, T_{w_0,w_1} x] \in R_w$ (here, we use the Markov property to see that $[y,T_{w_0,w_1} x] \in R_{w_1,\dots,w_n} \cap T_{w_0,w_1}(R_{w_0,w_1})$).
\end{proof}
Notice that on $R_w$, we can define the map $T_w = T_{w_{n-1}, w_n} \circ \dots \circ T_{w_0,w_1}$. The maps defined this way are hyperbolic in the following sense : there are constants $C > 0$ and $\lambda > 1$ such that for every $n \in \mathbb{N}, w \in \Sigma_{n+1}$, $x \in R_w, v \in E^s_{w_0}(x)$ and $v' \in E^u_{w_n}(T_w x)$ we have
\begin{equation*}
|D T_w(x) \cdot v| \leq C \lambda^{-n} |v| \textup{ and } |(D T_w (x))^{-1} \cdot v'| \leq C \lambda^{-n} |v'|.
\end{equation*}
This fact can be proved by an argument similar to the one in \cref{lemma:derivative_graph_transform} (when we proved the hyperbolicity for the maps $F_n$'s).

Up to subdividing further the rectangles to get a new Markov partition with smaller rectangles, we may assume that the $T_{\theta,\vartheta}$'s are $C^1$ close to their affine approximation at any point of $R_{\theta,\vartheta}$. We deduce several consequences from that fact:
\begin{itemize}
\item we can extend the $T_{\theta,\vartheta}$'s to diffeomorphism from $\mathbb{R}^2$ to itself (using a formula similar to \eqref{eq:extension_diffeo}). We will still call them $T_{\theta,\vartheta}$, and their composition is also denoted by $T_w$.
\item we can assume that for each $\theta \in \Theta$ there is a polarization $\mathcal{P}_\theta$ such that for each $\theta,\vartheta \in \Theta$ such that $A_{\theta,\vartheta} = 1$, the map $T_{\theta,\vartheta}$ is regular cone-hyperbolic with respect to $\mathcal{P}_\vartheta$ and $\mathcal{P}_\theta$ (see the definition in \cite{baladi_tsujii_determinant} page 18).
\end{itemize}

The following lemma is essential in order to relate the symbolic dynamical system $(\Sigma,\sigma)$ with the flow $\varphi$.

\begin{lemma}\label{lemma:iterated_rectangles}
There is a constant $C > 0$ such that for every $n \in \mathbb{N}$, if $w,w' \in \Sigma_{n+1}$ are such that $w'_{n} = w_0$ then the set $R_w \cap T_{w'}(R_{w'})$ is non-empty and has diameter less than $C \lambda^{-n}$.
\end{lemma}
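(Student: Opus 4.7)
I expect the two sets in question to have complementary shapes inside $R_{w_0} = R_{w'_n}$: the set $R_w$ should be saturated by stable slices of $R_{w_0}$ and have unstable diameter $\lesssim \lambda^{-n}$, while $T_{w'}(R_{w'})$ should be saturated by unstable slices of $R_{w_0}$ and have stable diameter $\lesssim \lambda^{-n}$. Once these two shape descriptions are in hand, non-emptiness and the diameter bound follow from a single application of the Bowen bracket inside $R_{w_0}$.

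The stable saturation of $R_w$ (that is, $W^s_{R_{w_0}}(x) \subseteq R_w$ for every $x \in R_w$) is proved by induction on $n$: the base case $n=1$ is exactly the Markov property \ref{item:Markov_property} of \cref{definition:Markov_partition}, and the induction step uses the decomposition $R_w = R_{w_0,w_1} \cap T_{w_0,w_1}^{-1}(R_{w_1,\dots,w_n})$ together with the fact that $T_{w_0,w_1}$ sends $W^s_{R_{w_0}}(x)$ into $W^s_{R_{w_1}}(T_{w_0,w_1}x)$ (again by the Markov property). A dual induction, applied to $T_{w'}^{-1}$ and the unstable part of \ref{item:Markov_property}, yields the unstable saturation of $T_{w'}(R_{w'})$. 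The width control then follows immediately from the hyperbolicity estimate stated just after \cref{lemma:one_sided_rectangle}: since $T_w(R_w) \subseteq R_{w_n}$ has uniformly bounded diameter and $T_w$ expands unstable vectors by a factor $\gtrsim \lambda^n$, any two points of $R_w$ lying on a common unstable slice of $R_{w_0}$ are at distance $\leq C \lambda^{-n}$; symmetrically, the stable contraction of $T_{w'}$ combined with the bounded diameter of $R_{w'}$ gives stable diameter $\leq C\lambda^{-n}$ for $T_{w'}(R_{w'})$.

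For non-emptiness, I would pick any $x \in R_w$ and any $y \in T_{w'}(R_{w'})$, form $p = [x,y]_{R_{w_0}} \in R_{w_0}$, and observe that $p \in W^s_{R_{w_0}}(x) \subseteq R_w$ and $p \in W^u_{R_{w_0}}(y) \subseteq T_{w'}(R_{w'})$ by the two saturation properties, so $p$ lies in the intersection. For the diameter, given any $z, z' \in R_w \cap T_{w'}(R_{w'})$ and $q = [z,z']_{R_{w_0}}$, the stable leg from $z$ to $q$ lies inside $T_{w'}(R_{w'})$ and therefore has length $\lesssim \lambda^{-n}$, while the unstable leg from $q$ to $z'$ lies inside $R_w$ and so also has length $\lesssim \lambda^{-n}$, giving $|z-z'| \lesssim \lambda^{-n}$. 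The only mild obstacle I foresee is the uniformity of all constants in $w$ and $w'$, but this is built into the hyperbolicity estimate and the fact that there are only finitely many rectangles $R_\theta$, each of bounded diameter.
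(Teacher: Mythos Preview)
Your proposal is correct and follows essentially the same route as the paper: split $d(z,z')$ via the Bowen bracket $[z,z']_{R_{w_0}}$, use the Markov property to see that the stable leg lies in $T_{w'}(R_{w'})$ and the unstable leg in $R_w$, and then invoke the hyperbolicity estimate to bound each leg by $C\lambda^{-n}$. The only cosmetic difference is non-emptiness: the paper observes directly that $R_w \cap T_{w'}(R_{w'}) = T_{w'}(R_{\bar w})$ for the concatenated word $\bar w = w'_0\dots w'_{n-1}w_0\dots w_n$ and cites \cref{lemma:one_sided_rectangle}, whereas you reprove this via the bracket (which is exactly the idea already used inside the proof of \cref{lemma:one_sided_rectangle}).
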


\begin{proof}
Notice that $R_w \cap T_{w'}(R_{w'}) = T_{w'}( R_{\bar{w}})$ where $$\bar{w} = w'_0 w'_1 \dots w'_{n-1} w_0 w_1 \dots w_{n+1}.$$ Hence, it follows from \cref{lemma:one_sided_rectangle} that $R_w \cap T_{w'}(R_{w'})$ is a non-empty connected rectangle.

Now, let $x,y \in R_w \cap T_{w'}(R_{w'})$, and write
\begin{equation*}
d(x,y) \leq d(x,[x,y]_{R_{w_0}}) + d(y,[x,y]_{R_{w_0}}).
\end{equation*}
Let us bound for instance $d(x,[x,y]_{R_{w_0}})$ (the other case is similar using unstable manifolds instead of stable manifolds). The point $x$ and $[x,y]_{R_{w_0}}$ belong to the same stable manifold. Consider the piece of stable manifold $W$ that joins $x$ and $[x,y]_{R_{w_0}}$. It follows from the Markov property that $W \subseteq T_{w'}(R_{w'})$. Hence, $T_{w'}^{-1}(W)$ is a piece of stable manifold in $R_{w'_0}$ that joins $T_{w'}^{-1}(x)$ and $T_{w'}^{-1}([x,y]_{R_{w_0}})$. Since the geometry of the stable manifolds are bounded, there is a uniform bound on the length of $T_{w'}^{-1}(W)$, and it follows then from the hyperbolicity of $T_{w'}$ that the length of $W$ is bounded by $C \lambda^{-n}$ for some $C > 0$. Since the length of $W$ is larger than $d(x,[x,y]_{R_{w_0}})$, the result follows.
\end{proof}

It follows from \cref{lemma:iterated_rectangles} that for every $w \in \Sigma$, there is a unique point $x \in R_{w_0}$ such that $x$ belongs to $R_{w_0,\dots,w_n}$ and $T_{w_{-n},\dots,w_0}(R_{w_{-n}})$ for every $n \in \mathbb{N}$. We denote this point by $\pi_{\Sigma}(w)$. The diameter estimate in \cref{lemma:iterated_rectangles} implies that the map $\pi_{\Sigma}$ is H\"older-continuous when $\Sigma$ is endowed with the distance
\begin{equation*}
d((x_m)_{m \in \mathbb{Z}}, (y_m)_{m \in \mathbb{Z}}) = 2^{- m_0((x_m)_{m \in \mathbb{Z}}, (y_m)_{m \in \mathbb{Z}})}
\end{equation*}
where 
\begin{equation*}
m_0((x_m)_{m \in \mathbb{Z}}, (y_m)_{m \in \mathbb{Z}}) = \inf\set{m \in \mathbb{N} : x_m \neq y_m \textup{ or } x_{-m} \neq y_{-m}}.
\end{equation*}
It is also standard that the map $\pi_{\Sigma}$ is surjective from $\Sigma$ to $\sqcup_{\theta \in \Theta} R_{\theta}$ and is finite-to-one. Using the map $\pi_\Sigma$, we can associate to a point $w \in \Sigma$ a point $\kappa_{w_0}(\pi_\Sigma(w))$ in $M$. It follows then from the definition of $\pi_\Sigma$ that points that are on the same orbit for $\sigma$ will give rise to points on the same orbit for $\varphi$. We are particularly interested in periodic orbits: 

\begin{lemma}
Let $n \geq 1$ and $w \in \Sigma$ be such that $\sigma^n w = w$. Then $\pi_{\Sigma}(w)$ is the unique fixed point of $T_{w_0,\dots,w_n}$ in $R_{w_0}$.
\end{lemma}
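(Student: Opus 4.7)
The proof has two parts: showing that $\pi_\Sigma(w)$ is a fixed point of $T_w \coloneqq T_{w_0,\dots,w_n}$, and showing uniqueness. The key tool is that $\pi_\Sigma$ semi-conjugates the shift $\sigma$ to the symbolic dynamics defined by the $T_{\theta,\vartheta}$'s.

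\textbf{Step 1 (Existence).} The plan is to verify the intertwining relation $T_{w_0,w_1}(\pi_\Sigma(w)) = \pi_\Sigma(\sigma w)$ for an arbitrary $w \in \Sigma$. The point $\pi_\Sigma(w)$ is characterized as the unique element of $R_{w_0}$ lying in $R_{w_0,\dots,w_m}$ and in $T_{w_{-m},\dots,w_0}(R_{w_{-m}})$ for every $m \geq 0$. Applying $T_{w_0,w_1}$ to these conditions and using the Markov property (which gives $T_{w_0,w_1}(R_{w_0,\dots,w_m}) \cap R^\ast_{w_1} = R_{w_1,\dots,w_m}$, and a dual statement for the backward cylinders obtained via Bowen brackets), one sees that $T_{w_0,w_1}(\pi_\Sigma(w))$ satisfies the defining property of $\pi_\Sigma(\sigma w)$ inside $R_{w_1}$. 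Iterating $n$ times, $T_w(\pi_\Sigma(w)) = \pi_\Sigma(\sigma^n w)$; combined with $\sigma^n w = w$, this shows $\pi_\Sigma(w)$ is a fixed point of $T_w$.

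\textbf{Step 2 (Uniqueness).} Let $y \in R_{w_0}$ be a fixed point of $T_w$. The strategy is to use the Bowen bracket to reduce to the cases where $y$ lies on the stable or unstable manifold of $\pi_\Sigma(w)$, then apply hyperbolicity. Consider $z = [y,\pi_\Sigma(w)]_{R_{w_0}}$, which belongs to $W^s_{R_{w_0}}(y) \cap W^u_{R_{w_0}}(\pi_\Sigma(w))$. Using the Markov property iteratively, $z$ lies in every $R_{w_0,w_1,\dots,w_{kn}}$ (for $k \geq 0$, this is the periodic cylinder $w^k$) because both $y$ and $\pi_\Sigma(w)$ do, and the cylinder is a rectangle closed under the bracket operation. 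Moreover, by the invariance of stable and unstable directions and the bracket characterization, $T_w(z) = [T_w(y), T_w(\pi_\Sigma(w))]_{R_{w_0}} = [y, \pi_\Sigma(w)]_{R_{w_0}} = z$. The hyperbolicity estimate applied to the length-$kn$ word $w^k$ gives, for $v \in E^s_{w_0}$, $|DT_{w^k} \cdot v| \leq C\lambda^{-kn}|v|$. Since $z \in W^s_{R_{w_0}}(y)$ and both $z,y$ are fixed by $T_w$, one has $d(z,y) = d(T_w^k(z), T_w^k(y)) \to 0$, so $z = y$. Symmetrically, applying the unstable contraction of $T_w^{-1}$ on $W^u_{R_{w_0}}(\pi_\Sigma(w))$ gives $z = \pi_\Sigma(w)$. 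Hence $y = \pi_\Sigma(w)$.

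\textbf{Main obstacle.} The delicate point is that $T_w$ has been extended from its natural domain $R_w$ to all of $\mathbb{R}^2$, so a priori a fixed point $y \in R_{w_0}$ might sit outside $R_w$ where the Markov property and the bracket-invariance fail. The obstacle is to confine $y$ to $R_w$ (or more precisely, to $\bigcap_{k \geq 0} R_{w_0,\dots,w_{kn}}$, which by Lemma on diameters equals $\{\pi_\Sigma(w)\}$). This is handled by observing that, since $y = T_w^k(y)$ for every $k \geq 0$ and since $T_w$ agrees with its extension only where the Markov structure is preserved, the $\omega$-orbit of $y$ forces $y \in R_{w_0,w_1}$, hence inductively $y \in R_{w_0,\dots,w_{kn}}$ for all $k$; the diameter bound $C\lambda^{-kn}$ from Lemma 4.2.2 then directly collapses $y$ to $\pi_\Sigma(w)$, giving a second and more direct path to uniqueness that sidesteps the bracket argument entirely.
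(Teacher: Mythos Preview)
Your existence argument is fine and essentially the same as the paper's (which just says ``follows from its definition'').

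Your uniqueness argument has a genuine gap, and the obstacle you identify is exactly where it fails. You assume a fixed point $y\in R_{w_0}$ lies in the nested cylinders $R_{w_0,\dots,w_{kn}}$ in order to run either the bracket argument or the diameter-collapse argument, but you never establish this. In Step~2 you write that $z=[y,\pi_\Sigma(w)]$ lies in each cylinder ``because both $y$ and $\pi_\Sigma(w)$ do,'' which is circular. In the final paragraph you assert that ``the $\omega$-orbit of $y$ forces $y\in R_{w_0,w_1}$,'' but the extended $T_w$ is a genuine diffeomorphism of $\mathbb{R}^2$ and nothing prevents it from having a fixed point in $R_{w_0}\setminus R_{w_0,w_1}$; on that region the map no longer has dynamical meaning, so no Markov or bracket reasoning applies. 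Both of your proposed routes to uniqueness therefore stall at the same unproven claim.

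The paper sidesteps this entirely with a global argument that does not use the rectangle structure at all. Since the extended $T_w$ is cone-hyperbolic and coincides with a hyperbolic linear map outside a compact set, for $m$ large enough $DT_w^m(x)-I$ is invertible at every $x\in\mathbb{R}^2$, and $T_w^m-I$ agrees with an invertible linear map near infinity; hence $T_w^m-I:\mathbb{R}^2\to\mathbb{R}^2$ is a diffeomorphism. This gives a unique fixed point of $T_w^m$ on all of $\mathbb{R}^2$, and in particular a unique fixed point of $T_w$ in $R_{w_0}$. This is both shorter and stronger than what you attempted, and it is precisely what is needed given that the $T_{\theta,\vartheta}$ have been artificially extended.
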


\begin{proof}
The fact that $\pi_{\Sigma}(w)$ is a fixed point of $T_{w_0,\dots,w_n}$ follows from its definition. The fact that it is unique follows from the cone-hyperbolicity of $T_{w_0,\dots,w_n}$ and the fact that it is linear outside of a compact set. Indeed, for $m \geq 1$ large enough, the map $T_{w_0,\dots,w_n}^m - I : \mathbb{R}^2 \to \mathbb{R}^2$ is invertible.
\end{proof}

\begin{definition}\label{definition:gammao}
To a periodic orbit $\mathcal{O}$ for $\sigma$, we associate a periodic orbit $\gamma_{\mathcal{O}}$ for $\varphi$: take a point $w \in \mathcal{O}$ and let $\gamma_{\mathcal{O}}$ be the orbit of $\varphi$ starting\footnote{Remember that we identify periodic orbits that only differ by a shift in their parametrizations, so that $\gamma_{\mathcal{O}}$ does not depend on the choice of $w$.} at $\kappa_{w_0}(\pi_\Sigma(w))$ and of length $T_{\mathcal{O}} \coloneqq \sum_{j = 0}^{m-1} \mathfrak{r}_{w_j,w_{j+1}} (\pi_{\Sigma}(\sigma^j w))$ where $m$ is the minimal period of $w$.
\end{definition}

Notice that the orbit $\gamma_{\mathcal{O}}$ may not be primitive, due to the lack of injectivity of $\pi_{\Sigma}$. It is not easy in general to understand which periodic orbits of $\varphi$ are of the form $\gamma_{\mathcal{O}}$. It is essential for us though, since the orbits of the form $\gamma_{\mathcal{O}}$ are those that will be counted by the symbolic dynamics associated to the Markov partition. There is a general argument to fix the error made when counting periodic orbits this way \cite{bowen_markov}. However, the very specific Markov partition that we were able to build due to the low dimension will allow us to evaluate more directly how to fix the counting of periodic orbits. We start by noticing that most periodic orbits are counted correctly. The remaining orbits will be dealt with in the proof of \cref{proposition:explicit_zeta_Markov3}.

\begin{lemma}\label{lemma:no_duplicate_regular}
Let $\gamma$ be a primitive periodic orbit for $\varphi$. Assume that $\gamma$ does not belong to $\Gamma^u \cup \Gamma^s$. Then:
\begin{itemize}
\item there is a unique periodic orbit $\mathcal{O}$ for $\sigma$ such that $\gamma = \gamma_{\mathcal{O}}$;
\item if $k \geq 2$, then for every periodic orbit $\mathcal{O}$ for $\sigma$ we have $\gamma^k \neq \gamma_{\mathcal{O}}$.
\end{itemize}
\end{lemma}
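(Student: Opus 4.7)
The key observation powering the whole argument is that the hypothesis $\gamma \notin \Gamma^u \cup \Gamma^s$ forces $\gamma$ to miss the entire stable and unstable boundaries of every rectangle; this is the only real subtlety, and the rest of the lemma follows essentially automatically from the construction of the Markov partition. To prove this observation, suppose $p \in \gamma \cap W^{\textup{s}}(\gamma')$ for some $\gamma' \in \Gamma^s$. By \cref{definition:full_manifolds_periodic}, $d(\varphi_t(p), \gamma') \to 0$ as $t \to +\infty$; but $\varphi_t(p) \in \gamma$ and $\gamma$ is compact, so the $\omega$-limit set of $p$ is simultaneously equal to $\gamma$ (as $\gamma$ is a periodic orbit) and contained in $\gamma'$. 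Hence $\gamma \subseteq \gamma'$ and $\gamma = \gamma'$, contradicting $\gamma \notin \Gamma^s$. The same argument in backward time rules out intersections with $W^{\textup{u}}(\gamma')$ for $\gamma' \in \Gamma^u$. By \cref{eq:assumption_boundary}, this shows that $\gamma$ avoids $\partial^u \widetilde{R}_\theta \cup \partial^s \widetilde{R}_\theta$ for every $\theta \in \Theta$; since the topological boundary of each rectangle is $\partial^u \widetilde{R}_\theta \cup \partial^s \widetilde{R}_\theta$, every intersection of $\gamma$ with $\Omega := \bigcup_\theta \widetilde{R}_\theta$ lies in the interior $\widetilde{R}_\theta^*$ of a uniquely determined rectangle.

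For the existence half of the first claim, the identity $M = \varphi_{[0,\alpha]}(\Omega)$ guarantees $\gamma \cap \Omega \neq \emptyset$. Pick $p_0 \in \gamma \cap \Omega$ and set $p_n = \widetilde{T}^n p_0$ for $n \in \mathbb{Z}$. By the previous paragraph each $p_n$ lies in a unique $\widetilde{R}_{\theta_n}^*$, yielding a sequence $w = (\theta_n)_{n \in \mathbb{Z}} \in \Sigma$ which is periodic with some minimal period $m_0$ under $\sigma$; let $\mathcal{O}$ be its orbit. Directly from the definitions of $R_{\theta_0,\dots,\theta_n}$ and $T_{\theta_{-n},\dots,\theta_0}$ one verifies that $\kappa_{\theta_0}^{-1}(p_0)$ satisfies the characterizing property of $\pi_\Sigma(w)$, so $\pi_\Sigma(w) = \kappa_{\theta_0}^{-1}(p_0)$; then the flow orbit of $p_0$ of length $T_\mathcal{O} = \sum_{j=0}^{m_0 - 1} \mathfrak{r}_{\theta_j,\theta_{j+1}}(\pi_\Sigma(\sigma^j w))$ is precisely one traversal of $\gamma$. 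Thus $\gamma_\mathcal{O} = \gamma$ and $T_\mathcal{O} = T_\gamma$.

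For uniqueness and the second claim, suppose $\mathcal{O}'$ is a periodic orbit in $\Sigma$ with $\gamma_{\mathcal{O}'} = \gamma^k$ for some $k \geq 1$. Fix $w' \in \mathcal{O}'$ and set $p_0' := \kappa_{w_0'}(\pi_\Sigma(w')) \in \gamma$. Since $\gamma \cap \Omega = \{p_0, p_1, \dots, p_{m_0-1}\}$, we have $p_0' = p_j$ for some $j$, and replacing $w'$ by $\sigma^{-j} w'$ we may assume $p_0' = p_0$. Then $w_n'$ is the label of the unique rectangle whose interior contains $\widetilde{T}^n p_0 = p_n$, so $w_n' = \theta_n$ for every $n$; hence $w' = w$ and $\mathcal{O}' = \mathcal{O}$. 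In particular $T_{\mathcal{O}'} = T_\mathcal{O} = T_\gamma$, and combining with $T_{\mathcal{O}'} = k T_\gamma$ forces $k = 1$, yielding simultaneously uniqueness of the coding for $\gamma$ and the impossibility $\gamma^k = \gamma_{\mathcal{O}}$ for $k \geq 2$.
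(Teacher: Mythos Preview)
Your proof is correct and follows essentially the same approach as the paper: both arguments hinge on the observation that $\gamma \notin \Gamma^u \cup \Gamma^s$ forces $\gamma$ to avoid the boundaries of all rectangles (via the inclusion \eqref{eq:assumption_boundary}), whence every point of $\gamma$ has a unique symbolic itinerary. The paper compresses the uniqueness and $k\geq 2$ claims into the single assertion that points on $\gamma$ have at most one preimage under $w \mapsto \kappa_{w_0}(\pi_\Sigma(w))$, while you unpack this more explicitly, but the content is the same.
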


\begin{proof}
Since $\pi_{\Sigma}$ is surjective, there is a periodic orbit $\mathcal{O}$ for $\sigma$ and an integer $k \geq 1$ such that $\gamma_{\mathcal{O}} = \gamma^k$. Now, if this equality happens for some $k \geq 2$, then it means that there is a point on $\gamma$ with two antecedents by the map $w \mapsto \kappa_{w_0}(\pi_{\Sigma}(w))$. The same happens if $\gamma$ has more than one antecedents by $\mathcal{O} \mapsto \gamma_{\mathcal{O}}$.

Thus, we only need to prove that the points on $\gamma$ have at most one antecedent by the map $w \mapsto \kappa_{w_0}(\pi_{\Sigma}(w))$. However, it follows from the definition of $\pi_\Sigma$ that the orbit of a point with several antecedents intersects the side of some rectangle. But $\gamma$ cannot intersect the side of a rectangle, since it would then belong either to the weak unstable manifold of an element of $\Gamma^u$ or to the weak stable manifold of an element of $\Gamma^s$, and thus be an element of $\Gamma^u$ or $\Gamma^s$.
\end{proof}

Let us also notice that it follows from the transitivity of the flow $(\varphi_t)_{t \in \mathbb{R}}$ that $\sigma$ acting on $\Sigma$ is transitive. We will now use the existence of Markov partitions and symbolic coding to prove the existence of specific periodic orbits for pseudo-Anosov flows, which we will then be able to use to produce better Markov partitions (in view of the proof of \cref{theorem:fried_conjecture}).

\begin{lemma}[McMullen's realization lemma]\label{lemma:mcmullen_realization} 
	Let $\widehat{M} \coloneqq M \setminus \bigcup_{\gamma \in \Gamma^u \cup \Gamma^s} \gamma$.  Let $G$ be a compact group and $\rho:\pi_1(\widehat{M}) \to G$ be a group morphism. Choose a basepoint for $\pi_1(\widehat{M})$ on a rectangle $R$ of our Markov partition for $\varphi$. Let $g\in \pi_1(\widehat{M})$ and $W$ be a neighbourhood of $\rho(g)$ in $G$. There exist infinitely many primitive periodic orbits $\gamma$ for $ \varphi$, not in $\Gamma^u$ or $\Gamma^s$, starting and ending on $R$ such that $\rho(\gamma) \in W$. Here, we convert $\gamma$ to an element of $\pi_1(\widehat{M})$ by joining it to the basepoint with a path in $R$.
\end{lemma}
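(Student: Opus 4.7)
The strategy is to realize $\rho(g)$ as the monodromy of a closed symbolic orbit for the Markov coding of \cref{subsection:symbolic_dynamics}, and then to exploit the combinatorial flexibility of the shift together with the compactness of $G$ to spin a single such orbit into infinitely many primitive ones with nearby monodromy.

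Let $\theta_R \in \Theta$ denote the symbol of $R$. Fix, for each $\theta \in \Theta$, a basepoint $p_\theta \in \widetilde{R}_\theta^*$ with $p_{\theta_R} = p$, together with, for each admissible pair $(\theta, \vartheta)$, an interior point $y_{\theta,\vartheta} \in \widetilde{R}_{\theta,\vartheta}^*$ and auxiliary paths inside $\widetilde{R}_\theta$ and $\widetilde{R}_\vartheta$ joining $p_\theta$ to $y_{\theta,\vartheta}$ and $\widetilde{T}(y_{\theta,\vartheta})$ to $p_\vartheta$; their concatenation with the first-return flow arc yields a path $e_{\theta,\vartheta}$ from $p_\theta$ to $p_\vartheta$ in $\widehat{M}$. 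For any admissible closed word $w = (\theta_0 = \theta_R, \theta_1, \ldots, \theta_n = \theta_R)$, set $g_w := e_{\theta_0,\theta_1} \cdots e_{\theta_{n-1},\theta_n} \in \pi_1(\widehat{M},p)$. The periodic orbit $\gamma_w$ of $\varphi$ associated to $w^\infty$, joined to $p$ by a path in $R$, is homotopic to $g_w$ in $\widehat{M}$: the homotopy slides the intermediate basepoints within the rectangles (each $\widetilde{R}_\theta^*$ is an open topological disk disjoint from $\Gamma^u \cup \Gamma^s$, because those orbits meet a rectangle only along its stable/unstable boundary) and deforms the flow arcs inside their simply connected flow boxes. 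Hence $\rho(\gamma_w) = \rho(g_w)$. The set $S := \{\rho(g_w) : w\} \subseteq G$ is closed under multiplication (concatenation of closed admissible words at $\theta_R$), hence a sub-semigroup, and in a compact group the closure of any sub-semigroup is a closed subgroup. A Seifert--van Kampen argument on the cover of $\widehat{M}$ by the open flow-box interiors, combined with transitivity of $\sigma$ (which provides admissible forward paths from $\theta_R$ to any $\theta$ and back), shows that $\overline{S} \supseteq \rho(\pi_1(\widehat{M}, p))$. In particular, there exists an admissible closed word $w_0$ at $\theta_R$ with $\rho(\gamma_{w_0}) \in W$.

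To promote $\gamma_{w_0}$ into an infinite family of primitive orbits, apply the density above with $g$ replaced by the trivial class and $W$ by a small symmetric neighborhood $U$ of $e_G$, producing an admissible closed word $w_*$ at $\theta_R$ of length at least one with $\rho(\gamma_{w_*}) \in U$. For each $k \geq 1$, the concatenation $w_k := w_0 * w_*^k$ is admissible and closed at $\theta_R$, of length $|w_0| + k|w_*|$, and satisfies $\rho(\gamma_{w_k}) = \rho(\gamma_{w_0}) \rho(\gamma_{w_*})^k$. The closure of $\{\rho(\gamma_{w_*})^k\}_{k \geq 1}$ in the compact group $G$ is a closed sub-semigroup, hence a closed subgroup, and therefore contains $e_G$; consequently $\rho(\gamma_{w_*})^k \in \rho(\gamma_{w_0})^{-1} W$ for infinitely many $k$, so $\rho(\gamma_{w_k}) \in W$. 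The orbits $\gamma_{w_k}$ are pairwise distinct since their lengths grow, and arranging $|w_0|$ and $|w_*|$ to be coprime (by adjusting auxiliary choices) forces each $w_k$ to be a primitive word in $\Sigma$; \cref{lemma:no_duplicate_regular} then ensures that every $\gamma_{w_k}$ outside the finite set $\Gamma^u \cup \Gamma^s$ is itself a primitive periodic orbit of $\varphi$. Discarding the finitely many indices for which $\gamma_{w_k} \in \Gamma^u \cup \Gamma^s$ yields the desired infinite family.

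The main obstacle is the Seifert--van Kampen step yielding $\overline{S} \supseteq \rho(\pi_1(\widehat{M}, p))$: one must verify that the cover of $\widehat{M}$ by flow-box interiors has pieces simply connected in $\widehat{M}$ (which relies on the orbits of $\Gamma^u \cup \Gamma^s$ lying only on rectangle boundaries) and pairwise intersections well-described by the admissible transitions of the Markov coding, and then use the compactness of $G$ to promote the forward-only semigroup generated by the $g_w$'s into a closed subgroup containing every $\rho(e_{\theta,\vartheta})$.
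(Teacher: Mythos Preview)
Your overall architecture matches the paper's: reduce to the directed graph $\Gamma$ of the Markov partition, use compactness of $G$ to turn the semigroup of monodromies of directed closed walks into a subgroup, and then append powers of an auxiliary cycle to manufacture infinitely many primitive examples. The paper phrases the middle step as ``eliminate backwards edges one at a time by splicing in a high power of a directed cycle,'' which is exactly your semigroup-closure-is-a-subgroup observation made explicit.

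There is, however, a genuine gap at the point you yourself flag. The open flow-box interiors do \emph{not} cover $\widehat{M}$: every rectangle $\widetilde R_\theta$ and every weak stable/unstable face of a flow box lies on the boundary of the flow boxes, and these sets meet $\widehat{M}$ (the orbits in $\Gamma^u\cup\Gamma^s$ touch those faces only along proper subsets). So Seifert--van~Kampen on that cover does not apply as stated, and thickening the flow boxes to force a cover destroys the simple intersection pattern you need. What is actually required is the surjectivity of $\pi_1(\Gamma)\to\pi_1(\widehat{M})$, and the paper obtains this by a direct homotopy argument rather than a nerve/van~Kampen computation: given a loop $c$ in $\widehat{M}$, make it transverse to $\partial^{ws}U\cup\partial^{wu}U$, and then eliminate each intersection with $\partial^{ws}U$ by following the backward orbit of the intersection point (which eventually exits $\partial^{ws}U$ since the point is not on an orbit of $\Gamma^s$) and rerouting $c$ in a thin flow tube; similarly for $\partial^{wu}U$ using forward orbits. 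After this, $c$ lies in $M\setminus(\partial^{ws}U\cup\partial^{wu}U)$, which is homotopy equivalent to $\Gamma$. This dynamical pushing-off step is the missing ingredient in your sketch.

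A smaller issue: ``arranging $|w_0|$ and $|w_*|$ to be coprime'' does not by itself force $w_0 w_*^{\,k}$ to be a primitive cyclic word. The paper instead chooses a second cycle $\gamma_2$ through the same base vertex with the property that $\gamma_1\gamma_2^{\,k}$ is primitive for all large $k$ (possible because $\Gamma$ is strongly connected and not a single cycle), and then invokes \cref{lemma:no_duplicate_regular} exactly as you do to pass from primitivity of the symbolic word to primitivity of the flow orbit after discarding finitely many $k$.
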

\begin{proof}
	We follow an argument from \cite{mcmullen}. Let $\Gamma$ be the directed graph associated with the Markov partition, i.e. the directed graph whose adjacency matrix is $A$. Identifying $\Gamma$ with a $1$-dimensional CW-complex, one may embed $\Gamma$ in $\widehat{M}$ in the following way. For each $\theta \in \Theta$, choose a point $x_\theta \in \widetilde{R}_\theta$ and identify with the corresponding vertex in $\Gamma$. If $(\theta,\vartheta)$ is an edge in $\Gamma$, choose a small piece of orbit $a_{\theta,\vartheta}$ for $\varphi$ joining a point in $\widetilde{R}_\theta^*$ and a point in $\widetilde{R}_\vartheta^*$, corresponding to a first return to $\bigcup_{\theta' \in \Theta} \widetilde{R}_{\theta'}$. Join then the extremities of $a_{\theta,\vartheta}$ with $x_{\theta}$ and $x_{\vartheta}$ using injective paths in $\widetilde{R}_\theta$ and $\widetilde{R}_\vartheta$ respectively, and identify the resulting path with the edge from $\theta$ to $\vartheta$ in $\Gamma$.
	
Whenever $A_{\theta, \vartheta}\neq 0$, let $U_{\theta, \vartheta}$ be the flow box between $\widetilde{R}_{\theta}$ to $\widetilde{R}_{\vartheta}$:
\begin{equation*}
U_{\theta,\vartheta} = \set{ \varphi_t(x) : x \in \widetilde{R}_{\theta,\vartheta}, 0 \leq t \leq \mathfrak{r}_{\theta,\vartheta}(\kappa_\theta^{-1}(x))}.
\end{equation*}
Let $U=\bigcup_{\theta, \vartheta} U_{\theta, \vartheta}$. Let $\partial^{ws}U$ be the union of the boundaries of the $U_{\theta, \vartheta}$ which are contained in the weak stable foliation. Similarly define $\partial^{wu}U$. Note that $\Gamma$ is homotopy equivalent to $M\setminus (\partial^{wu}U \cup \partial^{ws}U)$. 
	
	Let $g \in \pi_1(\widehat{M})$ and $c$ be a closed loop in $\widehat{M}$ representing $g$. Let us show that we can assume that $c$ is in the image of $\Gamma$. Homotope $c$ to intersect $\partial^{ws} U \cup \partial^{wu} U$ transversely in finitely many points. Now each of the intersection points with $\partial^{ws}U$ can be eliminated by a further homotopy. Simply arrange that $c$ intersects $\partial^{ws} U$ at a point which is not in one of the orbits in $\Gamma^s$. The backward orbit of this point eventually leaves $\partial^{ws} U$. So $c$ can be rerouted in a small tubular neighbourhood of this orbit to avoid $\partial^{ws} U$. Similarly, $c$ can be rerouted to avoid $\partial^{us} U$. Now $c$ lies in $M \setminus (\partial^{ws} U \cup \partial^{wu})$, so $c$ can be further homotoped to lie in the image of $\Gamma$.

	Consequently, $c$ is the image of a cycle $\gamma_0$ in the graph $\Gamma$. However, $\gamma_0$ may not necessarily be a directed cycle in $\Gamma$, since it may have some number $N$ of backwards edges. But we claim that $\gamma_0$ can be replaced with a directed cycle $\gamma_1$ in $\Gamma$ so that $\rho(\gamma_1) \in W$ (where we use the map $\pi_1(\Gamma) \to \pi_1(\widehat{M})$ to define $\rho$ on $\pi_1(\Gamma)$) for any open neighbourhood $W$ of $\rho(\gamma_0) = \rho(g)$. Suppose $\gamma_0$ traverses an edge of $(\theta, \vartheta)$ in the reverse direction. Since our flow is transitive, $\Gamma$ is strongly connected. Therefore, we can find a directed cycle $P$ in $\Gamma$ which starts with the edge $(\theta, \vartheta)$. Since $G$ is compact, we may select a large $k$ so that $\rho(P^k)$ is arbitrarily close to the neutral element of $G$. Now splice $P^k$ into $\gamma_0$ immediately after the backwards edge $(\vartheta, \theta)$. We may cancel the first edge of $P^k$ with the backwards edge. This operation decreases the number of backwards edges in $\gamma_0$ and, provided $\rho(P^k)$ is close enough to the neutral element, $\rho(\gamma_0)$ remains in $W$. Repeating this process, we may eliminate all the backwards edges of $\gamma$ and get a directed cycle $\gamma_1$ in $\Gamma$ with $\rho(\gamma_1) \in W$. This cycle corresponds to a periodic orbit of $\varphi$ through the symbolic coding of the flow. 
	
	However, this orbit might not be primitive or could be an element of $\Gamma^u$ or $\Gamma^s$. To avoid this issue, we want $\gamma_1$ to be primitive and not one of the finitely many primitive cycles in $\Gamma$ that code an orbit of $\Gamma^u$ or $\Gamma^s$. To do so, one can find another cycle $\gamma_2$ in $\Gamma$, with the same base point than $\gamma_1$ and such that the cycle $\gamma_1 \gamma_2^k$ is primitive for all $k$ large enough (this is possible because $\Gamma$ is not made of a single cycle, since otherwise $M$ would be made of a single periodic orbit of $\varphi$). The same argument as before implies that there are infinitely many $k$'s for which $\rho(\gamma_1 \gamma_2^k) \in W$. Excluding at most finitely many $k$, we ensure that the orbit corresponding to $\gamma_1 \gamma_2^k$ does not belong to $\Gamma^u$ or $\Gamma^s$. The orbit $\gamma$ of $\varphi$ corresponding to the cycle $\gamma_1 \gamma_2^k$ for one of those $k$ is homotopy equivalent to $\gamma_1 \gamma_2^k$ in $\widehat{M}$ and thus $\rho(\gamma) \in W$. Here, we identified $\gamma$ with a loop based at $x_0$ as specified in the statement of the lemma and we used the embedding of $\Gamma$ in $\widehat{M}$ constructed above to identify $\gamma_1 \gamma_2^k$ with a loop in $\widehat{M}$.
\end{proof}

\begin{remark}
Notice that \cref{lemma:mcmullen_realization} still holds if we replace all occurrences of $\pi_1(\widehat{M})$ by $\pi_1(M)$. Indeed, if $\rho : \pi_1(M) \to G$ is a group morphism, where $G$ is a compact group, then one may apply \cref{lemma:mcmullen_realization} to $\hat{\rho} = \rho \circ \iota$, where $\iota : \pi_1(\widehat{M}) \to \pi_1(M)$ is induced by the inclusion of $\widehat{M}$ in $M$, after noticing that $\hat{\rho}$ and $\rho$ have the same image (since $\iota$ is surjective).
\end{remark}

\subsection{Dynamical determinants following Baladi--Tsujii}\label{subsection:dynamical_determinant}

Let us explain now how to use \cite{baladi_tsujii_determinant} to construct dynamical determinants associated to the symbolic coding of the flow $\varphi$. In the spirit of \cite{rugh96}, we will then express the zeta functions associated to $\varphi$ in terms of these dynamical determinants in \cref{subsection:zeta_functions} and \cref{subsection:correcting}, which will allow us to prove the existence of a meromorphic continuation for these zeta functions and prove \cref{theorem:continuation_zeta}.

Let $m \geq 1$ and $k \in \set{0,1,2}$. For each $\theta,\vartheta \in \Theta$ such that $A_{\theta,\vartheta} = 1$, choose a compactly supported $C^\infty$ function $B_{\theta,\vartheta}$ from $\mathbb{R}^2$ to the space of $m \times m$ matrices with complex coefficients. Define then the operator $\mathcal{L}_k$ from $\oplus_{\theta \in  \Theta} \Omega^k(\mathbb{R}^2, \mathbb{C}^m)$ to itself\footnote{Where $\Omega^k(\mathbb{R}^2, \mathbb{C}^m)$ denotes the space of smooth differential $k$-form on $\mathbb{R}^2$ with values in $\mathbb{C}^m$.} defined for $u = (u_\theta)_{\theta \in \Theta}$ by
\begin{equation*}
(\mathcal{L}_k u)_\theta (x) = \sum_{\substack{\vartheta \in \Theta \\ A_{\theta,\vartheta} = 1}} B_{\theta,\vartheta}(x) (T_{\theta,\vartheta}^* u_\vartheta)(x).
\end{equation*}
One can then apply the analysis from \cite{baladi_tsujii_determinant} to find that the flat determinant defined for $|z| \ll 1$ by
\begin{equation}\label{eq:definition_determinant}
d_k(z) \coloneqq \exp\left( - \sum_{n = 1}^{+ \infty} \frac{1}{n}\tr^{\flat}(\mathcal{L}_k^n) z^n \right)
\end{equation} 
has a holomorphic continuation to $\mathbb{C}$. In this definition, we use the flat trace as an abbreviation for
\begin{equation}\label{eq:definition_flat_trace}
\tr^{\flat}(\mathcal{L}_k^n) \coloneqq \sum_{\substack{w \in \Sigma \\ \sigma^n w = w}} \frac{\tr(B_{w_0,\dots,w_n}(\pi_\Sigma(w))) \tr(\Lambda^k D T_w(\pi_{\Sigma}(w)))}{\left|\det(I - D T_w (\pi_\sigma(w)))\right|} \textup{ for } n \geq 1,
\end{equation}
where
\begin{equation*}
B_{w_0,\dots,w_n}(x) = B_{w_0,w_1}(x) B_{w_1,w_2}(T_{w_0,w_1}x) \dots B_{w_{n-1},w_n}(T_{w_0,\dots,w_{n-1}}x). 
\end{equation*}

Let us sketch briefly how the analysis from \cite{baladi_tsujii_determinant} adapts to our setting. One can choose a compact subset $K$ of $\mathbb{R}^2$ that contain the support of the $B_{\theta,\vartheta}$'s in its interior. Then, given $p < 0 < q$, one can build for each $\theta \in \Theta$ a space $\mathcal{C}^{\mathcal{P}_{\theta},p,q}(K)$ of sections of $\Lambda^k(\mathbb{R}^2) \otimes \mathbb{C}^m$ with distributional coefficients defined using the polarization $\mathcal{P}_{\theta}$ (see \cite{baladi_tsujii_determinant} page 16). We can then let $\mathcal{L}_k$ act on the space $\bigoplus_{\theta \in \Theta} \mathcal{C}^{\mathcal{P}_{\theta},p,q}(K)$. The analysis from \cite{baladi_tsujii_determinant} does not detail how to work with sections of a vector bundle, but this is a simple addition, as mentioned in \cite[\S 2]{baladi_tsujii_determinant}. Then, following Baladi and Tsujii, we can build yet another operator $\mathcal{M}$ acting on a space $\mathbf{B}_{Z}^{p,q}$ (see \cite[\S 5.3]{baladi_tsujii_determinant}. The operator $\mathcal{M}$ is defined as a matrix of operators applying to each operator of the form $ u \mapsto B_{\theta,\vartheta} T_{\theta,\vartheta}^* u$ the procedure from \cite[\S 4.2]{baladi_tsujii_determinant}). It is not very important for us, but let us mention for the readers convenience that the operator $\mathcal{M}$ is related to the operator $\mathcal{L}$ by a commutative diagram
\begin{equation*}
\begin{tikzcd}
\bigoplus_{\theta \in \Theta} \mathcal{C}^{\mathcal{P}_{\theta},p,q}(K) \arrow{r}{\mathcal{Q}} \arrow{d}{\mathcal{L}_k} & \mathbf{B}_{Z}^{p,q} \arrow{d}{\mathcal{M}} \\
\bigoplus_{\theta \in \Theta} \mathcal{C}^{\mathcal{P}_{\theta},p,q}(K) \arrow{r}{\mathcal{Q}}   & \mathbf{B}_{Z}^{p,q},
\end{tikzcd}
\end{equation*}
where the operator $\mathcal{Q}$ is an isometry on its image and the image of $\mathcal{M}$ is contained in the range of $\mathcal{Q}$ (this fact allows for instance to compare the spectral data of $\mathcal{L}$ and $\mathcal{M}$).

By using the fact that $\mathcal{M}$ maps $\mathbf{B}_{Z}^{p,q}$ into some specific subspace $\widehat{\mathbf{B}}_{Z}^{p,q}$, Baladi and Tsujii are able to define a flat trace $\tr^{\flat}(\mathcal{M}^n) $ for the power of the operator $\mathcal{M}$. They compute explicitly this flat trace in \cite[Proposition 6.3]{baladi_tsujii_determinant}. In our case, a similar computation yields $\tr^{\flat}(\mathcal{M}^n)  = \tr^{\flat}(\mathcal{L}_k^n)$. Indeed, the contribution of each diagonal term will be the same, but the sum over all diagonal terms will not simplify as in \cite{baladi_tsujii_determinant} since our systems of hyperbolic maps does not come from a hyperbolic diffeomorphism. The analysis from \cite[\S 7]{baladi_tsujii_determinant} then implies that $d_k(z)$ has a holomorphic continuation to a disk whose radius depend on $p$ and $q$. Since our maps are $C^\infty$, we can choose $|p|$ and $q$ as large as we want, and it follows that $d_k(z)$ actually has a holomorphic extension to $\mathbb{C}$.

\subsection{Zeta function associated to the Markov partition}\label{subsection:zeta_functions}

We want to apply the approach from the previous section in order to analyse twisted zeta functions for the smooth pseudo-Anosov flow $\varphi$. We construct first a meromorphic continuation for a zeta function that counts the periodic orbits of $\varphi$ as in the symbolic coding, i.e. with some errors that we will correct in \cref{subsection:correcting}. Introduce the manifold
\begin{equation}\label{eq:widehat_manifold}
\widehat{M} \coloneqq M \setminus \bigcup_{\gamma \in \Gamma^u \cup \Gamma^s} \gamma
\end{equation}
and let $\rho$ be a representation of $\pi_1(\widehat{M})$. We will work here in a slightly more general setting than needed for Theorems \ref{theorem:continuation_zeta} and \ref{theorem:fried_conjecture} (for which we just need to work with a representation of $\pi_1(M)$). This is because we need some intermediate results to  be stated in this generality in prevision for the proof of \cref{theorem:dirichletclassnumber}.

\begin{figure}[h]
	\centering
\def\svgwidth{0.7\linewidth}
	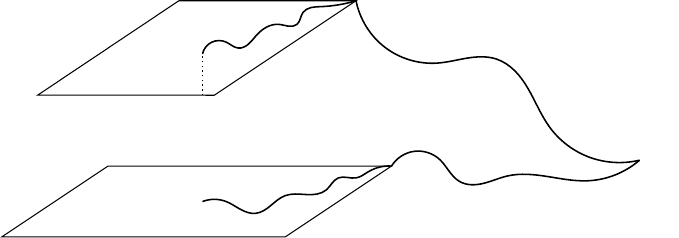
	\caption{The path $c_{\theta,\vartheta}$}\label{fig:basepoint}
\end{figure}
The first thing to do is to make a choice for the $B_{\theta,\vartheta}$'s that appear in the previous section. Let $x_0$ be the base point used to define $\pi_1(\widehat{M})$. For each $\theta \in \Theta$, choose a path $c_\theta$ in $\widehat{M} $ joining $x_0$ and a point in $\widetilde{R}_\theta$. Then, if $\theta,\vartheta \in \Theta$ are such that $A_{\theta,\vartheta} = 1$, we build a loop $c_{\theta,\vartheta}$ in the following way. We choose a point $x_{\theta,\vartheta}$ in $\widetilde{R}_{\theta,\vartheta}^*$ and let $c_{\theta,\vartheta}$ be the concatenation of : $c_\theta$, a path in $\widetilde{R}_\theta$ from the extremity of $c_\theta$ to $x_{\theta,\vartheta}$, the orbit segment from $x_{\theta,\vartheta}$ to $T x_{\theta,\vartheta}$ corresponding to the first return map, a path in $\widetilde{R}_\vartheta$ from $T x_{\theta,\vartheta}$ to the extremity of $c_\vartheta$ and finally the path $c_\vartheta$ backward. See \cref{fig:basepoint}. We let then $\rho_{\theta,\vartheta} = \rho(c_{\theta,\vartheta})$.  With $\chi$ a compactly supported $C^\infty$ function identically equal to $1$ on a neighbourhood of the $\widetilde{R}_\theta$'s, we define
\begin{equation*}
\widetilde{B}_{\theta,\vartheta}^s(x) = \chi(x) e^{ - s \mathfrak{r}_{\theta,\vartheta}(x)} \rho_{\theta,\vartheta}
\end{equation*}
for $x \in \mathbb{R}^2,s \in \mathbb{R}$ and $\theta,\vartheta \in \Theta$ such that $A_{\theta,\vartheta} = 1$.

Fixing $s \in \mathbb{C}$, we apply the analysis from \cref{subsection:dynamical_determinant} with $B_{\theta,\vartheta} = \widetilde{B}_{\theta,\vartheta}^s$. This way, we get a dynamical determinant $z \mapsto d_k(s,z)$, which is an entire function, given for $|z|$ small by the expressions \eqref{eq:definition_determinant} and \eqref{eq:definition_flat_trace} with $B_{\theta,\vartheta}$ replaced by $\widetilde{B}_{\theta,\vartheta}^s$. We want now to consider $d_{k}(s,z)$ as a function of both $s$ and $z$. We know that when $s$ is fixed, $z \mapsto d_k(s,z)$ is holomorphic in $\mathbb{C}$. Using that the number of periodic orbits of period $n$ of $\sigma$ grows at most exponentially fast with $n$, we can deduce directly from the definitions \eqref{eq:definition_determinant} and \eqref{eq:definition_flat_trace} that $d_k$ is holomorphic as a function of $s$ and $z$ on a neighbourhood of $\mathbb{C} \times \set{0}$ in $\mathbb{C}^2$. Thus, the following result implies that $d_k$ is actually holomorphic on $\mathbb{C}^2$.

\begin{lemma}
Let $f : \mathbb{C}^2 \to \mathbb{C}$ be a function such that
\begin{itemize}
\item for every $s \in \mathbb{C}$, the function $z \mapsto f(s,z)$ is holomorphic on $\mathbb{C}$;
\item there is an open neighbourhood $U$ of $\mathbb{C}\times \set{0}$ such that $f$ is holomorphic in $U$.
\end{itemize}
Then $f$ is holomorphic on $\mathbb{C}^2$.
\end{lemma}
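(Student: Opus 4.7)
The plan is to exhibit $f$ as a locally uniform limit on $\C^2$ of the polynomial partial sums of its Taylor series in $z$. First, I would define the coefficient functions $a_n(s) := \frac{1}{n!}(\partial_z^n f)(s, 0)$; these are entire in $s$, because the partial derivatives $\partial_z^n f$ are holomorphic on the open set $U \supset \C \times \set{0}$. Since $z \mapsto f(s, z)$ is entire for each $s$, the Taylor expansion
\[
f(s, z) = \sum_{n \geq 0} a_n(s) z^n
\]
holds for every $(s, z) \in \C^2$, and $\limsup_n |a_n(s)|^{1/n} = 0$ for every fixed $s$. Joint holomorphy of $f$ on $\C^2$ will follow once I upgrade this pointwise decay of $|a_n(s)|^{1/n}$ to a locally uniform one on $\C$, since $\sum_n a_n(s) z^n$ would then converge uniformly on compact subsets of $\C^2$ and $f$ would be a locally uniform limit of (jointly holomorphic) polynomials.

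Next, I would set up a subharmonic framework: the functions $u_n(s) := \frac{1}{n} \log |a_n(s)|$ are subharmonic on $\C$ since $a_n$ is entire. For any compact $K \subset \C$, the openness of $U$ and compactness of $K \times \set{0}$ furnish $\epsilon > 0$ with $K \times \overline{D(0, \epsilon)} \subset U$, and Cauchy's inequality applied to $f$ on this compact set yields $|a_n(s)| \leq M_K \epsilon^{-n}$ for $s \in K$, where $M_K := \sup_{K \times \overline{D(0, \epsilon)}} |f| < \infty$. Hence $(u_n)$ is uniformly bounded above on every compact subset of $\C$, while the first paragraph gives $\limsup_n u_n(s) = -\infty$ pointwise.

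I would then invoke Hartogs' lemma for subharmonic functions (see, e.g., H\"ormander's book on several complex variables): a sequence of subharmonic functions locally uniformly bounded above on an open subset of $\C$ that satisfies $\limsup_n u_n \leq c$ pointwise must satisfy $\sup_K u_n \leq c + \delta$ for $n$ large, for any compact $K$ and any $\delta > 0$. Applying this with $c = -M$ for arbitrarily large $M > 0$ gives $\sup_K u_n \to -\infty$, i.e.\ $\sup_{s \in K} |a_n(s)|^{1/n} \to 0$ on every compact $K \subset \C$. In particular, for each $R > 0$ the quantity $\sup_{s \in K} |a_n(s)| R^n$ decays faster than any geometric rate, so $\sum_n a_n(s) z^n$ converges uniformly on $K \times \overline{D(0, R)}$; by the Weierstrass theorem the limit $f$ is jointly holomorphic there, and arbitrariness of $K$ and $R$ gives $f$ holomorphic on $\C^2$. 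The main technical input is Hartogs' lemma; the joint-holomorphy hypothesis near $\C \times \set{0}$ enters the argument solely to yield, via Cauchy's inequality, the locally uniform upper bound on $(u_n)$.
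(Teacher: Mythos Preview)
Your argument is correct and follows essentially the same approach as the paper: expand in Taylor series in $z$, note the coefficient functions $a_n(s)$ are entire, use the local Cauchy bound near $\mathbb{C}\times\{0\}$ to make the subharmonic functions $u_n = \tfrac{1}{n}\log|a_n|$ locally uniformly bounded above, and apply Hartogs' lemma to upgrade the pointwise $\limsup u_n = -\infty$ to locally uniform decay. Your write-up is in fact a bit more careful than the paper's about stating the uniform convergence on arbitrary compacts $K\times\overline{D(0,R)}$.
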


\begin{proof}
For $n \geq 0$ and $s \in \mathbb{C}$, let $(a_n(s))_{n \geq 0}$ denote the coefficients in the power series expansion at zero for $z \mapsto f(s,z)$, i.e.
\begin{equation*}
f(s,z) = \sum_{n \geq 0} a_n(s) z^n \textup{ for } z \in \mathbb{C}.
\end{equation*}
Since $z \mapsto f(s,z)$ is entire, we have
\begin{equation}\label{eq:coefficients_entire}
\limsup_{n \to + \infty} \frac{1}{n} \log | a_n(s)| = - \infty. 
\end{equation}

Moreover, if $s_0 \in \mathbb{C}$, then there is $\epsilon > 0$ such that $\mathbb{D}(s_0,\epsilon) \times \mathbb{D}(0,2\epsilon) \subseteq U$. Thus, for $s \in \mathbb{D}(s_0,\epsilon)$ and $n \geq 0$ we have
\begin{equation*}
a_n(s) = \frac{1}{2 i \pi} \int_{\partial \mathbb{D}(0,\epsilon)} \frac{f(s,w)}{w^{n+1}}\mathrm{d}w.
\end{equation*}
It follows then that, for $n \geq 0$, the function $s \mapsto a_n(s)$ is entire. In particular, the function $s \mapsto \frac{1}{n} \log |a_n(s)|$ is subharmonic.

We deduce then from \cref{eq:coefficients_entire} and Hartogs Lemma that for every $R > 0$, there is $C > 0$ such that $|a_n(s)| \leq C R^{-n}$ for every $n \geq 0$ and $s \in \mathbb{D}(s_0,\epsilon/2)$. Thus, $f$ is holomorphic on $\mathbb{D}(0,R) \times \mathbb{D}(0,R)$. Since $R$ is arbitrary, the result follows.
\end{proof}

Since the $d_k$'s are holomorphic on $\mathbb{C}^2$, we know in particular the the functions $s \mapsto d_k(s,1)$ are holomorphic for $k =0,1,2$. Let us then define the zeta function
\begin{equation*}
\zeta_{\rho}^{\textup{Markov}}(s) \coloneqq \frac{d_1(s,1)}{d_0(s,1)d_2(s,1)}.
\end{equation*}
Here, the denominator is a non-constant holomorphic function: for $\re s$ large enough it is non-zero (as follows from the fact that the number of periodic orbit of period $n$ for $\sigma$ is at most exponential in $n$). The function $\zeta_{\rho}^{\textup{Markov}}(s)$ is meromorphic on $\mathbb{C}$, but it is not exactly the zeta function $\zeta_\rho(s)$, due to the error when counting periodic orbits by means of a Markov partition.

\subsection{Correcting the zeta function}\label{subsection:correcting}

We want now to relate $\zeta_{\varphi,\rho}(s)$ and $\zeta_{\rho}^{\textup{Markov}}(s)$. To this end, we will assume from now on that the sets of periodic orbits $\Gamma^u$ and $\Gamma^s$ used to construct our Markov partition are disjoint. This is always possible: we know from the symbolic coding that $\varphi$ has infinitely many primitive periodic orbits.

We will start by identifying the contribution of each orbit of $\sigma$ to $\zeta_{\rho}^{\textup{Markov}}(s)$. To do so, let $P_{\Sigma}$ denote the set of periodic orbits for $\sigma$ in $\Sigma$. For $\mathcal{O} \in P_{\Sigma}$, remember the periodic orbit $\gamma_{\mathcal{O}}$ from \cref{definition:gammao} and introduce the holomorphic function:
\begin{equation*}
F_{\mathcal{O}}(s,z) = \det( I - z^m e^{- s T_{\mathcal{O}}} \Delta_{\gamma_{\mathcal{O}}} \rho(\gamma_{\mathcal{O}})).
\end{equation*}
Notice that there might be an issue with the definition of $\Delta_{\gamma_{\mathcal{O}}}$ and $\rho(\gamma_{\mathcal{O}})$. The holonomy $\Delta_{\gamma_{\mathcal{O}}}$ is not defined when $\gamma_{\mathcal{O}}$ is a singular orbit, but in that case, we can make a small homotopy to replace $\gamma_{\mathcal{O}}$ by a closed curve in $\widetilde{M}$ which is a concatenation of integral curves for $\varphi$ between the rectangles of the Markov partitions corresponding to an orbit in $\mathcal{O}$ and of paths within the rectangles (see \cref{fig:pushoff} for a similar construction). Since the interior of the rectangles do not intersect the singular orbits, the holonomy of this new orbit for $\Delta$ does not depend on any choice made, and can be used to define $\Delta_{\gamma_{\mathcal{O}}}$. Alternatively, $\Delta_{\gamma_0}$ may be computed in the following way. For each $\theta \in \Theta$, choose an orientation for the unstable bundle over $R_\theta$ (which is possible since $R_\theta$ is simply connected). Now, if $\theta,\vartheta \in \Theta$ are such that $A_{\theta,\vartheta} = 1$, let $\Delta_{\theta,\vartheta} = 1$ if $T_{\theta,\vartheta}$ preserves the chosen orientation of the unstable direction and $\Delta_{\theta,\vartheta} = -1$ otherwise. Then we have
\begin{equation*}
\Delta_{\gamma_{\mathcal{O}}} = \prod_{j = 0}^{m-1} \Delta_{w_j,w_{j+1}}.
\end{equation*}
Similarly, $\rho(\gamma_{\mathcal{O}})$ is not defined when $\gamma_{\mathcal{O}}$ is a multiple of an element of $\Gamma^u \cup \Gamma^s$ (recall that $\rho$ is a representation of $\pi_1(\widehat{M})$ where $\widehat{M}$ is defined in \cref{eq:widehat_manifold}). We can proceed as above to homotope $\gamma_{\mathcal{O}}$ with a closed curve in $\widehat{M}$ whose free homotopy class in $\widehat{M}$ only depends on $\mathcal{O}$. We find then that $\rho(\gamma_{\mathcal{O}})$ is given (up to conjugacy) by
\begin{equation*}
\rho(\gamma_{\mathcal{O}}) = \rho_{w_0,w_1} \dots \rho_{w_{m-1},w_m}.
\end{equation*}
This formula is valid for any $\mathcal{O} \in P_{\Sigma}$, indeed, when $\gamma_{\mathcal{O}}$ lies in $\widehat{M}$, with the same notation as above,
\begin{equation*}
\rho_{w_0,w_1} \dots \rho_{w_{m-1},w_m} = \rho(c_{w_0,w_1} \cdot \dots \cdot c_{w_{m-1},w_m})
\end{equation*}
and the path $c_{w_0,w_1} \cdot \dots \cdot c_{w_{m-1},w_m}$ is freely homotopic to $\gamma_{\mathcal{O}}$ in $\widehat{M}$ since the rectangles of the Markov partition are simply connected.

The zeta function $\zeta_{\rho}^{\textup{Markov}}(s)$ is naturally described in terms of the $F_{\mathcal{O}}$'s:

\begin{proposition}\label{proposition:explicit_zeta_Markov2}
Let $s \in \mathbb{C}$. Then, there is $\delta >0$ such that for $z\in \mathbb{C}$ with $|z|<\delta$ we have
\begin{equation}\label{equation:markov_correction}
	 \frac{d_1(s,z)}{d_0(s,z)d_2(s,z)}  = \prod_{\mathcal{O} \in P_{\Sigma}} F_{\mathcal{O}}(s,z).
\end{equation}
\end{proposition}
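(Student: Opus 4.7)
The plan is to verify the identity by taking logarithms of both sides and comparing the resulting power series in $z$. Starting from the definition \eqref{eq:definition_determinant}, we have
\begin{equation*}
\log \frac{d_1(s,z)}{d_0(s,z) d_2(s,z)} = -\sum_{n\geq 1} \frac{z^n}{n}\bigl(\tr^{\flat}(\mathcal{L}_1^n) - \tr^{\flat}(\mathcal{L}_0^n) - \tr^{\flat}(\mathcal{L}_2^n)\bigr),
\end{equation*}
so the first step is to reorganize each flat trace \eqref{eq:definition_flat_trace} (with $B_{\theta,\vartheta}=\widetilde{B}^s_{\theta,\vartheta}$) as a sum over periodic orbits $\mathcal{O}\in P_\Sigma$ rather than over individual fixed points of $\sigma^n$. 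For a periodic point $w$ of minimal period $m_\mathcal{O}$ with $\sigma^n w=w$, one has $n=\ell m_\mathcal{O}$, the product $B_{w_0,\dots,w_n}(\pi_\Sigma(w))$ equals $e^{-s\ell T_\mathcal{O}}\rho(\gamma_\mathcal{O})^\ell$ (up to conjugation, which leaves the trace invariant), and $DT_w(\pi_\Sigma(w))=A_\mathcal{O}^\ell$ where $A_\mathcal{O}$ is the derivative of one full return along the orbit. Since each orbit contains $m_\mathcal{O}$ fixed points of $\sigma^n$, a standard rearrangement yields
\begin{equation*}
\tr^{\flat}(\mathcal{L}_k^n) = \sum_{\substack{\mathcal{O}\in P_\Sigma\\ m_\mathcal{O}\mid n}} m_\mathcal{O}\cdot\frac{e^{-s\ell T_\mathcal{O}}\tr(\rho(\gamma_\mathcal{O})^\ell)\,\tr(\Lambda^k A_\mathcal{O}^\ell)}{|\det(I-A_\mathcal{O}^\ell)|}, \quad \ell=n/m_\mathcal{O}.
\end{equation*}

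The second step is the purely linear-algebraic identity that collapses the three values $k=0,1,2$: for the $2\times 2$ hyperbolic matrix $A_\mathcal{O}$ with eigenvalues $\lambda,\mu$ satisfying $|\lambda|>1>|\mu|$, one computes $\sum_{k=0}^{2}(-1)^k\tr(\Lambda^k A_\mathcal{O}^\ell) = \det(I-A_\mathcal{O}^\ell) = (1-\lambda^\ell)(1-\mu^\ell)$, whose sign is $-\mathrm{sign}(\lambda^\ell)=-\Delta_{\gamma_\mathcal{O}}^\ell$ (using that $\Delta_{\gamma_\mathcal{O}}$ encodes the sign of $\lambda$, well-defined through the homotopy to an orbit in $\widehat{M}$ described before the statement). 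Consequently
\begin{equation*}
\frac{-\tr(\Lambda^1 A_\mathcal{O}^\ell)+\tr(\Lambda^0 A_\mathcal{O}^\ell)+\tr(\Lambda^2 A_\mathcal{O}^\ell)}{|\det(I-A_\mathcal{O}^\ell)|} = -\Delta_{\gamma_\mathcal{O}}^\ell.
\end{equation*}

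Substituting and interchanging the sums over $n$ and $\mathcal{O}$ (which is licit for $|z|$ sufficiently small, since the number of periodic orbits grows at most exponentially and all geometric quantities are uniformly bounded), one obtains
\begin{equation*}
\log \frac{d_1(s,z)}{d_0(s,z) d_2(s,z)} = -\sum_{\mathcal{O}\in P_\Sigma}\sum_{\ell\geq 1}\frac{\bigl(z^{m_\mathcal{O}}e^{-sT_\mathcal{O}}\Delta_{\gamma_\mathcal{O}}\bigr)^\ell}{\ell}\tr\bigl(\rho(\gamma_\mathcal{O})^\ell\bigr).
\end{equation*}
Applying the classical expansion $\sum_{\ell\geq 1}\frac{x^\ell}{\ell}\tr(\rho^\ell) = -\log\det(I-x\rho)$ (valid for $|x|$ small) to each term gives
\begin{equation*}
\log \frac{d_1(s,z)}{d_0(s,z) d_2(s,z)} = \sum_{\mathcal{O}\in P_\Sigma} \log\det\bigl(I-z^{m_\mathcal{O}}e^{-sT_\mathcal{O}}\Delta_{\gamma_\mathcal{O}}\rho(\gamma_\mathcal{O})\bigr) = \sum_{\mathcal{O}\in P_\Sigma}\log F_\mathcal{O}(s,z).
\end{equation*}
Exponentiating yields \eqref{equation:markov_correction}. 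The only genuine subtlety is the bookkeeping around $\Delta_{\gamma_\mathcal{O}}$ and $\rho(\gamma_\mathcal{O})$ when $\gamma_\mathcal{O}$ meets $\Gamma^u\cup\Gamma^s$: here one must check that the products $\Delta_{w_0,w_1}\cdots\Delta_{w_{m-1},w_m}$ and $\rho_{w_0,w_1}\cdots\rho_{w_{m-1},w_m}$ arising naturally from the flat trace agree with the definitions of $\Delta_{\gamma_\mathcal{O}}$ and $\rho(\gamma_\mathcal{O})$ obtained by the small push-off into $\widehat{M}$, which was precisely the content of the discussion immediately preceding the proposition.
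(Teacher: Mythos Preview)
Your proof is correct and follows essentially the same approach as the paper: both take logarithms, rearrange the sums over periodic points into sums over primitive orbits of $\sigma$, use the identity $\sum_k(-1)^k\tr(\Lambda^k A)=\det(I-A)$ together with the sign computation $\det(I-A_\mathcal{O}^\ell)/|\det(I-A_\mathcal{O}^\ell)|=-\Delta_{\gamma_\mathcal{O}}^\ell$, and conclude via the power-series expansion of $\log\det(I-x\rho)$. Your sign bookkeeping is in fact slightly cleaner than the paper's (which contains two cancelling sign slips), but the underlying argument is identical.
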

\begin{proof}
	Start by noticing that for $z \in \mathbb{C}$ small enough we have
	\begin{equation*}
	\sum_{k = 0}^2\sum_{n \geq 1} \sum_{\substack{w \in \Sigma \\ \sigma^n w = w}} \frac{|z|^n}{n} \frac{\left|\tr(\widetilde{B}^s_{w_0,\dots,w_n}(\pi_\Sigma(w))) \tr(\Lambda^k D T_w(\pi_{\Sigma}(w)))\right|}{\left|\det(I - D T_w (\pi_\Sigma(w)))\right|} < + \infty.
	\end{equation*}
	This convergence follows from the fact that the number of periodic orbits of length $n$ of $\sigma$ grows exponentially fast with $n$. Here, $\widetilde{B}^s_{w_0,\dots,w_n}$ is defined as $B_{w_0,\dots,w_n}$ in \cref{subsection:dynamical_determinant} using the $\widetilde{B}_{\theta,\vartheta}^s$ from \cref{subsection:zeta_functions} as the $B_{\theta,\vartheta}$'s. We can use Fubini's theorem to rearrange the sums and find for $|z|$ small enough
	\begin{equation*}
	\begin{split}
	& \log \left(\frac{d_1(s,z)}{d_0(s,z)d_2(s,z)}\right) \\
	& \quad = \sum_{k = 0}^2(-1)^{k+1}\sum_{n \geq 1} \sum_{\substack{w \in \Sigma \\ \sigma^n w = w}} \frac{z^n}{n} \frac{\tr(\widetilde{B}^s_{w_0,\dots,w_n}(\pi_\Sigma(w))) \tr(\Lambda^k D T_{w_0,\dots,w_n}(\pi_{\Sigma}(w)))}{\left|\det(I - D T_{w_0,\dots,w_n} (\pi_\Sigma(w)))\right|} \\
		& \quad = - \sum_{\mathcal{O} \in P_{\Sigma}} \underbrace{\sum_{w \in \mathcal{O}} \sum_{ \substack{ n \geq 1 \\ \sigma^n w = w}} \frac{z^n \tr(\widetilde{B}^s_{w_0,\dots,w_n}(\pi_\Sigma(w))) }{n} \frac{\det(I - D T_{w_0,\dots,w_n} (\pi_\Sigma(w)))}{\left|\det(I - D T_{w_0,\dots,w_n} (\pi_\Sigma(w)))\right|}}_{= C_{\mathcal{O}}(s,z)}.
	\end{split}
	\end{equation*}
	
	Now, fix $\mathcal{O} \in P_\Sigma$ and let $m$ be the minimal period of the elements of $\mathcal{O}$. We have then
	\begin{equation*}
	\begin{split}
	& C_{\mathcal{O}}(s,z) \\ & \qquad = \frac{1}{m}\sum_{w \in \mathcal{O}} \sum_{p \geq 1} \frac{z^{p m} \tr((\widetilde{B}_{w_0,\dots,w_{m}}^s(\pi_\Sigma(w)))^p)}{p} \frac{\det(I - D T_{w_0,\dots,w_{m}}^p (\pi_\Sigma(w)))}{\left|\det(I - D T_{w_0,\dots,w_{m}}^p (\pi_\Sigma(w)))\right|}.
	\end{split}
	\end{equation*}
	To simplify this expression, first notice that all the terms in the sum are equal (by cyclicity of the trace). Moreover, for $w \in \mathcal{O}$, we have
	\begin{equation*}
	\frac{\det(I - D T_{w_0,\dots,w_{m}}^p (\pi_\Sigma(w)))}{\left|\det(I - D T_{w_0,\dots,w_{m}}^p (\pi_\Sigma(w)))\right|} = \Delta_{\gamma_{\mathcal{O}}}^p.
	\end{equation*}
	Notice also that 
	\begin{equation*}
	\widetilde{B}_{w_0,\dots,w_{m}}^s(\pi_\Sigma(w)) = e^{-s T_{\mathcal{O}}}\rho_{w_0,w_1} \rho_{w_1,w_2}  \dots \rho_{w_{n-1},w_{m}}
	\end{equation*}
	Recall that $\rho(\gamma_{\mathcal{O}})$ is $\rho_{w_1,w_2}  \dots \rho_{w_{n-1},w_{m}}$ up to conjugacy. Thus, for $p \geq 1$ we have
	\begin{equation*}
	\tr((\widetilde{B}_{w_0,\dots,w_{m}}^s(\pi_\Sigma(w)))^p) = e^{-s p T_{\mathcal{O}}} \tr(\rho(\gamma_{\mathcal{O}})^p).
	\end{equation*}
	Putting everything together, we find that
	\begin{equation*}
	\begin{split}
	C_{\mathcal{O}}(s,z) & = \sum_{p \geq 1} \frac{z^{p m}}{p}e^{- s p T_{\mathcal{O}}} \Delta_{\gamma_{\mathcal{O}}}^p \tr(\rho(\gamma_{\mathcal{O}})^p) \\
		& = - \tr(\log( I - \Delta_{\gamma_{\mathcal{O}}} e^{-s T_{\mathcal{O}}} z^{m} \rho(\gamma_{\mathcal{O}}))) \\
		& = - \log(\det(I - \Delta_{\gamma_{\mathcal{O}}} e^{-s T_{\mathcal{O}}} z^{m} \rho(\gamma_{\mathcal{O}}))) = - \log F_{\mathcal{O}}(s,z)
	\end{split}
	\end{equation*}
	Notice that the convergence of the series above implies that this logarithm is well-defined.
	\end{proof}
	
	The rest of this section will be used for the proofs of Theorems \ref{theorem:continuation_zeta} and \ref{theorem:fried_conjecture}. Consequently, we will assume that $\rho$ is a representation of $\pi_1(M)$. All the constructions and results above are still valid, as we can apply them to the representation $\hat{\rho} = \rho \circ \iota$ where $\iota :\pi_1(\widehat{M}) \to \pi_1(M)$ is the morphism induced by the injection $\widehat{M} \hookrightarrow M$.
	
	In order to identify the contribution of periodic orbits in $\Gamma^u \cup \Gamma^s$ to $\zeta^{\textup{Markov}}_\rho(s)$, it will be useful to describe the contribution of a given periodic orbit in term of stable leaves (instead of the unstable leaves).
	
		\begin{lemma}\label{lemma:local_zeta_stable}
	Assume that $\rho$ is a representation of $\pi_1(M)$. Let $\gamma$ be a primitive periodic orbit for $\varphi$. Suppose $\lambda_1^{s},\dots,\lambda_n^{s}$ are the half weak stable leaves incident to $\gamma$, where $\lambda_i^{s}$ wraps $r_i^{s}$ times around $\gamma$. Then
	\begin{equation}\label{eq:definition_local_zeta_function_stable}
		\zeta_{\varphi,\rho,\gamma}(s) = \frac{\prod_{i=1}^n\xi_{\varphi,\varepsilon\rho,\gamma^{r_i^{s}}}(s)}{\xi_{\varphi,\varepsilon\rho,\gamma}(s)}.
	\end{equation}
	The only difference with \cref{eq:definition_local_zeta_function} is that $\rho$ is replaced with $\varepsilon \rho$, which is $\rho$ twisted by the orientation cocycle of $TM$.
	\end{lemma}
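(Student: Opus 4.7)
The plan is to verify the identity at the level of the formal logarithmic expansion. Writing $z = e^{-sT_\gamma}$ and $A = \rho(\gamma)$, expanding $\log\det(I - z^r A^r) = -\sum_{k\geq 1} z^{rk}\tr(A^{rk})/k$ and collecting terms by total exponent $m = rk$, I will obtain
\begin{equation*}
-\log \zeta_{\varphi,\rho,\gamma}(s) = \sum_{m\geq 1} \frac{z^m \tr(A^m)}{m}\bigl(F^u(m) - 1\bigr),
\end{equation*}
where $F^u(m) = \sum_{r \mid m} r \cdot |\{i : r_i^u = r\}|$ is precisely the number of unstable prongs of $\gamma$ that are fixed by the $m$-th iterate of the prong permutation $\sigma$ induced by the first return map. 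The same computation, applied to the right-hand side of \cref{eq:definition_local_zeta_function_stable} (so with $A$ replaced by $\varepsilon_\gamma A$ and $r_i^u$ replaced by $r_i^s$), gives
\begin{equation*}
-\log\frac{\prod_{i=1}^n \xi_{\varphi,\varepsilon\rho,\gamma^{r_i^s}}(s)}{\xi_{\varphi,\varepsilon\rho,\gamma}(s)} = \sum_{m \geq 1} \frac{(\varepsilon_\gamma z)^m \tr(A^m)}{m}\bigl(F^s(m) - 1\bigr),
\end{equation*}
with $F^s(m)$ the analogous count for stable prongs. So the lemma reduces to the combinatorial identity
\begin{equation*}
F^u(m) - 1 = \varepsilon_\gamma^m\bigl(F^s(m) - 1\bigr), \qquad m\geq 1.
\end{equation*}

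To prove this identity I will use the explicit form of $\sigma$ given by \cref{remark:prong_permutation}: one has $\sigma(k) = \varepsilon_\gamma k + a$ on $\mathbb{Z}/2n\mathbb{Z}$ with $a$ even, so $\sigma$ preserves parity (even indices labelling unstable prongs, odd indices labelling stable prongs). For non-singular $\gamma$ the same framework applies with $n=2$ via \cref{example:regular}.

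If $\varepsilon_\gamma = 1$, then on both the unstable and stable subsets (each identified with $\mathbb{Z}/n\mathbb{Z}$) $\sigma$ acts as the translation by $a/2$, so $F^u(m) = F^s(m)$ and both sides coincide. If $\varepsilon_\gamma = -1$, then $\sigma$ is an involution. For even $m$, $\sigma^m = \mathrm{id}$, hence $F^u(m) = F^s(m) = n$ and the identity becomes $n-1 = n-1$. For odd $m$, $\sigma^m = \sigma$, and the identity becomes $F^u(1) + F^s(1) = 2$, i.e.\ I need to count the total number of prongs fixed by the involution $k \mapsto -k + a$ on $\mathbb{Z}/2n\mathbb{Z}$. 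Since $a$ is even, the equation $2k \equiv a \pmod{2n}$ has exactly two solutions, $k = a/2$ and $k = a/2 + n$; these two solutions have different parities when $n$ is odd (giving one unstable and one stable fixed prong) and the same parity when $n$ is even (giving two of one type and none of the other). In every case the total is $2$, which closes the argument.

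The main obstacle is simply keeping track of the case analysis for $\varepsilon_\gamma = -1$ with $n$ even, where the distribution of fixed prongs between the unstable and stable sides is uneven; the point that saves the identity is that the difference $F^u(m) - F^s(m)$ is exactly what is needed to absorb the sign $\varepsilon_\gamma^m = -1$ on odd $m$. No analytic input beyond the expansion of $\log\det$ and \cref{remark:prong_permutation} is needed.
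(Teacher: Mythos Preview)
Your proof is correct and takes a genuinely different route from the paper. The paper argues directly at the level of the determinants: for $\varepsilon_\gamma = 1$ it observes a bijection between stable and unstable half-leaves obtained by rotating one step around $\gamma$, and for $\varepsilon_\gamma = -1$ it explicitly lists the multisets $(r_i^u)$ and $(r_i^s)$ in the two subcases (number of prongs odd or even) and then checks by a direct determinant computation that the ratio of the two expressions equals $1$. You instead pass to the logarithm and reduce the whole statement to the single combinatorial identity $F^u(m) - 1 = \varepsilon_\gamma^m\bigl(F^s(m) - 1\bigr)$ about fixed-prong counts of $\sigma^m$, which you then verify from the explicit form $\sigma(k) = \varepsilon_\gamma k + a$ of \cref{remark:prong_permutation}. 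Your argument is more uniform --- one identity handles all cases at once --- and makes the structural reason for the $\varepsilon$-twist transparent (it exactly accounts for the possible imbalance $F^u(1) \neq F^s(1)$ when $\varepsilon_\gamma = -1$); the paper's argument is more concrete but requires a separate computation in each subcase. Both ultimately rest on the same structural input, namely the description of the prong permutation in \cref{remark:prong_permutation}.
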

	
	\begin{proof}
	As in the introduction, let us denote the half weak unstable leaves incident to $\gamma$ by $\lambda_1^u,\dots,\lambda_m^u$, where $\lambda_i^u$ wraps $r_i^u$ times around $\gamma$. When $\varepsilon_\gamma=1$, there is a 1-1 correspondence between weak stable half-leaves and weak unstable half-leaves incident to $\gamma$: we associate each half-weak stable leaf to the next half-weak unstable leave when turning positively around $\gamma$ (after the choice of an orientation over $\gamma$). So the expressions for $\zeta_{\varphi,\rho,\gamma}$ in terms of weak stable and weak unstable leaves are the same. 
	
	Now suppose $\varepsilon_\gamma=-1$. Then the first return map of $\gamma$ must fix some prong (see \cref{remark:prong_permutation} when $\gamma$ is singular, the case of a regular orbit is similar). Without loss of generality, assume that it is an unstable prong (the result is symmetric with respect to stable and unstable leaves). There are two cases. If the number of unstable prongs is odd, then $m=n$ and, up to reordering, $(r_1^u,\dots,r_m^u) = (1,2,2,\dots,2)$ and $(r_1^s,\dots,r_n^s)=(1,2,2,\dots,2)$. All the terms with $r=2$ are the same in \cref{eq:definition_local_zeta_function} and \cref{eq:definition_local_zeta_function_stable}, and the terms with $r=1$ cancel with the term in the denominator. So \cref{eq:definition_local_zeta_function} and \cref{eq:definition_local_zeta_function_stable} agree in this case. On the other hand, if the number of unstable prongs is even then, $m=n+1$ and, up to reordering, $(r_1^u,\dots,r_m^u)=(1,1,2,\dots,2)$ and $(r_1^s,\dots,r_n^s)=(2,2,\dots,2)$. In this case, we have
	\begin{align*}
		\frac{\prod_{i=1}^m\xi_{\varphi,\rho,\gamma^{r_i^{u}}}(s)}{\xi_{\varphi,\rho,\gamma}(s)}\frac{\xi_{\varphi,\varepsilon\rho,\gamma}(s)}{\prod_{i=1}^n\xi_{\varphi,\varepsilon\rho,\gamma^{r_i^{s}}}(s)} &= \frac{\xi_{\varphi, \rho, \gamma}(s)^2 \xi_{\varphi, \varepsilon \rho, \gamma}(s)}{\xi_{\varphi, \rho, \gamma}(s)\xi_{\varphi,\varepsilon\rho, \gamma^2}(s)}\\
		&= \frac{\det(I-e^{-s T_\gamma }\rho(\gamma))\det(I+e^{-s T_\gamma }\rho(\gamma))}{\det(I-(e^{-s T_\gamma }\rho(\gamma))^2)}\\
		&=1
	\end{align*}
	So \cref{eq:definition_local_zeta_function} and \cref{eq:definition_local_zeta_function_stable} agree in this case as well.
	\end{proof}
	
	We are now ready to relate $\zeta_{\rho}^{\textup{Markov}}(s)$ and $\zeta_{\varphi,\rho}(s)$.

\begin{proposition}\label{proposition:explicit_zeta_Markov3}
	Assume that $\rho$ is a representation of $\pi_1(M)$. There is $C_0 > 0$ such that for every $s \in \mathbb{C}$ with $\re s \geq C_0$ we have
\begin{equation}\label{equation:markov_correction2}
	\zeta_{\rho}^{\textup{Markov}}(s) = \zeta_{\varphi,\rho}(s)\prod_{\gamma \in \Gamma^s} \xi_{\varphi,\rho,\gamma}(s) \prod_{\gamma \in \Gamma^u} \xi_{\varphi,\varepsilon \rho, \gamma}(s).
\end{equation}
\end{proposition}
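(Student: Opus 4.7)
My plan is to evaluate the product formula from \cref{proposition:explicit_zeta_Markov2} at $z=1$, group the Euler product by the orbit $\gamma_{\mathcal{O}}$ of $\varphi$ coded by each $\mathcal{O} \in P_{\Sigma}$, and carefully count the fibers of $\mathcal{O} \mapsto \gamma_{\mathcal{O}}$. First I would observe that, since the number of periodic points of $\sigma$ of period $n$ grows at most exponentially in $n$, for $\re s$ large enough the product $\prod_{\mathcal{O}} F_{\mathcal{O}}(s,1)$ converges absolutely; combining this with \cref{proposition:explicit_zeta_Markov2} and analytic continuation in $z$ gives $\zeta_{\rho}^{\textup{Markov}}(s) = \prod_{\mathcal{O} \in P_{\Sigma}} F_{\mathcal{O}}(s,1)$ for $\re s \geq C_0$.

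Next I would handle the contribution of primitive orbits $\gamma$ not in $\Gamma^u \cup \Gamma^s$. By \cref{lemma:no_duplicate_regular}, exactly one $\mathcal{O}$ maps to such a $\gamma$, and no $\mathcal{O}$ maps to any nontrivial power $\gamma^k$. Since $\{\gamma_1, \ldots, \gamma_N\} \subseteq \Gamma^u \cup \Gamma^s$, such a $\gamma$ is necessarily non-singular, so \cref{remark:nonsingular_zeta} gives $F_{\mathcal{O}}(s,1) = \det(I - e^{-s T_\gamma} \Delta_\gamma \rho(\gamma)) = \zeta_{\varphi,\rho,\gamma}(s)$, which is exactly the corresponding local factor of $\zeta_{\varphi,\rho}(s)$.

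The heart of the proof is the identification of the $\mathcal{O}$'s mapping to (powers of) orbits in $\Gamma^u \cup \Gamma^s$. For $\gamma \in \Gamma^s$, I would argue that the preimage of $\gamma^{\mathbb{N}}$ under $\mathcal{O} \mapsto \gamma_{\mathcal{O}}$ is in bijection with the set of half weak-unstable leaves $\lambda_i^u$ at $\gamma$: a point of $\gamma$ sits on the stable boundary of the Markov rectangles (because \cref{proposition:existence_Markov} ensures $\bigcup_\theta \partial^s R_\theta \subseteq \bigcup_{\gamma' \in \Gamma^s} W^{\textup{s}}(\gamma')$), so the fibers of $\pi_\Sigma$ above $\gamma$ are indexed by choices of ``side,'' i.e.\ by the prongs of the unstable manifold, and the first-return permutation on these prongs (explicitly described by \cref{remark:prong_permutation} in the singular case) has orbits of length exactly $r_i^u$. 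Thus there are exactly $m$ symbolic orbits $\mathcal{O}_1, \ldots, \mathcal{O}_m$ with $\gamma_{\mathcal{O}_i} = \gamma^{r_i^u}$. Since a half unstable leaf wrapping $r_i^u$ times is orientable, $\Delta_{\gamma^{r_i^u}} = 1$ and $F_{\mathcal{O}_i}(s,1) = \xi_{\varphi,\rho, \gamma^{r_i^u}}(s)$, so the combined contribution is $\prod_{i=1}^m \xi_{\varphi,\rho,\gamma^{r_i^u}}(s) = \zeta_{\varphi,\rho,\gamma}(s)\,\xi_{\varphi,\rho,\gamma}(s)$ by the definition of $\zeta_{\varphi,\rho,\gamma}(s)$.

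The symmetric analysis for $\gamma \in \Gamma^u$ produces $n$ orbits $\mathcal{O}_1, \ldots, \mathcal{O}_n$ with $\gamma_{\mathcal{O}_i} = \gamma^{r_i^s}$, one per half stable leaf. Here \cref{remark:orientable half leaves} (extended to the singular case by using the same homotopy that defines $\Delta_\gamma$ and $\rho(\gamma)$ for singular $\gamma$) gives $\Delta_{\gamma^{r_i^s}} = \varepsilon_{\gamma^{r_i^s}}$, hence $F_{\mathcal{O}_i}(s,1) = \xi_{\varphi,\varepsilon\rho,\gamma^{r_i^s}}(s)$; applying \cref{lemma:local_zeta_stable} the combined contribution is $\zeta_{\varphi,\rho,\gamma}(s)\,\xi_{\varphi,\varepsilon\rho,\gamma}(s)$. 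Multiplying together the contributions from all three types of orbits (using the disjointness $\Gamma^u \cap \Gamma^s = \emptyset$) yields the claimed formula. The main obstacle will be the combinatorial bijection in Step~3 (and its mirror in Step~4): the assertion that the symbolic orbits coding a boundary orbit $\gamma$ correspond exactly to its prongs, with the correct wrap numbers and no extraneous orbits. This requires a careful inspection of how the Markov rectangles fit together near $\gamma$, using the connectedness of rectangles from \cref{definition:Markov_partition}, the explicit boundary control \eqref{eq:assumption_boundary}, and for singular $\gamma$ the fact that the local model of \cref{remark:prong_permutation} makes the first-return action on the sectors completely explicit.
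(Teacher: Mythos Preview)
Your proposal is correct and follows essentially the same route as the paper's proof: reduce to the product over $P_\Sigma$ via \cref{proposition:explicit_zeta_Markov2}, dispose of generic orbits by \cref{lemma:no_duplicate_regular}, and for $\gamma\in\Gamma^s$ (resp.\ $\Gamma^u$) establish a bijection between symbolic codings and unstable (resp.\ stable) half-leaves, using orientability of the half-leaf to identify $\Delta_{\gamma_{\mathcal O}}$. The paper makes the bijection concrete by the ``pushoff'' construction (displacing $\gamma^{r_i^u}$ into the interior of $\lambda_i^u$ and recording the rectangles pierced), which is exactly the argument you outline as the main obstacle; your invocation of the disjointness $\Gamma^u\cap\Gamma^s=\emptyset$ to avoid corner ambiguities and of \cref{lemma:local_zeta_stable} for the $\Gamma^u$ case matches the paper as well.
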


\begin{proof}
	For $\re s$ large enough, both sides converge absolutely. According to \cref{proposition:explicit_zeta_Markov2}, we need to prove that, for $\re s$ large we have
	\begin{equation}\label{equation:markov_correction3}
	\prod_{\mathcal{O} \in P_{\Sigma}} F_{\mathcal{O}}(s,1) = \prod_{\gamma \textup{ primitive orbit }} \zeta_{\varphi,\rho,\gamma}(s)\prod_{\gamma \in \Gamma^s} \xi_{\varphi,\rho,\gamma}(s) \prod_{\gamma \in \Gamma^u} \xi_{\varphi,\varepsilon \rho, \gamma}(s).
\end{equation}
	It follows from \cref{lemma:no_duplicate_regular} that any closed orbit not in $\Gamma^u \cup \Gamma^s$ has exactly one symbolic coding, and is therefore counted exactly once one the left side of \cref{equation:markov_correction2}. By \cref{remark:nonsingular_zeta}, this agrees with the contribution to the right side. 
	
	Now consider an orbit $\gamma \in \Gamma^s$. By construction, $\gamma$ can only intersect a rectangle on its stable boundary. Since $\Gamma^u$ is disjoint from $\Gamma^s$, the orbit $\gamma$ does not intersect the corner of any rectangle. For each rectangle which $\gamma$ touches, there is a corresponding word $w\in \Sigma$ which codes some multiple of $\gamma$. We wish to count the equivalence classes of such words under the shift. As in the notation established in \cref{eq:definition_local_zeta_function}, let $\lambda_1^u,\dots,\lambda_m^u$ be the weak unstable half-leaves incident to some power of $\gamma$, and let $r_i^u$ be the number of times $\partial \lambda_i^u$ wraps around $\gamma$. For each $i\in 1,\dots,m$, push $\gamma^{r_i^u}$ slightly into the interior of $\lambda_i^u$ and record the sequence of rectangles pierced by this curve. This is an orbit of $\sigma$. Moreover, every rectangle containing $\gamma$ is pierced by exactly one of these pushoffs. Therefore, these $m$ orbits are all the orbits of $\sigma$ which code some multiple of $\gamma$. Note that the weak unstable bundle is orientable over $\gamma^{r_i^u}$. So the contribution of $\gamma$ to the left side of \cref{equation:markov_correction2} is
	\begin{equation*}
		\prod_{i=1}^m \xi_{\varphi,\rho,\gamma^{r_i^u}}(s).
	\end{equation*}
	This agrees  with the contribution to the right side of \cref{equation:markov_correction2}.

	Now consider an orbit $\gamma \in \Gamma^u$, and define $\lambda_1^s,\dots,\lambda_n^s$ and $r_1^s,\dots,r_n^s$ in the analogous way. Arguing as before, the weak stable bundle is orientable over $\gamma^{r_i^s}$. This means that $\varepsilon_{\gamma^{r_i^s}} \Delta_{\gamma^{r_i^s}}=1$. So the contribution of $\gamma$ to the left side of \cref{equation:markov_correction2} is
	\begin{equation*}
		\prod_{i=1}^m \xi_{\varphi,\varepsilon\rho,\gamma^{r_i^s}}(s).
	\end{equation*}
	This agrees with the contribution to the right side of \cref{equation:markov_correction2}.
\end{proof}

We can finally prove the existence of a meromorphic extension to $\mathbb{C}$ for $\zeta_{\varphi,\rho}(s)$.

\begin{proof}[Proof of \cref{theorem:continuation_zeta}]
It follows from \eqref{eq:definition_local_zeta_function}, \eqref{eq:definition_local_zeta_function_stable} , \Cref{proposition:explicit_zeta_Markov2}, and \Cref{proposition:explicit_zeta_Markov3} that for $\re s \gg 1$ we have
\begin{equation}\label{eq:Markov_to_standard}
\zeta_{\varphi,\rho}(s) = \zeta_{\rho}^{\textup{Markov}}(s) \times \left(\prod_{\gamma \in \Gamma^s} \xi_{\varphi,\rho,\gamma}(s) \prod_{\gamma \in \Gamma^u} \xi_{\varphi,\varepsilon\rho,\gamma}(s) \right)^{-1}.
\end{equation}
The function $\zeta_{\rho}^{\textup{Markov}}(s)$ has been defined in \cref{subsection:zeta_functions} as a meromorphic function on $\mathbb{C}$. The other factor is also meromorphic on $\mathbb{C}$, as can be seen from the definition \eqref{eq:auxiliary_function}.
\end{proof}

\subsection{Value at zero of zeta functions}\label{subsection:value_at_zero}

In order to prove \cref{theorem:fried_conjecture}, we need to compute $\zeta_{\varphi,\rho}(0)$ for $\rho$ a representation satisfying the hypotheses of the theorem. Thanks to \cref{eq:Markov_to_standard}, this can be done by computing $\zeta_\rho^{\textup{Markov}}(0)$ first. This is the point of the following lemma. We will also need this lemma for the proof of \cref{theorem:dirichletclassnumber}, so in this section we work again with $\rho$ a representation of $\pi_1(\widehat{M})$.

\begin{lemma}\label{lemma:zeta_Markov_at_zero}
If $d_0(0,1) \neq 0$ and $d_2(0,1) \neq 0$ then
\begin{equation*}
\zeta_\rho^{\textup{Markov}}(0) = \det(I - H)
\end{equation*}
where the matrix $H : \mathbb{C}^\Theta \otimes \mathbb{C}^m \to \mathbb{C}^\Theta \otimes \mathbb{C}^m$ is defined for $v = (v_\theta)_{\theta \in \Theta}$ by
\begin{equation*}
(Hv)_\theta= \sum_{\substack{\vartheta \in \Theta : A_{\theta,\vartheta} = 1}} \Delta_{\theta,\vartheta} \rho_{\theta,\vartheta} u_\vartheta,
\end{equation*} 
where\footnote{As mentioned in the beginning of \cref{subsection:correcting}, one may choose an orientation of the unstable direction on each rectangle, since rectangles are simply connected.}
\begin{equation*}
\Delta_{\theta,\vartheta} = \begin{cases} 1 & \textup{ if } T_{\theta,\vartheta} \textup{ preserves the orientation of the unstable direction}\\ - 1 & \textup{ otherwise.} \end{cases}
\end{equation*}
\end{lemma}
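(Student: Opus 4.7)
The plan is to evaluate $\zeta_\rho^{\textup{Markov}}(0)$ by first specializing the product formula of \cref{proposition:explicit_zeta_Markov2} at $s = 0$, and then recognizing the resulting infinite product over symbolic periodic orbits as the characteristic polynomial of the finite-dimensional matrix $H$. First I would take $s = 0$ in \cref{proposition:explicit_zeta_Markov2}: since $F_\mathcal{O}(0, z) = \det(I - z^{m_\mathcal{O}} \Delta_{\gamma_\mathcal{O}} \rho(\gamma_\mathcal{O}))$, where $m_\mathcal{O}$ denotes the minimal period of $\mathcal{O}$, one gets, for $|z|$ small,
$$\frac{d_1(0,z)}{d_0(0,z) d_2(0,z)} = \prod_{\mathcal{O} \in P_\Sigma} \det\bigl(I - z^{m_\mathcal{O}} \Delta_{\gamma_\mathcal{O}} \rho(\gamma_\mathcal{O})\bigr).$$

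Next I would establish the standard ``weighted graph zeta function'' identity
$$\det(I - z H) = \prod_{\mathcal{O} \in P_\Sigma} \det\bigl(I - z^{m_\mathcal{O}} \Delta_{\gamma_\mathcal{O}} \rho(\gamma_\mathcal{O})\bigr)$$
for $|z|$ small. The argument is to expand $\log\det(I - zH) = -\sum_{n\geq 1} z^n \tr(H^n)/n$ and compute
$$\tr(H^n) = \sum_{\substack{w \in \Sigma \\ \sigma^n w = w}} \tr\bigl(\Delta_{w_0,w_1} \rho_{w_0,w_1} \cdots \Delta_{w_{n-1},w_0} \rho_{w_{n-1},w_0}\bigr).$$
Grouping periodic points by their primitive symbolic orbit (a primitive orbit of length $m_\mathcal{O}$ contributes $m_\mathcal{O}$ fixed points of $\sigma^{n}$ whenever $m_\mathcal{O} \mid n$), and using that the product $\Delta_{w_0,w_1}\rho_{w_0,w_1} \cdots \Delta_{w_{m-1},w_0}\rho_{w_{m-1},w_0}$ is conjugate to $\Delta_{\gamma_\mathcal{O}} \rho(\gamma_\mathcal{O})$, this is exactly the computation performed in the proof of \cref{proposition:explicit_zeta_Markov2} with $e^{-sT_\mathcal{O}} \rho(\gamma_\mathcal{O})$ replaced by $\Delta_{\gamma_\mathcal{O}} \rho(\gamma_\mathcal{O})$.

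Combining these two identities gives $d_1(0,z)/(d_0(0,z) d_2(0,z)) = \det(I - zH)$ on a neighbourhood of $0$. Both sides extend to meromorphic functions on $\mathbb{C}$: the right-hand side is a polynomial, and the left-hand side is a ratio of entire functions. Hence the identity propagates to the whole plane. Evaluating at $z = 1$ and using the hypothesis $d_0(0,1), d_2(0,1) \neq 0$ to guarantee that the left-hand side has no pole there, one obtains $\zeta_\rho^{\textup{Markov}}(0) = \det(I - H)$. The only step needing any real care is the trace--cycle bookkeeping in the second paragraph, but this is a direct transcription of the argument already used in the proof of \cref{proposition:explicit_zeta_Markov2}, so I do not anticipate a substantial obstacle.
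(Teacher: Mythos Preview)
Your proposal is correct and follows essentially the same approach as the paper's proof. The only difference is a small detour: the paper works directly from the flat-trace definitions of the $d_k$, observes that at $s=0$ the weight $\widetilde{B}^0_{w_0,\dots,w_n}$ reduces to $\rho_{w_0,w_1}\cdots\rho_{w_{n-1},w_n}$, and immediately recognizes the resulting sum over periodic points as $\tr(H^n)$, giving $d_1(0,z)/(d_0(0,z)d_2(0,z)) = \det(I-zH)$ without ever passing through the product over primitive orbits; your route via \cref{proposition:explicit_zeta_Markov2} and the graph zeta identity is the same computation done twice in opposite directions.
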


\begin{proof}
As in the proof of \Cref{proposition:explicit_zeta_Markov2}, we notice that for $|z|$ small enough, we have
\begin{equation*}
\begin{split}
& \log\left(\frac{d_1(0,z)}{d_0(0,z)d_2(0,z)}\right) \\ & \qquad \quad = - \sum_{n \geq 1} \sum_{\substack{w \in \Sigma \\ \sigma^n w = w}} \frac{z^n}{n} \tr(\widetilde{B}^0_{w_0,\dots,w_n}(\pi_{\Sigma}(w))) \Delta_{w_0,w_1} \dots \Delta_{w_{n-1},w_n}.
\end{split}
\end{equation*}
But, since we have now $s = 0$, we find that
\begin{equation*}
\tr(\widetilde{B}^0_{w_0,\dots,w_n}(\pi_{\Sigma}(w)))  = \rho_{w_0,w_1} \dots \rho_{w_{n-1},w_n}
\end{equation*}
and thus
\begin{equation*}
\log\left(\frac{d_1(0,z)}{d_0(0,z)d_2(0,z)}\right) = - \sum_{n \geq 1} \frac{z^n}{n} \tr(H^n) = \log(\det(I - zH)).
\end{equation*}
Consequently, for $|z|$ small enough, we have
\begin{equation*}
\frac{d_1(0,z)}{d_0(0,z)d_2(0,z)} = \det(I - z H).
\end{equation*}
Thanks to our assumption that $d_0(0,1)$ and $d_2(0,1)$ are non-zero, we find that
\begin{equation*}
\zeta_{\rho}^{\textup{Markov}}(0) = \frac{d_1(0,1)}{d_0(0,1)d_2(0,1)} = \det(I - H).
\end{equation*}
\end{proof}

\begin{remark}
Let us explain why the conditions $d_0(0,1) \neq 0$ and $d_2(0,1) \neq 0$ are needed in the proof of \cref{lemma:zeta_Markov_at_zero}. Consider the holomorphic functions of two variables $f(s,z) = 1 - z$ and $g(s,z) = 1 - e^{-s} z$. Fixing $z = 1$, one may defined the meromophic function $\zeta_{\textup{example}}(s) = f(s,1)/g(s,1)$. Indeed, $g(s,1)$ is not identically equal to $0$. A direct inspection gives that $\zeta_{\textup{example}}$ is identically equal to $0$, in particular the singularity at $s = 0$ is removable. However, if we fix $s = 0$, then for $z$ small, we find that $f(0,z)/g(0,z) = 1$, and we see that we cannot take $z = 1$ in this equality to compute $\zeta_{\textup{example}}(0)= 0$.
\end{remark}

\cref{lemma:zeta_Markov_at_zero} is of no use without a way to check the conditions $d_0(0,1) \neq 0$ and $d_2(0,1) \neq 0$. The first condition is dealt with in the following lemma.

\begin{lemma}\label{lemma:denominator_non_zero}
Assume that $\rho$ is a unitary representation of $\pi_1(\widehat{M})$. If $d_0(0,1)$ is equal to zero, then $H^0(\widehat{M};\rho)\neq 0$. If $\rho$ extends to a representation\footnote{I.e. if $\rho(\gamma) = I$ whenever $\gamma$ is homotopically trivial in $M$.} of $\pi_1(M)$, then $H^0(M;\rho)\neq 0$.
\end{lemma}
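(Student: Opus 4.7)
The plan is to translate the hypothesis $d_0(0,1) = 0$ into the existence of a nonzero eigenvector of $\mathcal{L}_0|_{s=0}$ at the eigenvalue $1$, and then to use the Markov coding together with \Cref{lemma:mcmullen_realization} and the unitarity of $\rho$ to convert this eigenvector into a $\pi_1(\widehat M)$-invariant vector in $\mathbb{C}^m$.

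By the Baladi--Tsujii theory recalled in \Cref{subsection:dynamical_determinant}, the entire function $z \mapsto d_0(0,z)$ coincides with the Fredholm determinant of $I - z\,\mathcal{L}_0|_{s=0}$ on the anisotropic space $\bigoplus_{\theta \in \Theta}\mathcal{C}^{\mathcal{P}_\theta, p, q}(K)$. The hypothesis $d_0(0,1) = 0$ therefore produces a nonzero family $u = (u_\theta)_{\theta \in \Theta}$ satisfying, for every $\theta$,
\[
u_\theta = \chi\sum_{\vartheta : A_{\theta,\vartheta}=1} \rho_{\theta,\vartheta}\,(T_{\theta,\vartheta}^* u_\vartheta).
\]
Iterating $u = \mathcal{L}_0^n u$ shows that each $u_\theta$ is supported in $R_\theta$; restricted to the subrectangle $R_{\theta,\vartheta}$ only one summand survives, so the identity becomes the pointwise cocycle equation $u_\theta(x) = \rho_{\theta,\vartheta}\,u_\vartheta(T_{\theta,\vartheta}(x))$. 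Iterating along any admissible word $w = (w_0,\dots,w_n)$ yields $u_{w_0}(x) = \rho_{w_0,w_1}\cdots\rho_{w_{n-1},w_n}\,u_{w_n}(T_w(x))$ on $R_w$; specialising to a periodic point $x$ with $w_n = w_0$ gives $\rho(\gamma_{\mathcal{O}})\,u_{w_0}(x) = u_{w_0}(x)$, where $\gamma_{\mathcal{O}}$ is the associated closed orbit.

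Next, I would extract a periodic point at which $u$ is nonzero. Choose $\theta_0$ with $u_{\theta_0} \not\equiv 0$; the smoothing property of $\mathcal{L}_0$ along unstable leaves, combined with the unitarity of $\rho$ (which forbids exponential decay along backward orbits), allows one to argue as in the standard analysis of the top Ruelle resonance that $u_{\theta_0}$ admits a continuous representative. Since periodic points of the first-return map are dense in $R_{\theta_0}$ by transitivity of $\sigma$ on $\Sigma$, one finds a periodic $x \in R_{\theta_0}$ with $v \coloneqq u_{\theta_0}(x) \neq 0$. By the previous step, $\rho(\gamma)v = v$ for every primitive closed orbit $\gamma$ of $\varphi$ meeting $R_{\theta_0}$. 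Applying \Cref{lemma:mcmullen_realization} to the compact group $\overline{\rho(\pi_1(\widehat M))} \subseteq U(m)$ shows that every element of $\rho(\pi_1(\widehat M))$ is a limit of such $\rho(\gamma)$, and continuity then gives $\rho(g)v = v$ for every $g \in \pi_1(\widehat M)$, whence $v \in H^0(\widehat M, \rho) \setminus \{0\}$.

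For the second statement, the inclusion $\widehat M \hookrightarrow M$ is $\pi_1$-surjective because $M \setminus \widehat M$ is a finite union of embedded circles of codimension two in $M$; consequently, when $\rho$ factors through $\pi_1(M)$ the invariants $H^0(M,\rho)$ and $H^0(\widehat M,\rho)$ coincide, and the same $v$ exhibits $H^0(M,\rho) \neq 0$. The main obstacle is the continuity step: justifying that the anisotropic distributional eigenvector admits a well-defined continuous value at a specified periodic point. This is the analogue, in our symbolic setting, of the classical identification of the $0$-resonance of the Ruelle resolvent on functions with $H^0(M,\rho)$ for a transitive Anosov flow, and should follow from the unstable smoothing of $\mathcal{L}_0$ combined with the fact that, for unitary $\rho$, the only eigenvectors at modulus one come from genuine flat sections.
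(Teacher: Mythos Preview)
Your outline has a genuine gap at the point you yourself flag, and a second one you do not flag.

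\textbf{The continuity step.} The eigenvector $u$ lives in the Baladi--Tsujii space $\bigoplus_\theta \mathcal C^{\mathcal P_\theta,p,q}(K)$, which is a space of anisotropic \emph{distributions}; the pointwise identity $u_\theta(x)=\rho_{\theta,\vartheta}\,u_\vartheta(T_{\theta,\vartheta}x)$ and the evaluation $v=u_{\theta_0}(x)$ make no sense a priori. You appeal to ``unstable smoothing'' and an analogy with the top Ruelle resonance, but that analogy is exactly what has to be proved, and in the present generality (twisted coefficients, open hyperbolic family) it is not automatic. The paper avoids this problem entirely: it first replaces $d_0(0,z)$ by the auxiliary function $\tilde d_0(z)$, passes to the one-sided shift $\Sigma^+$, and works with classical Ruelle transfer operators $L$ and $\widetilde L$ on H\"older \emph{function} spaces, where pointwise arguments are legitimate. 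The Ruelle--Perron--Frobenius theorem gives a positive eigenfunction $h$ for $L$, and the twisted eigenvector $u$ for $\widetilde L$ is then a genuine H\"older function.

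\textbf{The ``same $v$'' step.} Even granting continuity, your argument only gives $\rho(\gamma_x)v=v$ for the \emph{single} closed orbit $\gamma_x$ through the chosen periodic point $x$; different periodic points $x'\in R_{\theta_0}$ give different vectors $u_{\theta_0}(x')$. So the sentence ``$\rho(\gamma)v=v$ for every primitive closed orbit meeting $R_{\theta_0}$'' is unproved, and without it the density input from \Cref{lemma:mcmullen_realization} has nothing to act on. The paper fills this gap by a separate argument: from $\widetilde Lu=u$ and $L\|u\|=\|u\|$ (equality in the triangle inequality, using unitarity and the full-support left eigenmeasure for $L$), one gets $u=h\alpha$ with $\|\alpha\|\equiv 1$ and the cocycle relation $\alpha(\sigma w)={}^t\rho_{w_0,w_1}\alpha(w)$. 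A further argument with periodic words and unitarity then shows $\alpha(w)$ depends only on $w_0$, i.e.\ $\alpha$ is \emph{constant on each rectangle}. That constancy is what produces a single vector fixed by all of $\rho(\pi_1(\Gamma))$, and the surjection $\pi_1(\Gamma)\twoheadrightarrow\pi_1(\widehat M)$ finishes the proof.

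In short, the missing ingredient in your sketch is precisely the reduction to the one-sided shift and the RPF comparison $\|u\|=h$; this is what converts a distributional eigenvector into a locally constant section and supplies the uniformity needed to invoke the density of closed orbits.
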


\begin{proof}
For $\theta,\vartheta$ such that $A_{\theta,\vartheta} = 1$, and $x \in R_{\theta,\vartheta}$ define 
\begin{equation*}
J^u_{\theta,\vartheta} (x) = | \det D T_{\theta,\vartheta} (x)_{| E^u_\theta(x)}| \textup{ and } J^s_{\theta,\vartheta} (x) = | \det D T_{\theta,\vartheta} (x)_{| E^s_\theta(x)}|. 
\end{equation*}
It follows then from the hyperbolicity of the first return map $T$ that
\begin{equation*}
\tr^\flat( \mathcal{L}_0^n) = \sum_{\substack{w \in \Sigma \\ \sigma^n w = w}} \frac{\tr(\rho_{w_0,\dots,w_n})}{J^u_{w_0,\dots,w_n}(\pi_{\Sigma}(w))} ( 1 + \mathcal{O}(\eta^n))
\end{equation*}
where $J^u_{w_0,\dots,w_n} = J^u_{w_0,w_1} J^u_{w_1,w_2} \circ T_{w_1,w_2} \dots J^u_{w_{n-1}, w_n} \circ T_{w_1,\dots,w_{n-1}}$ and $\eta \in (0,1)$. It follows that the holomorphic function
\begin{equation*}
\tilde{d}_0 (z) = \exp\left( - \sum_{n = 1}^{+ \infty} \frac{1}{n} z^n  \sum_{\substack{w \in \Sigma \\ \sigma^n w = w}} \frac{\tr(\rho_{w_0,\dots,w_n})}{J^u_{w_0,\dots,w_n}(\pi_{\Sigma}(w))} \right)
\end{equation*}
extends as a holomorphic function on $\set{z \in \mathbb{C} : |z| < \eta^{-1}}$ with the same zeroes as $d_0(0,z)$. In particular, we have $\tilde{d}_0(1) = 0$.

Define then the function $\widetilde{J}^u$ on $\Sigma$ by $\widetilde{J}^u (w) = J^u_{w_0,w_1} \circ \pi_\Sigma(w)$ and notice that the function $\widetilde{J}^u$ is H\"older-continuous and positive. Consequently, there are two real-valued H\"older functions $g_u,f_u$ on $\Sigma$ such that $\widetilde{J}^u = e^{-g_u + f_u \circ \sigma - f_u}$, where $g_u$ only depends on the coordinates of non-negative index of its argument \cite[Proposition 1.2]{parry_pollicott_asterisque}.

Let then 
\begin{equation*}
\Sigma^+ = \set{ (\theta_m)_{m \in \mathbb{N}} \in \Theta^{\mathbb{N}} : A_{\theta_m, \theta_{m+1}} = 1 \textup{ for every } m \in \mathbb{N}},
\end{equation*}
and notice that $g_u$ may be considered as a function on $\Sigma^+$.  Letting $\sigma$ also denote the shift on $\Sigma_+$, we can rewrite the function $\tilde{d}_0(z)$ as
\begin{equation*}
\tilde{d}_0 (z) = \exp\left( - \sum_{n = 1}^{+ \infty} \frac{1}{n} z^n  \sum_{\substack{w \in \Sigma_+ \\ \sigma^n w = w}} e^{\sum_{k = 0}^{n-1} g_u(\sigma^k w)}\tr({}^t \rho_{w_{n-1},w_n} \dots {}^t \rho_{w_0,w_1}) \right)
\end{equation*}

Let $ \beta \in(0,1)$ be close to $1$ and define the spaces $\mathcal{F}^\beta(\Sigma^+)$ and $\mathcal{F}^\beta(\Sigma^+,\mathbb{C}^m)$ of Lipschitz-continuous functions on $\Sigma_+$ for the distance
\begin{equation*}
d((\theta_m)_{m \in \mathbb{N}}, (\tilde{\theta}_m)_{m \in  \mathbb{N}}) = \beta^{\inf \set{m \in \mathbb{N}: \theta_m \neq \tilde{\theta}_m}}
\end{equation*}
with values in $\mathbb{C}$ and $\mathbb{C}^m$ respectively. Let then define the operators $L$ and $\widetilde{L}$ acting respectively on $\mathcal{F}^\beta(\Sigma^+)$ and $\mathcal{F}^\beta(\Sigma^+, \mathbb{C}^m)$ by
\begin{equation*}
L u (x) = \sum_{\sigma y = x} e^{g_u(y)} u(y) 
\end{equation*}
and
\begin{equation*}
\widetilde{L} u(x) = \sum_{\sigma y = x} e^{g_u(y)} \, {}^t \rho_{y_0,y_1} u(y).
\end{equation*}
One may wonder here why we need to work with the transpose of $\rho$ all of a sudden. This is because after moving to $\Sigma_+$, we are now studying an expanding (instead of hyperbolic) dynamical system, and it is more convenient for expanding dynamical systems to study transfer operators (as $L$ and $\widetilde{L}$) than Koopman operators. Transfer operators are the adjoint of Koopman operators, and it is consequently natural when working with vector valued functions that a transpose appears when we go from one to the other.

It follows from a standard volume argument in hyperbolic dynamics that the spectral radius of $L$ is $1$ (this is proved in \cref{lemma:spectral_radius_volume} below). Moreover, it follows from the Ruelle--Perron--Frobenius Theorem (see for instance \cite[Theorem 2.2]{parry_pollicott_asterisque}) that $1$ is a simple eigenvalue of $L$ associated to a positive eigenvector $h$, and that there is a corresponding right eigenvector  that is a probability measure $\nu$ on $\Sigma_+$ with full support. In addition, the fact that $\tilde{d}_0(1) = 0$ implies that $1$ is also an eigenvalue for the operator $\widetilde{L}$ (see \cite{pollicott_zeta} or \cite{haydn_zeta} for the scalar case, the tensor product does not introduce specific difficulties, it is dealt with for instance in \cite{ruelle_thermodynamic_expanding} in a slightly different context). Let $u$ be an eigenvector for $\widetilde{L}$ associated to the eigenvalue $1$.

Notice then that for $x \in \Sigma^+$ we have (by the triangle inequality)
\begin{equation}\label{eq:triangle_inequality_transfer_operator}
\n{u(x)} = \n{\widetilde{L} u(x)} \leq (L \n{u}) (x).
\end{equation}
Moreover, recall the left eigenvector $\nu$ for $L$ associated to the eigenvalue $1$, which is a probability measure with full support, and notice that
\begin{equation*}
\int_{\Sigma^+} (L \n{u} - \n{u} )\mathrm{d}\nu = 0.
\end{equation*}
Thus $L \n{u} = \n{u}$. Hence, $\n{u}$ is proportional to $h$. Without loss of generality, we can assume that $h = \n{u}$. Consequently, there is a function $\alpha : \Sigma^+ \to \mathbb{C}^m$, with values in the sphere, such that $u = h \alpha$.

From the equality in the triangle inequality in \cref{eq:triangle_inequality_transfer_operator}, we find that for every $w = w_0 w_1\dots \in \Sigma^+$, we have
\begin{equation*}
\alpha(\sigma w) = {}^t \rho_{w_0,w_1} \alpha(w).
\end{equation*}
Iterating this relation, we find that for any $w \in \Sigma_+$ and $n \in \mathbb{N}$, we have
\begin{equation}\label{eq:relation_homology}
\alpha(\sigma^n w) = {}^t (\rho_{w_0,w_1} \dots \rho_{w_{n-1}, w_n}) \alpha(w).
\end{equation}

Now we claim that $\alpha(w)$ depends only on $w_0$. It suffices to prove this for periodic words since periodic words are dense in $\Sigma^+$. Suppose  $y$ and $z$ are two periodic words in $\Sigma^+$, with $y_0=z_0$. Consider the word $y_{\textup{per}}^n z$ in $\Sigma^+$ where $y_{\textup{per}}$ is a prefix of length $m$ for $y$ and $m$ a period for $y$. Note that 
\begin{align*}
	\alpha(y) &= \lim_{n\to \infty} \alpha(y_{\textup{per}}^n z)\\\
	&=\lim_{n\to \infty} {}^t(\rho_{y_0,y_1} \dots \rho_{y_{m-1}, y_m})^n \alpha(z)
\end{align*}
Since $\rho_{y_0,y_1} \dots \rho_{y_{m-1}, y_m}$ is unitary, $(\rho_{y_0,y_1} \dots \rho_{y_{m-1}, y_m})^n$ is often very close to the identity. So we must have $\alpha(y)=\alpha(z)$.

As in the proof of \cref{lemma:mcmullen_realization}, we embed the graph $\Gamma$ associated to our Markov partition in $\widehat{M}$. The map $\pi_1(\Gamma) \to \pi_1(\widehat{M})$ allows to deduce from $\rho$ a representation $\rho_{\Gamma} : \pi_1(\Gamma) \to U(m)$. Now, if $w \in \Sigma_+$ is any word that starts with the vertex of $\Gamma$ used as a basepoint for $\pi_1(\Gamma)$, then $\overline{\alpha(w)}$ is fixed by the image of $\rho_{\Gamma}$. As noted in the proof of \cref{lemma:mcmullen_realization}, $\pi_1(\Gamma) \to \pi_1(\widehat{M})$ is a surjection. Therefore, $\overline{\alpha(w)}$ is a global fixed vector for $\rho:\pi_1(\widehat{M})\to U(m)$. This gives us a nontrivial element of $H^0(\widehat{M}; \rho)$. In turn, we have a surjection $\pi_1(\widehat{M})\to \pi_1(M)$. So if $\rho$ comes from a representation on $M$, then we have a global fixed vector for $\rho:\pi_1(M) \to U(m)$, and $H^0(M;\rho)\neq 0$.
\end{proof}

Let us prove now the spectral radius estimates that was delayed in the proof of \cref{lemma:denominator_non_zero}. We rely on a standard strategy, commonly used to prove that the pressure of the opposite unstable jacobian of a closed hyperbolic system is zero.

\begin{lemma}\label{lemma:spectral_radius_volume}
The spectral radius of the operator $L$ appearing in the proof of \cref{lemma:denominator_non_zero} is $1$.
\end{lemma}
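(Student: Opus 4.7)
My plan is to establish two-sided bounds $c \leq \|L^n \mathbf{1}\|_\infty \leq C$ with constants independent of $n$, which gives spectral radius $1$ via Gelfand's formula (the upper bound directly; the lower bound because $L$ is a positive operator on $\mathcal{F}^\beta(\Sigma^+)$ and $L^n\mathbf{1}$ cannot decay to zero in sup norm, hence not in the stronger $\mathcal{F}^\beta$-norm).

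First, I use the cohomology identity $\widetilde{J}^u = e^{-g_u + f_u\circ\sigma - f_u}$ to write
\begin{equation*}
L^n\mathbf{1}(x) = e^{-f_u(x)} \sum_{\sigma^n y = x} \frac{e^{f_u(y)}}{\prod_{k=0}^{n-1}\widetilde{J}^u(\sigma^k y)}.
\end{equation*}
Since $f_u$ is continuous on the compact space $\Sigma^+$, the factor $e^{f_u(y)-f_u(x)}$ is uniformly bounded above and below, so it suffices to two-sidedly bound $S_n(x) \coloneqq \sum_{\sigma^n y = x}\prod_{k=0}^{n-1}\widetilde{J}^u(\sigma^k y)^{-1}$. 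For each preimage $y$ with $\sigma^n y = x$, the word $w = (y_0,\dots,y_{n-1},x_0)$ is admissible, and a standard bounded distortion estimate (which holds because the pulled-back maps $T_{\theta,\vartheta}$ from \cref{subsection:symbolic_dynamics} are $C^\infty$ and uniformly cone-hyperbolic, so the logarithmic derivatives of $J^u$ along unstable leaves are uniformly H\"older and their sums telescope geometrically) implies that $\prod_{k=0}^{n-1}\widetilde{J}^u(\sigma^k y)$ is uniformly comparable to the unstable Jacobian $J^u_w$ of the composition $T_w = T_{w_{n-1},w_n}\circ\cdots\circ T_{w_0,w_1}$ evaluated anywhere in $R_w$. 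By the Markov property \ref{item:Markov_property} of \cref{definition:Markov_partition}, $T_w(R_w)$ is a substrip of $R_{x_0}$ that spans the unstable direction fully, so $\ell^u(T_w(R_w)) \asymp \ell^u(R_{x_0})$; combining with $\ell^u(T_w(R_w)) \asymp J^u_w\cdot \ell^u(R_w)$ gives $1/J^u_w \asymp \ell^u(R_w)/\ell^u(R_{x_0})$, and hence
\begin{equation*}
S_n(x) \asymp \frac{1}{\ell^u(R_{x_0})} \sum_{(w_0,\dots,w_{n-1}): (w_0,\dots,w_{n-1},x_0)\text{ admissible}} \ell^u(R_{w_0,\dots,w_{n-1},x_0}).
\end{equation*}

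For the upper bound, fix $w_0$ and $w_n = x_0$; then as $(w_1,\dots,w_{n-1})$ varies, the cylinders $R_w$ are pairwise disjoint s-subrectangles of $R_{w_0}$ with full stable extent, so their unstable widths sum to at most $\ell^u(R_{w_0})$. Summing over $w_0\in\Theta$ yields $\sum_{w: w_n=x_0}\ell^u(R_w) \leq L \coloneqq \sum_{w_0\in\Theta}\ell^u(R_{w_0})$, so $S_n(x) \leq L/\min_\theta \ell^u(R_\theta)$, uniform in $n$. For the lower bound, I sum over all admissible words of length $n+1$ without fixing $w_n$: by the partition property (the cylinders $R_{w_0,\dots,w_n}$ as $(w_1,\dots,w_n)$ ranges over all admissible continuations partition $R_{w_0}$ up to zero unstable measure), we have $\sum_{\text{admissible }w}\ell^u(R_w) = \sum_{w_0}\ell^u(R_{w_0}) = L$ exactly. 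By pigeonhole, there is some $x_0\in\Theta$ with $\sum_{w: w_n = x_0}\ell^u(R_w) \geq L/|\Theta|$, whence $S_n(x_0) \gtrsim L/(|\Theta|\cdot\max_\theta\ell^u(R_\theta)) > 0$ uniformly in $n$.

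The main technical point is verifying bounded distortion for the iterated maps $T_w$ near the singular rectangles, but this follows routinely once one works in the charts $\kappa_\theta$ of \cref{subsection:symbolic_dynamics}, since the $T_{\theta,\vartheta}$ are smooth there by construction and the hyperbolicity constants are uniform; the telescoping argument is classical and does not feel the singularities of the flow because the smooth maps $T_{\theta,\vartheta}$ on $\mathbb{R}^2$ already encode them away.
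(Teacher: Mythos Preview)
Your argument is correct and is essentially the same volume argument as the paper's: both rewrite $L^n\mathbf{1}$ via the cohomology identity, apply bounded distortion to compare the summands with unstable lengths of cylinder rectangles $R_w$, and then use the Markov partition structure (the cylinders with fixed initial symbol are disjoint $s$-subrectangles, while summing over all terminal symbols reconstitutes the full rectangle) to get uniform two-sided bounds. The one place you are slightly quicker than the paper is the passage from $\|L^n\mathbf{1}\|_\infty \leq C$ to $r \leq 1$: Gelfand's formula involves the $\mathcal{F}^\beta$ operator norm, not the sup norm, so ``directly'' is doing some work here --- the paper makes this step explicit by first invoking Ruelle--Perron--Frobenius to obtain a positive eigenfunction $h$ with $r^n h \leq C L^n \mathbf{1}$, which immediately forces $r \leq 1$ once $L^n\mathbf{1}$ is bounded.
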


\begin{proof}
It follows from the Ruelle--Perron--Frobenius Theorem \cite[Theorem 2.2.]{parry_pollicott_asterisque} that, if $r > 0$ denote the spectral radius of the operator $L$ above, then $r$ is a simple eigenvalue for $L$ associated to a positive eigenfunction $h$. Moreover, the associated left eigenvector is a measure. 

Since $h$ is continuous and positive, and $\Sigma_+$ is compact, there is $C > 0$ such that $C^{-1} < h < C$. It follows from the positivity of the transfer operator $L$ that
\begin{equation}\label{eq:reduction_to_one}
C^{-1} L^n 1(w) \leq r^n h(w) \leq C L^n 1 (w) \textup{ for every } w \in \Sigma_+.
\end{equation}

Let $w \in \Sigma_+$ and $n \geq 1$. For each $\theta \in \Theta$, choose a point $x_\theta \in R_\Theta$. For $(v_0,\dots,v_n) \in \Sigma_{n+1}$ with $v_n = w_0$, consider the piece of unstable manifold $W_{v_0,\dots,v_n} \coloneqq W_{R_{v_0}}^u(x_{v_0}) \cap R_{v_0,\dots,v_n}$. The change of variable formula and the Markov property yield
\begin{equation}\label{eq:change_of_variable}
|W_{v_0,\dots,v_n}| = \int_{W_{R_{v_n}}^u(T_{v_0,\dots,v_n} x_{v_0})} \frac{1}{J^u_{v_0,\dots,v_n} \circ T_{v_0,\dots,v_n}^{-1}} \mathrm{d}m,
\end{equation}
where $m$ denote the arclength metric. Write then $x_{v_0} = \pi_{\Sigma}(\tilde{w})$ with $\tilde{w}_0 = v_0$, and notice that $W_{v_0,\dots,v_n}$ is the image by $\pi_\Sigma$ of the sets of words of the form $z = \dots \tilde{w}_{-m} \dots \tilde{w}_{-1} v_0 v_1 \dots v_n \tilde{v}$ where $\tilde{v}$ is any word such that the result is in $\Sigma$. For such a $z$, we have
\begin{equation*}
S_n g_u(z) \coloneqq \sum_{k = 0}^{n-1} g_u(\sigma^k z) = f_u(\sigma^n(z)) - f_u(z) - \log J^u_{v_0,\dots,v_n}(\pi_{\Sigma}(z)).
\end{equation*}
Since $f_u$ is bounded, there is a constant $C_0 > 0$ (that does not depend on $z$, nor $n$, nor $w$), such that
\begin{equation*}
\frac{C_0^{-1}}{J^u_{v_0,\dots,v_n}(\pi_{\Sigma}(z))} \leq e^{S_n g_u(z)} \leq \frac{C_0}{J^u_{v_0,\dots,v_n}(\pi_{\Sigma}(z))}.
\end{equation*}
By a standard bounded distortion argument, we find that there is also $C_1 > 0$ such that
\begin{equation*}
C_1^{-1} \leq e^{S_n g_u(v_0\dots v_{n-1} w) - S_n g_u(z)} \leq C_1.
\end{equation*}
The last two estimates together give that for every $x \in W_{v_0,\dots,v_n}$, we have
\begin{equation*}
\frac{C_0^{-1} C_1^{-1}}{J^u_{v_0,\dots,v_n}(x)} \leq e^{S_n g_u(v_0 \dots v_{n-1} w)} \leq \frac{C_0 C_1}{J^u_{v_0,\dots,v_n}(x)} .
\end{equation*}
Hence, we deduce from \cref{eq:change_of_variable} that 
\begin{equation*}
\begin{split}
& \frac{C_0^{-1} C_1^{-1}}{|W_{R_{w_0}}^u(T_{v_0,\dots,v_n}(x_{v_0}))|} |W_{v_0,\dots,v_n}| \leq e^{S_n g_u(v_0 \dots v_{n-1} w)} \\ & \qquad \qquad \qquad \qquad \qquad \qquad \qquad \qquad \qquad \leq \frac{C_0 C_1}{|W_{R_{w_0}}^u(T_{v_0,\dots,v_n}(x_{v_0}))|} |W_{v_0,\dots,v_n}|.
\end{split}
\end{equation*}
Since the unstable manifolds are uniformly smooth, we know that the length $|W_{R_{w_0}}^u(T_{v_0,\dots,v_n}(x_{v_0}))|$ is uniformly bounded above and below. 

It follows then that there is another constant $C_2 > 0$ such that
\begin{equation}\label{eq:length_comparison}
C_2^{-1} \sum_{ \substack{y \in \Sigma \\ \sigma^n y = w} } |W_{y_0,\dots,y_n}| \leq L^n 1 (w) \leq C_2 \sum_{ \substack{y \in \Sigma \\ \sigma^n y = w} } |W_{y_0,\dots,y_n}|.
\end{equation}

Now, notice that
\begin{equation*}
\bigcup_{\substack{y \in \Sigma \\ \sigma^n y = w} }W_{y_0,\dots,y_n} \subseteq \bigcup_{\theta \in \Theta} W^u_{R_\theta} (x_\theta)
\end{equation*}
and thus
\begin{equation*}
\sum_{ \substack{y \in \Sigma \\ \sigma^n y = w} } |W_{y_0,\dots,y_n}| \leq \sum_{\theta \in \Theta} |W^u_{R_\theta}(x_\theta)|
\end{equation*}
is bounded above independently on $w$ and $n$. With \cref{eq:reduction_to_one}, this implies that $r \leq 1$ (choosing any value for $w$).

For each $\theta \in \Theta$, choose $w_\theta \in \Sigma_+$ that starts with $\theta$. It follows from \cref{eq:length_comparison} that
\begin{equation*}
\sum_{\theta \in \Theta} |W^s_{R_\theta}(x_\theta)| = \sum_{(v_0,\dots,v_n) \in \Sigma_{n+1}} |W_{v_0,\dots,v_n}| \leq C_2 \sum_{\theta \in  \Theta} L^n 1(w_\theta).
\end{equation*}
And since the left-hand side of this inequality does not depend on $n$, it follows from \cref{eq:reduction_to_one} that $r \geq 1$, and thus $r = 1$.
\end{proof}

Finally, let us end this section with a result dealing with the second condition, $d_2(0,1) \neq 0$, from \cref{lemma:zeta_Markov_at_zero}. We recall that $\varepsilon$ is our notation for the holonomy of the orientation bundle of $TM$.

\begin{lemma}\label{lemma:denominator_non_zero_two}
	Assume that $\rho$ is a unitary representation of $\pi_1(\widehat{M})$. If $d_2(0,1)$ is equal to zero, then $H^0(\widehat{M};\varepsilon \rho)\neq 0$. If $\rho$ extends to a representation of $M$, then $H^0(M;\varepsilon\rho) \simeq H_3(M;\rho)\neq 0$.
\end{lemma}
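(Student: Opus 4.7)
The plan is to mirror the proof of \cref{lemma:denominator_non_zero}, reversing the roles of the stable and unstable directions and picking up the orientation cocycle $\varepsilon$ along the way. The first task is to rewrite the flat trace. For $k=2$, one has $\tr(\Lambda^2 DT_w) = \det(DT_w)$, and in dimension $2$ the identity $\det(I - DT_w) = \det(DT_w)\cdot \det(I - DT_w^{-1})$ yields
\begin{equation*}
\frac{\det(DT_w)}{|\det(I - DT_w)|} = \frac{\mathrm{sign}(\det DT_w)}{|\det(I - DT_w^{-1})|} = \frac{\Delta^u_w \Delta^s_w}{|\det(I - DT_w^{-1})|}.
\end{equation*}
The product $\Delta^u_w \Delta^s_w$ is precisely $\varepsilon$ evaluated along the periodic orbit associated with $w$, because $TM = E^u \oplus E^s \oplus E^0$ with $E^0$ canonically oriented. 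Hyperbolicity gives $|\det(I - DT_w^{-1})|^{-1} = J^s_w(1 + O(\eta^n))$ for some $\eta \in (0,1)$, where $J^s_w$ is the stable Jacobian along $w$. Taking the $s=0$ specialization of $\widetilde{B}^s_{\theta,\vartheta}$ so that $\tr(\widetilde{B}^0_{w_0,\dots,w_n}(\pi_\Sigma w)) = \tr(\rho_w)$, we conclude
\begin{equation*}
\tr^\flat(\mathcal{L}_2^n) = \sum_{\sigma^n w = w} \varepsilon_w \tr(\rho_w) J^s_w(\pi_\Sigma(w))\,(1 + O(\eta^n)).
\end{equation*}

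Consequently, the holomorphic function
\begin{equation*}
\tilde d_2(z) = \exp\Bigl(-\sum_{n\geq 1} \frac{z^n}{n} \sum_{\sigma^n w = w} \varepsilon_w \tr(\rho_w) J^s_w(\pi_\Sigma(w))\Bigr)
\end{equation*}
extends holomorphically to $\{|z| < \eta^{-1}\}$ and shares its zeros there with $d_2(0,z)$; in particular $\tilde d_2(1) = 0$. Next, reverse time: by Livsic/Bowen, $\log \tilde J^s$ is cohomologous on $\Sigma$ to a H\"older function $g_s$ depending only on non-positive coordinates, so $\tilde d_2$ can be rewritten as the Fredholm determinant of a transfer operator $\widetilde{L}$ on the ``backward'' one-sided shift $\Sigma_-$ (equivalently, $\Sigma^+$ with the transposed adjacency matrix) acting on $\mathcal{F}^\beta(\Sigma_-, \mathbb{C}^m)$ by
\begin{equation*}
\widetilde{L} u(x) = \sum_{\tilde\sigma y = x} e^{g_s(y)}\, \varepsilon_{y_0,y_1}\, {}^{t}\!\rho_{y_0,y_1}\, u(y),
\end{equation*}
alongside its scalar counterpart $L$ with the $\varepsilon\, {}^t\rho$ factor removed. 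From here the argument is word-for-word that of \cref{lemma:denominator_non_zero}: a volume argument using lengths of stable manifolds (adapted from \cref{lemma:spectral_radius_volume}) gives $L$ spectral radius $1$; Ruelle--Perron--Frobenius produces a positive eigenfunction $h$ and a full-support invariant probability measure $\nu$ with $1$ simple; the vanishing $\tilde d_2(1) = 0$ forces $1$ to be an eigenvalue of $\widetilde{L}$ with eigenvector $u$; saturating the triangle inequality $\|u\| \leq L\|u\|$ against $\nu$ yields $u = h\alpha$ with $\alpha$ unit-valued and $\alpha(\tilde\sigma w) = \varepsilon_{w_0,w_1}\, {}^t\!\rho_{w_0,w_1}\, \alpha(w)$; compactness of $U(m)$ and density of periodic points force $\alpha$ to depend only on its initial coordinate, so via the embedding of the graph $\Gamma \hookrightarrow \widehat M$ used in \cref{lemma:mcmullen_realization}, the conjugate $\overline\alpha$ defines a nonzero element of $H^0(\widehat M; \varepsilon\rho)$.

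For the extension to $M$: if $\rho$ descends to $\pi_1(M)$ then so does $\varepsilon\rho$ (since $\varepsilon$ is defined on $\pi_1(M)$), and surjectivity of $\pi_1(\widehat M) \twoheadrightarrow \pi_1(M)$ shows the global fixed vector produced above is in fact $\pi_1(M)$-fixed, giving $H^0(M; \varepsilon\rho) \neq 0$. The remaining isomorphism $H^0(M;\varepsilon\rho) \simeq H_3(M;\rho)$ is Poincar\'e duality with twisted coefficients on a (possibly non-orientable) closed $3$-manifold: $H_k(M;\rho) \cong H^{3-k}(M; \rho \otimes \varepsilon)$, specialized to $k=3$. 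The main obstacle is the computation of Step 1 and the correct identification of the transfer-operator setup after time-reversal; everything downstream is a faithful transcription of the earlier argument with $\rho$ replaced by $\varepsilon\rho$ and $J^u$ replaced by $J^s$.
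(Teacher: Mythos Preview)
Your proposal is correct and follows essentially the same approach as the paper: both reduce $d_2(0,\cdot)$ to a determinant governed by $\varepsilon_w\,J^s_w$, reverse time to land on the one-sided shift for the inverse map, and then rerun the Ruelle--Perron--Frobenius argument of \cref{lemma:denominator_non_zero} with $\rho$ replaced by $\varepsilon\rho$, finishing with twisted Poincar\'e duality. The only cosmetic difference is your derivation of the key identity via $\det(I-DT_w)=\det(DT_w)\det(I-DT_w^{-1})$, whereas the paper first approximates $|\det(I-DT_w)|^{-1}$ by $1/J^u_w$ and then simplifies $\det(DT_w)/J^u_w = \varepsilon_w J^s_w$ directly.
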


\begin{proof}
Let us explain how one can adapt the proof \cref{lemma:denominator_non_zero} to this case. We also make a proof by contrapositive and assume that $d_2(0,1) = 0$. As in the proof of \cref{lemma:denominator_non_zero}, we see that this assumption implies that the holomorphic extension of
\begin{equation*}
\tilde{d}_2(z) = \exp\left( - \sum_{n = 1}^{+ \infty} \frac{1}{n} z^n  \sum_{\substack{w \in \Sigma \\ \sigma^n w = w}} \frac{\tr(\rho_{w_0,\dots,w_n}) \det D T_{w}(\pi_{\Sigma}(w))}{J^u_{w_0,\dots,w_n}(\pi_{\Sigma}(w))} \right)
\end{equation*}
has a zero at $z = 1$. Choose an orientation in the neighbourhood of each rectangle. Define $\orient_{\theta,\vartheta}=\pm 1$ depending on whether the orientations at $R_\theta$ and $R_\vartheta$ agree or disagree on their common flow box. Let us define for $(w_0,\dots,w_{n}) \in \Sigma_{n+1}$ the sign $\orient_{w_0,\dots,w_{n}} = \prod_{k = 0}^{n-1} \orient_{w_k,w_{k+1}}$. With this notation, we find that
\begin{equation*}
\tilde{d}_2(z) = \exp\left( - \sum_{n = 1}^{+ \infty} \frac{1}{n} z^n  \sum_{\substack{w \in \Sigma \\ \sigma^n w = w}} \tr(\rho_{w_0,\dots,w_n}) \orient_{w_0,\dots,w_n} J^s_{w_0,\dots,w_n}(\pi_{\Sigma}(w)) \right).
\end{equation*}
Here, we see that $\tilde{d}_2(z)$ looks like $\tilde{d}_0(z)$ except that we are working with the backward flow instead of the flow itself and that the representation $\rho$ is twisted by the signs $\orient_{\theta,\vartheta}$.

Hence, working as in the proof of \cref{lemma:denominator_non_zero}, we find that there is a map $\alpha: \Theta \to \C^m$ such that for any edge $(w_0, w_1)$ of $\Gamma$, we have
\begin{equation*}
	\alpha(w_0) = \overline{\rho}_{w_0,w_1}\orient_{w_0,w_1}\alpha(w_1).
\end{equation*}
Therefore, $\rho$ contains a 1-dimensional subrepresentation isomorphic to the orientation representation $\varepsilon$, and $H^0(\widehat{M};\varepsilon \rho)\neq 0$. Here, we are using again the fact that $\pi_1(\Gamma) \to \pi_1(\widehat{M})$ is surjective, as in the proof of \cref{lemma:mcmullen_realization}. If $\rho$ comes from a representation on $M$, then as in \cref{lemma:denominator_non_zero} we obtain a nontrivial element of $H^0(M; \varepsilon\rho)$. By twisted Poincar\'e duality, this gives a nontrivial element of $H_3(M;\rho)$.
\end{proof}

\section{Zeta functions and Reidemeister torsion}\label{section:rtorsion}

This final section is dedicated to the proof of Theorems \ref{theorem:fried_conjecture} and \ref{theorem:dirichletclassnumber}. We start by recalling some facts about the Reidemeister torsion in \cref{subsection:reidemeister_torsion}. Then, we state and prove \cref{theorem:fried_conjecture_general}, a generalization of \cref{theorem:fried_conjecture}, in \cref{subsection:fried_conjecture}. Finally, we prove \cref{theorem:dirichletclassnumber} in \cref{subsection:branched_covers}.

\subsection{Reidemeister torsion}\label{subsection:reidemeister_torsion}

In what follows, a volume form on a vector space over $\R$ or $\C$ is a nonzero element of the top exterior power, considered up to multiplication by unit norm scalars. A volume form on a zero-dimensional vector space is just a number.

Suppose that $C$ is a finite-dimensional chain complex over $\R$ or $\C$ with a volume form on each chain group as well as a volume form $\omega_i$ on $H_i(C)$ for each $i$. Make a choice of a volume form on $\ker(d_i)$ for each $i$. Combined with the volume form on $C_i$, this gives a volume form on $\image(d_i)$. In turn, this gives a volume form $\omega'_i$ on $H_i(C) = \ker(d_i)/\image(d_{i+1})$. Although $\omega_i'$ depends on the choice of volume form on $\ker(d_i)$, the product
\begin{equation}\label{equation:rt_formula}\tau(C)=\left| \prod_i \left(\frac{\omega_i}{\omega_i'}\right)^{(-1)^i} \right|\end{equation}
does not depend on this choice. We call $\tau(C)$ the Reidemeister torsion of $C$. 

Now suppose we have a topological space $M$ admitting a finite cell decomposition, along with a unimodular representation $\rho: \pi_1(M) \to \mathbb{C}^m$. Lift the cell decomposition of $M$ to its universal cover $M_{\textup{univ}}$ and let $C(M_{\textup{univ}})$ denote the associated chain complex. Define then the complex
\begin{equation*}
C(M; \rho)=C( M_{\textup{univ}}) \otimes_{\Z[\pi_1(M)]} \C^m,
\end{equation*}
where $\gamma\in \Z[\pi_1(M)]$ acts on the right on $c\in C(M_{\textup{univ}})$ by $c\cdot  \gamma = \gamma^{-1} \cdot c$ and $\Z[\pi_1(M)]$ acts on the left on $\C^m$ by $\rho$. The differential is defined to be zero on $\C^m$, so by the Leibniz rule, the boundary operator on $C(M; \rho)$ is the tensor product of the boundary operator on $C(M_{\textup{univ}})$ with the identity on $\mathbb{C}^m$. The homology $H_{*}(M;\rho)$ of the chain complex $C(M;\rho)$ is called the $\rho$-twisted homology of $M$.

Notice that one can choose a basis for $C(M;\rho)$ as follows. For each cell $c$ in the cell decomposition for $M$, choose a lift $\widetilde c$ in $\widetilde M_{\textup{univ}}$. Then as a vector space, $$C(M;\rho) = \bigoplus_c \widetilde c \otimes \C^m.$$ The boundary map is given using this basis by $$\partial (c \otimes v) = \sum_{\substack{d \text{ cell in } M\\ g\in \pi_1(M)}}\langle \partial \widetilde c, g\widetilde {d}\rangle d\otimes \rho(g)v,$$ where the scalar product $\langle \cdot,\cdot \rangle$ makes the cells of $M_{\textup{univ}}$ an orthonormal basis of $C(M_{\textup{univ}})$. This choice of basis for $C(M;\rho)$ and the unimodularity assumption on $\rho$ endow each chain group with a volume form.

If one chooses in addition a volume form $\omega_i$ on $H_i(M;\rho)$ for each $0\leq i \leq n$, then we may consider the Reidemeister torsion $\tau_\rho(M) = \tau(C(M;\rho))$ of the complex $C(M;\rho)$, which is called the $\rho$-twisted Reidemeister torsion of $M$. The most standard case is when $\rho$ is acyclic (meaning that $H_{*}(M;\rho)$ is trivial), in which case every homology group may be endowed with the trivial volume form $1$.

Reidemeister torsion satisfies the following properties:

\begin{enumerate}
	\item $\tau_{\rho}(M)$ does not depend on the choice of cell decomposition.
	\item $\tau_{\rho}(M)$ does not depend on the choice of invariant volume form for $\rho$.
	\item $\tau_{\rho}(M)$ is invariant under simple homotopy equivalences (ie sequences of elementary collapses or their inverses)
	\item (Short exact sequence) Suppose we have a short exact sequence $$0\to A \to B \to C \to 0$$ of chain complexes where $A_*$, $C_*$, $H_*(A)$, and $H_*(C)$ are equipped with volume forms. The volume forms on $A$ and $C$ induce a volume form on $B$. Call the volume forms on $H_*(A)$ and $H_*(C)$ $\omega^A_*$ and $\omega^C_*$ respectively. Let $\delta_i: H_i(C)\to H_{i-1}(A)$ be the connecting homomorphism in the long exact sequence on homology. Choose a volume form on $\ker(\delta)$. Then the long exact sequence induces a volume form $\omega^B_i$ on $H_i(B)$. Then $$\tau(B)=\tau(A)\tau(C).$$ In particular, $\tau(B)$ doesn't depend on the choice of volume form on $\ker(\delta)$.
	\item (Spectral sequence) Suppose we have a filtered complex each of whose chain groups is equipped with a volume form. Consider the associated spectral sequence $(E^r, \partial^r)$. After choosing a volume form on $\ker(\partial^r)$ for each $r$, we get an induced volume form on the chain groups in each page. Then, regardless of the choices made, the Reidemeister torsion of $C$ is equal to the Reidemeister torsion of $(E^\infty, \partial^\infty)$. This means that we can compute the Reidemeister torsion by keeping track of volume forms through the pages of the spectral sequence. See \cite[Section 1]{freed} for more discussion.
\end{enumerate}
See \cite{nicolaescu} for more background on Reidemeister torsion.

\begin{example}
	Let $M=S^1$ and let $A$ be the monodromy of $\rho$. If $A$ has no eigenvalue equal to 1, then $H_*(M;\rho)=0$. Choose $\omega_i=1$ in each dimension. Then $\tau_{\rho}(M) = \det(I-A)$.

	If instead $A$ has a 1-eigenspace of dimension $k$, then $H_0(M;\rho)\cong\R^k$ and $H_1(M;\rho)\cong \R^k$. We can write $A=I_k \oplus A'$, where $I_k$ is the identity on $\R^k$ and $A'$ has no 1 eigenvalue. For any choice of volume forms $\omega_0$, $\omega_1$ on $\R^k$, we have $\tau_{\rho}(M)=\frac{\omega_0}{\omega_1} \det(I-A')$.

\end{example}
\begin{example}\label{example:manifold_rtorsion}
	Let $M$ be a rational homology sphere and fix $\omega$ so that $\omega_3$ evaluates to 1 on the fundamental class, $\omega_0$ evaluates to 1 on the point class, and $\omega_1=\omega_2=1$. Let $\rho$ be the trivial representation. Then $\tau_{\rho}(M)=|H_1(M,\Z)|^{-1}$. To see this, choose a cell decomposition $C_*$ of $M$ with a single vertex and a single 3-cell. (Such a cell decomposition always exists; start with a triangulation, delete 2-dimensional faces until there is only one 3-cell, and then contract a spanning tree for the 1-skeleton to a single vertex.) Then we have $\ker(d_0) = C_0$, $\ker(d_1)=C_1$, $\ker(d_2)=0$, and $\ker(d_3)=C_3$. Choose for $\ker(d_2)$ the trivial volume form and choose for $\ker(d_0)$, $\ker(d_1)$, and $\ker(d_2)$ the volume form agreeing with the one from the cellular basis on $C_0, C_1, C_3$. Then the induced volume forms on $H_*(M,\mathbb R)$ are  $\omega_0'=\omega_0$, $\omega_2'=1$, and $\omega_3'=\omega_3$. Finally, $\omega_1'$ is the covolume of the lattice $\image(d_2)$ in $\C_1$, which is exactly $|H_1(M,\Z)|$. So \cref{equation:rt_formula} evaluates to $|H_1(M,\mathbb Z)|^{-1}$.
\end{example}

\begin{remark}
	If $\det \rho$ takes values in a subgroup $G$ of the unit circle in $\C$, then all the volume forms are well defined up to multiplication by elements of $G$ and $\pm 1$. So the Reidemeister torsion can be refined to an invariant in $\C/\langle G \cup \set{\pm 1} \rangle$.
\end{remark}

\subsection{Fried's conjecture}\label{subsection:fried_conjecture}
The goal of this section is to prove the following more general version of \cref{theorem:fried_conjecture}.

\begin{theorem}\label{theorem:fried_conjecture_general}
Let $\varphi = (\varphi_t)_{t \in \mathbb{R}}$ be a transitive smooth pseudo-Anosov flow on a $3$-dimensional closed manifold. Let $\rho$ be an acyclic unitary representation of the fundamental group of $M$. Assume that:
\begin{enumerate}[label=(\roman*)]
\item for each singular orbit $\gamma$ of $\varphi$, either $1$ or $\varepsilon_\gamma$ is not an eigenvalue for $\rho(\gamma)$;\label{item:monodromy_singular}
\item there is $g \in \pi_1(M)$ such that $1$ is not an eigenvalue for $\rho(g)$;\label{item:gammasnotempty}
\item there is $g \in \pi_1(M)$ such that $\varepsilon_g$ is not an eigenvalue for $\rho(g)$.\label{item:gammaunotempty}
\end{enumerate}
Then
\begin{equation*}
|\zeta_{\varphi,\rho}(0)|^{-1} = \tau_\rho(M).
\end{equation*}
\end{theorem}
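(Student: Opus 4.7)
The strategy is to combine \cref{eq:Markov_to_standard} with \cref{lemma:zeta_Markov_at_zero} to give a combinatorial formula for $\zeta_{\varphi,\rho}(0)$ in terms of the Markov data, and then to recognize this combinatorial quantity as the Reidemeister torsion of a CW decomposition of $M$ built from the Markov partition, following Sanchez-Morgado \cite{morgado96}.

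First I would choose $\Gamma^u$ and $\Gamma^s$ carefully so that the correction factors $\prod_{\gamma \in \Gamma^s} \xi_{\varphi,\rho,\gamma}(0)$ and $\prod_{\gamma \in \Gamma^u} \xi_{\varphi,\varepsilon\rho,\gamma}(0)$ are finite and nonzero. For each singular orbit $\gamma$, assumption \ref{item:monodromy_singular} guarantees that we can assign it either to $\Gamma^u$ (when $1$ is not an eigenvalue of $\rho(\gamma)$, so $\xi_{\varphi,\varepsilon\rho,\gamma}(0)$ makes sense) or to $\Gamma^s$ (when $\varepsilon_\gamma$ is not an eigenvalue of $\rho(\gamma)$, so $\xi_{\varphi,\rho,\gamma}(0) = \det(I-\rho(\gamma)) \neq 0$). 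Next, using assumptions \ref{item:gammasnotempty} and \ref{item:gammaunotempty} together with the McMullen realization lemma (\cref{lemma:mcmullen_realization}) applied to the compact group $U(m)$ in which $\rho$ takes values, I would enlarge $\Gamma^s$ with a primitive orbit $\gamma$ on which $\rho(\gamma)$ is close to $\rho(g)$ and so avoids $1$ as an eigenvalue, and similarly enlarge $\Gamma^u$ so as to include an orbit on which $\varepsilon_\gamma \cdot \rho(\gamma)$ avoids $1$ as an eigenvalue. This guarantees $\Gamma^s,\Gamma^u$ are nonempty, disjoint, and every term in the correction factor is nonzero at $s=0$.

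Given such $\Gamma^u,\Gamma^s$, I build a Markov partition $(\widetilde R_\theta)_{\theta \in \Theta}$ as in \cref{proposition:existence_Markov}. Since $\rho$ is acyclic, \cref{lemma:denominator_non_zero} gives $d_0(0,1)\neq 0$, and \cref{lemma:denominator_non_zero_two} combined with twisted Poincaré duality $H^0(M;\varepsilon\rho) \simeq H_3(M;\rho) = 0$ gives $d_2(0,1)\neq 0$. Hence \cref{lemma:zeta_Markov_at_zero} applies and
\begin{equation*}
\zeta_\rho^{\textup{Markov}}(0) = \det(I - H),
\end{equation*}
where $H$ is the signed combinatorial transfer matrix indexed by rectangles. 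Combining with \cref{eq:Markov_to_standard} I obtain
\begin{equation*}
|\zeta_{\varphi,\rho}(0)| = \frac{|\det(I-H)|}{\left|\prod_{\gamma\in \Gamma^s} \det(I - \rho(\gamma))\right| \cdot \left|\prod_{\gamma \in \Gamma^u} \det(I - \varepsilon_\gamma \rho(\gamma))\right|}.
\end{equation*}

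The remaining and main step is to identify the right-hand side with $\tau_\rho(M)^{-1}$. The plan is to build a CW decomposition of $M$ whose cells are organized by the Markov partition: each rectangle $\widetilde R_\theta$ contributes a $2$-cell (its interior as a piece of a local transversal), each flow box between rectangles contributes a $3$-cell, while the stable/unstable boundaries of the rectangles, together with the orbits in $\Gamma^u \cup \Gamma^s$, give the $0$- and $1$-skeleton. The twisted cellular chain complex $C(M;\rho)$ then splits naturally into subcomplexes: the sub-complex generated by the interiors of the rectangles and the $3$-cells has boundary map given essentially by $I-H$ (the Markov transition signed by $\Delta_{\theta,\vartheta}\rho_{\theta,\vartheta}$), while the one-dimensional subcomplex supported on the orbits of $\Gamma^s$ contributes $\prod_{\gamma \in \Gamma^s} \det(I-\rho(\gamma))$ via the standard $S^1$-computation and its transverse Poincaré-dual copy on $\Gamma^u$ contributes $\prod_{\gamma \in \Gamma^u} \det(I-\varepsilon_\gamma\rho(\gamma))$. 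Using the multiplicativity of Reidemeister torsion under short exact sequences (together with the acyclicity of $C(M;\rho)$ to arrange all ``choice of volume form'' ambiguities to cancel) I would assemble these pieces and conclude
\begin{equation*}
\tau_\rho(M) = \frac{|\det(I-H)|^{-1}\cdot \left|\prod_{\gamma\in\Gamma^s}\det(I-\rho(\gamma))\right| \cdot \left|\prod_{\gamma\in\Gamma^u}\det(I-\varepsilon_\gamma\rho(\gamma))\right|}{1} = |\zeta_{\varphi,\rho}(0)|^{-1}.
\end{equation*}

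The main obstacle will be the construction of the CW decomposition near the singular orbits and the matching of Reidemeister torsion computations through the short exact sequences that isolate the contributions of $\Gamma^u, \Gamma^s$. In particular, a singular orbit $\gamma$ with $n$ prongs contributes a non-standard local structure (the $n$ half weak-(un)stable leaves meet at $\gamma$), and the invertibility of $I-\rho(\gamma)$ (or $I - \varepsilon_\gamma\rho(\gamma)$) guaranteed by \ref{item:monodromy_singular} is exactly what makes the corresponding local twisted complex acyclic so that the short exact sequence argument can be run. Checking signs, orientations, and that the local model of \cref{definition:standard_fixed_point} does not introduce extraneous torsion factors will be the delicate bookkeeping that makes this step nontrivial; the analogue in the Anosov case is already carried out in \cite{morgado96} and the pseudo-Anosov refinement should follow the same template with the added care required by the singular prongs.
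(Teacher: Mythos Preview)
Your overall strategy matches the paper's exactly: choose $\Gamma^u,\Gamma^s$ using the monodromy conditions and \cref{lemma:mcmullen_realization}, verify the hypotheses of \cref{lemma:zeta_Markov_at_zero} via \cref{lemma:denominator_non_zero} and \cref{lemma:denominator_non_zero_two}, and then identify the combinatorial expression with $\tau_\rho(M)$ via a cell decomposition organized by the Markov partition (this is \cref{prop:rtorsionformula} in the paper).

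Two points deserve comment. First, you have the assignment of singular orbits to $\Gamma^u$ versus $\Gamma^s$ swapped. For $\gamma\in\Gamma^s$ the correction factor is $\xi_{\varphi,\rho,\gamma}(0)=\det(I-\rho(\gamma))$, so one needs $1\notin\mathrm{spec}\,\rho(\gamma)$; for $\gamma\in\Gamma^u$ the factor is $\xi_{\varphi,\varepsilon\rho,\gamma}(0)=\det(I-\varepsilon_\gamma\rho(\gamma))$, so one needs $\varepsilon_\gamma\notin\mathrm{spec}\,\rho(\gamma)$. Your parentheticals have these reversed.

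Second, and more substantively, the cell decomposition you sketch (rectangles as $2$-cells, flow boxes as $3$-cells) is not the one that makes the computation go through cleanly. The boundary of a flow-box $3$-cell involves not only the top and bottom rectangles but also four side faces in weak stable and unstable leaves, so $I-H$ does not appear directly as a boundary map. The paper instead cuts $M$ along the weak-unstable faces $X_\gamma^u$ (removing tubular neighbourhoods of $\Gamma^u$) and then \emph{collapses in the stable direction} onto a branched surface $B$. In this $2$-complex each rectangle becomes a horizontal \emph{edge} $e_\theta^{hor}$, the union of flow boxes leaving $\widetilde R_\theta$ becomes a single $2$-cell $f_\theta$, and the vertical $1$-skeleton deformation retracts onto $\bigsqcup_{\gamma\in\Gamma^s}\gamma$. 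One then glues back a $(p_\gamma,v_\gamma)$ pair for each $\gamma\in\Gamma^u$. This produces a three-step filtration whose graded pieces are exactly $\bigoplus_{\gamma\in\Gamma^u}(\mathbb{C}^m\xrightarrow{I-\varepsilon_\gamma\rho(\gamma)}\mathbb{C}^m)$, $(\mathbb{C}^m)^\Theta\xrightarrow{I-H}(\mathbb{C}^m)^\Theta$, and $\bigoplus_{\gamma\in\Gamma^s}(\mathbb{C}^m\xrightarrow{I-\rho(\gamma)}\mathbb{C}^m)$, from which the torsion formula follows by multiplicativity. The singular orbits require no special bookkeeping here: they sit in $\Gamma^u\cup\Gamma^s$ and contribute exactly like regular orbits in the filtration; the half-leaf combinatorics was already absorbed into the Markov partition and the correction factors in \cref{proposition:explicit_zeta_Markov3}.
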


Consequently, let $\varphi$ be a transitive smooth pseudo-Anosov flow on a closed $3$-manifold $M$ and $\rho$ be a unitary acyclic representation on $M$ satisfying the assumptions from \cref{theorem:fried_conjecture_general}.

We will be working with a Markov partition as in \cref{section:Markov_partition}. We will impose some additional conditions on the sets of primitive periodic orbits $\Gamma^u$ and $\Gamma^s$ used to construct this Markov partition. We want $\Gamma^u$ and $\Gamma^s$ to be disjoint and that:
\begin{itemize}
\item for every $\gamma \in \Gamma^s$ the number $1$ is not an eigenvalue of $\rho(\gamma)$;
\item for every $\gamma \in \Gamma^u$ the number $\varepsilon_\gamma$ is not an eigenvalue of $\rho(\gamma)$.
\end{itemize}
Recall that for the construction of a Markov partition we also need that $\Gamma^u$ and $\Gamma^s$ are not empty and that $\Gamma^u \cup \Gamma^s$ contains all the singular orbits of $\varphi$. The assumption \ref{item:monodromy_singular} in \cref{theorem:fried_conjecture_general} ensures that each singular orbit for $\varphi$ may be put in $\Gamma^u$ or $\Gamma^s$. Moreover, assumption \ref{item:gammasnotempty} and \cref{lemma:mcmullen_realization} imply that $\varphi$ has infinitely many periodic orbits $\gamma$ such that $1$ is not an eigenvalue of $\rho(\gamma)$. Hence, if we did not put already a singular orbit in $\Gamma^s$, we can always find another periodic orbit to put in it. Similarly, using \ref{item:gammaunotempty} and \cref{lemma:mcmullen_realization}, we ensure that $\Gamma^u$ is not empty.

Let us now fix a Markov partition for $\varphi$ constructed using the specific $\Gamma^u$ and $\Gamma^s$ we just chose (see \cref{proposition:existence_Markov}). We will be using notations from \cref{section:zeta_continuation}. We are now able to express Reidemeister torsion in terms of our Markov partition. We also prove another formula that will be needed for the proof of \cref{theorem:dirichletclassnumber}.

	\begin{proposition}\label{prop:rtorsionformula}
		We have $$\tau_{\rho}(M)=\left|\det(I-H)^{-1} \prod_{\gamma\in \Gamma^s} \det(I-\rho(\gamma))\prod_{\gamma\in \Gamma^u}\det(I-\varepsilon_{\gamma}\rho(\gamma))\right|.$$
		where $H$ is the matrix from \cref{lemma:zeta_Markov_at_zero}. Similarly, if $\rho$ is an acyclic representation of $\pi_1(M\setminus \bigcup \Gamma^u)$, then
		$$\tau_\rho(M\setminus \bigcup \Gamma^u) = \left| \det(I-H)^{-1} \prod_{\gamma\in \Gamma^s} \det(I-\rho(\gamma)) \right|.$$
	\end{proposition}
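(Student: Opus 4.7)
The plan is to realize $M$, up to simple homotopy equivalence, as a kind of mapping torus for the first return map $T$ acting on $\Omega = \bigsqcup_{\theta \in \Theta} \widetilde{R}_\theta$, and then use the multiplicativity of the Reidemeister torsion under short exact sequences to break the computation into an interior piece (which produces $|\det(I-H)^{-1}|$) and contributions of tubular neighborhoods of the orbits in $\Gamma^u \cup \Gamma^s$ (which produce the factors $\det(I-\rho(\gamma))$ and $\det(I-\varepsilon_\gamma \rho(\gamma))$). Because the rectangles $\widetilde R_\theta$ are contractible, the twisted chain complex of $\Omega$ is concentrated in degree zero and equal to $\bigoplus_\theta \C^m$; the matrix $H$ of \cref{lemma:zeta_Markov_at_zero} is precisely the chain-level action of $T$ once the orientation of the unstable direction ($\Delta_{\theta,\vartheta}$) and the representation ($\rho_{\theta,\vartheta}$) are taken into account.

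First I would build an explicit CW-type decomposition of $M$ adapted to the Markov partition: $3$-cells are the flow boxes $U_{\theta,\vartheta}=\{\varphi_t(x): x\in \widetilde R_{\theta,\vartheta},\ 0\le t\le \mathfrak{r}_{\theta,\vartheta}(\kappa_\theta^{-1} x)\}$, $2$-cells are the rectangles $\widetilde{R}_\theta$, and the lower-dimensional cells come from the stable and unstable boundaries of the rectangles, which by \cref{proposition:existence_Markov} lie on the weak stable and weak unstable manifolds of orbits in $\Gamma^s$ and $\Gamma^u$ respectively. Around a singular orbit the decomposition must be set up via the half-space parametrizations of \cref{subsection:local_model}, and the Markov property \ref{item:Markov_property} together with the connectedness condition \ref{item:stay_connected} of \cref{definition:Markov_partition} guarantee that the cells glue coherently into a CW decomposition of $M$.

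Next I would apply the short-exact-sequence property of torsion to the pair $(M,N)$ where $N$ is a small open tubular neighborhood of $\bigcup_{\gamma\in \Gamma^u\cup \Gamma^s}\gamma$. The complex of $N$ splits as a product over the orbits: around $\gamma\in \Gamma^s$, the neighborhood deformation retracts onto a core circle (the prong structure of a pseudo-hyperbolic singular orbit still collapses to this circle), yielding the twisted torsion $|\det(I-\rho(\gamma))|$; for $\gamma\in \Gamma^u$ the natural cellular orientation of the neighborhood also picks up the sign of the unstable bundle, giving $|\det(I-\varepsilon_\gamma\rho(\gamma))|$. Hypothesis \ref{item:monodromy_singular} of \cref{theorem:fried_conjecture_general} and the choices of $\Gamma^u$ and $\Gamma^s$ made at the start of \cref{subsection:fried_conjecture} ensure all these determinants are nonzero, so the torsion is well defined. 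After collapsing contractible pieces, the relative complex $C(M,N;\rho)$ is simple-homotopy equivalent to a two-term complex
\begin{equation*}
0 \longrightarrow \bigoplus_{\theta \in \Theta} \C^m \xrightarrow{\;I-H\;} \bigoplus_{\theta \in \Theta} \C^m \longrightarrow 0,
\end{equation*}
one copy of $\C^m$ per rectangle encoding the $3$-cells and one encoding the $2$-cells, with differential encoding the difference between the identity attachment and the attachment through the return map. Its torsion is $|\det(I-H)^{-1}|$, and multiplicativity yields the first formula.

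The main obstacle will be the careful bookkeeping of orientations around singular orbits, and in particular verifying that the sign $\Delta_{\theta,\vartheta}$ built into $H$ coincides with the sign produced by the cellular boundary map on $C(M,N;\rho)$, and that the ``solid torus'' computation for a pseudo-hyperbolic orbit indeed produces $\det(I-\rho(\gamma))$ (for $\gamma\in \Gamma^s$) or $\det(I-\varepsilon_\gamma \rho(\gamma))$ (for $\gamma \in \Gamma^u$) despite the presence of multiple prongs. The local models of \cref{subsection:local_model} make this tractable, and the disjointness of $\Gamma^u$ and $\Gamma^s$ prevents sign ambiguities at rectangle corners. The second identity follows from the same argument applied to the open manifold $M\setminus \bigcup \Gamma^u$: removing the tubular neighborhoods of the $\Gamma^u$ orbits eliminates their contribution while leaving the $\Gamma^s$ contribution and the $|\det(I-H)^{-1}|$ term intact.
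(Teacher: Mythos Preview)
Your overall strategy—build a CW structure from the Markov flow boxes and use multiplicativity of torsion—is the same as the paper's, and your identification of the middle piece with $I-H$ is correct in spirit. But there is a genuine gap in how you handle the $\Gamma^u$ and $\Gamma^s$ contributions, and it is not just bookkeeping.

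You treat $\Gamma^u$ and $\Gamma^s$ symmetrically, removing tubular neighborhoods of both and claiming that the neighborhood of $\gamma\in\Gamma^s$ contributes $|\det(I-\rho(\gamma))|$ while the neighborhood of $\gamma\in\Gamma^u$ contributes $|\det(I-\varepsilon_\gamma\rho(\gamma))|$. This cannot be right: a tubular neighborhood of a closed curve has the homotopy type of a circle and its twisted torsion is $|\det(I-\rho(\gamma))|$ in both cases; there is no ``unstable bundle sign'' intrinsic to the neighborhood. The asymmetry in the formula reflects an asymmetry in the construction that your symmetric excision does not see. In the paper the roles of $\Gamma^u$ and $\Gamma^s$ are deliberately different: one first cuts $M$ open along the weak \emph{unstable} faces (removing neighborhoods of $\Gamma^u$), then collapses the result along the \emph{stable} direction onto a branched surface $B$. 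This produces a three–step filtration whose bottom level is a $1$–complex that deformation retracts onto the $\Gamma^s$ circles (hence $\det(I-\rho(\gamma))$), whose middle level is exactly $(\C^m)^\Theta\xrightarrow{I-H}(\C^m)^\Theta$, and whose top level consists of the $2$– and $3$–cells $p_\gamma,v_\gamma$ used to \emph{refill} the solid tori over $\Gamma^u$. The factor $\varepsilon_\gamma$ appears because the $3$–cell $v_\gamma$ attaches to $p_\gamma$ by $I-\varepsilon_\gamma$, reflecting the (non)orientability of the normal bundle; it is a feature of filling, not of the neighborhood's homotopy type.

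A second, related issue is your claim that $C(M,N;\rho)$ is simple-homotopy equivalent to the two-term complex $\bigoplus_\theta\C^m\xrightarrow{I-H}\bigoplus_\theta\C^m$. After collapsing, the branched surface $B$ has $|\Theta|$ two-cells, $3|\Theta|$ one-cells, and $2|\Theta|$ zero-cells; the reduction to $I-H$ only appears at the middle associated graded of the filtration, and the bottom level is precisely where the $\Gamma^s$ factors emerge. So the $\Gamma^s$ contribution does not come from excised tubes but from the $1$–skeleton left after collapsing. Once you adopt this asymmetric ``collapse stable / refill unstable'' scheme, the signs and the second formula for $M\setminus\bigcup\Gamma^u$ fall out by simply omitting the top filtration level.
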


	\begin{proof}
	For $\gamma \in \Gamma^s$, let $X_\gamma^s$ be the union of those vertical faces of flow boxes which are in the weak stable leaves of $\gamma$. Similarly define $X_\gamma^u$ as the union of faces in the weak unstable leaves of some $\gamma\in \Gamma^u$. By the Markov property, $X_\gamma^s$ is closed under forward flow of $\phi$. Since $X_\gamma^s$ is compact, flowing it forward by $\phi$ for a sufficiently long time brings $X_\gamma^s$ into a neighbourhood of $\gamma$. Therefore, $X_\gamma^s$ deformation retracts onto $\gamma$. See \cref{fig:stableface}. Similarly, $X^u_\gamma$ deformation retracts onto $\gamma$ for $\gamma \in \Gamma^u$.

	\begin{figure}[!b]
		\centering
	\def\svgwidth{1.75in}
		\begingroup%
  \makeatletter%
  \providecommand\color[2][]{%
    \errmessage{(Inkscape) Color is used for the text in Inkscape, but the package 'color.sty' is not loaded}%
    \renewcommand\color[2][]{}%
  }%
  \providecommand\transparent[1]{%
    \errmessage{(Inkscape) Transparency is used (non-zero) for the text in Inkscape, but the package 'transparent.sty' is not loaded}%
    \renewcommand\transparent[1]{}%
  }%
  \providecommand\rotatebox[2]{#2}%
  \newcommand*\fsize{\dimexpr\f@size pt\relax}%
  \newcommand*\lineheight[1]{\fontsize{\fsize}{#1\fsize}\selectfont}%
  \ifx\svgwidth\undefined%
    \setlength{\unitlength}{131.14518929bp}%
    \ifx\svgscale\undefined%
      \relax%
    \else%
      \setlength{\unitlength}{\unitlength * \real{\svgscale}}%
    \fi%
  \else%
    \setlength{\unitlength}{\svgwidth}%
  \fi%
  \global\let\svgwidth\undefined%
  \global\let\svgscale\undefined%
  \makeatother%
  \begin{picture}(1,1.26449505)%
    \lineheight{1}%
    \setlength\tabcolsep{0pt}%
    \put(0,0){\includegraphics[width=\unitlength,page=1]{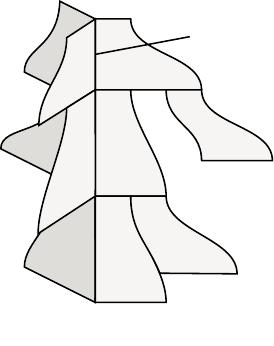}}%
    \put(0.70711563,1.11033516){\color[rgb]{0,0,0}\makebox(0,0)[lt]{\lineheight{1.25}\smash{\begin{tabular}[t]{l}$\gamma_i$\end{tabular}}}}%
    \put(0,0){\includegraphics[width=\unitlength,page=2]{stableface.pdf}}%
  \end{picture}%
\endgroup%

		\caption{A picture of $X_\gamma^s$, the union of the flow box faces on the stable leaves containing $\gamma$}\label{fig:stableface}
	\end{figure}

	Now cut open $M$ along $\bigcup_{\gamma \in \Gamma^u} X_\gamma^u$. Topologically, this is the same as deleting a tubular neighbourhood of the elements of $\Gamma^u$. Collapse the resulting manifold in the stable direction; this gives a deformation retraction to a branched surface $B\subset M$ which carries the weak unstable foliation. Following Sanchez-Morgado, this 2-complex has a cell decomposition of the form

	\begin{equation}
		0 \to \Z^\Theta \xrightarrow{\partial_1} \Z^\Theta \oplus \Z^{2\Theta} \xrightarrow{\partial_0} \Z^{2\Theta} \to 0\\
	\end{equation}

	To see this, we write the complex in terms of the generators shown in \cref{fig:complex}. Each rectangle $\widetilde{R}_\theta$ of our Markov partition collapses to an edge $e_\theta^{hor}$ with endpoints named $v_\theta^0$ and $v_\theta^1$. It is unimportant which endpoint is named $v_\theta^0$ and which is $v_\theta^1$. The union of the flowboxes leaving $\widetilde{R}_\theta$ collapses to a 2-cell which we call $f_\theta$. Finally, $e_\theta^{vert,0}$ and $e_\theta^{vert,1}$ are edges that connect from $v_\theta^0$ and $v_\theta^1$ respectively to the next vertex on the collapse of the weak stable face of a flow box leaving $\widetilde{R}_\theta$.

	\begin{equation} \label{eqn:2stepcomplex}
\begin{tikzcd}
	\bigoplus_{\theta \in \Theta} f_\theta \arrow[r, "{\partial_{1,1}}"] \arrow[rd, "{\partial_{1,0}}"] & \bigoplus_{\theta \in \Theta} e_{\theta}^{hor} \arrow[rd, "{\partial_{0,1}}"] \arrow[d, "\oplus", phantom] &                                                          \\
																										& {\bigoplus_{\theta \in \Theta, i\in \{0,1\}} e^{vert,i}_\theta} \arrow[r, "{\partial_{0,0}}"]              & {\bigoplus_{\theta \in \Theta, i\in \{0,1\}} v_\theta^i}
	\end{tikzcd}
\end{equation}

	\begin{figure}[!b]
		\centering
	\def\svgwidth{0.9\linewidth}
		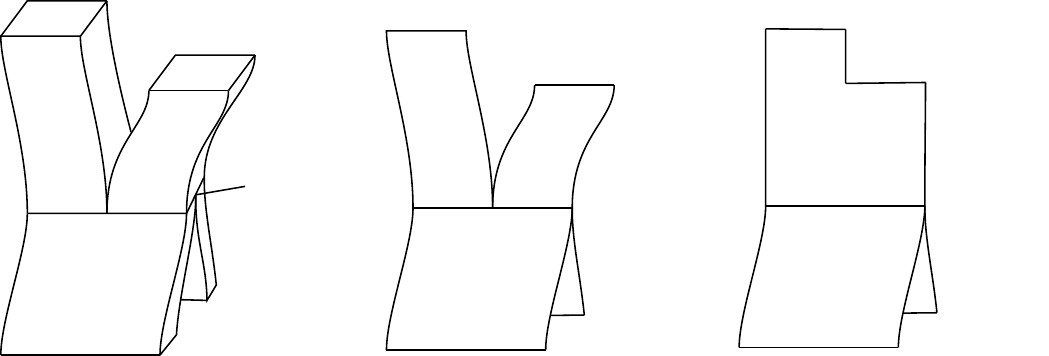
		\caption{Collapsing along the stable direction to get a 2-complex. In the picture on the left, we have drawn a rectangle $\widetilde{R}_\theta$ and separated the flow boxes entering or leaving $\widetilde{R}_\theta$.}\label{fig:complex}
	\end{figure}

	The chain complex is drawn to emphasize a natural 2-step filtration. To get a cell decomposition for $M$, we need to fill in a solid torus (or a twisted disk bundle over $S^1$ in the non-orientable case) for each $\gamma \in \Gamma^u$. This can be accomplished with a 2-cell $p_\gamma$ and a 3-cell $v_\gamma$ for each $\gamma \in \Gamma^u$. The resulting complex $C$ has a 3-step filtration:
\begin{equation}\label{eqn:3stepcomplex}
\begin{tikzcd}
	\bigoplus_{\gamma \in \Gamma^u}v_\gamma \arrow[r, "{\partial_{2,0}}"] \arrow[rd] & \bigoplus_{\gamma\in \Gamma^u} p_\gamma \arrow[rd] \arrow[rdd] \arrow[d, "\oplus", phantom] &                                                                                               &                                                          \\
																					 & \bigoplus_{\theta \in \Theta} f_\theta \arrow[r, "{\partial_{1,0}}"] \arrow[rd]             & \bigoplus_{\theta \in \Theta} e_{\theta}^{hor} \arrow[rd] \arrow[d, "\oplus", phantom]        &                                                          \\
																					 &                                                                                             & {\bigoplus_{\theta \in \Theta, i\in \{0,1\}} e^{vert,i}_\theta} \arrow[r, "{\partial_{0,0}}"] & {\bigoplus_{\theta \in \Theta, i\in \{0,1\}} v_\theta^i}
	\end{tikzcd}
	\end{equation}

	Now, using the cell decomposition associated to the chain complex \cref{eqn:3stepcomplex}, we constuct the twisted chain complex $C(M;\rho)$ of $M$ as in the beginning of the current section. The resulting complex is of the form
	\begin{equation}\label{eqn:twisted_complex}
		0 \to \C^{\Gamma^u} \otimes \W \xrightarrow{\partial_2} (\C^{\Gamma^u} \oplus \C^\Theta) \otimes \W \xrightarrow{\partial_1} (\C^\Theta \oplus \C^{2\Theta}) \otimes \W \xrightarrow{\partial_0} \C^{2\Theta} \otimes \W \to 0
	\end{equation}

The twisted chain complex \cref{eqn:twisted_complex} admits a 3-step filtration similar to \cref{eqn:3stepcomplex}. Let us call $F_1,F_2,F_3$ the levels of this filtration. At the top filtration level, using a suitable choice of $\widetilde{v}_\gamma$ and $\widetilde{p}_\gamma$ in the description above, we find that the complex $F_3/F_2$ is isomorphic to $$\bigoplus_{\gamma \in \Gamma^u}\left(\W \xrightarrow{I-\varepsilon_{\gamma}\rho(\gamma)} \W\right).$$ The term $\varepsilon_{\gamma}$ appears because the cell $v_\gamma$ attaches to $p_\gamma$ by the map $I - \varepsilon_{\gamma}$. Recall that we imposed that for every $\gamma \in \Gamma^u$ the number $\varepsilon_\gamma$ is not an eigenvalue of $\rho(\gamma)$. Hence, the complex $F_3/F_2$ is acyclic, with torsion $\prod_{\gamma \in \Gamma^u} | \det(I - \varepsilon_\gamma \rho(\gamma)) |$.

	Continuing to the middle level, it will be convenient to make a special choice of lifts which is compatible with the choices made in \cref{subsection:zeta_functions}. Recall that we chose a basepoint $x_0$ and for each $\theta$ a path $c_\theta$ from $x_0$ to $\widetilde{R}_\theta$. This gives us a canonical homotopy class of path from $x_0$ to $e_{\theta}^{hor}$, $v_\theta^{0}$, $v_\theta^{1}$, and $f_\theta$ as well. Choose once and for all a lift of $x_0$ to $M_{\textup{univ}}$, and then choose the lifts $\widetilde{e}_{\theta}^{hor}$, $\widetilde{v}_\theta^{0}$, $\widetilde{v}_\theta^{1}$, and $\widetilde{f}_\theta$ by concatenating with the lift of $c_\theta$. With this choice, $$\partial_{1,0} f_\theta = e_\theta^{hor} - \sum_{\vartheta\in \Theta: A_{\theta, \vartheta}=1} \Delta_{\theta,\vartheta} \rho_{\theta, \vartheta}.$$ Therefore, the complex $F_2/F_1$ can be written as $$(\W)^\Theta \xrightarrow{I-H} (\W)^\Theta$$ where $H$ is the $\Delta \rho$ twisted, directed adjacency matrix of the Markov partition (as in \cref{lemma:zeta_Markov_at_zero}). Hence, $F_2/F_1$ is acyclic if and only if $1$ is not an eigenvalue of $H$, in which case its torsion is $|\det(I - H)|^{-1}$.
	
	 Finally, the bottom level computes the twisted homology of the 1-complex composed of all the edges $e^{vert,0}_\theta$ and $e^{vert,1}_\theta$. This space is a deformation retract of $\bigcup_{\gamma \in \Gamma^s} X_\gamma^s$. Therefore, the underlying chain complex $F_1$ is simple homotopy equivalent to $$\bigoplus_{\gamma \in \Gamma^s} \left(\W \xrightarrow{I-\rho(\gamma)}\W\right).$$ Since for every $\gamma \in \Gamma^s$, the number $1$ is not an eigenvalue of $\rho(\gamma)$, we find that $F_1$ is acyclic with torsion $\prod_{\gamma \in \Gamma^s} |\det(I - \rho(\gamma))|$.
	 
	 By assumption, the complex $F_3$ is acyclic. Hence, using the long exact sequence associated to the short exact sequence $0 \to F_2 \to F_3 \to F_3 /F_2 \to 0$, we find that $F_2$ is acyclic and that $\tau(F_3) = \tau(F_2) \tau(F_3/F_2)$. Using then the long exact sequence associated to the short exact sequence $0 \to F_1 \to F_2 \to F_2 / F_1 \to 0$, we get that $F_2/F_1$ is acyclic, and that $\tau(F_3) = \tau(F_3/F_2) \tau(F_2/F_1) \tau(F_1)$. But $\tau(F_3)$ is just $\tau_\rho(M)$, and thus the first statement follows by the computation of the torsions of $F_3/F_2, F_2/F_1$ and $F_1$ above.

	 To prove the second statement, we simply repeat the argument, but with the top filtration level $F_3$ removed. The result is the $\rho$-twisted chain complex for $B$, which is homotopy equivalent to $M\setminus \bigcup \Gamma^u$.

	\end{proof}
	
\begin{proof}[Proof of \cref{theorem:fried_conjecture}]
Recall the relation \eqref{eq:Markov_to_standard} between the zeta function $\zeta_\rho^{\textup{Markov}}$ associated to the Markov partition and the zeta function $\zeta_{\varphi,\rho}$. Our assumptions on $\rho$ implies that the hypotheses of \cref{lemma:zeta_Markov_at_zero} are satisfied (thanks to Lemmas \ref{lemma:denominator_non_zero} and \ref{lemma:denominator_non_zero_two}), so that we have $\zeta_\rho^{\textup{Markov}}(0) = \det(I - H)$. We also find that the other factor in \cref{eq:Markov_to_standard} does not have a pole at zero, and consequently,
\begin{equation*}
\left|\zeta_{\varphi,\rho}(0)\right|^{-1} = \left|\det(I-H)^{-1} \prod_{\gamma\in \Gamma^s} \det(I-\rho(\gamma))\prod_{\gamma\in \Gamma^u}\det(I-\varepsilon_{\gamma}\rho(\gamma))\right| = \tau_{\rho}(M),
\end{equation*}
where the last equality comes from \cref{prop:rtorsionformula}.
\end{proof}

\subsection{Double branched covers}\label{subsection:branched_covers}
Finally, let us prove \cref{theorem:dirichletclassnumber}. Let $M$ be an integer homology sphere and let $\varphi$ be a transitive $C^\infty$ Anosov flow on $M$. Let $\gamma_1,\dots,\gamma_n$ be a nonempty collection of primitive closed orbits of $\varphi$. Given a curve $\gamma \subset M\setminus \bigcup_i \gamma_i$, define $\rho(\gamma)=(-1)^{\lk(\gamma, \bigcup_i \gamma_i)}$, where $\lk$ denotes the linking number. By \cref{lemma:mcmullen_realization}, there exists a closed orbit $\gamma$ with $\rho(\gamma)=-1$. Choose a Markov partition with $\Gamma^u = \set{\gamma_1,\dots,\gamma_n}$ and $\Gamma^s$ consisting of some closed orbits each satisfying $\rho(\gamma)=-1$. Let $\overline M$ be the branched double cover $M$ ramified over $\gamma_1,\dots,\gamma_n$, which may be constructed from the cover of $M\setminus \bigcup_i \gamma_i$ corresponding with the kernel of $\rho$ by adding a curve over each of the $\gamma_i$'s.
\begin{proposition}\label{proposition:doublecovertorsion}
	Suppose $\overline M$ is a rational homology sphere. Then $\rho$ is an acyclic representation of $\pi_1(M\setminus \bigcup \Gamma^u)$ and $$|H_1(\overline M,\mathbb Z)| = \left|\det(I-H)\right|2^{-|\Gamma^s|-1}$$ where $H$ is the $\Delta \rho$-twisted directed adjacency matrix of our Markov partition.
\end{proposition}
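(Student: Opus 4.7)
The strategy has three parts: verify that $\rho$ is acyclic on $N := M \setminus \bigcup \Gamma^u$, apply \cref{prop:rtorsionformula} to express $\tau_\rho(N)$ in terms of $\det(I-H)$, and separately compute $\tau_\rho(N)$ topologically in terms of $|H_1(\overline M;\Z)|$ using the double branched cover. Equating the two expressions for $\tau_\rho(N)$ yields the formula.

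For acyclicity, let $\widetilde{N}\subset \overline M$ be the preimage of $N$; this is the unbranched connected double cover of $N$ corresponding to $\rho$. The transfer gives $H_*(\widetilde N;\Q)\cong H_*(N;\Q)\oplus H_*(N;\rho_\Q)$, so it suffices to compute $H_*(\widetilde N;\Q)$ and $H_*(N;\Q)$ separately and check they have the same Betti numbers. Both computations proceed by Mayer--Vietoris: decompose $M$ and $\overline M$ as $N\cup \bigsqcup U_{\gamma_i}$ and $\widetilde N\cup \bigsqcup \overline U_{\gamma_i}$, with boundaries being disjoint unions of tori, then use the hypothesis that $M$ is an integer homology sphere and $\overline M$ is a rational homology sphere. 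A direct count yields $b_*(N)=b_*(\widetilde N)=(1,n,n-1,0)$, so $H_*(N;\rho_\Q)=0$. With acyclicity established, the second half of \cref{prop:rtorsionformula} gives
\[
\tau_\rho(N) \;=\; \bigl|\det(I-H)^{-1}\bigr|\prod_{\gamma\in\Gamma^s}\bigl|\det(I-\rho(\gamma))\bigr| \;=\; \frac{2^{|\Gamma^s|}}{\bigl|\det(I-H)\bigr|},
\]
since $\rho(\gamma)=-1$ for each $\gamma\in\Gamma^s$ by choice of the Markov partition.

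The main step is to show independently that $\tau_\rho(N)=\dfrac{1}{2\,|H_1(\overline M;\Z)|}$. Starting from the cell decomposition of $M$ built in the proof of \cref{prop:rtorsionformula}, I lift to a cell decomposition of $\overline M$: every cell of the branched surface $B$ has two lifts (since such cells lie in $N$), while each cell of the form $p_\gamma$ or $v_\gamma$ with $\gamma\in\Gamma^u$ has a single lift $\overline p_\gamma$, $\overline v_\gamma$. Over $\Q$ the deck involution $\sigma$ splits $C_*(\overline M;\Q)=C^+\oplus C^-$ into invariants and anti-invariants. The invariant part $C^+$ is identified with $C_*(M;\Q)$ by the pushforward $\pi_*$; here $\pi_*(v^++v^-)=2v$ on $B$-cells while $\pi_*(\overline p_\gamma)=2p_\gamma$ and $\pi_*(\overline v_\gamma)=2v_\gamma$ on branch cells (degree $2$ locally). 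The anti-invariant part $C^-$ is naturally isomorphic to $C_*(B;\rho_\Q)\simeq C_*(N;\rho_\Q)$ via $v^+-v^-\leftrightarrow v_\rho$, which is acyclic. Multiplicativity of torsion under direct sums gives $\tau(\overline M)=\tau(C^+)\,\tau(C^-)$ with appropriate volume-form choices, and I intend to track these carefully. The chain-basis rescalings contribute a factor $2^{\chi(B)}=1$ since $\chi(B)=\chi(N)=0$. On homology, $\phi_*\colon H_*(C^+)\to H_*(M;\Q)$ is the identity on $H_0$ (integer generator goes to integer generator) but multiplies integer generators by $2$ on $H_3$ (since $\pi\colon\overline M\to M$ has degree $2$ on fundamental classes). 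This mismatch produces a factor $\tfrac12$ relating $\tau(\overline M)$ to $\tau(M)\tau_\rho(N)$. Using $\tau(M)=|H_1(M;\Z)|^{-1}=1$ and $\tau(\overline M)=|H_1(\overline M;\Z)|^{-1}$ (from \cref{example:manifold_rtorsion}) yields $\tau_\rho(N)=\dfrac{1}{2|H_1(\overline M;\Z)|}$, and equating this with the expression from the previous paragraph gives the claimed formula.

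The main obstacle is the homological bookkeeping in the third step. Several factors of $2$ appear and must be reconciled: the local degree $2$ of $\pi_*$ on branch cells, the change of basis on $C_*(\overline M;\Q)$ from the natural lift basis to the $\sigma$-eigenbasis, the integer-generator identifications on $H_*(\overline M;\Z)$, $H_*(M;\Z)$, and the acyclic complex $C^-$. The computation is manageable because the chain-level factors combine into $2^{\chi(B)}=1$, leaving only the single homological factor on $H_3$; but making each identification explicit (and compatible with the sign and volume-form conventions in \cref{subsection:reidemeister_torsion}) is where one has to be most careful.
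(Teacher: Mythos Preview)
Your approach is correct in outline and takes a genuinely different route from the paper. The paper assembles the short exact sequence $X \hookrightarrow C(\overline M) \twoheadrightarrow C(M)$ (with $X$ the $\rho$-twisted complex of $B$) into a double complex and computes its Reidemeister torsion two ways, via the horizontal and vertical spectral sequences: the horizontal one shows the torsion is $1$ because each column is an acyclic short exact sequence, while the vertical one yields $2\tau(X)|H_1(\overline M)|$. Acyclicity of $X$ is obtained as a byproduct of the vertical spectral sequence collapsing. You instead establish acyclicity up front by Mayer--Vietoris and the transfer isomorphism $H_*(\widetilde N;\Q)\cong H_*(N;\Q)\oplus H_*(N;\rho)$---a cleaner, purely topological explanation---and then split $C_*(\overline M;\Q)=C^+\oplus C^-$ into $\sigma$-eigenspaces and invoke multiplicativity of torsion under direct sums. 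Your route avoids spectral-sequence machinery but obliges you to track the factors of $2$ by hand (chain-basis change on $B$-cells, degree-$2$ pushforward on branch cells, the $H_0$/$H_3$ mismatch); the paper's double-complex formalism packages the same short exact sequence so that these factors cancel column-by-column. Both arguments rest on the identical relation among $\tau(\overline M)$, $\tau(M)$, and $\tau_\rho(N)$; the difference is organizational.

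One caution: the direction of the surviving factor of $2$ in your relation $\tau(\overline M)=2^{\pm 1}\tau(M)\tau_\rho(N)$ is easy to get backwards, and your sketch does not pin it down. Since the whole statement hinges on that sign, it would be prudent to check it against a concrete example (for instance, compute $\tau_\rho(S^3\setminus K)$ for a small knot via the Alexander polynomial and compare with $|H_1|$ of the branched double cover) before finalizing.
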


\begin{proof}

Let $C(M)$ be the cell decomposition constructed in \cref{prop:rtorsionformula} and let $C(\overline M)$ be its lift to $\overline M$. Note that for $\gamma \in \Gamma^u$, the cells $v_\gamma$ and $p_\gamma$ each naturally lifts to a single cell $\overline{v_\gamma}$ and $\overline{p_\gamma}$ in $\overline M$. As in \cref{prop:rtorsionformula}, we use $B$ to denote the unstable branched surface in $M$.

Let $X$ be the $\rho$-twisted chain complex of $B$. Note that $X$ fits into the short exact sequence
$$X \xrightarrow{\iota} C(\overline M) \xrightarrow{\pi} C(M).$$

After flipping the signs of $\iota$ and $\pi$ in odd degree, we get the following chain complex:

\begin{equation}
\begin{tikzcd}
	X_3 \arrow[r] \arrow[d, "-\iota"]            & X_2 \arrow[r] \arrow[d, "\iota"]            & X_1 \arrow[r] \arrow[d, "-\iota"]            & X_0 \arrow[d, "\iota"]            \\
	C_3(\overline M) \arrow[r] \arrow[d, "-\pi"] & C_2(\overline M) \arrow[r] \arrow[d, "\pi"] & C_1(\overline M) \arrow[r] \arrow[d, "-\pi"] & C_0(\overline M) \arrow[d, "\pi"] \\
	C_3(M) \arrow[r]                             & C_2(M) \arrow[r]                            & C_1(M) \arrow[r]                             & C_0(M)                           
	\end{tikzcd}
\end{equation}

We can compute the Reidemeister torsion of this chain complex using the spectral sequence associated to either the vertical or horizontal filtration. Let's consider the first page of the spectral sequence associated to the horizontal filtration. The columns break into a direct sum over cells of $C(M)$. Each cell contributes a column of one of the following three types:

\begin{equation}
\begin{tikzcd}
	0 \arrow[d]                        &  & 0 \arrow[d] \arrow[d]              &  & \mathbb Z \arrow[d, "{(1,-1)}"] \\
	\overline{v_\gamma} \arrow[d, "2"] &  & \overline{p_\gamma} \arrow[d, "2"] &  & \mathbb Z^2 \arrow[d, "x+y"]    \\
	v_\gamma                           &  & p_\gamma                           &  & \mathbb Z                      
	\end{tikzcd}
\end{equation}

All the columns are acyclic. Therefore, the Reidemeister torsion of the double complex is a product of contributions from each column. The first two types of column have Reidemeister torsion 2, and appear in gradings 3 and 2 respectively. Therefore, their net contribution is 1. The contribution of the last type of column is 1. Therefore, the Reidemeister torsion of the double complex is 1.

On the other hand, the second page of the spectral sequence associated with the vertical filtration looks like
\begin{equation}
\begin{tikzcd}
	H_3(X) \arrow[d, "0"]              & H_2(X) \arrow[d] & H_1(X) \arrow[d]                              & H_0(X) \arrow[d, "0"]             \\
	H_3(\overline M) \arrow[d, "-\pi"] & 0_1 \arrow[d]    & {0_{|H_1(\overline M, \mathbb Z)|}} \arrow[d] & H_0(\overline M) \arrow[d, "\pi"] \\
	H_3(M)                             & 0_1              & {0_{|H_1(M,\mathbb Z)|}}                      & H_0(M)                           
	\end{tikzcd}
\end{equation}

Here, $0_{\omega}$ represents the 0 vector space with the volume form of magnitude $\omega$. For any choice of cell structure on a 3-manifold, the fundamental class has volume 1 and the point class has volume 1. Therefore, the map $\pi:H_0(\overline M) \to H_0(M)$ preserves the volume form and the map $\iota:H_3(\overline M) \to H_3(M)$ doubles the volume form. Therefore, we can compute the third page:

\begin{equation}
\begin{tikzcd}
	H_3(X) & H_2(X) \arrow[ldd] & H_1(X) \arrow[ldd]                  & H_0(X) \arrow[ldd] \\
	0_1    & 0_1                & {0_{|H_1(\overline M, \mathbb Z)|}} & 0_1                \\
	0_2    & 0_1                & {0_{|H_1(M,\mathbb Z)|}}            & 0_1               
	\end{tikzcd}
\end{equation}

The spectral sequence collapses at this stage, and $X$ must be acyclic. This proves the first statement in the lemma, that $\rho$ is an acyclic representation of $\pi_1(M\setminus \bigcup \Gamma^u)\cong \pi_1(B)$. Recall that we assumed that $|H_1(M,\Z)|=1$. Now the Reidemeister torsion of the complex above is the alternating product of the magnitudes of the volume forms in each grading:
\begin{equation}
	2\tau(X) |H_1(\overline M,\Z)|
\end{equation}
Equating our two computations of the Reidemeister torsion of the double complex, we get
\begin{equation}\label{eqn:vertvshor}
	2\tau(X) |H_1(\overline M,\Z)|=1
\end{equation}
Using \cref{prop:rtorsionformula} and the fact that $\rho(\gamma)=-1$ for each $\gamma\in \Gamma^s$, we find, 
\begin{align*}
	\tau(X)&=\left|\det(I-H)^{-1}\prod_{\gamma \in \Gamma^s} \det(I-\rho(\gamma))\right|\\
	&=\left|\det(I-H)^{-1}\right|2^{|\Gamma^s|}
\end{align*}
	
	Substituting into \cref{eqn:vertvshor} gives the desired formula.
	
\end{proof}

\begin{proof}[Proof of \Cref{theorem:dirichletclassnumber}]
	Let us compare the contributions of each orbit $\gamma$ of $\varphi$ to $\zeta_{\overline \varphi,1}(s)$ and $\zeta_{\varphi,1}(s)$. When $\gamma \not\in \set{\gamma_1,\dots,\gamma_n}$, $\gamma$ lifts to either one or two orbits depending on $\lk(\gamma, \cup_i \gamma_i)$. The contribution of $\gamma$ toward $\zeta_{\overline \varphi,1}(s)$ is
	
	\begin{equation*}
			\begin{cases} 
				(1-\Delta_\gamma e^{-sT_\gamma})^2 & \textup{ if $\rho(\gamma)=1$}\\ 
				1-\Delta_\gamma^2e^{-2sT_\gamma} & \textup{ if $\rho(\gamma)=-1$}\\ 			
			\end{cases}
	\end{equation*}
	Therefore, the contribution of $\gamma$ toward $\zeta_{\overline \varphi,1}(s)/\zeta_{\varphi,1}(s)$ is $$(1-\Delta_\gamma e^{-sT_\gamma}\rho(\gamma)).$$

	Now suppose $\gamma\in \set{\gamma_1,\dots,\gamma_n}$. Recall the notation $\lambda^u_1,\dots,\lambda^u_m$ and $r_1^u,\dots,r_\gamma^u$ from \cref{eq:definition_local_zeta_function}. In fact, because our manifold is orientable, $r_1^u=\dots=r_m^u$ and the pushoffs of $\gamma^{r_j^u}$ into $\lambda_j^u$ are all homotopic in the complement of $\gamma$. There are two cases, depending on the linking number of $\bigcup_i \gamma_i$ with the pushoff of $\gamma^{r_j^u}$. This dictates whether $\lambda_j^u$ will have one or two lifts. The contribution of $\gamma$ to $\zeta_{\overline \varphi,1}(s)$ is

	\begin{equation*}
		\begin{cases} 
			\frac{\prod_{j=1}^m (1- e^{-sr_j^u T_\gamma})^2}{1-e^{-sT_\gamma}} & \textup{ if $\lambda^u_j$ has two lifts}\\ 
			\frac{\prod_{j=1}^m (1- e^{-2sr_j^u T_\gamma})}{1-e^{-sT_\gamma}} & \textup{ if $\lambda^u_j$ has one lift}\\ 			
		\end{cases}
\end{equation*}
Now the contribution of $\gamma$ to $\zeta_{\overline \varphi,1}(s)/\zeta_{\varphi,1}(s)$ is $$\prod_{j=1}^m (1-e^{-sr_j^u T_\gamma}\rho(\gamma^{r_j^u}))=\prod_{j=1}^m (1-\Delta_{\gamma^{r_j^u}}e^{-sr_j^u T_\gamma}\rho(\gamma^{r_j^u}))$$ where $\rho(\gamma^{r_j^u})$ is to be interpreted as $\rho$ evaluated on the pushoff of $\gamma^{r_j^u}$ into $\lambda^u_j$, and we have used \cref{remark:orientable half leaves} to see that $\Delta=1$ in this situation.

Now let us summarize the overcounting of orbits in the symbolic shift. An orbit $\gamma \not\in \set{\gamma_1,\dots,\gamma_n}$ is overcounted if and only if $\gamma \in \Gamma^s$. As in \cref{proposition:explicit_zeta_Markov3}, the overcounting is by a factor of $\xi_{\varphi,\rho,\gamma}(s)$.

For $\gamma \in \set{\gamma_1,\dots,\gamma_n}$, the orbits $\gamma^{r_j^u}$ are all counted exactly once in the Markov partition. Therefore, as in \cref{proposition:explicit_zeta_Markov3}, we have $$\frac{\zeta_{\overline \varphi,1}(s)} {\zeta_{\varphi,1}(s)}=\prod_{\mathcal{O} \in P_{\Sigma}} F_{\mathcal{O}}(s,1) \left(\prod_{\gamma \in \Gamma^s} \xi_{\varphi,\rho,\gamma}(s)\right)^{-1},$$ where we use notations from \cref{subsection:correcting}. By \cref{proposition:explicit_zeta_Markov2}, the first product on the right side is $\zeta_{\rho}^{\textup{Markov}}(s)$.

As proved in \cref{proposition:doublecovertorsion}, $\rho$ is an acyclic representation of $\pi_1(M\setminus \bigcup \Gamma^u)$. Also, $\varepsilon$ is the trivial representation. Therefore, \cref{lemma:denominator_non_zero} and \cref{lemma:denominator_non_zero_two} apply to show that $d_1(0,1)\neq 0$ and $d_2(0,1)\neq 0$. The hypotheses of \cref{lemma:zeta_Markov_at_zero} are satisfied. Now we can evaluate both sides at 0:
\begin{align*}
\left|\frac{\zeta_{\overline \varphi,1}(0)} {\zeta_{\varphi,1}(0)}\right|&=\left|\zeta^{\textup{Markov}}_{\rho}(0) \left(\prod_{\gamma \in \Gamma^s} \xi_{\varphi,\rho,\gamma}(s)\right)^{-1}\right|\\
&= \left|\det(I-H) 2^{-|\Gamma^s|}\right| \qquad \qquad\textup{by \cref{lemma:zeta_Markov_at_zero},}\\
&=2|H_1(\overline M,\Z)| \qquad \qquad \textup{by \cref{proposition:doublecovertorsion}.}
\end{align*}

\end{proof}

\bibliographystyle{alpha}
\bibliography{zeta_pseudo.bib}

\end{document}